\newtheorem{theorem}{Theorem}
\newtheorem{asm}{Assumption}
\newtheorem{tl}{Technical Lemma}
\newtheorem{lemma}{Lemma}
\newtheorem{corollary}{Corollary}
\newcolumntype{M}{>{\centering\arraybackslash}m{0.17\textwidth}}
\DeclareMathOperator{\cov}{cov}
\DeclareMathOperator{\var}{var}
\DeclareMathOperator{\diag}{diag}
\newcommand{\calB}{\mathcal B}
\newcommand{\calS}{\mathcal S}
\newcommand{\calG}{\mathcal G}
\newcommand{\calP}{\mathcal P}
\newcommand{\calL}{\mathcal L}
\newcommand{\calI}{\mathcal I}
\newcommand{\calD}{\mathcal D}
\newcommand{\calF}{\mathcal F}
\newcommand{\calW}{\mathcal W}
\newcommand{\eps}{\epsilon}
\newcommand{\given}{\,|\,}
\newcommand{\Fo}{F^{(1)}}
\newcommand{\Fz}{F^{(0)}}
\newcommand{\Ft}{F^{(2)}}
\newcommand{\Fi}{F^{(i)}}
\newcommand{\tfo}{\widetilde{F^{(1)}}}
\newcommand{\tfz}{\widetilde{F^{(0)}}}
\newcommand{\intrd}{\int_{\mathbb R^d}}
\newcommand{\iid}{\overset{\mbox{iid}} \sim}
\def\##1\#{\begin{align}#1\end{align}}
\def\$#1\${\begin{align*}#1\end{align*}}
\def\spacingset#1{\renewcommand{\baselinestretch}%
{#1}\small\normalsize} \spacingset{1}
\newcommand{\blue}[1]{{\leavevmode\color{blue}{#1}}}
\newcommand*{\addFileDependency}[1]{
\typeout{(#1)}
%
%
\@addtofilelist{#1}
%
\IfFileExists{#1}{}{\typeout{No file #1.}}
}\makeatother
\begin{document}
\title{\bf Infill Consistent Estimability of the Regression Slope Between Gaussian Random Fields Under Spatial Confounding}

\author{Abhirup Datta \\
Department of Biostatistics, Johns Hopkins University  \and  
Michael L. Stein \\
Department of Statistics, Rutgers University}
\date{}

\maketitle

\begin{abstract}
	The problem of estimating the slope parameter in regression between two spatial processes under confounding by an unmeasured spatial process has received widespread attention in the recent statistical literature. Yet, a fundamental question remains unresolved: when is this slope 
	consistently estimable under spatial confounding, with existing insights being largely empirical or estimator-specific.  
	In this manuscript,  
	we characterize conditions for consistent estimability of the regression slope 
	between Gaussian random fields (GRFs),
	the common stochastic model for spatial processes, under spatial confounding.
	Under fixed-domain (infill) asymptotics, we give sufficient conditions for consistent estimability
	in terms of the smoothness or local behavior of the exposure and confounder processes. When estimability holds, 
	we provide consistent estimators of the slope using local differencing (taking discrete differences or Laplacians of the processes of suitable order).
	Using functional analysis results on Paley-Wiener spaces, we then provide an easy-to-verify necessary condition for consistent estimability of the slope in terms of the relative 
	spectral tail decays of the confounder and exposure.
	As a by-product, we establish a novel and general spectral condition on the
	equivalence of measures on the paths of multivariate GRFs with component fields of varying smoothnesses, a result of independent importance.
	Our estimability results or estimators do not rely on specific parametric models for the covariance functions.
	We show that for many covariance classes like the Matérn, power-exponential, generalized Cauchy, and coregionalization families, the necessary and sufficient conditions become identical, except for a boundary point, thereby providing a sharp characterization of consistent estimability of the slope
	for these processes. 
	The results are extended to multivariate slopes and to accommodate measurement error using local-averaging-and-differencing-based estimators. We show that differencing-based estimation remains consistent for popular classes of non-stationary Gaussian random fields, some non-Gaussian random fields, and irregular designs.
	The finite sample behavior of the estimators is explored
	via numerical experiments. 
\end{abstract}

\section{Introduction} 
We consider the problem of estimating the regression slope of an observed spatial outcome process $Y(s)$ on an observed spatial exposure process $X(s)$ under the presence of an unmeasured process $W(s)$ which impacts both $X$ and $Y$. This problem of spatial confounding has now been the focus of a large and burgeoning literature.
We refer the readers to \cite{clayton,wakefield,reich,hodges,hanks,page,papadogeorgou,thaden,gilbert,khan,zimmerman,nobre2021effects,dupont,khan2,woodward2024instrumental,wu2025spatial} for a spectrum of contributions and opinions on this topic. Yet, a more fundamental question has remained unanswered -- {\em if the entire processes $Y(s)$ and $X(s)$ were observed in a spatial domain $\mathcal D$, under what conditions can we consistently estimate the slope
	in the presence of an unmeasured confounder process?} 
More formally, if we observe $X(s)$ and $Y(s) = X(s)\beta + W(s)$ for all $s$ in some fixed domain $\mathcal D \in \mathbb R^d$ but $W(s)$ is not observed and is possibly correlated with $X(s)$, when does there exist a consistent estimator of $\beta$? The answer to this question should provide an upper bound to the set of scenarios under which we can expect to estimate $\beta$ accurately using finite data and some analysis strategy. 

Much that is known about this problem has come from exact expressions of biases of estimators and empirical studies of these expressions
\cite{paciorek,khan2}.
These studies have broadly concluded that, when the exposure $X$ is rougher than the confounder $W,$ the slope $\beta$ can be well estimated by common spatial models or estimators, e.g., Gaussian process regression or generalized least squares (GLS). The few theoretical studies on consistent estimation of $\beta$ \citep{wang2020prediction,yu2022parametric,bolin2025spatial} have been mostly estimator-specific, e.g., studying consistency of the GLS estimator, and considering $X$ to be either a non-stochastic (fixed) function or a stochastic process that is uncorrelated with the error process $W$.
Under either assumption, the conditional mean $E(W \given X)$ is the same as its unconditional mean $E(W)$. In this scenario, even the unadjusted ordinary least squares (OLS) estimator of $\beta$, obtained from regressing $Y$ on $X$, is unbiased, precluding what is typically characterized as confounding in causal inference, where omission of the confounder in linear regression leads to bias in the unadjusted OLS estimator. However, these studies address other important challenges to identification and estimation of $\beta$, like the role of smoothness of $X$ and the impact of covariate misspecification. Section \ref{sec:setup} provides a more detailed review of this thread of literature. \citep{dupont,gilbert2025consistency} have established consistency of certain estimators for $\beta$ assuming presence of noise (non-spatial variation) in $X$. These results are not applicable when $X$ is a smooth spatial process without noise.

In this manuscript, we provide necessary and sufficient conditions for consistent estimation of $\beta$ when both the exposure $X$ and outcome $Y=X\beta+W$ are {\em Gaussian random fields (GRFs)}, the common stochastic model for spatial processes, with $W$ being an unmeasured GRF correlated with $X$. We consider infill asymptotics, i.e., the spatial domain remains fixed. Consistent estimation of parameters of GRFs is challenging under infill asymptotics, as an increase in data density within a fixed domain may not lead to an increase in information about parameters. Notable work on this topic include  \cite{stein1999interpolation,zhang2004inconsistent,anderes2010consistent,tang2021identifiability,li2023inference}. 

We first establish general sufficient conditions for consistent infill-domain estimability of the slope $\beta$ between two GRFs under spatial confounding. We show that $\beta$ can be characterized as the ratio of the principal irregular terms between the covariance functions of the outcome and exposure process. The principal irregular term dictates the local (near-zero-distances) behavior of the process. Crudely, a (cross-)covariance function having a principal irregular term of exponent $2\nu$ is $2m$ times differentiable) if $\nu > m$ \citep{stein1999interpolation}. Hence, we directly use the half-exponent $\nu$ to quantify the smoothness of a covariance function. 
We show that $\beta$ can be consistently estimated as long as the exposure process $X$ is less than $d/2$ degrees smoother
than the confounder process $W$, and the cross-covariance function between $X$ and $W$ is smoother than the covariance function of $X$. We
directly provide a consistent estimate of $\beta$ via {\em local differencing} --- using discrete differences ($d=1$) or Laplacians ($d>1$) of $Y$ and $X$ of suitable order (determined by the smoothness of $X$). The result dispels the common perception that the exposure $X$ needs to be rougher than the confounder $W$ to identify $\beta$, as we show that it can be up to $d/2$ degrees smoother. 

We then establish necessary conditions for consistent estimability of $\beta$, violation of which would lead to equivalence of measures on the paths of the bivariate $(X,Y)$ process for two different values of $\beta$. Using functional analysis results in matrix-valued  Paley-Wiener spaces, we provide a simple spectral necessary condition based on the relative rates of polynomial tail decay of the spectral densities of $X$ and $W$. In the process, we establish a novel and easy-to-verify spectral condition for equivalence of multivariate GRFs where the univariate component fields have varying smoothnesses. This is an advancement over the limited existing results for equivalence of measures on the paths of multivariate GRFs, which either require all components to have the same smoothness \citep{bachoc2022asymptotically} or are generally difficult to verify for common processes \citep{ruiz2015equivalence}. The result is thus of independent importance for studying consistent estimability of parameters for multivariate GRFs. 

Conditions on the principal irregular terms of covariance functions, which characterize our sufficient conditions, are intimately related to the tail behavior of spectral densities, which characterizes our necessary conditions. They both inform local behavior of the processes, and the equivalence of these is often established via Abelian and Tauberian theorems. We show that for several common classes of covariance functions, including Matérn, power-exponential, generalized Cauchy, and coregionalization families, our sufficient and necessary conditions are indeed identical (except at a boundary point), thus making our conditions sharp. This leads to complete characterization of estimability regions of $\beta$ as a function of the smoothnesses of the exposure and confounder for these classes of GRFs (see, e.g., Figure \ref{fig:regions}). 

We present multiple theoretical extensions of the work. We show that the results on consistent estimability remain unchanged if the outcome and the exposure are observed with measurement error; however, a different estimator will be required, which needs to first average data locally before taking local differences or Laplacians. We extend the results to multivariate slopes. We show that the differencing-based estimators remain consistent even for some non-stationary or non-Gaussian processes. We develop a spacing-weighted differencing or Laplacian-based estimator for irregular designs and show its consistency under the same conditions. We also show that, in settings where the differencing-based estimator is consistent, the popular generalized least squares (GLS) estimator can be inconsistent even with arbitrarily small misspecification of the working covariance matrix, thereby demonstrating the utility of our proposed differencing-based approach for robust estimation under model misspecifications. 
Finally, we conduct a suite of numerical experiments that explore the finite sample behavior of our proposed estimators. We conclude with a discussion on how our proposed estimation strategies can be adapted in practice
to estimate $\beta$.

\section{Setup and related work}\label{sec:setup}
We consider a pair of processes $(X,Y)=\{(X(s),Y(s)): s \in \mathcal D\}$ observed on a
domain 
$\mathcal D \subset \mathbb R^d$,
and related via a structural linear model 
\begin{equation}\label{eq:dgpy}
	Y(s) = X(s)\beta + W(s) \quad \forall s \in \mathcal D. 
\end{equation}
where $W(s)$ is an unobserved spatial process.
This spatial regression model and the estimation of $\beta$ have been the focus of much of the aforementioned spatial confounding literature. We exclude an intercept in (\ref{eq:dgpy}) without loss of generality here, as our estimators will be based on differencing and their distributional properties will be invariant to the intercept. The slope $\beta$
has the usual interpretation of the regression coefficient in multiple linear regression, i.e., $\beta$ is the expected change in $Y$ when $X$ is increased by one unit, keeping $W$ fixed. If $W$ was observed, the OLS estimator of $\beta$ obtained by regressing $Y$ on $(X,W)$ would be unbiased.

When $W$ is not observed and is correlated with $X$, regressing $Y$ on $X$ will lead to a biased estimator of $\beta$ \citep[see, e.g.,][for a formal result]{gilbert2025consistency}. We refer to this scenario of an unobserved $W$ correlated with $X$ as spatial confounding --- aligning with the standard causal inference terminology, as this induces omitted variable bias. We note that the term spatial confounding has been used more broadly to describe a collection of important identification and estimation issues related to $\beta$. Our setup of spatial confounding has been referred to as {\em data generation confounding} in \cite[][]{khan2}, who distinguished it from {\em analysis model confounding}, which refers to biases of estimators of $\beta$ caused by empirical correlation in $X$ and $W$ even if they are not correlated in the true data generation process. \cite{bolin2025spatial} 
study `spatial self-confounding', bias introduced when a smoothness-related misspecification occurs when observing $X$.

Studies on infill asymptotic properties of estimators of $\beta$ in (\ref{eq:dgpy}) are limited and have used various assumptions about the true $X$ and $W$. \cite{dupont,gilbert2025consistency} establish consistency of, respectively, the {\em spatial+} and GLS estimators of $\beta$
as long as the exposure has some non-spatial iid noise.
These results do not apply to a spatially smooth $X$.
\cite{wang2020prediction} shows that the GLS estimator $\hat \beta_{GLS}$ of $\beta$ is inconsistent when $X$ is a smooth fixed function of space in the reproducing kernel Hilbert space (RKHS) of the covariance kernel of $W$.
In \cite{bolin2025spatial},  
$\calS X$, a smoothed version of the true $X$, is assumed to be observed. They show that the degree of misspecification of the smoothness of $X$ dictates the consistency of the GLS estimator. For the case $\calS X = X$, i.e., when there is no covariate misspecification, their theory implies that $\hat\beta_{GLS}$ is consistent when $X$ is not in the RKHS of the covariance kernel of $W$, complementing the result of \cite{wang2020prediction}.
Both these studies primarily considered $X$ to be a fixed smooth function rather than a stochastic process. \cite{yu2022parametric} considers the case where $X$ is stochastic (GRF) but independent of $W$ and establishes conditions for consistently estimating $\beta$.
As we discussed earlier, when $X$ is a fixed function or is a GRF independent of $W$, there is no omitted variable bias, and even the unadjusted ordinary least squares (OLS) estimator regressing $Y$ on $X$ is unbiased (although no estimator may be consistent). Also, \cite{wang2020prediction,yu2022parametric,bolin2025spatial} study the GLS estimator using the true covariance of $W$. When $X$ is fixed or independent of $W$, this is the
maximum likelihood estimate (MLE) of $\beta$.
When $X$ is a GRF correlated with $W$, this GLS is neither the MLE nor unbiased, as 
$E(W | X) \neq E(W)$ and $Cov(W | X) \neq Cov(W)$. So, the lack of consistency of this GLS estimator
need not imply that other estimators cannot be consistent. A more fundamental notion is {\em lack of consistent estimability,} which implies that there can be no consistent estimator of $\beta$ even if the processes $(X,Y)$ are observed on all of $\calD$.

Our contribution is to characterize necessary and sufficient conditions for consistent estimability of $\beta$ when $X$ is a purely spatial stochastic process with no added noise, and is explicitly correlated with the confounder process $W$.
To our knowledge, there is no theoretical literature for this setting, although it has been extensively studied empirically \citep{paciorek,khan2}. These studies have largely concluded that when the exposure is a rougher spatial process than the confounder, $\beta$ can be estimated accurately.
Our results show that this condition is sufficient but need not be necessary, as $\beta$ can be identified and consistently estimated
even if the exposure $X$ is up to a certain degree smoother than the confounder $W$.

\section{Sufficient conditions for consistent estimability}
\label{sec:suff}

\subsection{Notation}\label{sec:notn} We denote the set of integers, natural, real, and complex numbers by $\mathbb Z, \mathbb N, \mathbb R$ and $\mathbb C$ respectively. For two sequences of real numbers $a_n$, we say $a_n = o(b_n)$ if $a_n/b_n \to 0$, $a_n = O(b_n)$ if $|a_n/b_n|$ is uniformly bounded, and $a_n \asymp b_n$ if $0 < \liminf |a_n/b_n| \leq \limsup |a_n/b_n| < \infty$. If $X_n$ denote a sequence of random variables then $X_n=o_p(a_n)$ if $X_n/a_n \to 0$ in probability, and $X_n=O_p(a_n)$ if $X_n/a_n$ is bounded in probability. For two measures $\calP_0$ and $\calP_1$ we say $\calP_0 \equiv \calP_1$ if they are equivalent (i.e., $\calP_0$ is absolutely continuous with respect to $\calP_1$ and vice versa), and  $\calP_0 \perp \calP_1$ if they are orthogonal, i.e., there exists a measurable set  $A$ such that $\mathcal P_0(A)=0$ and $\mathcal P_1(A)=1$.

\subsection{Consistent estimability in $\mathbb R$}\label{sec:r1}
We first consider the spatial domain to be in $\mathbb R$ to elaborate on the main ideas that lead to sufficient conditions for consistent estimability of $\beta$.
We present a general result on the consistent estimability of a ratio of the coefficients of the principal irregular terms of bivariate stationary GRFs.
We will then show how this general result provides sufficient conditions for consistent estimability of $\beta$ under spatial confounding. We first define the class of (cross-)covariance functions we consider. 

\begin{asm}\label{eq:K.assump} The (cross-) covariance function $K$ on $\mathbb R$ can be expressed as $K(t)=A(t) + B(t)$ where $A$ is an even analytic function and $B$
	is even, continuous on $[-L,L]$ for some $L>0$.
	The $k^{th}$ derivative $B^{(k)}$ on $(0,L]$
	satisfies, 
	as $t\downarrow 0$, 
	$B^{(k)}(t) = (\alpha)_{k}\,c\,t^{\alpha-k} + o(t^{\alpha-k})$, for a constant $c$ (non-zero when $K$ is a covariance function) and a positive exponent $\alpha$ that is not an even integer, where $B^{(0)}=B$ and $(\alpha)_k = \Gamma(\alpha+1)/\Gamma(\alpha+1 -k)$ is the falling factorial.
\end{asm}

Many common covariance functions satisfy Assumption \ref{eq:K.assump}, as we discuss in Section \ref{sec:examples}. Under Assumption \ref{eq:K.assump}, $B$ denotes the irregular part of the covariance $K$ with $ct^\alpha$ being its leading (least smooth) term, which is referred to as the {\em principal irregular term} \citep{stein1999interpolation}. 
We can extend this notion to $\alpha$ being an even integer by replacing $t^{\alpha}$ in Assumption \ref{eq:K.assump} by
$S(t)t^\alpha$ for some function $S$ slowly varying at 0 as long as $S$ is not differentiable at 0 if $\alpha$ is an even integer.
The case $t^{2m}\log t$ for $m$ a positive integer is of greatest practical interest, because it covers Mat\'ern models with integer-valued smoothness parameters. One example of how the theory extends to such covariances is provided later in Corollary \ref{cor:boundary}.

Suppose $(Z_1,Z_2)$ is a bivariate zero-mean stationary GRF on $\mathbb{R}$. Let $K_{k\ell}$ be the (cross-) covariance function of $Z_k$ and $Z_\ell$, where each $K_{k\ell}$ satisfies Assumption \ref{eq:K.assump} with some
$c_{k\ell}$, and $\alpha_{k\ell} > 0$. 
Further, let us assume that
\begin{equation}\label{eq:K12limit}
	\begin{aligned}
		\alpha_{12} &= \alpha_{11} \mbox{ and } c_{12} = \beta c_{11},
	\end{aligned}
\end{equation}
We first present a result for generic $Z_1$ and $Z_2$ satisfying (\ref{eq:K12limit}). In the context of spatial confounding, as we will show later, $Z_1$ will be $X$, $Z_2$ will be $Y = X\beta + W$, and (\ref{eq:K12limit}) will be satisfied as long as the cross-covariance between $X$ and $W$ is smoother
relative to the covariance of $X$ near the origin.   
Under (\ref{eq:K12limit}), the ratio of the principal irregular terms of $K_{12}$ and $K_{11}$ becomes $\frac{c_{12}h^{\alpha_{12}}}{c_{11}h^{\alpha_{11}}} = \frac{\beta c_{11}h^{\alpha_{11}}}{c_{11}h^{\alpha_{11}}} = \beta$. To give an intuition on how $\beta$ can be estimated leveraging this, assume for simplicity that $\alpha_{1j}$ is less than $2$, for $j=1,2$. 
Define $\gamma_{1j}^{(1)}(h)=\frac 12 \mathrm{Cov}(Z_1(s+h) - Z_1(s),Z_j(s+h) - Z_j(s))$ as the first order semi-(cross-)-variogram between $Z_1$ and $Z_j$ at distance $h$.
Then under Assumption \ref{eq:K.assump}, as $h\to 0$, we have $\gamma^{(1)}_{1j}(h) = K_{1j}(0) - K_{1j}(h) = A_{1j}(h) - A_{1j}(0) + c_{1j}h^{\alpha_{11}} + o(h^{\alpha_{11}}) = c_{1j}h^{\alpha_{11}} + o(h^{\alpha_{11}})$ as $A$ is analytic and $\alpha_{11} = \alpha_{12} < 2$. 
Thus the limiting ratio of the semi-cross-variogram between $Z_1$ and $Z_2$ and the semi-variogram of $Z_1$ at small distances is
\begin{equation}\label{eq:betaratio}
	\lim_{h \to 0} \frac{\gamma^{(1)}_{12}(h)}{\gamma^{(1)}_{11}(h)} = \beta.
\end{equation}

A similar result will hold for larger $\alpha_{11}$ using higher order semi-variograms defined by recursively taking process differences. This motivates our strategy to develop a consistent estimate of $\beta$ based on local differences of the $Z_1$ and $Z_2$ processes of suitable order.

For a stationary GRF $Z$ on $\mathbb R$ with covariance $K(h)=\mbox{Cov}(Z(s+h),Z(s))$, we define its
(scaled) first-order difference process or discrete gradient for a small distance $h \in \mathbb R$
\begin{equation}\label{eq:deriv}
	\nabla^{(1)}_h Z(s) = \frac 1h \left(
	Z(s+h) - Z(s)\right). 
\end{equation} 
Higher order differences are defined recursively, e.g., $\nabla^{(i)}_h Z(s) = \nabla^{(1)}_h (\nabla^{(i-1)}_h Z(s))$, with the convention that $\nabla_h^{(0)} Z= Z$ for all $h$.

Consider observations of a bivariate stationary GRF $(Z_1,Z_2)$ on $\mathbb R$ at $n+1$ equally spaced locations in between $0$ and $L > 0$. Write $h$ for the distance $L/n$ between neighboring observations.
For an integer $p \geq 0$, define the OLS estimator between the $p^{th}$ order spatial-first differences of $Z_1$ and $Z_2$ on this lattice as
\begin{equation}\label{eq:olsdiff}
	\mathrm{OLS}_n^{(p)}(Z_2,Z_1) = \frac{\sum_{j=0}^{n-p}\nabla_h^{(p)}Z_1(h j)\nabla_h^{(p)}Z_2(h j)}
	{\sum_{j=0}^{n-p}\{\nabla_h^{(p)}Z_1(h j)\}^2}.
\end{equation}
For $p=0$, this is simply the OLS  estimator regressing $Z_2$ on $Z_1$. For $p > 0$, this is the OLS estimator regressing the $p^{th}$ order spatial differences of $Z_2$ on those of $Z_1$. 
We now state a result on consistent estimability of $\beta$ for bivariate stationary GRF on $\mathbb R$ with covariances satisfying Assumption \ref{eq:K.assump} and Equation (\ref{eq:K12limit}).

\begin{theorem}\label{th:beta.con}
	Let $(Z_1,Z_2)$ be a bivariate
	stationary GRF on $\mathbb R$
	with covariance function $K=(K_{k\ell})_{\{1 \leq k,l \leq 2\}}$.
	Assume that each $K_{k\ell}$ satisfy Assumption \ref{eq:K.assump} for some $L > 0$, and with parameters $c_{k\ell}$ and $\alpha_{k\ell}$ which satisfy Equation (\ref{eq:K12limit}) for some $\beta$. 
	Additionally, assume $\alpha_{11}-1 < \alpha_{22} \le \alpha_{11}$.
	Then the measures on the paths of the bivariate random fields $(Z_1,Z_2)$ on $[0,L]$ are orthogonal for different values of $\beta$. In particular, if $p$ is an integer such that $\alpha_{11} < 2p$, then   
	\begin{equation}\label{eq:th1.main}
		\hat\beta_n = \mathrm{OLS}_n^{(p)}(Z_{2},Z_{1}) 
		\to \beta \mbox{ in probability as } n\to\infty.
	\end{equation}
\end{theorem}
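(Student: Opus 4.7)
I will write $\hat\beta_n = \beta + N_n/D_n$ with $U := Z_2 - \beta Z_1$,
\[
D_n = \sum_{j=0}^{n-p}\bigl[\nabla_h^{(p)} Z_1(jh)\bigr]^2, \qquad
N_n = \sum_{j=0}^{n-p}\nabla_h^{(p)} Z_1(jh)\,\nabla_h^{(p)} U(jh),
\]
and show $N_n/D_n \to 0$ in probability. The $p$-th order differencing is engineered to extract the principal irregular term of a covariance: expanding
\[
\mathrm{Cov}\bigl(\nabla_h^{(p)} Z(jh),\,\nabla_h^{(p)} Z((j+m)h)\bigr) = h^{-2p}\sum_{k,l=0}^p(-1)^{k+l}\tbinom{p}{k}\tbinom{p}{l} K\bigl(h(m+k-l)\bigr)
\]
by Taylor expansion of $K = A + B$ around the appropriate base point (around $mh$ when $|m|>p$ and directly around $0$ when $|m|\le p$), the polynomial-annihilation identity $\sum_{k,l=0}^p(-1)^{k+l}\tbinom{p}{k}\tbinom{p}{l}(k-l)^q = 0$ for $q<2p$ forces the analytic part $A$ to contribute only $O(1)$, while Assumption \ref{eq:K.assump} gives the irregular part a dominant contribution of order $c\,(\alpha)_{2p}\,(mh)^{\alpha-2p}$. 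Applied to $K_{11}$, this yields $\mathbb{E}[D_n] \asymp n\, h^{\alpha_{11}-2p}$. The decisive structural input for the numerator is the cancellation (\ref{eq:K12limit}): the cross-covariance $K_{U,1}(t) = K_{21}(t) - \beta K_{11}(t)$ has its $t^{\alpha_{11}}$ term annihilated, so $(B_{21} - \beta B_{11})^{(k)}(t) = o(t^{\alpha_{11}-k})$ for $0\le k \le 2p$ by Assumption \ref{eq:K.assump}. Consequently $\mathbb{E}[N_n] = n\cdot[O(1) + o(h^{\alpha_{11}-2p})]$, so $\mathbb{E}[N_n]/\mathbb{E}[D_n] = o(1)$.

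Next I establish concentration via Isserlis' theorem. With $X_j := \nabla_h^{(p)}Z_1(jh)$ and $U_j := \nabla_h^{(p)}U(jh)$, the same Taylor analysis gives $\mathrm{Cov}(X_j, X_{j+m}) \asymp m^{\alpha_{11}-2p}\,h^{\alpha_{11}-2p}$ and an analogous formula for $U$ with exponent $\alpha_{22}$ (since $K_U$ has principal irregular exponent $\alpha_{22}$ in both cases $\alpha_{22}<\alpha_{11}$ and $\alpha_{22}=\alpha_{11}$). The condition $\alpha_{11} < 2p - 1/2$ makes the sequence $m^{\alpha_{11}-2p}$ square-summable, yielding $\mathrm{Var}(D_n) = 2\sum_{j,j'} \mathrm{Cov}(X_j, X_{j'})^2 = O(n\,h^{2(\alpha_{11}-2p)})$ and thus $D_n/\mathbb{E}[D_n] \to 1$ in probability. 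For the numerator, Isserlis gives
\[
\mathrm{Var}(N_n) = \sum_{j,j'} \mathrm{Cov}(X_j,X_{j'})\mathrm{Cov}(U_j,U_{j'}) + \sum_{j,j'} \mathrm{Cov}(X_j,U_{j'})\mathrm{Cov}(X_{j'},U_j).
\]
The first sum is $O(n\,h^{\alpha_{11}+\alpha_{22}-4p})$ by summing a product of two power-law tails (summable since $\alpha_{11}+\alpha_{22} < 4p-1$, implied by $\alpha_{22}\le\alpha_{11} < 2p-1/2$); the second is $o(n^2\,h^{2(\alpha_{11}-2p)})$ because the cancellation implies $\mathrm{Cov}(X_j,U_{j'}) = o(h^{\alpha_{11}-2p})$ uniformly in the lag. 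Dividing by $\mathbb{E}[D_n]^2 \asymp n^2\,h^{2(\alpha_{11}-2p)}$, the first ratio becomes $O(h^{\alpha_{22}-\alpha_{11}+1})$, which tends to $0$ precisely under the hypothesis $\alpha_{22} > \alpha_{11}-1$; the second is automatically $o(1)$. Chebyshev then gives $N_n/\mathbb{E}[D_n] \to 0$ in probability, and combining with the denominator concentration yields $\hat\beta_n \to \beta$. Orthogonality of the measures for different $\beta$ follows immediately from the Gaussian dichotomy, since $\hat\beta_n$ is a measurable functional of $(Z_1,Z_2)$ converging to distinct limits.

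The main obstacle I anticipate is executing the Taylor expansions at low lags ($|m|\le p$), where the expansion interval $[(m-p)h,(m+p)h]$ meets the singularity of $B$ at the origin; for these lags I will not expand around $mh$ but compute directly from the pointwise behavior of $B$ near $0$ (as for the $m=0$ case), absorbing the contribution into the same $h^{\alpha-2p}$ scale. The second delicate step is promoting the pointwise estimate $(B_{21}-\beta B_{11})^{(2p)}(t) = o(t^{\alpha_{11}-2p})$ to a uniform-in-$m$ bound $\mathrm{Cov}(X_j,U_{j'}) = o(h^{\alpha_{11}-2p})$, handled by splitting on $|m|h$: for small $|m|h$ the pointwise $o$-bound gives the conclusion directly, while for $|m|h$ bounded away from $0$ the derivative $K_{U,1}^{(2p)}$ is uniformly $O(1)$, which is trivially $o(h^{\alpha_{11}-2p})$. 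All remaining Taylor bookkeeping uses Assumption \ref{eq:K.assump} with $r = p$, valid since $\alpha_{11} < 2p < 2(p+1)$.
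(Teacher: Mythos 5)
Your proposal is correct and follows essentially the same route as the paper's proof: compute the means of the differenced sums from the principal irregular terms (with the analytic part contributing only $O(1)$ after $2p$-fold differencing), bound the variances via Isserlis' theorem using square-summability of the lag-$m$ covariances (which is exactly where $\alpha_{11}<2p-1/2$ enters), and use $\alpha_{22}>\alpha_{11}-1$ to make the normalized numerator variance vanish. The only differences are organizational — you center at $\beta$ via $U=Z_2-\beta Z_1$ so the cancellation in (\ref{eq:K12limit}) is explicit, and you treat general $p$ directly with the polynomial-annihilation identity, whereas the paper works out $p=1$ and asserts the extension — but the argument is the same.
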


All proofs are in the Supplementary Materials file. This result on orthogonality for the measures on the paths of the bivariate process  $(Z_1,Z_2)$ for different values of $\beta$ is, as far as we know, new. In fact, we give an explicit estimator (\ref{eq:th1.main}) of $\beta$ based on local differencing of sufficient order. 
The order of differencing $p$ is dictated by the smoothness of $Z_1$, with $p$ being an integer greater than $\alpha_{11}/2$ sufficing. Larger $p$ may lead to a less efficient estimator due to over-differencing, but will not ruin consistency. 

To see why Theorem \ref{th:beta.con} is relevant to spatial confounding, let $Y$ be generated as (\ref{eq:dgpy}) where $(X,W)$ has a bivariate covariance function matrix $K=(K_{k\ell})_{\{1 \leq k,l \leq 2\}}$ satisfying Assumption \ref{eq:K.assump} with exponents $(\alpha_{k\ell})$ for the principal irregular terms and constants $(c_{k\ell})$. Let $Z_1=X$,  $Z_2=Y$, and $K^*=(K^*_{k\ell})$ denote the covariance function of $Z=(Z_1,Z_2)$. Then each $K^*_{k\ell}$ satisfies Assumption \ref{eq:K.assump} with exponents $\alpha^*_{k\ell}$ and constants $c^*_{k\ell}$. As $Z_1=X$, we have $\alpha^*_{11}=\alpha_{11}$ and $c^*_{11}=c_{11}$. To find these terms for the cross-covariance, note that $ K^*_{12}(h)=Cov(Y(s+h),X(s)) = \beta Cov(X(s+h),X(s)) + Cov(W(s+h),X(s))$ satisfies Assumption \ref{eq:K.assump} with the irregular (non-analytic) term being $\beta c_{11} h^{\alpha_{11}} + c_{12} h^{\alpha_{12}} +$ higher order terms. If the cross-covariance between $X$ and $W$ is smoother than the covariance of $X$, i.e., $\alpha_{12} > \alpha_{11}$ and $\beta\neq 0$, then the leading irregular term is $\beta c_{11} h^{\alpha_{11}}$, yielding $\alpha^*_{12}=\alpha_{11}$ and $c^*_{12}=\beta c_{11}$. So, (\ref{eq:K12limit}) is satisfied, and an OLS of differences of $Y$ on differences of $X$ of suitable order should be consistent for $\beta$ by Theorem \ref{th:beta.con}. 
Our next result proves this formally. 

\begin{theorem}\label{th:suff}
	Consider
	$Y$ generated as in (\ref{eq:dgpy}) where $(X,W)$ is a bivariate stationary GRF on $\mathbb R$ with
	covariance function matrix $K=(K_{k\ell})_{\{1 \leq k,l \leq 2\}}$ such that $K_{11}$ and $K_{22}$ satisfy Assumption \ref{eq:K.assump} for some $L > 0$ and with $ \alpha_{11} < \alpha_{22} + 1$, and either $X$ is independent of $W$ or $K_{12}$ satisfies Assumption \ref{eq:K.assump} with $\alpha_{12} > \alpha_{11}$. 
	Then the regression slope $\beta$
	is consistently estimable on the paths of $(X,Y)$ with $\mathrm{OLS}_n^{(p)}(X,Y)$ on $[0,L]$ being a consistent estimator of $\beta$, where $p$ is an integer such that $\alpha_{11} < 2p$.
\end{theorem}

The result shows that for data generated according to (\ref{eq:dgpy}), even under spatial confounding, i.e., there being an unmeasured spatial process $W$ that is a confounder, one can identify the regression coefficient of $Y$ on $X$, as long as two conditions hold on the relative magnitudes of the exponents of the principal irregular terms:  $\alpha_{11} < \alpha_{12}$ and $ \alpha_{11} < \alpha_{22} + 1$. These exponents dictate the behavior of the process at near-zero distances and are closely related to smoothnesses of processes, with higher $\alpha_{k\ell}$ implying more smoothness \citep[page 29 of][provides a general result connecting the exponent of the principal irregular term to the degree of differentiability of the covariance function at zero]{stein1999interpolation}.
The first assumption in Theorem \ref{th:suff} is that the cross-covariance between $X$ and $W$ is smoother than the covariance of $X$, specified as $\alpha_{11} < \alpha_{12}$. For many covariance classes, this
is equivalent to assuming that the cross-spectral density $f_{XW}$ of $X$ and $W$ decays faster than the spectral density $f_{XX}$ of $X$ at high frequencies (see Section \ref{sec:examples} for examples). In other words, the ratio $f_{XW}(\omega)/f_{XX}(\omega)$,
goes to zero as $|\omega|\to\infty$.
This condition
has been used to develop spectral methods to adjust for spatial confounding \citep{guan}. 
However, our result shows that this itself may not be adequate. In addition, a second assumption is utilized, which is about the relative marginal smoothnesses of $X$ and $W$, i.e., $\alpha_{11} < \alpha_{22} + 1$, which mandates that the
exposure $X$ cannot be too much smoother than the confounder $W$. 

It has long been conjectured that the regression slope $\beta$ of $Y$ on $X$ is consistently estimable under spatial confounding if the unmeasured confounder $W$ is smoother than the exposure $X$. For example, \cite{paciorek} and \cite{khan2} provide extensive empirical evidence in favor of this. They show that the GLS estimator usually has lower bias than the OLS estimator (both based on $Y$ and $X$) when $X$ is rougher than $W$.
\cite{guan} presented examples of estimating $\beta$ when  $X$ is smoother than $W$, assuming specific parametric forms of the cross-covariance between $X$ and $W$.
Theorem \ref{th:suff} shows that consistent estimability holds under more lenient assumptions, not only when the exposure is rougher than the confounder $(\alpha_{11} < \alpha_{22})$ but even when it is somewhat smoother $(\alpha_{22} \leq  \alpha_{11} < \alpha_{22} + 1$).

Under these two conditions for estimability ($\alpha_{11} < \alpha_{12}$ and $ \alpha_{11} < \alpha_{22} + 1$), we provide an explicit consistent estimator of $\beta$ in Theorem \ref{th:suff}. The consistency result does not assume any specific parametric family of covariances beyond the general form specified in Assumption \ref{eq:K.assump}, which, as we show in Section \ref{sec:examples}, is satisfied for several common covariance families. The explicit consistent estimator we provide is non-parametric: it regresses local differences of $Y$ on local differences of $X$. It does not involve or require estimating any covariance parameters such as variances or ranges, which are often not consistently estimable, e.g.,\ in Mat\'ern covariance families \citep{zhang2004inconsistent}.

We note that local differencing of variables has been used before to mitigate spatial confounding \citep{druckenmiller2018accounting}. The rationale behind this approach was that when the confounder varies at a larger scale than the exposure, local differencing largely cancels out the confounder but retains the high-frequency variations of the exposure, which is enough to identify $\beta$. Our result shows that local differencing is more powerful, leading to consistent estimates of $\beta$ even when the exposure is somewhat smoother than the confounder, although first differences may not always suffice and the order of differencing needs to be based on the smoothness of $X$. 

\subsection{Consistent estimability in higher dimensional spatial domains}\label{sec:rd}

We extend the results on consistent estimability of $\beta$ in Theorems \ref{th:beta.con} and \ref{th:suff} to GRFs on any spatial domain
$\calD \in \mathbb R^d$ that contains an open $d$-dimensional ball. As such a domain always contains a 1-dimensional interval $\calI$, as an immediate corollary of Theorem \ref{th:suff}, we have that $\beta$ will be consistently estimable for a $(X,Y)$ process on $\calD$ whose restriction to $\calI$ satisfies the conditions of that theorem. However, we will now show that in $\mathbb R^d$, for $d > 1$, $\beta$ can be consistently estimable under weaker conditions, and that the region of consistent estimability of $\beta$, as a function of the smoothnesses of $X$ and $W$, expands with the dimension of the spatial domain. 

The consistent estimators in Section \ref{sec:r1} relied on taking differences of suitable order on a regular grid along a straight line. The order of differencing $p$ can be even or odd as long as $p > \alpha_{11}/2$. For a stationary process on a regular grid in $\mathbb R^d$, it is more natural to consider discrete Laplacians of the process of suitable order.
Crudely, discrete Laplacians can be thought of as even order differencing, 
which fully leverage the availability of data along all the $d$ directions in $\mathbb R^d$.
This is central for consistency in $\mathbb R^d$ under weaker assumptions than in $\mathbb R$. 

Let $\calG_n$ denote a $(n+1) \times (n+1) \times \ldots (n+1)$
regular grid in $[0,L]^d \in \mathbb R^d$. Then $\calG_n$ consists of $(n+1)^d$ points, and the length of each side of the hypercubic grid cell is $h=L/n$. Let
$\calG_n^{(1)}$ denote the interior $(n-1)\times(n-1)$ grid created by peeling off one layer of the outer points of $\calG_n$. Define, higher order interiors recursively as $\calG_n^{(m)}=(\calG_n^{(m-1)})^{(1)}$.

For any GRF $Z$ on $\calG_n$, define the first order Laplacian $\Delta_h (Z)$ at a location $s_j \in \calG_n^{(1)}$ as
$$\Delta_h (Z(s_i)) = \frac 1{h^2} \sum_{g = 1}^d (Z(s_i + he_g) + Z(s_i - he_g) - 2Z(s_i) )$$ where $e_g$ is the $g^{th}$ column of a $d \times d$ identity matrix. 
Define higher order Laplacians recursively as $\Delta_h^{(m)}(Z)=\Delta_h\left(\Delta_h^{(m-1)}(Z)\right)$ where $\Delta_h^{(1)}=\Delta_h$.

For a bivariate zero-mean stationary GRF $(Z_1,Z_2)$ on the grid $\calG_n$, let
$Z_i^{(m)}$ denote $\Delta_h^{(m)}(Z_i)$ restricted to $\calG_n^{(m)}$, for $i=1,2$, and define the OLS estimator on Laplacians as  
\begin{equation}\label{eq:lap}
	\mbox{LAP}_n^{(m)}(Z_1,Z_2) = 
	OLS_n(Z_1^{(m)},Z_2^{(m)})= \frac{Z^{(m)T}_1Z^{(m)}_2}{ Z^{(m)T}_1Z^{(m)}_1}.
\end{equation}
On $\mathbb R$, $\mbox{LAP}_n^{(m)}(Z_2,Z_1) = \mbox{OLS}_n^{(2m)}(Z_2,Z_1)$, i.e., the $m^{th}$-order Laplacian corresponds to $2m^{th}$ order differencing. So on $\mathbb R^d$ with $d >1$,
the estimator in (\ref{eq:lap}) corresponds to only even-order differencing. 

We specify the following regularity condition for the covariance function and its derivatives in $\mathbb R^d$, generalizing Assumption \ref{eq:K.assump}  to covariance functions in $\mathbb R^d$. 

\begin{asm}\label{as:rd} Let
	$C(u)=(C_{k\ell}(u))_{1 \leq k,l \leq 2}$ be a $2 \times 2$ matrix-valued 
	stationary covariance function on $\mathbb R^d$ with symmetric cross-covariance function, i.e., $C_{12}=C_{21}$, where $C_{k\ell}(u)=K_{k\ell}(\|u\|)+ r_{k\ell}(u)$
	for $1\leq k,\ell\leq2$.
	Here $K_{k\ell}(h)$ is a function that can be extended to be supported on $\mathbb R$ by assuming it is even and  $K_{k\ell}(h)$ satisfies Assumption \ref{eq:K.assump} on $[0,L]$ for some $L>0$, and with constants $c_{k\ell}$ and $\alpha_{k\ell}$.
	The remainder term $r_{kl}(u)$ is such that  $r_{k\ell}(u) = o(\|u\|^{\alpha_{k\ell}})$, and for any $u \neq 0$, the $p^{th}$ order mixed partial derivatives of $r_{k\ell}$ at $u$ are all $o(\|u\|)^{\alpha_{k\ell}-p}$. 
\end{asm}

Under Assumption \ref{as:rd}, we have the following result on consistent estimation of $\beta$
in $\mathbb R^d$. 

\begin{theorem}\label{th:suff.rd}
	Let $Y$ be generated as in (\ref{eq:dgpy}) where $(X,W)$ is a bivariate stationary GRF on $\mathbb R^d$ with
	covariance function $C=(C_{k\ell})_{\{1 \leq k,l \leq 2\}}$
	satisfying Assumption \ref{as:rd}
	with $\alpha_{11} < \alpha_{12}$ and $ \alpha_{11} < \alpha_{22} + d$. Then $\beta$ is consistently estimable on the paths of $(X,Y)$ in $[0,L]^d$ and $\mathrm{LAP}_n^{(m)}(X,Y) \to \beta$ in probability, where $m$ is an integer such that $\alpha_{11} < 4m$.
\end{theorem}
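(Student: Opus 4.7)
The plan is to derive Theorem \ref{th:suff.rd} as a corollary of Theorem \ref{th:beta.con.Lap} applied to the bivariate process $(Z_1, Z_2) = (X, Y)$. The main work is to compute the covariance of $(X, Y)$, verify that it inherits Assumption \ref{as:rd} from that of $(X, W)$, verify the principal-irregular-term identity \eqref{eq:K12limit} with ratio equal to the regression slope $\beta$, and verify the smoothness pair $\alpha^{\text{new}}_{11} - d < \alpha^{\text{new}}_{22} \leq \alpha^{\text{new}}_{11}$.

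First I would write the induced covariance matrix of $(X, Y)$ in terms of that of $(X, W)$:
\begin{align*}
C^{\text{new}}_{11}(u) &= C_{XX}(u), \\
C^{\text{new}}_{12}(u) &= \beta\, C_{XX}(u) + C_{XW}(u), \\
C^{\text{new}}_{22}(u) &= \beta^2\, C_{XX}(u) + 2\beta\, C_{XW}(u) + C_{WW}(u).
\end{align*}
Since the isotropic principal parts, their $o(\|u\|^\alpha)$ remainders, and the $o(\|u\|^{\alpha-p})$ bounds on mixed partial derivatives in Assumption \ref{as:rd} are all additive, with leading exponent given by the smallest surviving exponent of the summands, $C^{\text{new}}$ again satisfies Assumption \ref{as:rd}.

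Next I would identify the new exponents and leading coefficients. Clearly $\alpha^{\text{new}}_{11} = \alpha_{11}$ and $c^{\text{new}}_{11} = c_{11}$. For $C^{\text{new}}_{12}$, the hypothesis $\alpha_{11} < \alpha_{12}$ forces the principal irregular term to come from $\beta C_{XX}$, yielding $\alpha^{\text{new}}_{12} = \alpha_{11}$ and $c^{\text{new}}_{12} = \beta c_{11}$; this is exactly \eqref{eq:K12limit} with the regression slope as the ratio. For $C^{\text{new}}_{22}$, the term $2\beta C_{XW}$ has irregular exponent $\alpha_{12} > \alpha_{11}$ and is therefore subdominant, so $\alpha^{\text{new}}_{22} = \min(\alpha_{11}, \alpha_{22})$. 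The required smoothness condition then becomes $\alpha_{11} - d < \min(\alpha_{11}, \alpha_{22}) \leq \alpha_{11}$: the upper bound is automatic, and the lower bound is either trivial (when $\alpha_{11} \leq \alpha_{22}$) or equivalent to the hypothesis $\alpha_{11} < \alpha_{22} + d$ (when $\alpha_{22} < \alpha_{11}$).

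Theorem \ref{th:beta.con.Lap} then delivers orthogonality of the measures on paths of $(X, Y)$ for different values of $\beta$, together with $\mathrm{LAP}^{(m)}(X, Y) \to \beta$ in probability for any integer $m$ satisfying $\alpha_{11} < 4m - d/2$. I expect the main obstacle to be the knife-edge case $\alpha_{11} = \alpha_{22}$, where the leading coefficient of $C^{\text{new}}_{22}$ at exponent $\alpha_{11}$ is $\beta^2 c_{11} + c_{22}$; one must ensure this combination does not cancel so that the principal irregular term of $C^{\text{new}}_{22}$ retains exponent $\alpha_{11}$. Positive semidefiniteness of the matrix of leading principal-irregular coefficients of the bivariate field $(X, W)$ rules out generic cancellation, and in any degenerate scenario where such cancellation occurs, $Y$ is then strictly smoother than $X$, a setting in which consistent estimability of $\beta$ is only easier and the LAP estimator remains consistent by a direct bias-variance computation on the differenced process.
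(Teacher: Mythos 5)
Your proposal is correct and follows essentially the same route as the paper: compute the induced covariance of $(X,Y)$, read off $\alpha^*_{11}=\alpha_{11}$, $c^*_{12}=\beta c_{11}$, $\alpha^*_{12}=\alpha_{11}$, $\alpha^*_{22}=\min(\alpha_{11},\alpha_{22})$, check $\alpha^*_{11}-d<\alpha^*_{22}\le\alpha^*_{11}$, and invoke Theorem \ref{th:beta.con.Lap}. Your attention to the knife-edge case $\alpha_{11}=\alpha_{22}$ is actually more careful than the paper's own argument (which takes $c^*_{22}\neq 0$ for granted); the cleanest way to close it is to note that $c_{11}$ and $c_{22}$ are principal-irregular coefficients of valid marginal covariances with the same exponent and hence share a sign, so $\beta^2 c_{11}+c_{22}$ cannot vanish.
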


Theorem \ref{th:suff.rd} generalizes the results of Theorem \ref{th:suff} from processes on a line to processes in Euclidean domains of any dimension. 
As the exponent $\alpha_{k\ell}$ often
equals twice the smoothness of the process for many parametric covariance families (see Section \ref{sec:examples} for examples), the sufficient condition $\alpha_{11} < \alpha_{22} + d$ implies that the exposure $X$ is allowed to be up to $d/2$ degrees smoother than the confounder $W$ for estimability of $\beta$ to hold, implying  that the region of consistent estimability
increases with increasing dimension (see Figure \ref{fig:regions}).
We will also show in the next Section that this gap of $d/2$ in the smoothnesses is not only sufficient but also necessary, thereby providing sharpness to our results. We discuss this apparent `blessing of dimensionality' in Section \ref{sec:matern}. 

To our knowledge, Theorem \ref{th:suff.rd} gives the broadest conditions under which one can guarantee consistent estimability of the slope under spatial confounding while considering stochasticity of the exposure process $X$ and without assuming any specific parametric form of the covariance functions of $X$ and $W$. We also provide a non-parametric estimator of $\beta$ that
simply uses discrete Laplacians of the observed processes, and does not require knowledge or estimation of the covariance parameters.

\section{Necessary conditions for consistent estimability}\label{sec:nec}

\subsection{Background on Fourier analysis and Paley-Wiener spaces}\label{sec:notation}

We will now establish necessary conditions for consistent estimability of $\beta$ in model (\ref{eq:dgpy}) based on observations of the $(X,Y)$ process.
We will obtain conditions under which different values of $\beta$ lead to equivalent measures on the paths of the bivariate GRF $(X,Y)$ for some specified domain.
In this case, there can be no consistent estimator for $\beta$ even if the $(X,Y)$ field were observed everywhere on this domain. There are existing results on the equivalence and orthogonality of measures on $Y$ for different
values of $\beta$. 
For example, when $X$ is a fixed function of space,
\cite{bolin2025spatial} studied the consistency of the correctly specified GLS estimator based
on the true covariance of $W$ using the Feldman--H\'ajek theorem \cite{da2014stochastic}.
This case essentially reduces to comparing two univariate Gaussian measures with
the same covariance but different means, and a necessary and sufficient condition is that the difference of means lies in the reproducing kernel
Hilbert space (RKHS) of $\mathrm{Cov}(W)$. Many applications involve the SPDE
characterization of the Mat\'ern process, either on a Riemannian manifold or
on a bounded domain with homogeneous Neumann or Dirichlet boundary conditions, where
the RKHS has a well-known characterization
as a Sobolev space, and the eigenvalues grow according to Weyl's law. However, this does not
cover the stationary Mat\'ern GRF on a compact bounded domain obtained
by restricting the stationary Mat\'ern field on $\mathbb{R}^d$. Similar
equivalence and orthogonality results are used to study the correctly specified
GLS estimator when $X$ is independent of $W$ in \cite{yu2022parametric}, in which case, conditioning on a realization of $X$ does not change the law of $W$.

When $X$ is a GRF, $\beta$ features in the covariance function of $(X,Y)$, and consistent estimability of $\beta$ thus relates to equivalence/orthogonality results for two Gaussian measures with different covariance functions.
Existing sufficient conditions to establish this are either challenging to verify \citep[e.g., the condition (iii) of the Feldman-Hajek Theorem in][]{bolin2025spatial}, or are only provided for the univariate case \citep[e.g., the easy-to-verify spectral conditions of][]{ibragimov2012gaussian,skorokhod1973absolute,stein1999interpolation}. When $X$ is a GRF correlated with $W$, and we observe $(Y,X)$, we have to use the equivalence result on the path of this bivariate GRF (or more generally, multivariate GRF, if $X$ has more than one covariate). 
There are relatively fewer available results for the equivalence of multivariate Gaussian random fields. \cite{ruiz2015equivalence} provides some conditions that are generally challenging to verify for common multivariate covariance families. \cite{bachoc2022asymptotically} provides sufficient conditions on the equivalence of multivariate Gaussian random fields that are easier to verify but assume the univariate components of the multivariate fields all have the same smoothness or rate of decay of the spectral density at high frequencies. As we saw in Section \ref{sec:suff}, consistent estimability of $\beta$ in (\ref{eq:dgpy})
is fundamentally tied to the relative differences in smoothness between the exposure $X$ and the confounder $W$. 
Hence, these existing results do not apply to this setting. 

Our first theoretical contribution in this section is to provide a novel and simple sufficient spectral condition for the equivalence of two multivariate GRFs
that accommodates differing smoothnesses or spectral tail behaviors of the component fields. This will be central to its application in establishing necessary conditions for consistent estimability of $\beta$.

We first present some notation and background.
Let $C=(C_{ij})$ denote a $p \times p$ matrix-valued stationary covariance function on a bounded domain $\calD \in \mathbb R^d$. In this section, we take the domain to be $\calD=[-T,T]^d$ for some $T > 0$, i.e., symmetric about 0. This is convenient here for the Fourier analysis used for the theory and, because of stationarity, equivalence on $[-T,T]^d$ implies equivalence on $[0,2T]^d$.

We consider covariance function matrices $C$ for which there exists a spectral density matrix $F=(F_{ij})$,
i.e., 
\begin{equation}\label{eq:bochner}
	C_{ij}(h) = \int_{\mathbb R^d} \exp(\iota h^T \omega) F_{ij}(\omega) d\omega,
\end{equation}
and all the spectral densities
are integrable. Here $\iota$ denotes the complex square root of $-1$. 

For a function $g: \mathbb R^d \to \mathbb R$, let $\calF(g)$ denote its Fourier transform i.e., 
\begin{equation}\label{eq:fourier}
	\calF(g)(\omega) = \frac 1{(2\pi)^d} \int_{\mathbb R^d} \exp(-\iota h^T \omega)g(h)dh.
\end{equation}

Let $\calW_\calD$ denote the set of functions from $\mathbb R^d \to \mathbb C^p$ such that if $u=(u_1,\ldots,u_p)^T \in \calW_\calD$ then each $u_i$ can be expressed as a Fourier transform $\calF(g_i)$ of a square integrable function $g_i$ on $\mathbb R^d$ ($g_i \in \calL_2(\mathbb R^d)$) that vanishes outside $\calD$. We denote this as $u=\calF(g)$ where $g=(g_1,\ldots,g_p)^T$. The space $\calW_\calD$ is a multivariate Paley-Wiener space \citep{iosevich2015exponential}. By the Plancherel theorem \citep[see][]{skorokhod1973absolute}, $\intrd |u_i(\omega)|^2 d\omega < \infty$ for any such $u_i$. For a complex matrix, we use the $^*$ notation to denote its Hermitian. For a complex vector, $^*$ indicates the transposed complex conjugate.
For a matrix valued function $F=(F_{ij})$ such that $F(\omega)$ is Hermitian and positive definite for all $\omega \in \mathbb R^d$ and $\sup_\omega \|F(\omega)\| \leq M$ for some $M>0$, define $\calW_\calD(F)$ to be the closure of $\calW_\calD$ in the metric
\begin{equation}\label{eq:metric}
	\|u\|_F^2
	= \int_{\mathbb R^d} u(\omega)^*F(\omega)u(\omega)d\omega.
\end{equation}
Then $\calW_\calD(F)$ is a complex, separable Hilbert space.

Let $\calW_\calD^2$ be the space of $p\times p$ matrix-valued functions $B(\mu,\omega)=(b_{ij}(\mu,\omega))$ such that each $b_{ij}(\mu,\omega)$ can be represented as
\begin{equation}\label{eq:bij}
	b_{ij}(\mu,\omega) = \frac 1{(2\pi)^{2d}} \int_\mathbb R^d \int_\mathbb R^d \exp(-\iota a^T\mu + \iota h^T \omega)\rho_{ij}(a,h) da\, dh
\end{equation}
for some $\rho_{ij}$ in $\calL_2(\mathbb R^d \times \mathbb R^d)$ that is zero outside  $\calD \times \calD$. Then $\calW_\calD^2$ is also a Paley-Wiener space, now for matrix-valued functions. For a $p \times p$ spectral density matrix $F$ as above, define $\calW^2_\calD(F)$ to be the closure of $\calW_\calD^2$ based on the inner product
\begin{equation}\label{eq:w2}
	\langle B_1,B_2 \rangle_{2,F} = \int_\mathbb R^d \int_\mathbb R^d \mbox{trace} \left[ B_1(\mu,\omega)F(\omega)B_2(\mu,\omega)^*  F(\mu)\right] d\mu\, d\omega.
\end{equation}
Note that for any $u,v \in \calW_\calD(F)$, $B(\mu,\omega)=u(\mu)v^*(\omega) \in \calW^2_\calD(F)$ with $\|B\|_{2,F}^2=\|u\|_F^2 \|v\|^2_F$. 

\subsection{Equivalence of multivariate Gaussian random fields}\label{sec:equiv}

We now state our main result on the equivalence of measures on the paths of multivariate Gaussian random fields with component univariate fields of possibly unequal smoothnesses. 

\begin{theorem}\label{th:equiv} Let $\calP_0$ and $\calP_1$ denote two measures on the paths of a $p$-dimensional stationary zero-mean Gaussian random field on $\calD$. Let $C^{(i)}$ and $F^{(i)}$ denote their respective covariance functions and spectral densities under $\calP_i$. 
	Suppose there exists positive constants $c_1$, $c_2$, and $p$ real-valued positive 
	functions $\phi_1, \ldots, \phi_p$ with $(\phi_1, \ldots, \phi_p)^T \in \calW_\calD$ such that $\sup_{j,\omega} \phi_j(\omega) \leq M$ for some $M > 0$ and with $\Phi(\omega)=\diag\left(\phi^2_1(\omega),\ldots,\phi^2_p(\omega)\right)$ we have
	\begin{equation}\label{eq:cond1}
		c_1 \Phi(\omega) \leq F^{(i)}(\omega) \leq c_2 \Phi(\omega)\, \forall \omega \in \mathbb R^d,  i=0,1.
	\end{equation}
	Then $\calP_0 \equiv \calP_1$ if
	\begin{equation}\label{eq:equivgeneral}
		\int_{\mathbb R^d} \Big\| \Phi(\omega)^{-1/2}\left(F^{(1)}(\omega)-F^{(0)}(\omega)\right)\Phi(\omega)^{-1/2}\Big\|^2 d\omega < \infty.
	\end{equation}
\end{theorem}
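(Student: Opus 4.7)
The plan is to follow the classical Ibragimov--Rozanov / Feldman--H\'ajek path for equivalence of mean-zero stationary Gaussian measures, adapted to the multivariate setting via the Paley--Wiener structures $\calW_\calD(F)$ and $\calW_\calD^2(F)$ already set up in the paper. First, I would use the pinching condition (\ref{eq:cond1}) to establish that the three Hilbert spaces $\calW_\calD(F^{(0)})$, $\calW_\calD(F^{(1)})$, and $\calW_\calD(\Phi)$ coincide as sets with equivalent norms: the matrix inequalities $c_1\Phi \leq F^{(i)} \leq c_2 \Phi$ translate immediately into quadratic form inequalities $c_1 \|u\|_\Phi^2 \leq \|u\|_{F^{(i)}}^2 \leq c_2 \|u\|_\Phi^2$, so the closures defining these spaces agree and carry isomorphic Hilbert structure. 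Via the spectral representation, this common space is isometric to the first Gaussian chaos of the restricted process on $\calD$ under either $\calP_0$ or $\calP_1$.

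Second, I would reduce equivalence $\calP_0 \equiv \calP_1$ to a Hilbert--Schmidt condition. By the multivariate Feldman--H\'ajek criterion, once the Cameron--Martin spaces agree (which Step~1 provides), it suffices to show that the bounded self-adjoint operator $T - I$ on $\calW_\calD(\Phi)$ is Hilbert--Schmidt, where $T$ is defined by $\langle T u, v\rangle_\Phi = \langle u, v\rangle_{F^{(1)}-F^{(0)}} + \langle u, v\rangle_\Phi$ (so that $\langle (T-I) u, v\rangle_\Phi = \int u^*(F^{(1)}-F^{(0)}) v\, d\omega$). Choosing any orthonormal basis $\{e_n\}$ of $\calW_\calD(\Phi)$,
\#
\|T - I\|_{HS}^2 = \sum_{n,m} \left| \int_{\mathbb R^d} e_n(\omega)^* \bigl(F^{(1)}(\omega) - F^{(0)}(\omega)\bigr) e_m(\omega)\, d\omega \right|^2.
\#

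Third, the core estimate bounds this Hilbert--Schmidt sum by the single integral (\ref{eq:equivgeneral}). The natural candidate kernel is $B(\mu,\omega) := \Phi(\mu)^{-1/2}(F^{(1)}(\mu)-F^{(0)}(\mu))\Phi(\omega)^{-1/2}\, \mathbf 1\{\mu=\omega\}$, but since this lives on the diagonal, the actual strategy is to insert $\Phi^{-1/2}\Phi^{1/2}$ factors, write $e_n = \Phi^{1/2} \tilde e_n$ where $\{\tilde e_n\}$ is ON in $L^2(d\omega)$ restricted to the image of $\calW_\calD(\Phi)$, and use Parseval together with (\ref{eq:polya2}) from the $\calW_\calD^2$ setup. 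Concretely, after a Cauchy--Schwarz step in the trace inner product one obtains
\$
\|T - I\|_{HS}^2 \;\le\; C \int_{\mathbb R^d} \bigl\| \Phi(\omega)^{-1/2}\bigl(F^{(1)}(\omega)-F^{(0)}(\omega)\bigr) \Phi(\omega)^{-1/2} \bigr\|^2 d\omega,
\$
where the constant $C$ absorbs the equivalence ratios from (\ref{eq:cond1}) and a uniform bound on the pointwise sum $\sum_n e_n^* \Phi\, e_n$ coming from the reproducing-kernel structure of the Paley--Wiener space and the uniform bound $\sup \phi_j \le M$.

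The main obstacle is precisely this last step: the hypothesis (\ref{eq:equivgeneral}) is a \emph{single} integral over frequency, while $\|T-I\|_{HS}^2$ is intrinsically a \emph{double} sum/integral with trace structure. Converting between them is what forces the Paley--Wiener assumption $(\phi_1,\ldots,\phi_p)^\top \in \calW_\calD$, the uniform bound $\sup_{j,\omega} \phi_j(\omega) \le M$, and the $\calW_\calD^2$ square-integrability (\ref{eq:polya2}): these together give control of the reproducing kernel on the diagonal so that the rank-one expansions $u(\mu)v^*(\omega)$ arising from the ON basis assemble to the single-integral bound. Handling the non-commutativity of $F^{(i)}$, $\Phi$, and the matrix trace in this reduction---especially when the $\phi_j$ have unequal decay rates, which is exactly the novelty over \cite{bachoc2022asymptotically}---is the most delicate part of the argument.
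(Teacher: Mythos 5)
Your core estimate---writing $e_n=\Phi^{1/2}\tilde e_n$ with $\{\tilde e_n\}$ orthonormal in $L^2$, and using Bessel's inequality / the reproducing-kernel bound on $\sum_n e_n(\omega)^*\Phi(\omega)e_n(\omega)$ to convert the Hilbert--Schmidt sum into the single integral (\ref{eq:equivgeneral})---is exactly the computation the paper performs in the display following the definition of $H=\Phi^{-1/2}(\Fo-\Fz)\Phi^{-1/2}$, and your identification of where the hypotheses $(\phi_1,\ldots,\phi_p)^\top\in\calW_\calD$ and $\sup_{j,\omega}\phi_j\le M$ enter is correct. Organizing it as the full double sum $\sum_{n,m}$ rather than the paper's diagonal sum over all bases is a cosmetic difference; both yield the Hilbert--Schmidt property of the relevant operator.

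The genuine gap is in your Step 2. You reduce the theorem to ``$T-I$ is Hilbert--Schmidt'' by citing ``the multivariate Feldman--H\'ajek criterion,'' but no such criterion is available off the shelf in the form you need: what must be shown is that the Hilbert--Schmidt property of a \emph{spectrally defined} operator on the Paley--Wiener space implies equivalence of the \emph{restrictions} of the two stationary fields to the bounded domain $\calD$. That implication is precisely the content of the paper's Theorem \ref{th:techequiv} (itself a nontrivial generalization of Theorem 1 of \cite{bachoc2022asymptotically} to unequal component smoothnesses), and invoking it requires work your outline omits: one must take the spectral decomposition $V-I=\sum_k\lambda_k v_k\langle\cdot,v_k\rangle$, assemble the explicit Hermitian kernel $B(\mu,\omega)=\sum_k\lambda_k v_k(\mu)v_k(\omega)^*$, verify that $B\in\calW^2_\calD(\Fz)$ with $\|B\|^2_{2,\Fz}=\sum_k\lambda_k^2$, and check that it represents the covariance difference $C^{(1)}-C^{(0)}$ in the form (\ref{eq:equivtech}). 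Moreover, the paper's argument first reduces to the dominated case $\Fo\ge\Fz$ by splitting off an independent common component with spectral density $\Fz-c_1\Phi$ (this is where the lower bound in (\ref{eq:cond1}) is used), and then removes that assumption by comparing both measures to an auxiliary density $F^{(3)}=\widehat\Fo+\widetilde{F^{(2)}}$ dominating both. Your proposal contains neither the kernel construction nor this two-sided domination argument, and without them the passage from ``HS operator'' to ``$\calP_0\equiv\calP_1$'' is unsubstantiated; the delicate part of the proof is this reduction, not (as you suggest) the trace estimate itself.
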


Condition (\ref{eq:cond1}) states that the spectral density matrices corresponding to the two measures are uniformly bounded from below and above by a multiplier of a diagonal spectral density matrix $\Phi=\mbox{diag}(\phi_j^2)$, where each component $\phi_i$ is a Fourier transform of a square-integrable positive compactly supported function. Then each  $\phi_j$ is an entire function (holomorphic on the entire complex plane). The condition implies that the spectral densities $\Fz$ and $\Fo$, bounded by multipliers of $\Phi$ on both sides, are regular. A similar assumption has been used to derive equivalence results in the univariate setup in \cite{skorokhod1973absolute}. For the multivariate setup of \cite{bachoc2022asymptotically}, a more stringent condition was used, where $\Phi(\omega)$ was assumed to be of the form $\gamma^2(\omega) I$ for some $\gamma(\omega)$ that is a Fourier transform of an integrable compactly supported function. This restricted the scope of the sufficiency result in \cite{bachoc2022asymptotically}, ruling out even simple cases where, say, for example, $\Fz$ (or $\Fo$) is a diagonal matrix, with the two spectral densities having different tail decays. Our condition (\ref{eq:cond1}) is more general, allowing component spectral densities to have different tail decays.
From (\ref{eq:equivgeneral}), using $\Phi^{-1} \asymp F^{(0)-1}$
and the equivalence of $\ell_2$ and Frobenius norms for fixed-dimensional matrices, we can obtain the following sufficient condition 
\begin{equation}\label{eq:equivdirect}
	\intrd \mbox{trace}\left[ \left(F^{(1)}(\omega)F^{(0)}(\omega)^{-1} - I_{p \times p} \right)^2 \right] d\omega < \infty.
\end{equation}
Condition (\ref{eq:equivdirect}) is a simpler sufficient condition for the equivalence of two measures on the paths of a multivariate GRF with components of varying smoothnesses. The condition can be directly evaluated using the two spectral density matrices $\Fz$ and $\Fo$.

\subsection{Spectral necessary conditions for consistent estimability of the slope}\label{sec:converse}
We now apply Theorem \ref{th:equiv} to obtain a simple sufficient spectral condition for equivalence of measures on the paths of $(X, Y = X\beta + W)$, where $(X, W)$ is a bivariate GRF.
Violation of this condition will be necessary for consistent estimability of $\beta$. The condition, stated in the following result, is expressed in terms of the spectral densities of $X$ and $W$ and shows that when $Y=X\beta + W$ and $X$ is much smoother than $W$,
$\beta$ is not consistently estimable even if there is no confounding ($W$ and $X$ are independent). 

\begin{theorem}\label{th:nonidgen}
	Let $Y(s) = X(s) \beta + W(s)$ for $s \in \calD$, a bounded subset of $\mathbb R^d$ that contains an open $d$-dimensional ball. Let $X$ and $W$ 
	be independent stationary GRFs with spectral densities $f_X(\omega)$ and $f_W(\omega)$ which are positive, continuous, bounded away from $0$ and $\infty$
	as $\omega \to 0$, and satisfies
	\begin{equation}\label{eq:bound}
		\begin{aligned}
			c_1 \phi^2_X(\omega) \leq f_X(\omega) \leq c_2 \phi^2_X(\omega),\;
			c_1 \phi^2_W(\omega)
			\leq
			f_W(\omega)
			\leq c_2 \phi^2_W(\omega).
		\end{aligned} 
	\end{equation}
	for all $\omega$,  some universal positive constants $c_1$ and $c_2$, and real-valued positive functions $\phi_X(\omega)$ and $\phi_W(\omega)$
	which are
	Fourier transforms of functions that are in $\calL_2(\calD^*)$ for some bounded subset $\calD^*$ containing $\calD$, and are zero outside $\calD^*$.
	If $\sup_{\omega \in \mathbb R^d} f_X(\omega)/f_W(\omega) < \infty$ and
	\begin{equation}\label{eq:nec}
		\intrd \frac{f_X(\omega)}{f_W(\omega)} d\omega < \infty,
	\end{equation}
	then the measures on the paths of $(X,Y)$ are equivalent for different values of $\beta$, i.e., there can be no consistent estimator of $\beta$ even if the entire $(X,Y)$ process were observed on $\calD$.
\end{theorem}

For measures corresponding to two different values of $\beta$ on the paths of the bivariate GRF $(X,Y)$, Theorem \ref{th:nonidgen} offers a direct, simple spectral condition that implies their equivalence. It is thus necessary for the integral in (\ref{eq:nec}) to diverge for $\beta$ to be consistently estimable.
To our knowledge, this spectral condition (\ref{eq:nec}), guaranteeing the lack of estimability of the slope between two Gaussian random processes, is new. The inconsistency results for $\beta$ in  \cite{wang2020prediction} and \cite{bolin2025spatial} assume $X$ to be a fixed function and not a stochastic process, and focus only on the GLS estimator. We emphasize that our result not only implies a lack of consistency of any specific estimator, but it is a stronger impossibility result saying that there cannot exist any consistent estimator of $\beta$ when (\ref{eq:nec}) holds, even if the entire $(X,Y)$ field is observed.
The result on equivalent measures for GRFs for two different values of $\beta$ in Proposition 7.3.4 of \cite{yu2022parametric} is based on sample path properties of the $X$ process.
These can be challenging to verify
for covariance families like the generalized Cauchy or powered exponential.
Our result does not make any parametric assumptions on the covariance functions of Gaussian random fields and provides a simple necessary spectral condition for consistent estimability based only on the relative tail spectral decays of $X$ and $W$. 
Indeed, condition (\ref{eq:nec}) should be easy to verify for any pair of spectral densities as long as their tail behavior is known, as we show
in Section \ref{sec:examples}. 

We remark that our result does not cover analytic processes like a GRF with a squared exponential covariance, because (\ref{eq:bound}) is not satisfied for such processes. Such analytic stochastic processes are of less relevance in geosciences, as it is unlikely that a physical process can be perfectly predicted at a location just by knowing its values in some closed neighborhood not containing that location.

\section{Theory for common covariance families}\label{sec:examples}

The theoretical results in the previous two sections do not assume any specific covariance families. The sufficient conditions in Theorems \ref{th:suff} and \ref{th:suff.rd} are based on the behavior of the bivariate covariance function matrix of $(X,Y=X\beta +W)$ near zero distances, specifically on the exponents of the principal irregular terms. On the other hand, the necessary condition implied by (\ref{eq:nec}) for consistent estimability of $\beta$
is based on the ratio of spectral tail decays of $W$ and $X$.
Principal irregular terms and behavior of covariances near zero distances are closely related to decay rates of spectral densities at high frequencies via Abelian and Tauberian theorems \citep[see, e.g.,][for general results]{bingham1972}. In this Section, we show that the sufficient conditions for estimability
from Theorems \ref{th:suff} and \ref{th:suff.rd} and the necessary condition implied by (\ref{eq:nec}) coincide (except at a boundary point) for many 
common covariance families, thereby yielding sharp conditions for consistent estimability of $\beta$ under spatial confounding. 

\subsection{Matérn covariance}\label{sec:matern}

We first consider the case where the exposure and the confounder $(X,W)$ are jointly distributed as a GRF with the bivariate Matérn covariance function matrix \citep{gneiting2010matern,apanasovich2010cross}.
The Matérn covariance  between locations $ s $ and $ s' $ is given by
\[
C(h) = \sigma^2 \cdot \frac{2^{1 - \nu}}{\Gamma(\nu)} \left( \frac{\sqrt{2\nu} h}{\rho} \right)^\nu K_\nu\left( \frac{\sqrt{2\nu} h}{\rho} \right),
\]
where $ h = \|s - s'\| $ is the Euclidean distance, $ \sigma^2 $ is the marginal variance, $ \rho $ is the range parameter, $ \nu $ controls smoothness, $ \Gamma(\cdot) $ is the gamma function, and $ K_\nu(\cdot) $ is the modified Bessel function of the second kind. In a bivariate Matérn GRF, both the marginal covariance functions and the cross-covariance function are from the Matérn family. The following result provides a near-complete characterization of
consistent estimability of the slope $\beta$ between $Y$ and $X$ under unmeasured spatial confounding by $W$ for Matérn processes.  

\begin{corollary}[Matérn covariance]\label{cor:matern}
	Let $Y(s) = X(s) \beta + W(s)$ for $s \in \calD$, a bounded subset of $\mathbb R^d$ that contains an open $d$-dimensional ball. Let $(X,W)$ be a non-degenerate bivariate Matérn GRF with
	marginal smoothnesses $\nu_X$ and
	$\nu_W$ respectively, and cross-smoothness $\nu_{XW}$.
	\begin{enumerate}[label=(\alph*)]
		\item If $\nu_X < \nu_W + d/2$, then $\beta$ is consistently estimable based on $(Y,X)$ as long as
		$\nu_{XW} > \nu_X$. If $d=1$, a consistent estimator of $\beta$ is given by $OLS^{(p)}_n(X,Y)$ as in (\ref{eq:olsdiff}), the OLS estimator between $p^{th}$ order differences of $Y$ on those of $X$ on any 1-dimensional regular lattice, where $p$ is any integer exceeding $\nu_X $. If $d >1$,  a consistent estimator of $\beta$ is given by the OLS estimator between $m^{th}$ discrete Laplacians of $Y$ and $X$ on a regular $d$-dimensional lattice, where $m$ is any integer exceeding $\nu_X/2$.
		
		\item If $\nu_X > \nu_W + d/2$, 
		then $\beta$ is not consistently estimable by any estimator based on $\{(Y(s),X(s)) : s \in \calD\}$.
	\end{enumerate}
\end{corollary}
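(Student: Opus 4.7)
This follows directly from Theorems \ref{th:suff} and \ref{th:suff.rd} once I verify that a bivariate Matérn covariance matrix satisfies Assumption \ref{as:rd} (or Assumption \ref{eq:K.assump} in the case $d=1$). The classical series expansion of $K_\nu(t)$ near the origin yields a decomposition of any scalar Matérn covariance as an analytic even part plus a principal irregular term of the form $c\,t^{2\nu}$ for non-integer $\nu$ (or $c\,t^{2\nu}\log t$ for integer $\nu$), with derivatives exhibiting the falling-factorial asymptotics required by Assumption \ref{eq:K.assump}. Applied entrywise to the bivariate Matérn matrix, this identifies $\alpha_{11}=2\nu_X$, $\alpha_{22}=2\nu_W$, $\alpha_{12}=2\nu_{XW}$, so the hypotheses $\nu_{XW}>\nu_X$ and $\nu_X<\nu_W+d/2$ translate exactly into $\alpha_{11}<\alpha_{12}$ and $\alpha_{11}<\alpha_{22}+d$. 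The stated orders $p>\nu_X+1/4$ (for $d=1$) and $m>\nu_X/2+d/8$ (for $d>1$) come from rearranging $2p-1/2>\alpha_{11}$ and $4m-d/2>\alpha_{11}$.

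\textbf{Part (b).} I apply Theorem \ref{th:equiv} to the measures $\calP_\beta$ and $\calP_{\beta'}$ on paths of $(Y,X)$ for two distinct slopes. The spectral density matrix of $(Y,X)$ under $\calP_\beta$ is
\[
F^{(\beta)}(\omega) = \begin{pmatrix} \beta^2 f_X + 2\beta\,\mathrm{Re}(f_{XW}) + f_W & \beta f_X + \overline{f_{XW}} \\ \beta f_X + f_{XW} & f_X \end{pmatrix}(\omega),
\]
with Matérn tails $f_X(\omega)\asymp(1+|\omega|^2)^{-\nu_X-d/2}$, $f_W(\omega)\asymp(1+|\omega|^2)^{-\nu_W-d/2}$, and $|f_{XW}(\omega)|^2\leq f_X(\omega) f_W(\omega)$ from the pointwise Cauchy-Schwarz bound on cross-spectral densities. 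I choose the bounding matrix $\Phi(\omega)=\diag(\phi_1^2(\omega),\phi_2^2(\omega))$ with $\phi_1^2\asymp(1+|\omega|^2)^{-\nu_W-d/2}$ and $\phi_2^2\asymp(1+|\omega|^2)^{-\nu_X-d/2}$, with $\phi_j\in\calW_\calD$. The bounds (\ref{eq:cond1}) on $F^{(\beta)}$ by $\Phi$ follow from these tail estimates (upper) and from the non-degeneracy of the bivariate Matérn, which keeps the determinant $f_X f_W-|f_{XW}|^2$ bounded away from zero relative to $\phi_1^2\phi_2^2$ (lower). Verifying (\ref{eq:equivgeneral}) reduces to an entrywise tail computation: each nonzero entry of $F^{(\beta)}-F^{(\beta')}$ contributes to $\|\Phi^{-1/2}(F^{(\beta)}-F^{(\beta')})\Phi^{-1/2}\|^2$ a term of order $|\omega|^{-2(\nu_X-\nu_W)}$ at infinity (using Cauchy-Schwarz on $f_{XW}$ for the diagonal entry, and the raw $f_X$ tail for the off-diagonal entry), which is integrable over $\mathbb R^d$ precisely when $\nu_X-\nu_W>d/2$.

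\textbf{Main obstacle.} The delicate step in Part (b) is constructing positive $\phi_j\in\calW_\calD$ (Fourier transforms of compactly supported $L^2$ functions) whose squares match the prescribed Matérn-type polynomial decays $(1+|\omega|^2)^{-\nu-d/2}$. One route is to start from a sufficiently high-order tensor-product B-spline supported inside some $\calD^*\supset\calD$ (giving a Fourier transform with prescribed integer polynomial decay), then interpolate to possibly fractional rates via convolution with a compactly supported kernel of appropriate fractional smoothness. Once $\phi_j$ are available, the upper and lower bounds in (\ref{eq:cond1}) and the tail bookkeeping in (\ref{eq:equivgeneral}) are routine. This plan also clarifies why the boundary case $\nu_X=\nu_W+d/2$ is excluded from both parts: the tail integrand degenerates to $|\omega|^{-d}$ there, which is not integrable at infinity in $\mathbb R^d$, so neither Theorem \ref{th:suff.rd} nor Theorem \ref{th:equiv} yields a conclusion in that knife-edge case.
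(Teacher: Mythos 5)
Part (a) of your proposal is correct and essentially identical to the paper's: identify $\alpha_{k\ell}=2\nu_{k\ell}$ from the Bessel expansion and invoke Theorems \ref{th:suff} / \ref{th:suff.rd}; your orders $p>\nu_X+1/4$ and $m>\nu_X/2+d/8$ match. Part (b) is where you genuinely diverge. The paper does \emph{not} work with the confounded spectral matrix at all: it observes that non-estimability only requires exhibiting \emph{some} two admissible parameter configurations with different $\beta$ and equivalent laws on $(X,Y)$, and it simply picks both configurations with intra-site correlation $\rho_{XW}=0$. This reduces everything to Theorem \ref{th:nonidgen} (the independent-$X$-and-$W$ case), where the spectral matrices are the explicit diagonal/sheared pair in (\ref{eq:spectral}) and the verification of (\ref{eq:nec}) is a one-line radial integral; the construction of the bounding functions $\phi_X,\phi_W$ is outsourced to Zastavnyi's theorems via (\ref{eq:maternbound}) rather than built from B-splines. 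Your route instead keeps $f_{XW}\neq 0$ and applies Theorem \ref{th:equiv} directly to the full $2\times2$ matrix $F^{(\beta)}$, using the shear-invariance of the determinant ($\det F^{(\beta)}=f_Xf_W-|f_{XW}|^2$) and Cauchy--Schwarz on the cross-spectrum to get the two-sided bounds (\ref{eq:cond1}) and the tail estimate in (\ref{eq:equivgeneral}). This is more work — the lower bound $F^{(\beta)}\geq c_1\Phi$ with off-diagonal terms needs the non-degeneracy argument you sketch, and your $\phi_j$ construction is only outlined where the paper cites a ready-made result — but it buys a strictly stronger conclusion: the two values of $\beta$ are indistinguishable even when the law of $(X,W)$, including the cross-covariance, is held fixed, i.e.\ non-estimability persists inside the genuinely confounded model rather than only in the unconfounded submodel. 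Both routes land on the same threshold $\nu_X>\nu_W+d/2$, and your observation about the knife-edge case $\nu_X=\nu_W+d/2$ matches the paper's. The one step I would insist you flesh out before calling the argument complete is the existence of positive $\phi_j\in\calW_\calD$ with two-sided bounds $c\phi_j^2\leq(1+\|\omega\|)^{-2\nu-d}\leq c'\phi_j^2$; your convolution/B-spline sketch is plausible but the two-sided lower bound with strict positivity everywhere is exactly the nontrivial content of the Zastavnyi result the paper leans on.
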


Corollary \ref{cor:matern} proves that the estimability of the slope $\beta$ between $Y=X\beta + W$ and $X$ depends solely on the smoothness parameters $\nu_X$, $\nu_W$, and $\nu_{XW}$ and not on the marginal variances, the spatial ranges, or the intra-site correlation between $X$ and $W$. This is a new and important finding as the spatial range parameter in Matérn covariances has been previously empirically shown to be driving spatial confounding bias \citep{paciorek}. Our result proves that while the ranges may dictate finite sample bias and, possibly, efficiency of different estimators, it plays no role in asymptotic consistency, where the only relevant notion of `scale' are the smoothnesses. As long as the cross-correlation function is smoother than the marginal correlation function of $X$, $\beta$ is consistently estimable when the difference $\nu_X - \nu_W$ is less than $d/2$. When consistent estimation is possible, the minimum order of differencing $p$ (or the order of discrete Laplacian $m=p/2$ for $d>1$) needed to obtain a consistent estimator is solely dictated by the smoothness of the observed covariate $X$.
On the other hand, when $\nu_X - \nu_W$ is greater than $d/2$, then $\beta$ cannot be consistently estimable using any method or estimator based on only observing $Y(s)$ and $X(s)$ for all $s \in \calD$. 
Figure \ref{fig:regions} demonstrates the region of consistent estimability as a function of $\nu_X$ and $\nu_W$ along with the minimal order of differencing or Laplacian needed to get a consistent estimator.  

\begin{figure}[h!]
	\centering
	\includegraphics[width=0.9\linewidth]{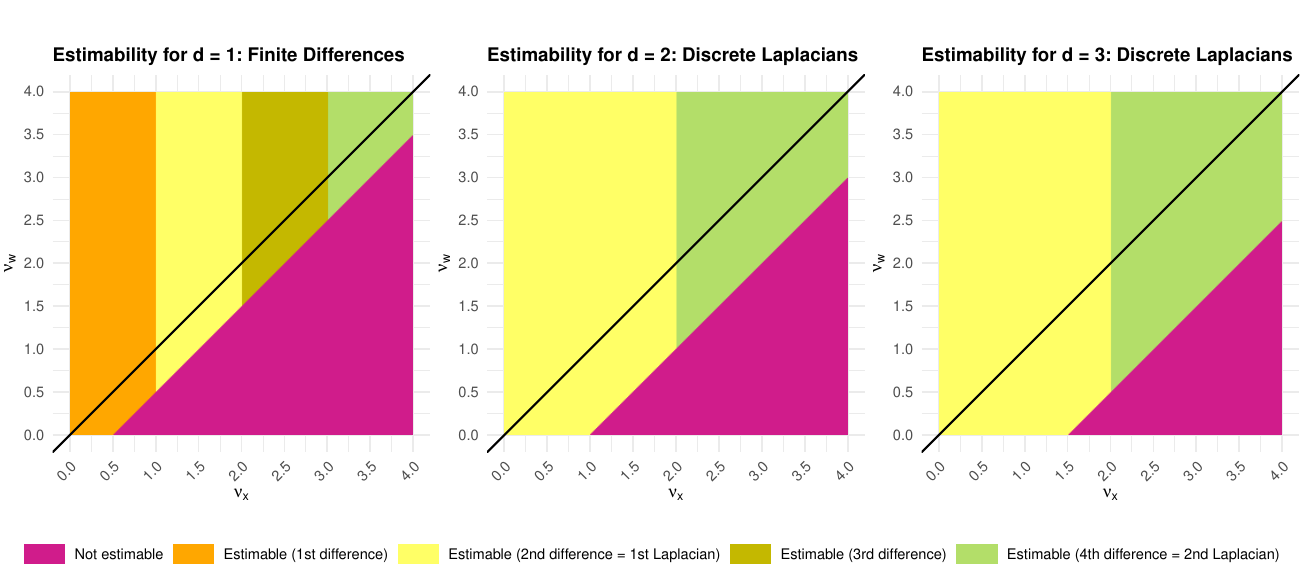}
	\caption{Region of consistent estimability of the slope $\beta$ in regression of $Y=X\beta + W$ on $X$ under spatial confounding in $\mathbb R^d$ for Matérn processes, as concluded from Corollary \ref{cor:matern}. Here $(X,W)$ is jointly a bivariate Matérn process with smoothnesses $\nu_X$ and $\nu_W$ and cross-smoothness $\nu_{XW} > \nu_X$. The region where $\beta$ is consistently estimable is color-coded by the minimum order of differencing/Laplacian needed for a consistent estimator.}
	\label{fig:regions}
\end{figure} 
We provide some intuition to this apparent `boon of dimensionality' proved in Corollary \ref{cor:matern} and seen in Figure \ref{fig:regions}, where the region of consistent estimability of $\beta$ expands with increasing dimension $d$ of the spatial domain, allowing $W$ to be up to $\frac d2$ degrees smoother than $X$. Our threshold of $\nu_X-\nu_W = d/2$ is consistent with the existing results on estimability of the slope in spatial regression and relates to the gap between the smoothness of sample paths of Matérn GRF and those of functions in the RKHS of the covariance kernel. Formally, if $X$ is a Mat\'ern GRF with covariance function
$C$ and smoothness parameter $\nu_X$, then typical sample paths have Sobolev
smoothness just below $\nu_X$, whereas the RKHS $\mathcal H_C$ coincides
(up to norm equivalence) with the Sobolev space of order $\nu_X+d/2$. Hence
$\mathcal H_C$ is $d/2$ smoother than the sample paths.  
See \cite{kanagawa2018gaussian} for a comprehensive review on this topic. \cite{wang2020prediction,bolin2025spatial}  
showed that the consistency of the GLS estimator using the true covariance of $W$ ($C_W$) depends on whether $X$ lies in the RKHS of $C_W$. \cite{yu2022parametric} also obtains the same threshold of $d/2$ where $X$ is a GRF independent of $W$. 

Also, it is well known that even for univariate processes, consistent estimability of GRF parameters depends on the dimensionality of the domain. For example, for univariate Matérn GRF, variance and range parameters are not infill consistently estimable for $d \leq 3$ \citep{stein1999interpolation,zhang2004inconsistent}, whereas they are consistently estimable when $d \geq 5$ \citep{anderes2010consistent} (the case $d=4$ remains open). A rough intuition is that, as $d$ increases, there are more directions to look for implied differences in the high-frequency spectral properties of the process. 
Thus, based on observations restricted to a bounded domain, the higher the dimension, the smoother $X$ can be relative to $W$ while still allowing consistent estimation of $\beta$. A similar discussion is in \cite{anderes2010consistent} whose estimators for spatial variance and range parameters in univariate Matérn GRF are also based on local differences. Heuristically, taking increments along enough distinct directions makes the summands involved in the estimators sufficiently
decorrelated, so the estimator converges as the sample size grows. This allows consistent estimability for a larger range of settings for higher $d$, as the discrete Laplacians use increments of the fields along $d$ orthogonal directions. 
A mathematical insight is provided in \cite{li2023inference} in the context of covariance parameter estimation in univariate Matérn GRF on Riemannian manifolds, who show that the necessary and sufficient condition for orthogonality is determined by Weyl's law on the growth of the eigenvalues of the covariance operators, which is dependent on $d$.

For the boundary case where $\nu_X=\nu_W + d/2$,  
the following Corollary proves that for $d=1$, when $\alpha_{11}$ is an even integer, the differencing based estimator of $\beta$ is still consistent.

\begin{corollary}\label{cor:boundary} If $(X,W)$ is a bivariate stationary GRF in $\mathbb R$ with covariance $K=(K_{k\ell})$, where each $K_{k\ell}$ satisfies a modified version of Assumption \ref{eq:K.assump}, where $\alpha$ is an even integer and $B(t)=t^\alpha \log t + o(t^\alpha \log t)$ with the derivatives $B^{(k)}(t)=t^{\alpha-k} \log t + o(t^{\alpha-k} \log t)$.
	Assume that Equation (\ref{eq:K12limit}) is also satisfied. Then the limiting result (\ref{eq:th1.main}) holds when $\alpha_{11}=2p$ for an integer $p$ and $\alpha_{22}=\alpha_{11}-1$. 
\end{corollary}

In the Matérn case, the corollary gives the exact sharp results that when the smoothness $\nu_X$ of $X$ is an integer and $\nu_{XW} > \nu_X$, $\beta$ can be consistently estimated if and only if $\nu_X \leq \nu_W +1/2$. However, the variance of the numerator of the differencing-based estimator shrinks to zero at a very slow (logarithmic) rate. 
Similar consistency results should hold for the Laplacian-based estimator when $d>1$ and $\nu_X$ is still an integer. For the more general case, where $\nu_X$ is not an integer, we conjecture  that
$\beta$ will still be consistently estimable when $\nu_X=\nu_W + d/2$ (as long as $\nu_{XW} > \nu_X$).
However, the difference or Laplacian-based estimator may not be consistent, and some other estimator may be needed.

\subsection{Power exponential family}\label{sec:powexp} We consider the case where $X$ and $W$ are GRFs on $\mathbb R$, with covariances from the power exponential family given by 
$K(t) = \sigma^2 \exp(-\phi |t|^\delta)$ for $0 < \delta < 2$. We exclude the case $\delta=2$ from our study, 
although it is a valid covariance function,
as it corresponds to an analytic process (see discussion at the end of Section \ref{sec:converse}). The next result characterizes the consistent estimability of $\beta$ under spatial confounding for the power exponential family.

\begin{corollary}[Power exponential covariance]\label{cor:powexp}
	Let $Y(s) = X(s) \beta + W(s)$ for $s \in \calD$, a bounded subset of $\mathbb{R}$ that contains an open interval. Let $(X,W)$ 
	be a non-degenerate stationary bivariate GRF such that $X$ and $W$  marginally have power exponential covariances with exponent parameters  $\delta_X$ and $\delta_W$ respectively, and the cross-covariance satisfies Assumption \ref{eq:K.assump} for some $\alpha_{12}>\delta_X$.  
	Then, 
	\begin{enumerate}[label=(\alph*)]
		\item If $\delta_X < \delta_W + 1$, then $\beta$ is consistently estimable. A consistent estimator of $\beta$ is given by $OLS^{(1)}_n(X,Y)$ as in (\ref{eq:olsdiff}), i.e., the OLS estimator between the first
		order differences of $Y$ on those of $X$ on a $1$-dimensional regular lattice.
		\item If $\delta_X > \delta_W + 1$, 
		then $\beta$ is not consistently estimable.
	\end{enumerate}
\end{corollary}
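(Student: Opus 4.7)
The plan is to deduce part (a) directly from Theorem \ref{th:suff} after verifying Assumption \ref{eq:K.assump} for the power exponential, and to deduce part (b) from Theorem \ref{th:equiv} applied to the joint spectral density of $(X,Y)$.

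For part (a), I would first verify Assumption \ref{eq:K.assump} for $K(t) = \sigma^2 \exp(-\phi|t|^\delta)$ with $0 < \delta < 2$. The Taylor expansion $K(t) = \sigma^2 - \sigma^2 \phi |t|^\delta + \tfrac{\sigma^2 \phi^2}{2} |t|^{2\delta} - \cdots$ suggests the decomposition $A(t) \equiv \sigma^2$ (trivially analytic) and $B(t) = K(t) - \sigma^2$. Termwise differentiation of the series for $B$ on $(0, L]$ gives $B^{(k)}(t) = -\sigma^2\phi\,(\delta)_k\, t^{\delta - k} + O(t^{2\delta - k})$ as $t \downarrow 0$, so $B \in \calB(-\sigma^2\phi, \delta)$ and Assumption \ref{eq:K.assump} is satisfied with principal-irregular-term exponent $\alpha = \delta$ (and $\delta$ is not an even integer since $0 < \delta < 2$). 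Applying this to the marginal covariances of $X$ and $W$, and using the assumed $\alpha_{12} > \delta_X$ for the cross-covariance, Theorem \ref{th:suff} gives consistent estimability of $\beta$ provided $\alpha_{11} = \delta_X < \alpha_{12}$ and $\delta_X < \delta_W + 1$, which are exactly the hypotheses of (a). The minimum order $p$ is the smallest integer with $\delta_X < 2p - 1/2$, giving $p = 1$ for $0 < \delta_X < 1.5$ and $p = 2$ for $1.5 \leq \delta_X < 2$; since Theorem \ref{th:suff} allows any $p$ satisfying $\delta_X < 2p - 1/2$, the value $p = 2$ also works in the former range, justifying $p \in \{1, 2\}$.

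For part (b), I would write the joint spectral density of $(X,Y)$ as
\[
F^{(\beta)}(\omega) = \begin{pmatrix} f_X(\omega) & \beta f_X(\omega) + f_{XW}(\omega) \\ \beta f_X(\omega) + f_{XW}(\omega) & \beta^2 f_X(\omega) + 2\beta f_{XW}(\omega) + f_W(\omega) \end{pmatrix},
\]
where $f_{XW}$ is real because the cross-covariance is even under Assumption \ref{eq:K.assump}, and apply Theorem \ref{th:equiv} to show that the measures of $(X,Y)$ for any $\beta_0 \neq \beta_1$ are equivalent. Abelian/Tauberian theorems applied to the principal irregular terms give the high-frequency tails $f_X(\omega) \asymp |\omega|^{-(1+\delta_X)}$, $f_W(\omega) \asymp |\omega|^{-(1+\delta_W)}$, and $|f_{XW}(\omega)| = O(|\omega|^{-(1+\alpha_{12})})$. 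Under $\delta_X > \delta_W + 1$ we have $f_X \ll f_W$ at infinity, and since $\alpha_{12} > \delta_X > \delta_W + 1/2$ also $|f_{XW}| \ll f_W$. I would then choose $\Phi(\omega) = \diag(\phi_X^2, \phi_W^2)$ with $\phi_X, \phi_W$ realized as Fourier transforms of compactly supported $L^2$ functions uniformly comparable to $\sqrt{f_X}$ and $\sqrt{f_W}$ respectively; the two-sided bound (\ref{eq:cond1}) then follows from the domination of $F^{(\beta)}_{22}$ by $f_W$ at infinity, the positive-definiteness bound $|f_{XW}|^2 \leq f_X f_W$, and the smallest-eigenvalue estimate $\det F^{(\beta)} / \tr F^{(\beta)} \asymp f_X$. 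Finally, $F^{(\beta_1)} - F^{(\beta_0)}$ vanishes at position $(1,1)$, and substituting the tail rates into (\ref{eq:equivgeneral}) reduces the integrability test to tail integrals of order $|\omega|^{\delta_W - \delta_X}$, $|\omega|^{2(\delta_W - \delta_X)}$, and $|\omega|^{2(\delta_W - \alpha_{12})}$, each finite because $\delta_X > \delta_W + 1$ and $\alpha_{12} > \delta_W + 1/2$; Theorem \ref{th:equiv} then delivers equivalence and hence non-estimability.

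The main technical obstacle is realizing $\phi_X$ and $\phi_W$ as bona fide elements of $\calW_\calD$, i.e., Fourier transforms of compactly supported $L^2$ functions, while matching the polynomial tails of $\sqrt{f_X}$ and $\sqrt{f_W}$ uniformly in $\omega$ and preserving the required two-sided bounds on $F^{(\beta)}$. I anticipate reusing essentially the construction that underlies the proof of Theorem \ref{th:nonidgen}, namely taking each $\phi_j$ to be the Fourier transform of a smooth compactly supported kernel matched to the desired tail rate. A secondary, more routine step is the Tauberian conversion of Assumption \ref{eq:K.assump} on $C_{12}$ into the stated tail bound $|f_{XW}(\omega)| = O(|\omega|^{-(1+\alpha_{12})})$, which follows from the classical correspondence between a $|t|^\alpha$ singularity in the covariance and an $|\omega|^{-(1+\alpha)}$ power-law tail in the one-dimensional spectral density.
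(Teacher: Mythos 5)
Your part (a) is essentially the paper's argument: verify Assumption \ref{eq:K.assump} for $\exp(-\phi|t|^\delta)$ with $\alpha=\delta$ via the series expansion and invoke Theorem \ref{th:suff}; that part is fine.

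Part (b) takes a genuinely different route and has two concrete gaps. First, the paper does not try to prove equivalence for the actual confounded configuration; it only needs to exhibit \emph{some} pair of admissible parameter values with different $\beta$ whose $(X,Y)$ measures are equivalent, and it chooses both to have zero cross-covariance so that Theorem \ref{th:nonidgen} (stated for independent $X$ and $W$) applies directly. Your approach of applying Theorem \ref{th:equiv} to the full $2\times 2$ spectral matrix with a nonzero $f_{XW}$ is more ambitious than necessary and runs into a real obstruction in verifying the lower bound of (\ref{eq:cond1}): the smallest eigenvalue of $\Phi^{-1/2}F^{(\beta)}\Phi^{-1/2}$ is controlled by $1-f_{XW}^2/(f_Xf_W)$, and positive definiteness only gives $f_{XW}^2\le f_Xf_W$ pointwise. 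Non-degeneracy of the bivariate field does not supply a uniform gap $f_{XW}^2\le(1-\epsilon)f_Xf_W$ over all frequencies, so $c_1\Phi\le F^{(\beta)}$ can fail; your ``smallest-eigenvalue estimate $\det F^{(\beta)}/\tr F^{(\beta)}\asymp f_X$'' is asserted, not established.

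Second, your ``routine Tauberian conversion'' from the principal irregular term $c|t|^\alpha$ to the two-sided tail $f(\omega)\asymp|\omega|^{-(1+\alpha)}$ is not a theorem: Abelian results go from spectral tails to covariance behavior, and the Tauberian direction requires extra regularity. The paper is explicit that the power-exponential spectral densities have no closed form and obtains the required two-sided bounds (and boundedness near $\omega=0$) from the asymptotics of L\'evy stable densities (Nolan; Garoni--Frankel), not from Assumption \ref{eq:K.assump}. This matters even more for the cross-spectral density: the corollary assumes only that $C_{12}$ satisfies Assumption \ref{eq:K.assump}, which gives no upper bound of the form $|f_{XW}(\omega)|=O(|\omega|^{-(1+\alpha_{12})})$, so the tail estimates your integrability computation relies on for the $(1,2)$ and $(2,2)$ entries are unavailable. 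Both gaps disappear if you follow the paper and reduce to the $\rho_{XW}=0$ configuration, where $F^{(\beta)}$ has the simple form (\ref{eq:spectral}) and only the marginal tails (supplied by the stable-density asymptotics) are needed.
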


Like the Matérn family, the power exponential covariance family also has the near-zero asymptotic expansion as in Assumption \ref{eq:K.assump} with a principal irregular term $O(t^\alpha)$ where $\alpha=\delta$. Hence, Theorem \ref{th:suff} implies that $\delta_X < \delta_W + 1$ is sufficient for consistent estimability of $\beta$. On the other hand, when $\delta_X > \delta_W + 1$, we prove equivalence of measures for different values of $\beta$ by applying Theorem \ref{th:nonidgen}.
The spectral densities of power exponential covariance functions
are generally not available in closed forms except for special cases (e.g., $\delta=1$). Instead, we rely on characterization of the powered exponential covariance as the characteristic function of the Lévy stable distribution \citep{zolotarev1986one}. We then use asymptotic expansions of the probability density function (pdf) of these stable distributions near zero and infinity \citep{garoni2002levy,nolan2020univariate} to establish that the power spectral density varies asymptotically like $O(\omega^{-\delta-1})$ at high frequencies and is well-behaved at low frequencies. This justifies applying Theorem \ref{th:suff} to prove the equivalence of measures on the paths of $(Y,X)$ for two values of $\beta$ when $\delta_X > \delta_W + 1$. Thus, the line $\delta_X = \delta_W + 1$ provides a sharp boundary for the region of consistent estimability.

\subsection{Generalized Cauchy covariance}\label{sec:cauchy}

We next consider the four-parameter generalized Cauchy correlation family in $\mathbb R^d$ given by $C(s-s')=\sigma^2(1+\phi\|s-s'\|^\delta)^{-\kappa}$, for $\delta \in (0,2)$ and $\kappa > 0$ \citep{gneiting2004stochastic}. The following result provides conditions for the estimability of $\beta$ in (\ref{eq:dgpy}) when the marginal covariance functions of both $X$ and $W$ are from this family. As for the power exponential family, the case $\delta=2$ for the generalized Cauchy family also corresponds to
an analytic process and is hence not considered in this study. We have the following result on the estimability of $\beta$ for processes with the generalized Cauchy covariance family.

\begin{corollary}[Generalized Cauchy]\label{cor:cauchy}
	Let $Y(s) = X(s) \beta + W(s)$ for $s \in \calD$, a bounded subset of $\mathbb R^d$ that contains a $d$-dimensional open ball. Let $(X,W)$ 
	be a non-degenerate stationary bivariate GRF such that $X$ and $W$  marginally have generalized Cauchy covariance functions with parameters $(\delta_X,\kappa_X)$ and $(\delta_W,\kappa_W)$ respectively with $\delta_X \kappa_X > d$ and $\delta_W \kappa_W > d$. Also, the cross-correlation satisfies Assumption \ref{eq:K.assump} for some $\alpha_{12}>\delta_X$.  
	Then, 
	\begin{enumerate}[label=(\alph*)]
		\item If $d=1$ and $\delta_X < \delta_W + 1$, then $\beta$ is consistently estimable. A consistent estimator of $\beta$ is given by $OLS^{(1)}_n(X,Y)$ as in (\ref{eq:olsdiff}), i.e., the OLS estimator between the first
		order differences of $Y$ on those of $X$ on a $1$-dimensional regular lattice.
		\item If $d=1$ and $\delta_X > \delta_W + 1$, 
		then $\beta$ is not consistently estimable.
		\item If $2>d>1$, $\beta$ is always consistently estimable. A consistent estimator of $\beta$ is given by LAP$^{(1)}_n(X,Y)$ as in (\ref{eq:lap}), i.e., the OLS estimator between the first
		order Laplacians of $Y$ on those of $X$ on a $d$-dimensional regular lattice.
	\end{enumerate}
\end{corollary}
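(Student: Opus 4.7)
The plan is to reduce the three parts of Corollary \ref{cor:cauchy} to the general theorems already proved: Theorems \ref{th:suff} and \ref{th:suff.rd} for the estimability statements (a) and (c), and Theorem \ref{th:nonidgen} for the non-estimability statement (b). The common first step is to verify that the generalized Cauchy covariance $K(t)=\sigma^2(1+\phi|t|^\delta)^{-\kappa}$, extended evenly to the line, satisfies Assumption \ref{eq:K.assump} with exponent $\alpha=\delta$. I would write $K(t)=\sigma^2+B(t)$ with $B(t)=\sigma^2\big[(1+\phi|t|^\delta)^{-\kappa}-1\big]$ and expand it via the binomial series $(1+x)^{-\kappa}=\sum_{n\geq 0}\binom{-\kappa}{n}x^n$, valid for $\phi|t|^\delta<1$, giving
\[
B(t) = -\kappa\sigma^2\phi|t|^\delta + \sum_{n\geq 2}\binom{-\kappa}{n}\sigma^2\phi^n|t|^{n\delta}.
\]
Term-by-term differentiation of this absolutely convergent series confirms $B^{(k)}(t)=-\kappa\sigma^2\phi\,(\delta)_k\,t^{\delta-k}+o(t^{\delta-k})$ as $t\downarrow 0$ for all required $k$, so Assumption \ref{eq:K.assump} holds with $\alpha=\delta$ and $c=-\kappa\sigma^2\phi$. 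Since $K$ is isotropic, Assumption \ref{as:rd} is also satisfied with remainder $r_{k\ell}\equiv 0$. Thus $\alpha_{11}=\delta_X$, $\alpha_{22}=\delta_W$, and the corollary's hypothesis on the cross-correlation reads $\alpha_{12}>\delta_X=\alpha_{11}$.

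Parts (a) and (c) then follow directly. For (a), the hypothesis $\delta_X<\delta_W+1$ is exactly $\alpha_{11}<\alpha_{22}+1$; combined with $\alpha_{11}<\alpha_{12}$ this triggers Theorem \ref{th:suff}, and the smallest integer $p$ with $2p-1/2>\delta_X$ is $p=1$ for $\delta_X<3/2$ and $p=2$ for $3/2\leq\delta_X<2$. For (c), when $d\geq 2$ we have $\alpha_{11}=\delta_X<2\leq d\leq \alpha_{22}+d$, so the condition $\alpha_{11}<\alpha_{22}+d$ of Theorem \ref{th:suff.rd} is automatic; the Laplacian order $m=1$ satisfies $\alpha_{11}<4m-d/2$ whenever $\delta_X<4-d/2$, which holds for $d\in\{2,3,4\}$ since $\delta_X<2$, and higher-order Laplacians cover larger $d$.

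For (b), the genuine technical step is to establish the high-frequency tail of the generalized Cauchy spectral density, namely $f(\omega)\sim c\,|\omega|^{-d-\delta}$ as $|\omega|\to\infty$. Unlike the Mat\'ern family no closed form is available, but this asymptotic is the standard Abelian--Tauberian consequence of a principal irregular term $c|t|^\delta$ with non-integer $\delta\in(0,2)$. The conditions $\delta\kappa>d$ and positive definiteness make $K$ integrable and $f$ continuous on all of $\mathbb R^d$ and bounded away from $0$ near the origin. I would then pick $\phi_X$ as the Fourier transform of a smooth bump supported in some $\calD^*\supset\calD$, tuned so that $\phi_X^2(\omega)$ matches the envelope $|\omega|^{-(d+\delta_X)}$ at infinity, verifying the sandwich condition of Theorem \ref{th:nonidgen}. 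For $d=1$ and $\delta_X>\delta_W+1$, the key integral behaves like $\int_{|\omega|>1}|\omega|^{\delta_W-\delta_X}\,d\omega<\infty$; combined with the boundedness of $f_W$ away from $0$ on compacta, this yields \eqref{eq:nec} and hence non-estimability.

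The chief obstacle is turning the Tauberian tail estimate into a uniform two-sided inequality of the form \eqref{eq:bound} with an envelope $\phi_X$ that is itself the Fourier transform of a compactly supported $L^2$ function; this couples the low-frequency behaviour of $\phi_X$ to its required polynomial tail and needs careful interpolation between the smoothness at the origin and the prescribed decay at infinity. A secondary subtlety is that Theorem \ref{th:nonidgen} is stated for independent $X,W$ while the corollary permits cross-correlation; the assumption $\alpha_{12}>\delta_X$ forces $f_{XW}$ to decay strictly faster than $f_X$ at infinity, so the full $2\times 2$ spectral matrix of $(X,Y)$ can be sandwiched between multiples of an almost-diagonal $\Phi$ and the underlying Theorem \ref{th:equiv} applies, but this extension must be written out explicitly.
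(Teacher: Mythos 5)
Your reduction of parts (a) and (c) to Theorems \ref{th:suff} and \ref{th:suff.rd} is correct and matches the paper: the binomial expansion of $(1+\phi|t|^\delta)^{-\kappa}$ giving principal irregular term $-\sigma^2\phi\kappa|t|^\delta$ (the paper simply cites Lim and Teo for this expansion), the identification $\alpha_{11}=\delta_X$, $\alpha_{22}=\delta_W$, and the arithmetic for the differencing order $p$ and Laplacian order $m$ are all as in the paper's proof, which defers to the proofs of Corollaries \ref{cor:matern} and \ref{cor:powexp}.

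For part (b) there are two places where your plan does not close. First, the envelope construction: you propose taking $\phi_X$ as ``the Fourier transform of a smooth bump\dots tuned so that $\phi_X^2(\omega)$ matches the envelope $|\omega|^{-(d+\delta_X)}$,'' and you correctly flag this as the chief obstacle --- but a generic compactly supported bump has an oscillating Fourier transform with zeros, so its square cannot be bounded \emph{below} by $c(1+|\omega|)^{-d-\delta_X}$, and ``tuning'' will not fix this. What is needed is the existence of a \emph{positive} function with prescribed polynomial decay that is the Fourier transform of a compactly supported $L^2$ function; the paper obtains this from Zastavnyi's results (the same two-sided bound \eqref{eq:maternbound} used in the Mat\'ern proof, with $2\nu+d$ replaced by $\delta+d$), together with Lim and Teo's results that the generalized Cauchy spectral density is continuous, bounded away from $0$ and $\infty$ near the origin when $\kappa\delta>d$, and $\asymp\|\omega\|^{-\delta-d}$ at infinity. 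Your Tauberian heuristic for the tail is fine as a statement, but the uniform sandwich \eqref{eq:bound} comes from these citations, not from an ad hoc bump. Second, the correlated case: you propose extending Theorem \ref{th:nonidgen} to a nonzero cross-spectrum by sandwiching the full $2\times2$ spectral matrix, and you admit this ``must be written out explicitly.'' This extension is unnecessary. Non-estimability only requires exhibiting \emph{one} pair of parameter configurations with different $\beta$ whose path measures are equivalent, and the paper (in the Mat\'ern proof, to which the Cauchy proof defers) simply takes both configurations with intra-site correlation $\rho_{XW}=0$, so that Theorem \ref{th:nonidgen} applies as stated. Adopting that device removes your second obstacle entirely; without it, your part (b) remains an outline rather than a proof.
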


For $d=1$,
$\beta$ is not identified if $\delta_X > \delta_W +1$. This is expected. For $s-s' \to 0$, $C(s-s') = \sigma^2 - \sigma^2\phi\kappa \|s-s'\|^\delta + o(\|s-s'\|^\delta)$. Hence, $\delta$ is the exponent of the principal irregular term. Additionally, the spectral density $f$ satisfies $f(\omega) = O(\|\omega\|^{-\delta-d})$ as $\|\omega\| \to \infty$ \citep{lim2009gaussian}.
Thus,
$\delta$ determines both the exponent of the principal irregular term and the algebraic rate of decay of the spectral density at high frequencies, implying that the sufficient and necessary conditions for estimability of $\beta$ from Sections \ref{sec:suff} and \ref{sec:nec} are identical and sharp. 

Part (c) proves that $\beta$ can always be identified in $\mathbb R^2$ or higher dimensions as long as the cross-covariance between $X$ and $W$ is smoother than the covariance of $X$. This is because as both $\delta_X,\delta_W \in (0,2)$, $\delta_X$ is always less than $\delta_W + d$ for $d \geq 2$ which guarantees consistent estimability from Theorem \ref{th:suff.rd}. The conditions $\delta_X \kappa_X > d$ and $\delta_W \kappa_W > d$ are used in Corollary \ref{cor:cauchy} to ensure that the spectral density is continuous and convergent at low frequencies \citep{lim2009gaussian}, which is needed for the regularity conditions of Theorem \ref{th:nonidgen}.

\subsection{Linear model of coregionalization}\label{sec:lmc}

Correlated GRFs are often perceived to be formed by linear combinations of independent processes, with the weights determining the extent of correlation. This model is often termed the {\em linear model of coregionalization} \citep{gelfand2003proper,wackernagel2003multivariate}. 
The following result provides conditions for consistent estimability of $\beta$ under spatial confounding when $(X,W)$ are generated as a linear model of coregionalization. 

\begin{corollary}\label{cor:lmc}
	Let $U_1, \ldots, U_r$ denote $r$ independent univariate GRFs each from either the Matérn, power-exponential, or generalized Cauchy family in $\mathbb R^d$ with different exponent parameters $\delta_1, \delta_2, \ldots, \delta_r$ (if $U_r$ has a Matérn covariance, then $\delta_i = 2\nu_i$, $\nu_i$ being the smoothness parameter). Let $X = \sum_i a_{i}U_i$ and $W = \sum_i b_{i}U_i$ where $a_{i}$ and $b_{i}$ are real numbers. Let $ 
	\delta_X=\min \{\delta_i : a_{i} \neq 0\}$, $
	\delta_W=\min \{\delta_i : b_{i} \neq 0\}$, $
	\delta_{XW}=\min \{\delta_i : a_{i}b_{i} \neq 0\}$ and $Y(s) = X(s) \beta + W(s)$ for $s \in \calD$, a bounded subset of $\mathbb R^d$ that contains a $d$-dimensional open ball. Then, $\beta$ is consistently estimable if $\delta_X < \delta_W + d$ and $\delta_{XW} > \delta_X$ with $OLS^{(p)}_n(X,Y) \to \beta$ for any $p > \nu_X$ if $d=1$, or LAP$^{(m)}_n(X,Y) \to \beta$ for $m > \nu_X/2$ if $d>1$. When $\delta_X > \delta_W + d$, and $X$ is independent of $W$, then $\beta$ is not consistently estimable. 
\end{corollary}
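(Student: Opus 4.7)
I plan to deduce sufficiency from Theorem \ref{th:suff.rd} and necessity from Theorem \ref{th:equiv}, in each case by reading off the LMC structure at the right scale—near the origin for covariances, at infinity for spectral densities. \emph{Sufficiency.} For each of the three listed families, $C_{U_r}$ satisfies Assumption \ref{eq:K.assump} with principal irregular exponent $\delta_r$ (using the $t^{2m}\log t$ variant for integer-smoothness Matérn). Since $\mathrm{Cov}(X,X)=\sum_r a_r^2 C_{U_r}$, $\mathrm{Cov}(W,W)=\sum_r b_r^2 C_{U_r}$, and $\mathrm{Cov}(X,W)=\sum_r a_r b_r C_{U_r}$, the leading irregular term in each sum is the one with the smallest $\delta_r$ whose coefficient is nonzero; higher-$\delta_r$ contributions are absorbed into the $o(t^{\delta_{\min}})$ remainders together with matching derivative asymptotics, so the summed covariance still lies in $\mathcal{B}(\cdot,\delta_{\min})$. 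The LMC covariances are also isotropic, hence the remainder $r_{k\ell}$ in Assumption \ref{as:rd} vanishes. Thus Assumption \ref{as:rd} holds with $\alpha_{11}=\delta_X$, $\alpha_{22}=\delta_W$, $\alpha_{12}=\delta_{XW}$, and the hypotheses $\delta_X<\delta_W+d$ and $\delta_{XW}>\delta_X$ become exactly $\alpha_{11}<\alpha_{22}+d$ and $\alpha_{11}<\alpha_{12}$; Theorem \ref{th:suff.rd} then yields consistency of the Laplacian-based estimator.

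For necessity, since $X$ and $W$ may be dependent under LMC, I cannot appeal to Theorem \ref{th:nonidgen} and would instead apply Theorem \ref{th:equiv} directly to $(X,Y)$. The spectral density matrix at parameter $\beta_i$ is
\[
F^{(i)}(\omega)=\begin{pmatrix} f_X & \beta_i f_X + f_{XW} \\ \beta_i f_X + f_{XW} & \beta_i^2 f_X + 2\beta_i f_{XW} + f_W \end{pmatrix},
\]
so with $\Delta=\beta_1-\beta_0$ the matrix $F^{(1)}-F^{(0)}$ has zero $(1,1)$ entry, $(1,2)$ entry $\Delta f_X$, and $(2,2)$ entry $\Delta(2\beta_0+\Delta)f_X+2\Delta f_{XW}$. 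I would take $\Phi(\omega)=\diag(\phi_X(\omega)^2,\phi_Y(\omega)^2)$ with Paley-Wiener $\phi_X,\phi_Y$ whose squares are asymptotic to $f_X$ and $f_W$ respectively (the construction mirrors the one built inside the proof of Theorem \ref{th:nonidgen}). The sandwich \eqref{eq:cond1} reduces to controlling the off-diagonal correlation of $F^{(i)}$: non-degeneracy of $(X,W)$ handles it on compact frequency sets, while at high frequencies $|F^{(i)}_{12}|^2/(F^{(i)}_{11}F^{(i)}_{22})\to 0$ because $\delta_X>\delta_W$ forces $f_X/f_W\to 0$ and $\delta_{XW}\geq\delta_X$—immediate from $\{r:a_r b_r\neq 0\}\subseteq\{r:a_r\neq 0\}$—forces $f_{XW}/f_W\to 0$.

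It then remains to verify \eqref{eq:equivgeneral}. Using the tail asymptotics $f_X(\omega)\asymp|\omega|^{-\delta_X-d}$ and $f_W(\omega)\asymp|\omega|^{-\delta_W-d}$ already established for these families in Section \ref{sec:examples}, the binding $(1,2)$ contribution to the Frobenius norm integrates to $\Delta^2\int_{\mathbb R^d} f_X/f_W\,d\omega\asymp\int|\omega|^{-(\delta_X-\delta_W)}\,d\omega$, convergent at infinity iff $\delta_X-\delta_W>d$—exactly our hypothesis. The $(2,2)$ contribution splits into $\int(f_X/f_W)^2 d\omega$ and $\int(f_{XW}/f_W)^2 d\omega$, both convergent under the same gap condition thanks to $\delta_{XW}\geq\delta_X$, and the $(1,1)$ contribution is zero. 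Theorem \ref{th:equiv} then forces the measures at $\beta_0$ and $\beta_1$ to be equivalent, ruling out any consistent estimator. The real obstacle is not the bookkeeping above but the explicit construction of the Paley-Wiener sandwich functions $\phi_X,\phi_Y$ that simultaneously match the two disparate polynomial tail rates; this construction (a variant of multiplying smooth compactly supported bumps by Matérn-type kernels) is essentially the one embedded in the proof of Theorem \ref{th:nonidgen}, so I would lean on that technique rather than redo it.
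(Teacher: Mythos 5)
Your proposal is correct, and the two halves sit differently relative to the paper. The sufficiency argument is essentially the paper's: the covariance of a linear combination of independent components inherits Assumption \ref{eq:K.assump} with exponent equal to the smallest $\delta_r$ in the support, higher-exponent terms and their derivatives being absorbed into the $o(t^{\delta_{\min}-k})$ remainders, so Theorem \ref{th:suff.rd} applies with $\alpha_{11}=\delta_X$, $\alpha_{22}=\delta_W$, $\alpha_{12}=\delta_{XW}$ (your extra remarks on isotropy and the $t^{2m}\log t$ variant are correct but not load-bearing). The necessity argument, however, takes a genuinely different route. The paper verifies that $f_X$ and $f_W$ satisfy (\ref{eq:bound}) with tail rates $\delta_X+d$ and $\delta_W+d$ and then invokes Theorem \ref{th:nonidgen}; but that theorem is stated for \emph{independent} $X$ and $W$, so the paper's reduction implicitly relies on exhibiting an indistinguishable pair within the model class at $f_{XW}=0$ (the same device used in Corollary \ref{cor:matern}(b)), rather than treating the given, possibly correlated, LMC. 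You instead apply Theorem \ref{th:equiv} directly to the bivariate spectral matrices of $(X,Y)$ with $f_{XW}\neq 0$, and your bookkeeping is right: the $(1,1)$ entry of $F^{(1)}-F^{(0)}$ vanishes, the $(1,2)$ entry contributes $\int f_X/f_W$, and the $(2,2)$ entry is dominated by the same integral because $\delta_{XW}\geq\delta_X$ forces $f_{XW}=O(f_X)$; the sandwich (\ref{eq:cond1}) holds since the spectral coherence $|F^{(i)}_{12}|^2/(F^{(i)}_{11}F^{(i)}_{22})$ tends to zero at high frequencies and is bounded away from one elsewhere by non-degeneracy. What your route buys is a proof of non-estimability for the actual coregionalized pair as literally stated in the corollary, at the cost of redoing the Paley--Wiener sandwich construction (which, as you note, is the same \cite{zastavnyi2006some}-based construction already embedded in the proofs of Theorem \ref{th:nonidgen} and Corollary \ref{cor:matern}); what the paper's route buys is brevity, at the cost of silently passing through the independent case.
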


Corollary \ref{cor:lmc} proves that when $(X,W)$ is based on a linear model of coregionalization, $\beta$ is consistently estimable if $X$ contains at least one factor $U_i$ that is not in $W$ (this ensures that $\delta_{XW} > \delta_X$), and when the roughest factor of $X$ (which determines its smoothness) is not more than $d/2$ degrees smooth than the roughest factor of $W$. 

\subsection{Covariances with nugget}\label{sec:error}

Spatial processes are often observed with some added noise (nugget).
Theorem 6 of \cite{stein1999interpolation} shows that equivalence or orthogonality of two measures on the paths of univariate GRFs is not affected by the addition of spatially independent noise. \cite{tang2021identifiability} showed that the measurement error variance (nugget) can be consistently estimable for univariate GRFs.
In the setting of spatial confounding, it is important to study whether measurement error impacts the consistency of estimators. The local differencing/Laplacian-based estimators we have proposed may no longer work when the processes have measurement error, as differencing noise amplifies it relative to the continuous component of the process. While this does not necessarily imply a lack of consistent estimability, it illustrates that the consistency of specific estimators may rely on the absence of noise. The following result characterizes consistent estimability of $\beta$ in the presence of measurement error.

\begin{theorem}
	\label{thm:error}
	Let $(X,W)$ denote a bivariate stationary GRF on $\mathbb R^d$ whose covariance $K=(K_{k\ell})$ belongs to one of the classes considered in Corollaries \ref{cor:matern}, \ref{cor:powexp}, \ref{cor:cauchy}, and \ref{cor:lmc},
	where each $K_{k\ell}$ satisfies Assumption \ref{eq:K.assump} ($d=1$) or 
	\ref{as:rd} ($d>1$) for some $L>0$ and with $\alpha_{k\ell}$ as the exponent of the principal irregular term. Let $Y(s)=X(s)\beta+W(s)$. If $\alpha_X < \alpha_W + d$ and $\alpha_{XW} > \alpha_X$, and we observe $Z(s_i)=Y(s_i)+\epsilon(s_i)$ and $\tilde X(s_i) = X(s_i) + \varepsilon(s_i)$ for $s_i$ on the regular lattice $\calG_n$ in $[0,L]^d$ where $\{\epsilon(s_i)\}$ and $\{\varepsilon(s_i)\}$ are iid zero mean Gaussian random variables independent of each other and of $(X,W)$, then $OLS(M_n Z, M_n \tilde X) \to \beta$ in probability as $n \to \infty$ where $M_n = D_n A_n$, $A_{n}= \left(\frac 1{k_n}I(s_j \in B_n(s_i))\right)$ an averaging matrix over local neighborhoods $B_n(s_i)$ of size $k_n$ around $s_i$ in $\calG_n$, and $D_n$ is a differencing ($d=1$) or discrete Laplacian ($d>1$) matrix of suitable order. 
	If $\alpha_X > \alpha_W + d$, and we observe $Z(s_i)=Y(s_i)+\epsilon(s_i)$ and $\tilde X(s_i) = X(s_i) + \varepsilon(s_i)$ for any countable set of locations $\{s_i\}$ in any bounded subset of $\mathbb R^d$, $\beta$ cannot be consistently estimated based on $\{(\tilde X(s_i),Z(s_i)) \mid i=1,2, \ldots\}$. 
\end{theorem}

Theorem \ref{thm:error} ensures that if consistent estimability is feasible in the noiseless case, then we can still estimate $\beta$ consistently in the presence of measurement error. We state this as a theorem, as the results do not directly follow from previous results. Particularly, the technique of taking local differences (Theorem \ref{th:suff}) or discrete Laplacians (Theorem \ref{th:suff.rd}) used to obtain a consistent estimator for $\beta$
does not work when there is measurement error.
We address this problem by using a {\em local-averaging-and-differencing} technique where we first average nearby observations around each grid point and then use differencing or discrete Laplacians at those grid points. The idea is illustrated in Figure \ref{fig:lad}. Averaging first a sufficiently large number of observations makes the noise variance small enough to yield a consistent estimator for $\beta$ by subsequent differencing. The formal details are in the proof. 

\begin{figure}[t]
	\centering
	\includegraphics[width=0.5\linewidth]{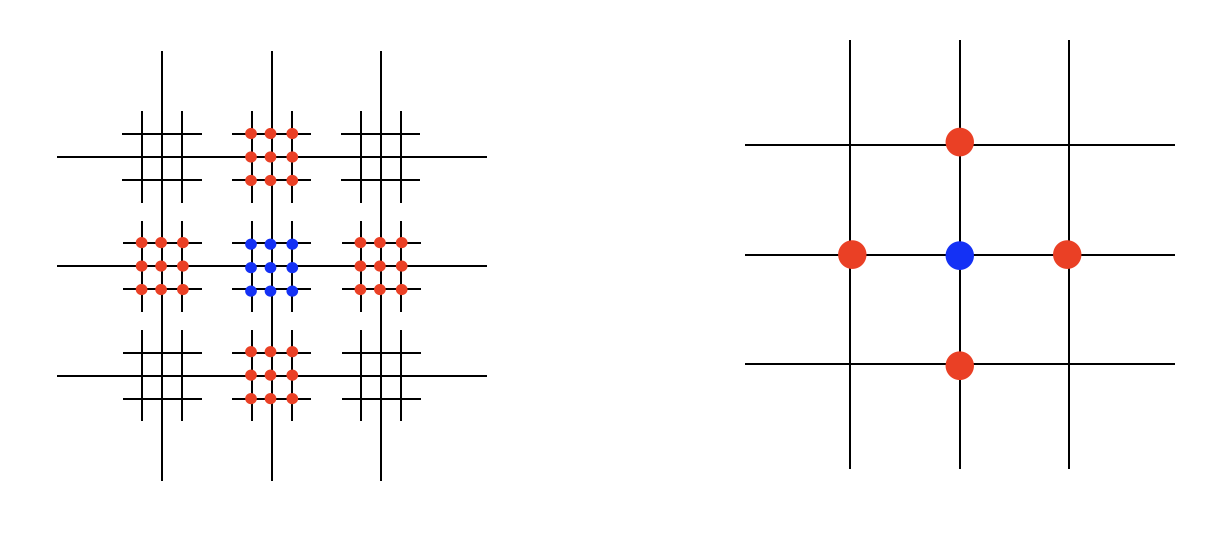}
	\caption{Local-averaging-and-differencing based estimation of $\beta$ when we observe $(\tilde X,Z)$, a measurement error contaminated version of $(X,Y)$.
		The left figure corresponds to the averaging part. For each point in the coarser grid, the bivariate $(\tilde X,Z)$ process over the finer sub-grid around it is averaged to create an averaged process for that point. The right figure corresponds to taking differences/discrete Laplacians.
		This is done by summing over the differences in the averaged process at the blue point and each of its neighbors (red points).}
	\label{fig:lad}
\end{figure}

\section{Non-stationary and non-Gaussian random fields}\label{sec:nsandng}
We now discuss conditions for consistent estimability of $\beta$ under spatial confounding when $X$ and/or $W$ is non-stationary or
non-Gaussian. For simplicity, we present results for a one-dimensional spatial domain. They can be extended to higher dimensions akin to the theory in Section \ref{sec:rd}.

\subsection{A general class of non-stationary covariances}\label{sec:nsgen}
We first provide a general result for a class of non-stationary covariance functions that subsumes popular examples. Let $K_\alpha$ be a covariance function on $[0,L]^2$ for some $L>0$, which admits the expansion
\begin{equation}\label{eq:nsclass}
	\begin{aligned}
		K_\alpha(s,s')
		&= \Gamma_{0}(s,s') + \Gamma_{2}(s,s')(s-s')^{2}
		+ \Gamma_{\alpha}(s,s')\,|s-s'|^{\alpha} + r(s,s'),
	\end{aligned}
\end{equation}
where the remainder $
r(s,s') \asymp \,\rho\big(\,|s-s'|\big)$ for some function $\rho$
which satisfies \[
\rho(t) = o(t^{\alpha'}), 
\qquad 
\rho^{(k)}(t) = o\big(t^{\alpha'-k}\big)
\quad\text{as } t\to 0,
\quad k = 1,2,\ldots
\]
with $\alpha' = \min(\alpha,2)$. 
Here $\Gamma_0,\Gamma_2$, and $\Gamma_\alpha$
are some real analytic functions on $[0,1]^{2}$. This class of covariances $K_\alpha$ generalizes the class in Assumption \ref{eq:K.assump} to non-stationary settings.

The following theorem proves consistency of the first differencing-based estimator for this non-stationary class. 

\begin{theorem}\label{th:ns} Let $Y=X\beta +W$ where $(X,W)$ is a zero-mean bivariate GRF on $[0,L]$ with covariance $K=(K_{k\ell})$ such that each $K_{k\ell}$ is of the form (\ref{eq:nsclass}) with exponent parameter $\alpha_{k\ell}$. If the function $\Gamma_{\alpha_{11}}$ satisfies $\int_{0}^{1} \Gamma_{\alpha_{11}}(s,s)\,ds \neq 0$ and $\alpha_{11} < \min\{\alpha_{12},\alpha_{22}+1,2\}$, then the OLS estimator regressing first differences of $Y$ on those of $X$ on the regularly spaced grid in $[0,L]$ is consistent for $\beta$. 
\end{theorem}

Consistent estimability holds for this class of non-stationary processes under very similar conditions on the principal irregular terms as the stationary case, $\alpha_{11} < \alpha_{12}$ and $\alpha_{22}+1$. The additional $\alpha_{11} < 2$ can be relaxed by replacing first-order differences with higher-order ones, as before. The condition $\int_{0}^{1} \Gamma_{\alpha_{11}}(s,s)\,ds \neq 0$ is essentially the generalization of $c \neq 0$ condition in Assumption \ref{eq:K.assump}. 

\subsection{Location-warped non-stationary GRF}\label{sec:warped}
We first consider the popular class of non-stationary processes of \cite{sampson1992nonparametric} created from stationary processes by warping of locations (see also \citep{zammit2022deep} for a modern take on location warping using deep neural networks). 
The following result presents necessary and sufficient conditions for consistent estimability of $\beta$ for such warped non-stationary processes in terms of equivalence or orthogonality of the measures on the paths of the original stationary processes with respect to different values of $\beta$. 

\begin{corollary}\label{cor:ns_warped} Let $(X(s),W(s))$ be a bivariate stationary GRF on $\mathbb R$ with covariance $K_{k\ell}$ satisfying Assumption \ref{eq:K.assump} for some $L>0$ and with exponents $\alpha_{k\ell}$. Let $(X^*(s),W^*(s)) = (X(f(s)),W(f(s)))$
	where $f: [0,L] \to [0,L]$ is a fixed function.
	Define $Y^*(s)=\beta X^*(s) + W^*(s)$.
	Then:
	\begin{enumerate}[label=(\alph*)]
		\item{\em Consistency:}
		If 
		$\alpha_{11} < \min\{\alpha_{12},\alpha_{22} + 1, 2\}$ and $f$ is a Lipschitz function that is piecewise analytic and monotonic with $f$ not being a constant on at least one of the pieces, then the OLS estimator based on first differences of $Y^*$ and $X^*$
		is consistent for $\beta$. 
		\item{\em Equivalence:} If $\alpha_{11} > \alpha_{22} + 1$, and $X$ and $W$ have spectral densities that satisfy the regularity conditions of Theorem \ref{th:nonidgen} with high frequency decay $\asymp |\omega|^{-(\alpha_{11}+1)}$ and $\asymp |\omega|^{-(\alpha_{22}+1)}$ respectively,
		then the measures on the paths of $(Y^*(s),X^*(s))$ are equivalent for any two different values of $\beta$.
	\end{enumerate}
\end{corollary}

The result proves that even under non-stationarity due to location warping, where the warping function is allowed to be irregular (only piecewise smooth), the same differencing-based estimator is consistent for $\beta$ under the same assumptions for the underlying unwarped stationary fields. Here piecewise analytic means that there exist \(0=\tau_0<\tau_1<\cdots<\tau_m=L\) such that, for each \(r=1,\ldots,m\), \(f|_{[\tau_{r-1},\tau_r]}\) extends to an analytic and monotone function on an open neighborhood of \([\tau_{r-1},\tau_r]\). The proof is based on showing that the warped covariance belongs to the class of non-stationary processes defined in (\ref{eq:nsclass}) and leverages Theorem \ref{th:ns}. We note that the differencing-based estimator does not require knowing or estimating the warping function.

\subsection{Variance and range non-stationarity}\label{sec:paciorek}
Next, we consider another popular class of non-stationary covariance functions \citep{paciorek2006spatial} given by 
\begin{equation}\label{eq:pac}
	\begin{aligned}
		K^{NS}(s, s')
		&=
		\frac{\sigma(s)\sigma(s')\, \left(|\Phi(s)|\, |\Phi(s')|\right)^{1/4}}{
			\left| \frac{\Phi(s) + \Phi(s')}{2} \right|^{1/2}} K(\sqrt{Q(s,s')}),  
		\mbox { where } \\
		Q(s,s') &= 
		(s - s')^T
		\left( \frac{\Phi(s) + \Phi(s')}{2} \right)^{-1}
		(s - s').
	\end{aligned}
\end{equation}
Here, both the variance $\sigma^2(s)$ and the range $\Phi(s)$ are positive spatially varying functions. 

For multivariate processes, the generalization we consider is
\begin{equation}\label{eq:kleiber}
	\begin{aligned}
		K_{k\ell}^{NS}(s, s')
		&=
		\frac{\sigma_k(s)\sigma_{\ell}(s')\, \left(|\Phi_{k\ell}(s)|\, |\Phi_{\ell k}(s')|\right)^{1/4}}{
			\left| \frac{\Phi_{k \ell}(s) + \Phi_{\ell k}(s')}{2} \right|^{1/2}} K_{k\ell}(\sqrt{Q_{k\ell}(s,s')}),  
		\mbox { where } \\
		Q_{k \ell}(s,s') &= 
		(s - s')^T
		\left( \frac{\Phi_{k \ell}(s) + \Phi_{\ell k}(s')}{2} \right)^{-1}
		(s - s'), 
	\end{aligned}
\end{equation}
Each $K_{k\ell}$ satisfies Assumption \ref{eq:K.assump} with the corresponding principal irregular term exponent equaling $\alpha_{k\ell}$.
Examples of valid multivariate covariances of the form (\ref{eq:kleiber}) are given in \cite{kleiber2012nonstationary}. 

The following result shows that the differencing-based OLS estimators are consistent for $\beta$  for this class of non-stationary covariances as long as the local range and variance functions are reasonably well-behaved (piecewise analytic) and the exponent parameters in the principal irregular terms of $K_{k\ell}$ satisfy the conditions of Theorem \ref{th:suff}.

\begin{corollary}\label{cor:nspaciorek} If $(X,W)$ is a bivariate zero-mean GRF on $\mathbb R$ generated from a valid bivariate covariance function matrix $(K^{NS}_{k\ell})$ where each $K^{NS}_{k\ell}$ has the form (\ref{eq:kleiber}) and $K_{k\ell}$ is a stationary covariance satisfying Assumption \ref{eq:K.assump} on some $L > 0$, and with exponent $\alpha_{k\ell}$. Let $Y=X\beta + W$. 
	\begin{enumerate}[label=(\alph*)]
		\item Consistency: If $\alpha_{11} < \min\{\alpha_{12},\alpha_{22} + 1, 2\}$, all $\sigma_k(s)$ and $\Phi_{k \ell}(s)$ functions are positive, continuous, and piecewise analytic in $[0,L]$, the OLS estimator based on first differences of $X$ and $Y$ on a regular grid $jh$, $j=0,\ldots,n$, $h=L/n$ in $[0,L]$ is consistent for $\beta$. 
		\item Equivalence: If $\alpha_{11} > \alpha_{22}+1$, the class of functions for $\Phi_{k \ell}$ and $\sigma_k(s)$ includes the constant functions, and the spectral densities corresponding to $K_{11}$ and $K_{22}$ satisfy the regularity conditions of Theorem \ref{th:nonidgen} with high frequency decay $\asymp |\omega|^{-(\alpha_{11}+1)}$ and $\asymp |\omega|^{-(\alpha_{22}+1)}$ respectively, then $\beta$ is not estimable on the paths of $(X,Y)$.
	\end{enumerate}
\end{corollary}

As in Section \ref{sec:warped} with the warping function $f$, the differencing-based estimator consistently estimates $\beta$ without having to know or estimate the local variance and range functions.

\subsection{Heavy-tailed processes}\label{sec:heavy}

We now give sharp results on when $\beta$ can and cannot be consistently estimated, assuming a common type of non-Gaussian processes -- heavy-tailed processes based on scale mixture of GRFs.

\begin{corollary}\label{cor:heavy}
	Let $Z^*=(X^*,W^*)$ be a bivariate stationary GRF on $\mathbb R$ with covariance function matrix 
	$K=(K_{k\ell})$ where $K_{k\ell}$ satisfies Assumption \ref{eq:K.assump} for some $L>0$, and with exponent
	$\alpha_{k\ell}$, $k,\ell\in\{1,2\}$. Let 
	$\mathrm{Var}(X^*(s))=\mathrm{Var}(W^*(s))=1$
	and $\sigma_X\in(0,\infty)$ and $\sigma_W\in(0,\infty)$ be positive processes on $\mathbb R$, independent of $Z^*$ and not depending on $\beta$, with
	almost surely piecewise analytic paths. Let 
	$
	X(s)=\sigma_X(s) X^*(s)$, $W(s)=\sigma_W(s) W^*(s)$, $Y(s)=X(s)\beta+W(s)$. Then, 
	
	\begin{enumerate}[label=(\alph*)]
		\item Consistency: If $\alpha_{11} < \min\{\alpha_{12},\alpha_{22} + 1, 2\}$,
		the OLS estimator based on first differences of $X$ and $Y$ on a regular grid $jh$, $j=0,\ldots,n$, $h=1/n$ in $[0,L]$ is consistent for $\beta$. 
		\item Equivalence: If $\alpha_{11} > \alpha_{22}+1$
		and $X^*$ and $W^*$ have spectral densities that satisfy the regularity conditions of Theorem \ref{th:nonidgen} with high frequency decay $\asymp |\omega|^{-(\alpha_{11}+1)}$ and $\asymp |\omega|^{-(\alpha_{22}+1)}$ respectively, then $\beta$ is not consistently estimable on the paths of $(X,Y)$.
	\end{enumerate}
\end{corollary}

Corollary \ref{cor:heavy} gives sharp conditions for consistent estimation of $\beta$
for a variety of heavy-tailed non-Gaussian random fields obtained by
multiplying the base GRF $(X^*,W^*)$ with positive scale fields.
With 
$\sigma_X^2(s)\sim\mathrm{Inv\text{-}Gamma}(\kappa_X(s)/2,\kappa_X(s)/2)$ and
$\sigma_W^2(s)\sim\mathrm{Inv\text{-}Gamma}(\kappa_W(s)/2,\kappa_W(s)/2)$, at each location $s$, $X(s)$ and $W(s)$ will marginally have univariate Student's $t$ distributions with location-specific degrees of freedom
$\kappa_X(s)$ and $\kappa_W(s)$ respectively. 
To construct such a scale process with analytic sample paths and Inverse-Gamma marginals, let $G$ be a GRF on $\calD$ which has almost surely analytic sample paths (e.g., one with squared exponential covariance)
and define $\sigma_X^2(s) = F^{-1}_{\kappa(s)}(\Phi(G(s)))$, where $F_\kappa$ is the $\mathrm{Inv\text{-}Gamma}(\kappa/2, \kappa/2)$ CDF; the resulting path of $\sigma_X$ is almost surely analytic because $\Phi$ is analytic, $\kappa$ is analytic and bounded away from $0$, and $(\kappa, u) \mapsto F^{-1}_\kappa(u)$ is jointly real analytic on $\{\kappa > 0\} \times (0,1)$.
Similarly, when the scale fields have exponential distributions, $X(s)$ and $W(s)$ follow Laplace distributions at each location. In all these constructions, the covariance
exponents $\alpha_{k\ell}$ are unchanged because the scale fields are smooth. 
Like the preceding results in this section, Corollary \ref{cor:heavy} can be extended to relax the $\alpha_{11} < 2$ assumption using higher order differences, and to $\calD \in \mathbb R^d$ using discrete Laplacians.

\section{Multivariate Exposure}\label{sec:mult}

In many applications, $X$ is a multivariate GRF, e.g., when $X$ consists of both the exposure of interest and a set of measured confounders. The next result provides sufficient conditions for consistent estimability of $\beta$ for multivariate $X$. For simplicity, we consider a bivariate $X$, but the conditions generalize for more variables. 

\begin{theorem}\label{thm:multsharp}
	Let $(X_1,X_2,X_{3}=W)$ be a trivariate  stationary GRF on $\mathbb R^d$ with strictly positive definite covariance function matrix $K=(K_{k\ell})$ where $K_{k\ell}$ satisfies Assumption \ref{as:rd} for some $L>0$ and with exponent $\alpha_{k\ell}$ .
	Let $Y= \beta_1 X_1 + \beta_2 X_2 + W$ and $(Y,X_1,X_2)$ be observed. 
	For $v=(v_1,v_2)^T \in \mathbb R^2$, let $X_v = v_1 X_1 + v_2 X_2$, and denote the covariance of $X_v$ by $K_{vv}$ and the cross-covariance of $X_v$ and $W$ by $K_{v3}$.  
	Then $\beta=(\beta_1,\beta_2)^T$ is consistently estimable if for all $v \neq 0$, $K_{vv}$ and $K_{v3}$ satisfy Assumption \ref{eq:K.assump} with exponents $\alpha_{vv}$ and $\alpha_{v3}$ such that $\alpha_{vv} < \min\{\alpha_{v3},\alpha_{33}+1\}$.
\end{theorem}

From Theorem \ref{thm:multsharp} we see that in the case of multivariate exposure, the sufficient condition for consistent estimability of $\beta$ is more nuanced. The constraint $\alpha_{vv} < \min\{\alpha_{v3},\alpha_{33}+1\}$ is stronger than assuming that the individual variables $X_1$ and $X_2$ satisfy these constraints. Indeed, the individual conditions $\alpha_{ii} < \min\{\alpha_{i3},\alpha_{33}+1\}$ for $i=1,2$ are satisfied by setting $v=(1,0)$ and $(0,1)$ respectively. But the Theorem requires $\alpha_{vv} < \min\{\alpha_{v3},\alpha_{33}+1\}$ to hold for all non-trivial $v$. This is essentially a condition restricting the amount of collinearity between $X_1$ and $X_2$, requiring that no linear combination of $X_1$ and $X_2$ is too smooth relative to $W$. 
To see the importance of this condition, suppose the exponent of $X_1$ satisfies $\alpha_{11} < \min\{\alpha_{13},\alpha_{33}+1\}$.  Let $X_2 = cX_1 + U$ for some very smooth $U \perp W$ ($\alpha_{UU} > \alpha_{33} +1$). 
Then exponents of $X_2$ are same as those of $X_1$ and thus satisfy $\alpha_{22} < \min\{\alpha_{23},\alpha_{33}+1\}$. However, with $v=(1,-c)^T$, $v^T X = X_2 - cX_1 = U$ fails to satisfy the condition $\alpha_{vv} < \min\{\alpha_{v3},\alpha_{33}+1\}$ as $\alpha_{vv}=\alpha_{UU} > \alpha_{33}+1$. It is clear that $\beta=(\beta_1,\beta_2)^T$ cannot be identified in this scenario as $Y=\beta_1 X_1 + \beta_2 X_2 + W = (\beta_1 +c) X_1 + 0.X_2 + W^*$ where $W^*=W+U$. Hence, the collinearity of $X_1$ and $X_2$ needs to be restricted relative to the smoothness of $W$. 

Similar but more general results can be derived for $X$ being $q$-dimensional for any fixed $q$, requiring all linear combinations of $X$'s to satisfy the relative roughness condition. 

\section{Numerical experiments}\label{sec:sim}
We conduct numerical experiments using simulated data to examine finite sample results in settings covered by the theoretical results.

\subsection{Estimation in $\mathbb R$}\label{sec:sim1d}

We consider data generated on a regular grid of $n$ equispaced points in $[0,1]$. The exposure $X$ and the confounder $W$ are jointly generated from a bivariate Matérn GRF and $Y=X\beta+W$ where $\beta=2$. By Corollary \ref{cor:matern} and Figure \ref{fig:regions}, the different smoothness bands (intervals) for $\nu_X$, requiring different estimators for consistency of $\beta$, are $(0,1)$, $(1,2)$, $\ldots$. Hence, we consider two values for the smoothness of $X$, i.e., $\nu_{X}\in\{0.7,1.2\}$ such that they are on either side of the smoothness cutoff of $\nu_X=1$ and thus should require different estimators to consistently estimate $\beta$ when it is consistently estimable. We choose the smoothness of $W$ to be $\nu_{W}=\nu_{X}+\delta$
where we vary $\delta\in\{-0.6,-0.3,0,0.3\}$.
For $\delta=-0.6$, according to Corollary \ref{cor:matern}, $\beta$ is not consistently estimable even when $X$ and $W$ are independent. Hence, for this case, we choose the intra-site correlation $\rho$ between $X$ and $W$ to be $0$. For $\delta=-0.3$, we are in a scenario where the confounder $W$ is rougher than the exposure $X$, but still $\beta$ should be consistently estimable according to Corollary \ref{cor:matern}. For $\delta \neq -0.6$, we set the cross-smoothness $\nu_{XW}$ to be $\nu_X + 0.25$ and the intra-site cross-correlation to be $\rho=\min\{0.5,\sqrt{\nu_{X}\nu_{W}}/\nu_{WX}\}$. For each combination, we generate data for sample sizes $n\in\{100,500,1000,2000\}$ and for each sample size we run $100$ replicate experiments. We compare the performance of three estimators of $\beta$, the OLS estimator between $Y$ and $X$, and the OLS estimators between the first or second differences of $Y$ with the corresponding differences of $X$. 

The results are summarized in the box-whisker plots of estimates of $\beta$ in Figure \ref{fig:mat1d}. Table \ref{tab:rmse_table}, \ref{tab:bias_table}, and \ref{tab:variance_table} provide the root mean squared error (RMSE), bias, and standard deviation, respectively.
We first look at the case where $\delta=\nu_W-\nu_X=-0.6$. This is a scenario where there is no confounding, i.e., $\rho=0$. So the OLS estimators should be unbiased but not consistent, as $\beta$ is not consistently estimable according to Corollary \ref{cor:matern}. We see this corroborated in the results. For both choices of $\nu_X$, all three sets of estimates are centered around the true $\beta$, but none of the variances shrink with increasing sample size. We then look at the remaining scenarios for $\nu_X=0.7$ (top row of Figure \ref{fig:mat1d}, excluding the first column). For all these scenarios, as $\delta > -0.5$, $\beta$ is expected to be consistently estimated by taking first- and second-order differences. This is validated in the results with both the difference-based estimators converging towards the truth with shrinking variances as $n$ increases. The OLS estimator not only has a non-vanishing variance but is also biased due to confounding (as $\rho \neq 0$). Finally, for $\nu_X=1.2$ and $\delta > -0.5$, we see that in addition to the OLS estimator, the first-difference based estimator is also now biased with non-vanishing variance. The results align with Corollary \ref{cor:matern} and Figure \ref{fig:regions}, as when $\nu_X > 1$,  two- or higher-order differencing yields consistency. The second-order differencing-based estimator can be seen to have diminishing bias and variance, in line with its proven consistency. 

\begin{figure}[t!]
	\centering
	\includegraphics[width=0.9\linewidth]{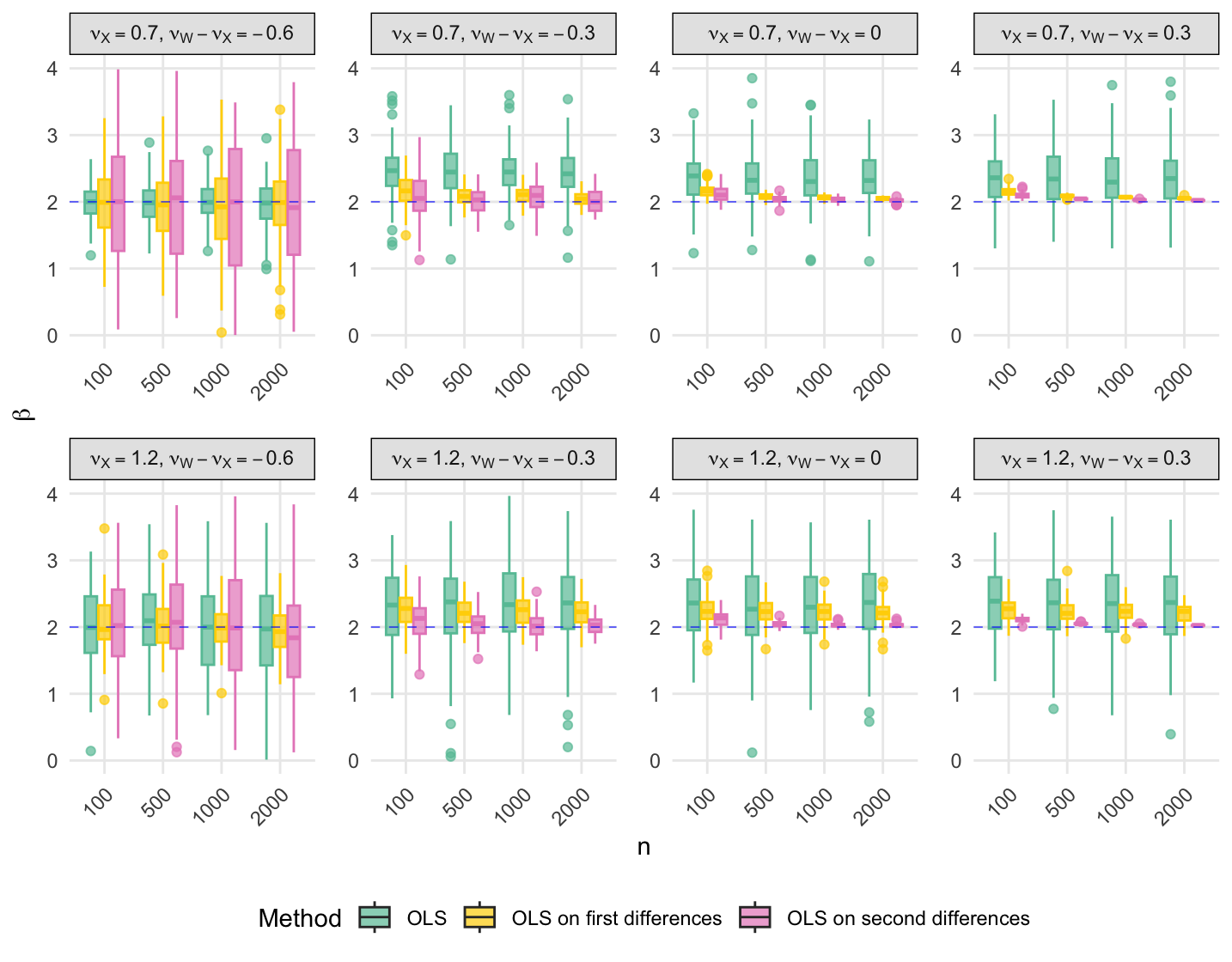}
	\caption{Estimates of $\beta$ for regression between GRF $Y=X\beta+W$ and $X$ when both the exposure $X$ and the unmeasured confounder $W$ have Matérn covariances with smoothnesses $\nu_X$ and $\nu_W$ respectively.}
	\label{fig:mat1d}
\end{figure}

\subsection{Results for $\mathbb R^2$ and variance convergence rates}\label{sec:sim2d}
We generate data on a regular $n \times n$ grid on the square $[0,1]^2$. The exposure $X$ and the unobserved confounder $W$ are modeled jointly as a bivariate Matérn GRF with parameter combinations as listed in Table \ref{tab:r2}. 

\begin{table}[h]
	\centering
	\begin{tabular}{cccc}
		\hline
		$\nu_{X}$ & $\delta_{W} = \nu_W - \nu_X$ & $\nu_{XW}$ & $\rho$ \\
		\hline
		1 & -0.6 & 1.25 & 0.204 \\
		1 & -0.4 & 1.25 & 0.306 \\
		1 & -0.2 & 1.25 & 0.408 \\
		1 & \phantom{-}0   & 1.25 & 0.500 \\
		1 &  \phantom{-}0.2 & 1.10 & 0.500 \\
		1 &  \phantom{-}0.4 & 1.20 & 0.500 \\
		\hline
	\end{tabular}
	\caption{Parameter values for simulating the bivariate Matérn GRF $(X,W)$ in $\mathbb R^2$.}
	\label{tab:r2}
\end{table}

We consider the case $\nu_X = 1$, which is in the first estimability band in Figure \ref{fig:regions} as $\nu_X < 2$, implying that
a first-order discrete Laplacian-based OLS estimator should be consistent. We vary $\delta = \nu_W - \nu_X \in \{-0.6, -0.4, -0.2, 0, 0.2, 0.4\}$.
Note that $\delta = -0.6$ constitutes a scenario where the gap between $\nu_X$ and $\nu_W$ is large enough to violate the estimability assumption if the domain was in $\mathbb R$ but is consistently estimable here as we are in $\mathbb R^2$. The cross-smoothness is fixed at $\nu_{XW} = 1.25$ if $\nu_W \leq \nu_X$ and $\nu_{XW}=(\nu_X+\nu_W)/2$ if $\nu_W > \nu_X$, ensuring that $\nu_{XW} > \nu_X$ always. The intra-site correlation $\rho$ is set according to $\min\left(0.5, \nu_X\nu_W/\nu_{XW}^2\right)$.
This expression for $\rho$ ensures that the bivariate Matérn correlation function is valid \citep{gneiting2010matern}. 

For each parameter combination, we generate 100 replicate datasets on grids of size $N \in \{225,529,1024,2025,4900,10000\}$ where $N=n^2$.
For each replicate, we compare the naïve OLS estimator of $\beta$ regressing $Y$ on $X$, and the OLS estimator regressing the first-order discrete Laplacians of $Y$ on those of $X$. For all the specified choices of $\nu_X$ and $\delta$, we have proven that this estimator is consistent. The estimates of $\beta$ are given in Figure \ref{fig:mat2d} while the actual RMSE, biases, and standard deviations are given in Supplemental Tables \ref{tab:mse_table_2d}, \ref{tab:bias_table_2d}, and \ref{tab:variance_table_2d} respectively. 
We see that the Laplacian-based estimator converges towards the true $\beta$ with diminishing bias and variance as the sample sizes increase, whereas the naïve OLS estimator is biased (as $\rho \neq 0$) and does not have vanishing variance, aligning with our theory.

We also use this experiment to empirically study the rates of convergence of the Laplacian-based estimator and compare them to the rates inferred from the theoretical results.
We
plot the empirical log-standard deviations of the Laplacian-based estimator of $\beta$ along with the theoretical variance bounds of the log-standard deviations of the numerator of the Laplacian estimator. The latter is of the form $\log sd = \text{constant} + \gamma \log N$ where $\gamma = -\frac 12+\frac 12 \max\left(\frac{\alpha_X-\alpha_W}d,0\right)$ (see e.g., Equation (\ref{eq:raterd}) in the proof of Theorem \ref{th:beta.con.Lap}).
Both curves are plotted as functions of the log-sample sizes and are centered appropriately, as we are only interested in looking at the slope $\gamma$, which controls the rate. The results are visualized in Figure \ref{fig:mat2drates}, and we see that for all the scenarios in Table \ref{tab:r2} the empirical log standard deviations align very closely with the theoretical upper bounds, being approximately log-linear in $N$ with the empirical slope $\hat \gamma \approx \gamma$.
Thus, the upper bound on the rates we establish in deriving consistency is verified in the simulations and is likely to be sharp. 

\begin{figure}[t]
	\centering
	\includegraphics[width=0.9\linewidth]{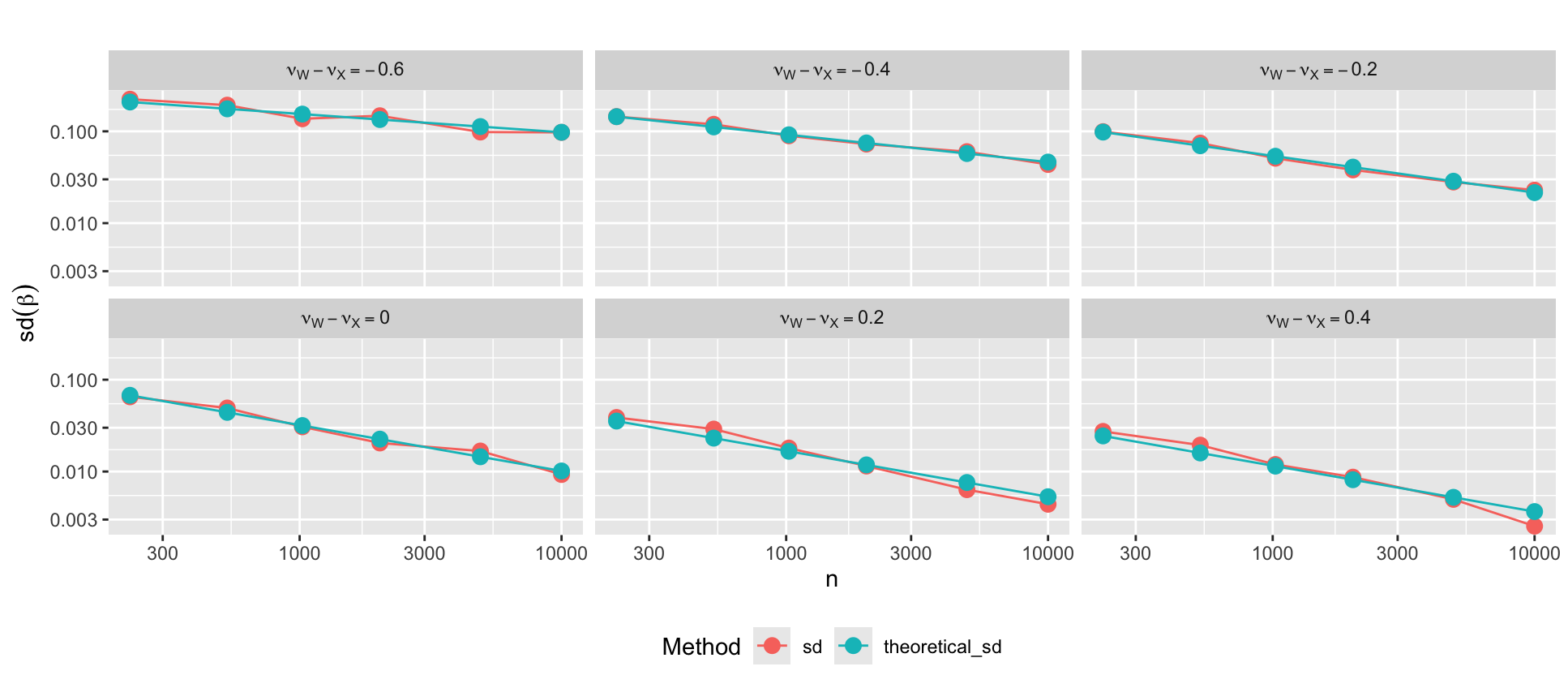}
	\caption{Comparison of empirical standard deviations of $\beta$ and theoretical bound on the standard deviation for the numerator of the Laplacian-based consistent estimators of $\beta$.}
	\label{fig:mat2drates}
\end{figure}

Additional simulations are detailed in Supplemental Section \ref{sec:addsim}, which includes comparisons with the GLS estimator (Section \ref{sec:simgls}), assessment of the estimators for irregular designs (Section \ref{sec:simirreg}), and under added noise (Section \ref{sec:simnoise}).

\section{Discussion}\label{sec:disc}
We provide both sufficient and necessary conditions for consistent estimability of the slope between two GRFs under unmeasured spatial confounding.
The sufficient condition is based on directly proving the consistency of
the local differencing or discrete Laplacian-based estimator,
as long as the exposure is not too smooth compared to the confounder, and the cross-correlation is smoother than the correlation of exposure.
Functional analysis in Paley-Wiener spaces yields a simple, easy-to-check necessary condition: the integral of the exposure to the confounder spectral density ratio must be infinite.
In the process, we develop a general result on the equivalence of multivariate Gaussian random field measures where each of the individual component fields is allowed to have a different smoothness.
We show that for common families of covariance functions like the Matérn, power exponential, generalized Cauchy, or coregionalization models,
the sufficient and necessary conditions are the same, except at a boundary point, thereby providing a complete characterization of consistent estimability. We show that the consistent estimability results remain unchanged if the outcome and the exposure are observed with measurement error, although one has to now use a local-averaging-then-differencing-based estimator. We extend the results to popular classes of non-stationarity and non-Gaussianity and to multivariate covariate processes.

While the focus of this manuscript has primarily been to resolve the consistent estimability problem for the slope between spatial random fields, in the process, we have provided explicit estimators that can be used to consistently estimate $\beta$ when the consistent estimability conditions hold. These estimators, based on local (averaging and) differencing, are non-parametric and do not rely on any parametric knowledge of the covariance functions. As the smoothness of the $X$ and $Y$ processes can typically be consistently estimated, the feasibility of confounding adjustment and, if feasible, the order of differencing needed for consistent estimation, can be informed just from the observed data (see Supplemental Section \ref{sec:guidance} for guidance on this).

Future work also needs to study the theoretical efficiency of the differencing-based estimators. Uncertainty quantification and inference methods also need development, which would require estimation of other (spatial covariance) parameters. Comparisons with other estimators also need to be done. Some preliminary results (See Supplemental Section \ref{sec:gls} and Proposition \ref{prop:gls}) show that the GLS estimator with even slight covariance misspecification is inconsistent under some settings of spatial confounding where the differencing-based estimator is consistent. Exhaustive comparisons are needed to understand the applicability of each estimator. We also show in Supplemental Section \ref{sec:irreg} and Theorem \ref{th:irreg-second} that our approach to estimating $\beta$ can be extended to irregular designs using spacing-weighted differences.

Finally, while we have restricted the study to Euclidean domains, we believe that similar sharp necessary and sufficient conditions can be derived for consistent estimability of the slope between two GRFs on Riemannian manifolds under spatial confounding. Necessary and sufficient conditions for equivalence or orthogonality of univariate GRF measures in such domains are characterized by the spectral density ratio of the individual eigenfunctions \cite[see, e.g., Lemma 3 of][]{li2023inference}. Extension of such a result to multivariate GRF, as we did in Theorem \ref{th:equiv} and subsequent application to the spatial confounding problem, should yield a condition similar to (\ref{eq:nec}) for determining consistent estimability in such domains. 

\section*{Acknowledgement} We thank Dr. Thomas Hsiao for helpful discussions on this topic. We acknowledge the use of generative AI for help with some of the proof development and writing of code for the numerical experiments. All work was independently reviewed by the authors, who take sole responsibility for the content. The work was partially supported by the National Institutes of Environmental Health Sciences grant R01 ES033739. 

\bibliographystyle{plainnat}
\bibliography{ref}

\newpage
\begin{center}
	\LARGE{Supplementary materials}
\end{center}

\newtheorem{proposition}{Proposition}
\newtheorem{Hlemma}[lemma]{Lemma}
\renewcommand\thesection{S\arabic{section}}
\renewcommand\theequation{S\arabic{equation}}
\renewcommand\thelemma{S\arabic{lemma}}
\renewcommand\thefigure{S\arabic{figure}}
\renewcommand\thetable{S\arabic{table}}
\setcounter{theorem}{0}
\counterwithout{theorem}{section}
\renewcommand{\thetheorem}{S\arabic{theorem}}
\renewcommand\theproposition{S\arabic{proposition}}
\renewcommand\thelemma{S\arabic{lemma}}
\setcounter{figure}{0}
\setcounter{section}{0}
\setcounter{equation}{0}
\setcounter{table}{0}
\setcounter{lemma}{0}
\setcounter{theorem}{0}

\renewcommand{\theHsection}{supp.section.\arabic{section}}
\renewcommand{\theHequation}{supp.equation.\arabic{equation}}
\renewcommand{\theHfigure}{supp.figure.\arabic{figure}}
\renewcommand{\theHtable}{supp.table.\arabic{table}}
\renewcommand{\theHtheorem}{supp.theorem.\arabic{theorem}}
\renewcommand{\theHlemma}{supp.lemma.\arabic{lemma}}
\renewcommand{\theHproposition}{supp.proposition.\arabic{proposition}}

\section{Some considerations for estimation of $\beta$ in practice}\label{sec:est}

The focus of this manuscript is primarily on characterizing conditions for the consistent estimability of the regression coefficient $\beta$ in the presence of unmeasured spatial confounding. We further show that under the sharp conditions for consistent estimability, the difference- or Laplacian-based estimator is consistent. We now discuss the practical importance of our theory and the strengths and limitations of our estimators compared to the generalized least squares (GLS) estimator.

\subsection{Feasibility of estimation and order of differencing}\label{sec:guidance} 
We first discuss how we can use the observed data $(X,Y)$ to assess the feasibility of consistently estimating $\beta$ and, if feasible, how to determine the order of differencing/Laplacian from the data. 

We note that through all our theory results, we have shown that when the sharp conditions for consistent estimability are met (except possibly the boundary case), the differencing or Laplacian-based estimator of suitable order is consistent. This estimator is non-parametric as it does not require any knowledge of the parameters of the true covariance family of $(X,W)$. Even knowledge of the covariance class is not needed, as long as they belong to Assumption \ref{eq:K.assump}, which, as we have seen in Section \ref{sec:examples}, holds for Matérn, powered exponential, Cauchy, or their linear combinations. We have also shown in Section \ref{sec:nsandng} that the estimator is robust to popular forms of non-stationarity or non-Gaussianity. 

If $(X,W)$ satisfies Assumption \ref{eq:K.assump}, then so does $Y$. So, using observed $(X,Y)$, one can consistently estimate the exponents of the principal irregular terms for both $Y$ and $X$ (see \cite{zhu2002} for $\alpha<2$; similar results hold for larger $\alpha$ by considering higher order differences).
If $\alpha_{YY}$ is estimated to be less than $\alpha_{XX} - d$, then we know from our theory that one cannot consistently estimate $\beta$. If that is not the case, then we can use the OLS estimator based on $p^{th}$ order differences for the smallest $p$ such that $2p$ is greater than the estimated $\alpha_{XX}$ (or $m^{th}$ order discrete Laplacians for the smallest $m$ such that $4m$ is greater than the estimated $\alpha_{XX}$). This should give us a way to consistently estimate $\beta$ in a fully data-driven manner, contingent on the assumption that $\alpha_{XW} > \alpha_{XX}$, which is required for the existence of a consistent estimate of $\beta$.
Thus, our theory provides practical guidance on data-driven assessment of the feasibility of estimation of $\beta$ and the order of differencing required, if feasible. The effectiveness of this strategy in real-world settings needs further study. 

\subsection{GLS estimator}\label{sec:gls}
An exhaustive comparison among different estimators of $\beta$ under spatial confounding is a separate study in its own right. Nonetheless, some discussion is merited regarding the generalized least squares (GLS) estimator, which is commonly used to estimate $\beta$ in the presence of dependence and/or confounding. 

As discussed in the main text, there are important existing results on the consistency of GLS. \cite{wang2020prediction} proved that the GLS estimator is inconsistent when $X$ is a fixed function of space that lies in the reproducing kernel Hilbert space of the kernel of $W$.
In \cite{bolin2025spatial}, for the setting where the smoothing operator is identity, i.e., the true fixed function $X$ is observed, this GLS estimator is proved to be consistent if $X$ is not in the RKHS of $\textrm{Cov}(W)$. \cite{yu2022parametric} obtain similar conditions based on sample paths of a random $X$ (GRF) that is independent of $W$.

These theoretical results assume either $X$ to be a fixed function of space or a GRF independent of $W$, implying that there is no statistical correlation between $X$ and the error process $W$. The GLS estimator they study is essentially an {\em `oracle GLS'}, using the true covariance of $W$, which is the unbiased minimum variance estimator and the maximum likelihood estimator. This is no longer true when $X$ is correlated with $W$ (as the observed $X$ is informative about the distribution of $W$) and/or when the covariance function class used for the GLS is misspecified, which is likely to be the case in any real-world application. 

We now present a theoretical counter-example showing that when there is spatial confounding, i.e., $X$ is correlated with $W$, then the GLS estimator from a misspecified covariance family is inconsistent in a setting where the Laplacian-based estimator is consistent. 

\begin{proposition}[Inconsistency of GLS with misspecified covariance family]\label{prop:gls} Let $(X(s),W(s))^T$ be a zero mean stationary bivariate GRF on $\mathbb R$ with covariance satisfying Assumption \ref{eq:K.assump} for some $L>0$ and with $c_{12} \neq 0$, $\alpha_{12} > \alpha_{11}$ and $\alpha_{11} < \alpha_{22} +1$.
	Let $X$ and $Y=X\beta + W$ be observed at the grid points $s_i = i h$ with $h = \blue L/n$ and $0 \le i \le n$. Let $\Sigma$ be the $(n+1)\times (n+1)$ covariance matrix over the grid   based on the exponential covariance function, i.e., 
	\[
	\Sigma_{ij} = \exp\!\Big(-\lambda\,|s_i-s_j|\Big),\qquad \lambda>0.
	\] 
	If $\alpha_{11} > 2$,
	then for all $\lambda \in (0,\infty) \setminus S$ where $S \subseteq \mathbb (0,\infty)$ with $|S|\leq 12$, the GLS estimator based on the working covariance matrix $\Sigma$, i.e., $\hat\beta_{GLS}= \frac{X^T \Sigma^{-1} Y}{X^T \Sigma^{-1} X}$ is inconsistent for $\beta$. 
\end{proposition}

The result shows that the GLS with an exponential working covariance matrix will lead to an inconsistent estimate of $\beta$ as long as there is spatial confounding ($c_{12} \neq 0$) and $2 < \alpha_{11} < \min\{\alpha_{12},\alpha_{22}+1\}$.
From the second part of the inequality, we know that the difference-based estimator of suitable order will be consistent. Yet, the GLS is inconsistent here when using the exponential covariance family. 
This includes scenarios where the covariance misspecification is infinitesimally small.
Consider the case where for a small $\delta > 0$, $(X,W)$ is a bivariate Matérn where $\nu_{XX}=1 + \delta/2$, $\nu_{WW}=\frac 12 + \delta$, $\nu_{XW} > \nu_{XX}$. Here, the exponential covariance is only slightly misspecified, using $\nu=1/2$ in the Matérn, where the true $\textrm{Cov}(W)$ is based on $\nu_{WW}=\frac 12 + \delta$. However, in this scenario we will have the true $\alpha_{22}=1+\blue 2\delta$, $\alpha_{11}=2+\delta$, $\alpha_{12}> \alpha_{11}$ and we know from Proposition \ref{prop:gls} that the GLS estimator using the exponential working covariance will be inconsistent for all possible positive values of $\lambda$ except possibly a small set $S$ with maximum cardinality $12$ ($S$ consists of possible zeroes in the mean and variances of the numerator and denominator in the bias expression for the GLS, which are polynomials in $\lambda$). 

This inconsistency of the GLS using the exponential  covariance is problematic as the exponential family is popularly used as the working covariance model for GLS in place of the more general Matérn family because this choice circumvents the time-intensive evaluation of the Bessel function featuring in the general Matérn family, and our result shows that this practice is not advisable if estimation of $\beta$ is the goal under spatial confounding. 

These findings are supported in our numerical experiments in Section \ref{sec:simgls}. The Proposition is based on the exponential covariance since the GLS precision matrix on a grid is then available in closed form, facilitating the tractability of the GLS expression. However, we conjecture that similar inconsistency results will hold for spatial domains in $\mathbb R^d$ and for other undersmoothed working covariance classes of fixed smoothness.

Unlike the GLS estimator, the differencing-based estimator of suitable order is a robust, non-parametric estimator that does not rely on knowing or estimating the functional form of the covariance family or its parameters. It is consistent for every scenario where $\beta$ can be consistently estimable. While similar results possibly will hold also for GLS with a suitably chosen working covariance family and estimated parameters, this is yet to be established for the GLS when $(X,W)$ is a bivariate correlated random field, and is generally challenging due to having to account for the covariance parameter estimation involved in the GLS covariance matrix. The differencing-based estimator is also scalable to very large datasets, where the exact GLS is challenging to implement due to requiring large matrix inversions.

\subsection{Irregular designs}\label{sec:irreg}

For irregular designs, instead of using unweighted first differences or Laplacians, we propose using spacing-weighted differences. The following result shows that this leads to a consistent estimator with the same rates of convergence for the numerator and denominator terms (up to constants). For simplicity,  we have proved the result for one dimension. The results should be extendable to higher dimensions, as we discuss after the theorem. 

\begin{theorem}\label{th:irreg-second}
	Let $(X,W)$ denote a bivariate stationary GRF on $\mathbb R$ with covariance
	$(K_{k\ell})_{k,\ell=1,2}$ such that each $K_{k\ell}$ satisfies Assumption \ref{eq:K.assump} for some $L>0$ 
	with parameter $\alpha_{k\ell}>0$, and assume $\alpha_{11}<\alpha_{12}$.
	Let $Y(s)=X(s)\beta+W(s)$.
	For each $n$, let
	$\calS_n=\{s^{(n)}_0=0<s^{(n)}_1<\cdots<s^{(n)}_n=L\}$ be an irregular grid with spacings
	$h^{(n)}_i=s^{(n)}_{i+1}-s^{(n)}_i$ satisfying
	\[
	\frac{m}{n}\le h^{(n)}_i\le\frac{M}{n}\qquad(0\le i\le n-1)
	\]
	for constants $0<m\le M<\infty$ independent of $n$.
	
	Let the first–order spacings be $h_i=h^{(n)}_i$, and define the
	second-order spacings as 
	\[
	\tilde h_i=\frac{h_{i-1}+h_i}{2},\qquad
	\tilde W=\mathrm{diag}(\tilde h_1,\ldots,\tilde h_{n-1}). 
	\]
	Define the spacing-weighted second-order difference (or first Laplacian) operator $\Delta X$ as 
	\[
	(\Delta X)_i
	=
	\tilde h_i^{-1}\Bigg(
	\frac{X(s_{i+1})-X(s_i)}{h_i}
	-
	\frac{X(s_i)-X(s_{i-1})}{h_{i-1}}
	\Bigg). 
	\]
	If $\alpha_{11}<4$, $\alpha_{11} \neq 2$ and $\alpha_{22}>\alpha_{11}-1$, then 
	\[
	\mathrm{OLS}_n(\Delta X,\Delta Y)\;\rightarrow\beta \mbox{ in probability } 
	\text{as }n\to\infty.
	\]
\end{theorem}

In the theorem, we considered a larger range $(0,4)$ for the exponent $\alpha_{11}$ of the principal irregular term for $X$. This is the more interesting setting requiring second-order spacing-weighted differences, as 
for the setting $\alpha_{11} \in (0,2)$, even unweighted differences can work. We present numerical experiments in Section \ref{sec:simirreg} which show that for $\alpha_{11} > 2$, none of the unweighted first- or second-difference estimators, or the spacing-weighted first-difference estimator works, but the spacing-weighted second-difference estimator performs well. 

We believe the idea can be extended to irregular designs in $\mathbb R^d$ as follows. For irregular designs in $[0,1]^d$, the idea is to estimate $\beta$ by OLS after applying a discrete
second-order operator. Specifically, form a sparse matrix $A\in\mathbb R^{n\times n}$ from a
local neighbor graph on the design points $\{s_i\}_{i=1}^n$ and use
\[
\widehat\beta_{A,n}
=
\mathrm{OLS}_n(AX,AY)
=
\frac{(AX)^T(AY)}{(AX)^T(AX)}.
\]
The key requirement on $A$ is that, at interior points, each row acts like a Laplacian and
therefore cancels the low-order analytic part of the covariance expansion. This can be done as follows. For an interior node $s_i$, let $N(i)$ denote a set of local graph neighbors and write the
offsets $r_{ij}=s_j-s_i$. Choose weights $\{a_{ij}: j\in N(i)\}$ and for any function $f$ evaluated at $s_\blue 1,\ldots,s_n$, define the matrix $A$ as
\[
(Af)_i
=
\sum_{j\in N(i)} a_{ij}\,\bigl(f(s_j)-f(s_i)\bigr).
\]
Impose the moment cancellation conditions 
\[
\sum_{j\in N(i)} a_{ij}=0,
\qquad
\sum_{j\in N(i)} a_{ij}\, r_{ij}=0,
\qquad
\sum_{j\in N(i)} a_{ij}\, r_{ij} r_{ij}^T
=
c_i I_d,
\]
for some scalar $c_i>0$. These conditions make $(Af)_i$ behave like a scaled Laplacian
$\Delta f(s_i)$ and annihilate the constant and linear Taylor expansion terms for $f$, so that when $A$ is applied to both $X$ and $Y$ the remaining leading contribution comes from the principal
irregular term governed by $\alpha_{11}$, yielding matching orders for the numerator and
denominator in $\mathrm{OLS}_n(AX,AY)$.

\section{Additional simulation experiments}\label{sec:addsim}

\subsection{Comparisons with the GLS estimator}\label{sec:simgls}
We present an empirical study comparing the difference-based estimators with three GLS estimators --- the GLS estimator that uses the true marginal covariance of $W$, a GLS estimator where the covariance family is correctly specified as Matérn and all parameters are estimated, and a GLS estimator where the working covariance uses the exponential family where parameters are estimated.

We generate $X$ and $W$ from a bivariate Matérn stationary GRF where the marginal smoothness for $X$ is 1.2 and for $W$ is 0.9, and the cross-smoothness is $1.5$. This constitutes a situation where the theory says that the second differences-based estimator is consistent. Hence, in addition to the GLS estimators, we also present the first- and second-differences based OLS estimators.

\begin{figure}[h]
	\centering
	\includegraphics[width=0.8\textwidth]{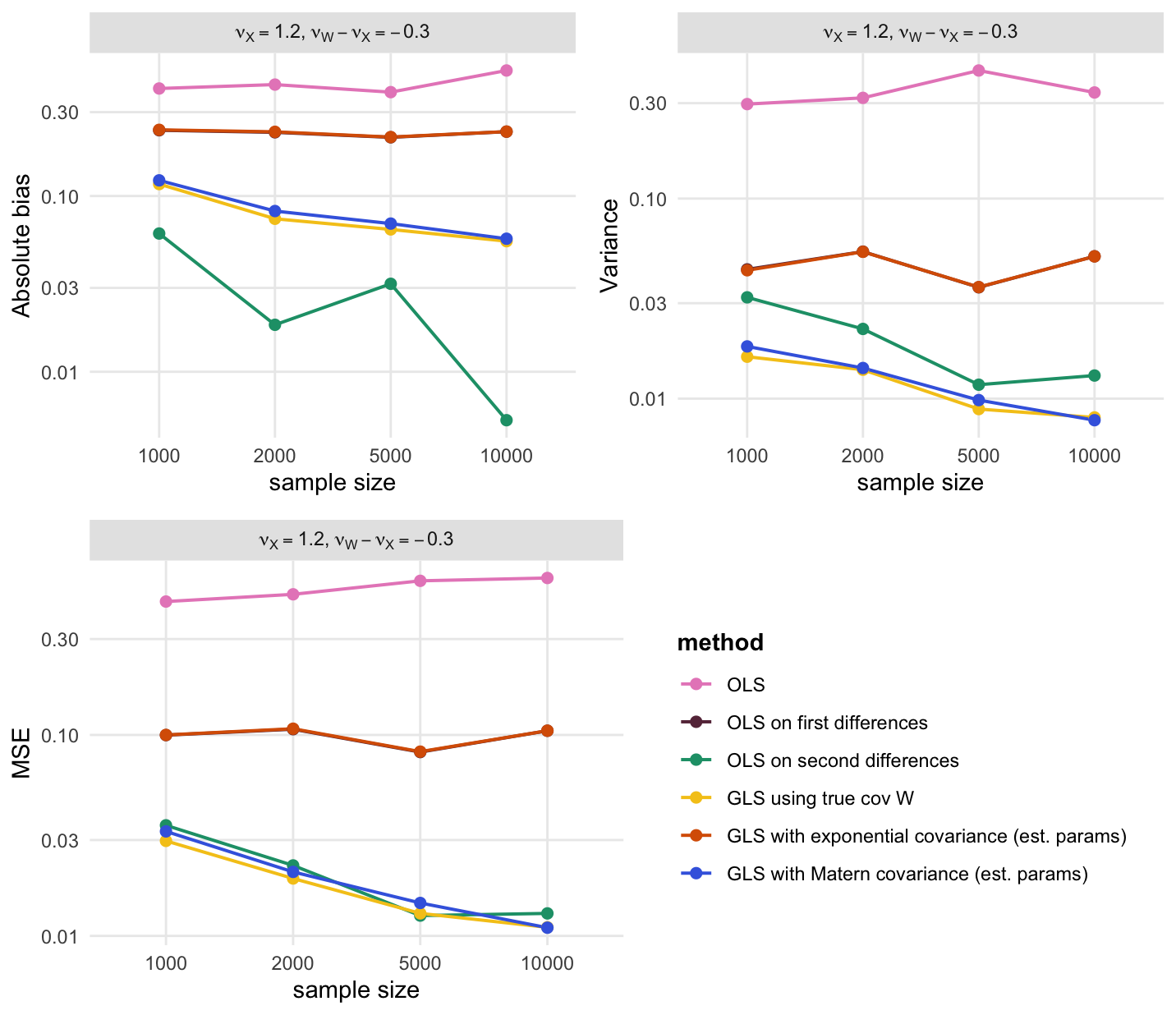}
	\caption{Summary metrics for difference-based and GLS estimators.}
	\label{fig:gls_metrics_settings2}
\end{figure}

The results are as follows. The GLS estimator using a working covariance matrix from the exponential family (the setting of Proposition \ref{prop:gls}) performs poorly, with both bias and variance converging to non-zero limits, aligning with what is expected from the theory. The first-differences based estimator is almost identical in performance to this GLS estimator (hence, its lines are not visible in the figure) with non-decreasing bias or variance. This is not surprising as the first-difference estimator is essentially GLS (on contrasts) based on a Brownian motion covariance, and from our theoretical results, we have seen that when $\alpha_{11} > 2$, we need second or higher order differencing for consistent estimation of $\beta$. 
The GLS using the true marginal covariance of $W$ and the GLS with Matérn covariance with all parameters estimated perform on par with the second-difference based estimator, which is the provably consistent estimator for this setting. The latter has slightly less bias and slightly higher variance than these GLS estimators, and all three estimators lead to very similar MSE. 

\subsection{Simulations for irregular design}\label{sec:simirreg}
We present results from a simulation experiment based on irregular design  on $[0,1]$. For each
$n \in \{100,500,1000,2000\}$, the locations are obtained by subsampling $n$
points without replacement from a fine grid of size approximately $5n$, and
then sorting them. So the spacings are random and nonuniform.

At these locations, a zero mean bivariate GRF $(X,W)$ is
simulated with Mat\'ern covariances sharing range parameter $0.2$ and
marginal variances $\sigma_1^2=\sigma_2^2=1$. The smoothness parameters are
$\nu_X=\nu_{11}=1.5$, $\nu_W=\nu_{22}=\nu_{11}+\delta_{22}=2.0$, and the
cross smoothness is $\nu_{12}=\nu_{11}+0.25=1.75$, with cross correlation
parameter $\rho=\min\{0.5,\sqrt{\nu_{11}\nu_{22}}/\nu_{12}\}$. Observations follow the data generation process $Y=\beta X + W$ with $\beta=2$.

The results are displayed in Figure \ref{fig:irreg}. The unweighted (equal-spacing) first and second differences based OLS estimators do not work well, nor does the first-order spacing-weighted differencing-based one (as $\alpha_{11} > 2$). The second-order spacing weighted differencing-based estimator works well. As $\alpha_{11} < 4$, this aligns with the result of Theorem \ref{th:irreg-second} and shows that the idea of differencing or Laplacian-based estimation generalizes well to practical settings. 

\begin{figure}
	\centering
	\includegraphics[width=0.9\linewidth]{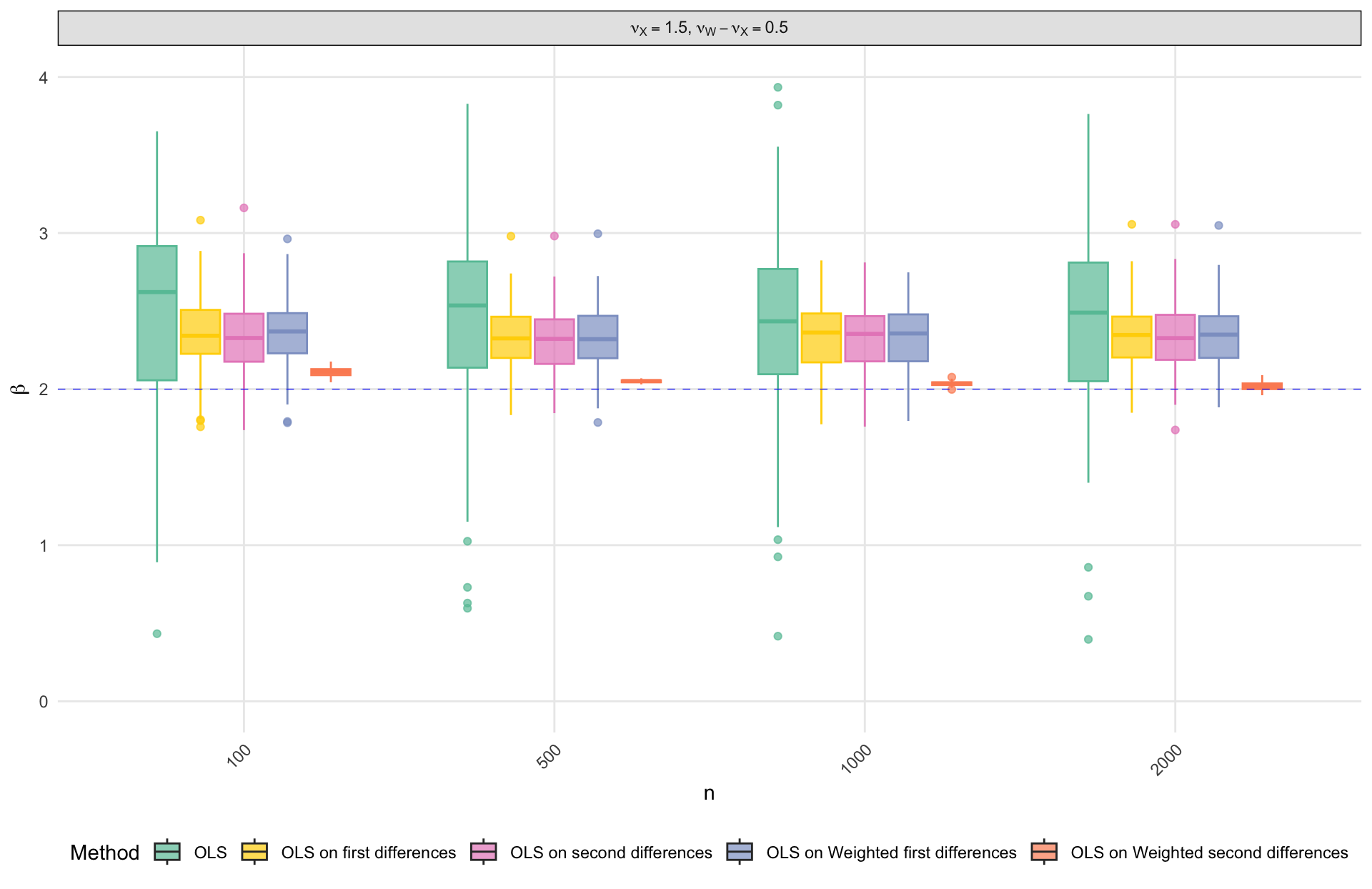}
	\caption{True $\beta=2$ (dotted line) and estimated $\beta$ for irregular designs of different sample sizes.}
	\label{fig:irreg}
\end{figure}

\subsection{Simulations for noisy data}\label{sec:simnoise}

We also considered a setting with measurement error or noise in the observed outcome to assess the local averaging-and-differencing-based estimator, which was proved to be consistent for the noisy case in Theorem \ref{thm:error}. We use a nested gridded design as in Figure \ref{fig:lad} but in $\mathbb R$. For each coarse resolution $n \in \{50,100,200\}$ and averaging index
$\rho \in \{0.2,0.3,0.4,0.5\}$, locations are generated by first taking the coarse
grid to be $\{i/n: i=0,\dots,n\}$. Then, around each point on this coarse grid, we add a symmetric
subgrid of size $2\lceil n^{\rho}\rceil+1$ at a very fine spacing, yielding a
clustered, irregular design with total size
$N=(n+1)\bigl(2\lfloor n^{\rho}\rceil+1\bigr)$. At these locations, a zero mean
bivariate GRF $(X,W)$ is simulated with Mat\'ern marginals
having range $0.2$, variances $\sigma_1^2=\sigma_2^2=1$, smoothness
$\nu_X=0.5$ and $\nu_W=0.8$, and cross smoothness $\nu_{XW}=0.75$, with
cross correlation parameter $\rho_{XW}=0.5$. The response is generated as
$Y=\beta X + W + \varepsilon$ with $\beta=2$ and independent noise
$\varepsilon \sim N(0,1)$, and the experiment uses $100$ Monte Carlo
replicates for each value of $\rho$.

The results are shown in Figure \ref{fig:noise}. Across all panels, plain OLS is biased upward relative to the truth
$\beta=2$. OLS on first differences reduces the bias but remains more
variable, while OLS on second differences shows the largest variability and
outliers, consistent with differencing amplifying observation noise. The
`averaging-and-differencing' estimator, which averages within each cluster
(block) and then differences the block means, is the most stable and typically
closest to $\beta=2$, with dispersion generally shrinking as $n$ increases
and as $\rho$ increases, consistent with the theory.

\begin{figure}
	\centering
	\includegraphics[width=0.9\linewidth]{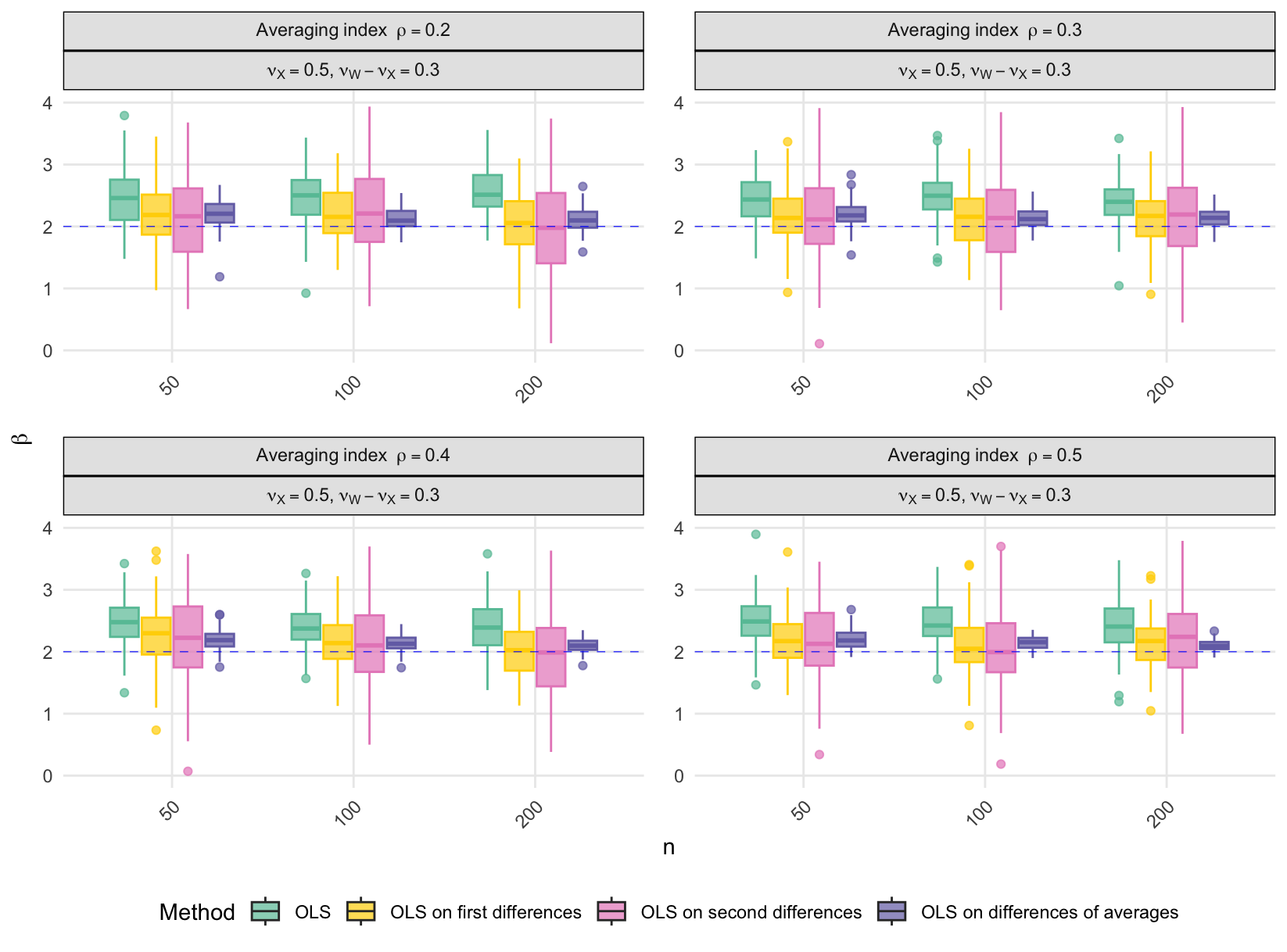}
	\caption{True $\beta=2$ (dotted line) and estimated $\beta$ for data observed with measurement error.}
	\label{fig:noise}
\end{figure}

\newpage
\section{Main proofs}\label{sec:mainproofs} We provide the proofs of some of the main results here that illustrate the central techniques used for both consistent estimability (orthogonality) and equivalence. 

\subsection{Proofs for estimability in one dimension}\label{sec:proof1d}
The proofs of the general results on estimability of $\beta$ in one dimension (Theorems \ref{th:beta.con},  \ref{th:suff}, and Corollary \ref{cor:boundary}) are provided here.

\begin{proof}[\textbf{Proof of Theorem \ref{th:beta.con}}]
	We first prove the case for $p=1$, $0<\alpha_{11}<2$. From the statement of the Theorem we have $\alpha_{12}=\alpha_{11}$, $c_{12}=\beta c_{11}$ and $\alpha_{11}-1 < \alpha_{22} \le \alpha_{11}$. 
	As $K$ is even from Assumption \ref{eq:K.assump}, we have
	\begin{equation*}
		E \left[\{\nabla_h^{(1)}Z_k(0)\} \{\nabla_h^{(1)}Z_\ell(0)\}\right] = \frac 1{h^2} \left[2K_{k\ell}(0)-2K_{k\ell}(h)\right]
	\end{equation*}
	for $k,\ell=1,2$.
	Since $\alpha_{k\ell} < 2$ and, from Assumption \ref{eq:K.assump}, $A_{k\ell}$ is analytic, we have 
	$2A_{k\ell}(0)-2A_{k\ell}(h) = O(h^2) = o(h^{\alpha_{k\ell}})$ and 
	$2B_{k\ell}(0)-2B_{k\ell}(h) = -2 c_{k\ell}h^{\alpha_{k\ell}} + o(h^{\alpha_{k\ell}})$. So, we have
	\begin{equation}\label{eq:num.mean}
		\frac 1 {nh^{\alpha_{11}-2}} E\sum_{j=0}^{n-1}\{\nabla_h^{(1)}Z_1(h j)\}\{\nabla_h^{(1)}Z_2(h j)\} = -2\beta c_{11} + o(1)
	\end{equation} 
	and
	\begin{equation}\label{eq:denom.mean}
		\frac 1 {nh^{\alpha_{11}-2}}  E\sum_{j=1}^n\{\nabla_h^{(1)}Z_1(h j)\}^2 = - 2 c_{11} + o(1).
	\end{equation}
	
	Note that the bivariate first difference process
	$(\nabla_h^{(1)} Z_k(h j),\nabla_h^{(1)} Z_\ell(h j))$ is stationary on the grid. For a function \(f:\mathbb{R}\to\mathbb{R}\), define the second-order discrete difference operator
	\begin{equation}\label{eq:funclap}
		\nabla_h^{(2)} f(x)
		=
		h^{-2}\{f(x+h)-2f(x)+f(x-h)\}.
	\end{equation} 
	Then $Cov(\nabla_h^{(1)} Z_k(h j),\nabla_h^{(1)} Z_\ell(h j'))=-\nabla_{h}^{(2)} K_{k\ell}(h(j-j'))$.
	Using standard properties of the multivariate normal distribution, the joint stationarity of $(Z_1,Z_2)$, and Isserlis's theorem, we have
	
	\begin{equation}\label{eq:var.sum}
		\begin{aligned}
			& \mathrm{Var}\left[ \sum_{j=0}^{n-1}\{\nabla_h^{(1)} Z_k(h j)\}\{\nabla_h^{(1)} Z_\ell(h j)\}\right] \\
			& \quad = n
			\left[
			\{\nabla_h^{(2)}K_{kk}(0)\}
			\{\nabla_h^{(2)}K_{\ell\ell}(0)\}
			+
			\{\nabla_h^{(2)}K_{k\ell}(0)\}^2
			\right] \\
			&\quad+
			2\sum_{j=1}^{n-1}(n-j)
			\left[
			\{\nabla_h^{(2)}K_{kk}(hj)\}
			\{\nabla_h^{(2)}K_{\ell\ell}(hj)\}
			+
			\{\nabla_h^{(2)}K_{k\ell}(hj)\}^2
			\right].
		\end{aligned}
	\end{equation}
	Now
	\begin{equation}\label{eq:var.bound1}
		K_{k\ell}(0)-K_{k\ell}(h) =  -c_{k\ell}h^{\alpha_{k\ell}} + o(h^{\alpha_{k\ell}})= O(h^{\alpha_{k\ell}}),
	\end{equation}
	\begin{equation}\label{eq:var.bound2}
		\nabla_h^{(2)}K_{k\ell}(0) = 2c_{k\ell}
		h^{\alpha_{k\ell}-2} + o\big(h^{\alpha_{k\ell}-2}\big)
	\end{equation}
	and, using the second-order Taylor series, for $j>0$,
	\begin{align*}
		\nabla_h^{(2)}K_{k\ell}(hj) & =  \nabla_h^{(2)}A_{k\ell}(hj) + \nabla_h^{(2)}B_{k\ell}(hj) \\
		& =  O(1) + c_{k\ell} \{2B_{k\ell}''(\xi_2) - B_{k\ell}''(\xi_1)\}
	\end{align*}
	for some $\xi_1$ and $\xi_2$ in $[hj,h(j+2)]$.
	From the form of $B''_{k\ell}$ in Assumption \ref{eq:K.assump}, it follows that for $j>0$,
	\begin{equation}\label{eq:var.bound3}
		|\nabla_h^{(2)}K_{k\ell}(hj)| = O(1) +   O(h^{\alpha_{k\ell}-2}j^{\alpha_{k\ell}-2}). 
	\end{equation}
	
	From (\ref{eq:var.bound1})--(\ref{eq:var.bound3}), we have
	\begin{align}
		\mathrm{Var}\left[  \sum_{j=0}^{n-1}\{\nabla_h^{(1)} Z_1(h j)\}^2\right] \nonumber & = nO(h^{2\alpha_{11}-4}) +  O(n^2) + O\left( \sum_{j=1}^{n-1} nh^{2\alpha_{11}-4}j^{2\alpha_{11}-4}\right) \nonumber\\
		& =  O(n^{5-2\alpha_{11}}) + O(n^2) + n^{5-2\alpha_{11}} O\left( \sum_{j=1}^{n-1} j^{2\alpha_{11}-4}\right). 
		\label{eq:var.bd1}
	\end{align}
	Now
	\[
	\sum_{j=1}^{n-1} j^{2\alpha_{11}-4} = 
	\begin{cases}
		O(1) & \mbox{if } 2\alpha_{11} < 3 \\
		O(\log n) & \mbox{if } 2\alpha_{11} = 3 \\
		O(n^{2\alpha_{11}-3}) & \mbox{if } 3 < 2\alpha_{11} < 4,
	\end{cases}
	\]
	so
	\[
	\mathrm{Var}\left[  \sum_{j=0}^{n-1}\{\nabla_h^{(1)} Z_1(h j)\}^2\right] 
	= 
	\begin{cases}
		O(n^{5-2\alpha_{11}}) & \mbox{if } 2\alpha_{11} < 3 \\
		O(n^2\log n) & \mbox{if } 2\alpha_{11} = 3 \\
		O(n^2) & \mbox{if } 3 < 2\alpha_{11} < 4.
	\end{cases}
	\] 
	Applying similar logic and using that Equation (\ref{eq:K12limit}) implies $2\alpha_{12} = 2\alpha_{11} \geq \alpha_{11}+\alpha_{22}$, we have
	\begin{align*} \mathrm{Var}\left[ \sum_{j=0}^{n-1}\{\nabla_h^{(1)} Z_1(h j)\}\{\nabla_h^{(1)} Z_2(h j)\}\right]
		= \begin{cases}    
			O\big(n^{5-\alpha_{11}-\alpha_{22}}\big) & \mbox{if } \alpha_{11}+\alpha_{22} < 3 \\
			O\big(n^{2}
			\log n\big) & \mbox{if } \alpha_{11}+\alpha_{22} = 3 \\ 
			O\big(n^{2}
			\big) & \mbox{if } 3< \alpha_{11}+\alpha_{22} < 4
		\end{cases}
	\end{align*}
	
	Normalizing both the numerator and denominator, we have 
	\begin{equation}\label{eq:varden}
		\mathrm{Var}\left[ \frac 1{nh^{\alpha_{11}-2}}  \sum_{j=0}^{n-1}\{\nabla_h^{(1)} Z_1(h j)\}^2\right] = 
		\begin{cases}
			O(n^{-1}) & \mbox{if } 2\alpha_{11} < 3 \\
			O(n^{-1}\log n) & \mbox{if } 2\alpha_{11} = 3 \\
			O(n^{2\alpha_{11}-4}) & \mbox{if } 3 < 2\alpha_{11} < 4,
		\end{cases}
	\end{equation} 
	and 
	\begin{align}\label{eq:varnum}
		&   \mathrm{Var}\left[ \frac 1{nh^{\alpha_{11}-2}} \sum_{j=0}^{n-1}\{\nabla_h^{(1)} Z_1(h j)\}\{\nabla_h^{(1)} Z_2(h j)\}\right] \nonumber \\
		& \quad = 
		\begin{cases}
			O(n^{-1+\alpha_{11}-\alpha_{22}}) & \mbox{if } \alpha_{11}+\alpha_{22} < 3 \\
			O(n^{2\alpha_{11}-4}\log n) & \mbox{if } \alpha_{11}+\alpha_{22} = 3 \\
			O(n^{2\alpha_{11}-4}) & \mbox{if } 3 < \alpha_{11}+\alpha_{22} < 4.
		\end{cases}  
	\end{align}
	Both variances converge to zero as $\alpha_{11} < 2$ and $\alpha_{22} > \alpha_{11}-1$. Hence, from (\ref{eq:num.mean}) and (\ref{eq:varnum}), as $\alpha_{11} < \alpha_{22}+1$, we have
	\begin{equation*}
		\frac{\sum_{j=0}^{n-1}\nabla_h^{(1)}Z_1(h j)\nabla_h^{(1)}Z_2(h j)}
		{nh^{\alpha_{11}}} \to -2\beta c_{11} \mbox{ in }L^2
	\end{equation*}
	and from 
	(\ref{eq:denom.mean}) and (\ref{eq:varden}), we have
	\begin{equation*}
		\frac{\sum_{j=0}^{n-1}\{\nabla_h^{(1)}Z_1(h j)\}^2}
		{nh^{\alpha_{11}}} \to -2 c_{11} \mbox{ in }L^2.
	\end{equation*}
	Hence, their ratio converges to $\beta$ in probability.  
	
	If, instead of using first differences of $Z_1$ and $Z_2$, one uses differences of order $p$, then a similar proof shows that the consistency holds for $\alpha_{11} < 2p$.
\end{proof}

\begin{proof}[Proof of Corollary \ref{cor:boundary}]
	Suppose $\alpha_{11}=2p $ for integer $p$ and $\alpha_{22} = \alpha_{11}-1$.
	As in the proof of Theorem \ref{th:beta.con}, we only give the details when $p = 1$. 
	We need to include a $\log h$ term in the analogs to (\ref{eq:num.mean}) and (\ref{eq:denom.mean}):
	\begin{equation}\label{eq:num.mean1}
		-\frac {1}{n\log h} E\sum_{j=0}^{n-1}\{\nabla_h^{(1)}Z_1(h j)\}\{\nabla_h^{(1)}Z_2(h j)\} = 2\beta c_{11} + o(1)
	\end{equation} 
	and
	\begin{equation}\label{eq:denom.mean1}
		-\frac {1} {n\log h}  E\sum_{j=0}^{n-1}\{\nabla_h^{(1)}Z_1(h j)\}^2 = 2 c_{11} + o(1).
	\end{equation}
	It is straightforward to show that
	\[
	\mathrm{Var}\left[ \frac {1}{n\log h} \sum_{j=0}^{n-1}\{\nabla_h^{(1)}Z_1(h j)\}^2 \right] \to 0
	\]
	as $n\to\infty$.
	Furthermore, since $\alpha_{22}=1$, we have from (\ref{eq:var.bound2}),
	$\nabla_h^{(2)}K_{22}(0) = O\big(h^{-1}\big)$ and, for $j>0$, $\nabla_h^{(2)}K_{22}(hj) = o\big((hj)^{-1}\big)$. Note that the last argument requires that $\alpha_{22}=1$, since otherwise $\nabla_h^{(2)}K_{22}(hj)$ is only $O\big((hj)^{-\alpha_{22}}\big)$ and not $o\big((hj)^{-\alpha_{22}}\big)$ for $j>0$.
	
	Similar to (\ref{eq:var.bd1}), we get
	\begin{align*}
		& \mathrm{Var}\left[ \sum_{j=0}^{n-1}\{\nabla_h^{(1)}Z_1(h j)\}\}\{\nabla_h^{(1)}Z_2(h j)\}\} \right] \\
		& = O\big( n\log^2 n\big) + O\big( n^2\log n)\\
		& \qquad + O\left( n\sum_{j=1}^n (1+\log^2(hj))\right) +   n\sum_{j=1}^n  (|\log hj|+1)\left\{O(1)+o\left(\frac{n}{j}\right)\right\}.
	\end{align*}
	Since $\sum_{j=1}^n \log^2(hj) = O(n)$, $\sum_{j=1}^n n/j = O(n\log n)$ and $\sum_{j=1}^n |\log hj| n/j = O(n\log^2 n)$, we have 
	\[
	\mathrm{Var}\left[ \frac {1}{n\log h} \sum_{j=0}^{n-1}\{\nabla_h^{(1)}Z_1(h j)\}\{\nabla_h^{(1)}Z_2(h j)\} \right] \to 0
	\]
	as $n\to\infty$ with the variance converging to zero at a logarithmic rate. The theorem follows for $\alpha_{11}=2$ and $\alpha_{22}=1$.
\end{proof}

\begin{proof}[\textbf{Proof of Theorem \ref{th:suff}}]
	We start with the case $\beta \neq 0$. We consider the bivariate process $(X,Y)$ on $\mathbb R$. Using the conditions of the theorem, as $\alpha_{12} > \alpha_{11}$, for $t \downarrow 0$, we have 
	$$
	\begin{aligned}
		\mbox{Cov}(Y(s+t),X(s)) &= \beta \mbox{Cov}(X(s+t),X(s)) + \mbox{Cov}(W(s+t),X(s)).
	\end{aligned}
	$$
	The irregular term in the cross-covariance is 
	$$
	\begin{aligned}
		\beta c_{11}t^{\alpha_{11}} + c_{12}t^{\alpha_{12}} + o(t^{\alpha_{11}}) + o(t^{\alpha_{12}}) &= \beta c_{11}t^{\alpha_{11}} + o(t^{\alpha_{11}}) \quad (\mbox{as } \alpha_{11}<\alpha_{12} \mbox{ and } \beta \neq 0) \\
		&= c^*_{12}t^{\alpha^*_{12}} + o(t^{\alpha^*_{12}})
	\end{aligned}
	$$ where $c^*_{12}=\beta c_{11}$ and $\alpha^*_{12}=\alpha_{11}$.  
	Similarly, we have 
	$$
	\begin{aligned}
		\mbox{Cov}(Y(s+t),Y(s)) &= \beta^2 c_{11}t^{\alpha_{11}} + 2\beta c_{12}t^{\alpha_{12}} + c_{22}t^{\alpha_{22}} + o(t^{\alpha_{11}}) + o(t^{\alpha_{22}})\\
		& = c^*_{22}t^{\alpha^*_{22}} + o(t^{\alpha^*_{22}})
	\end{aligned}
	$$
	where $\alpha^*_{22}=\min(\alpha_{11},\alpha_{22})$ as $\beta \neq 0$ and $c^*_{22}$ is the corresponding coefficient. 
	
	Letting $K^*=(K^*_{k\ell})_{\{1\leq k,\ell \leq 2\}}$ denote the covariance of $(X,Y)$, we have, as $t \downarrow 0$, 
	
	$$
	\begin{aligned}
		K^*_{k\ell} & = c^*_{k\ell}t^{\alpha^*_{k\ell}} + o(t^{\alpha^*_{k\ell}}) \mbox{ for $k,\ell = 1,2$ where }\\
		& c^*_{11}=c_{11}, \alpha^*_{11}=\alpha_{11}, \\
		& c^*_{12}=\beta c_{11}, \alpha^*_{12}=\alpha_{11}, \mbox{ and } \\
		&\alpha^*_{22}=\min(\alpha_{11},\alpha_{22}).
	\end{aligned}
	$$
	
	This $K^*$ thus satisfies Assumption \ref{eq:K.assump} and (\ref{eq:K12limit}). 
	Also, since $\alpha_{22} > \alpha_{11} - 1$ and $\alpha^*_{22}=\min(\alpha_{11},\alpha_{22})$, we have $\alpha^*_{11} - 1 < \alpha^*_{22} \leq  \alpha^*_{11}$. 
	Thus $(Z_1,Z_2) = (X,Y)$ satisfies all conditions of Theorem \ref{th:beta.con}, and $\beta$ is consistently estimable with the consistent estimator given by $OLS_n^{(p)} (Z_{1},Z_{2})= OLS_n^{(p)} (X,Y)$. 
	
	For the case $\beta=0$, choose a $\beta_0 \neq 0$, and define $Y^*= X\beta_0 + W$, and $\widetilde Y = Y+Y^* = X\beta_0 + 2W$. As $\beta_{0} \neq 0$, from the first scenario, we have $OLS_n^{(p)} (X,Y^*) \to \beta_0$ and $OLS_n^{(p)} (X,\widetilde Y) \to \beta_0$. Due to linearity of differencing, we have $\nabla_p \widetilde Y - \nabla_p Y^* = \nabla_p Y$ implying 
	$OLS_n^{(p)} (X,Y) = OLS_n^{(p)} (X,\widetilde Y) - OLS_n^{(p)} (X,Y^*) \to 0$. 
	
\end{proof}

\subsection{Proofs of equivalence}\label{sec:proofequiv}

We first prove the following Lemma.

\begin{lemma}\label{lem:integral}
	Let $b_{ij}$ be defined as in (\ref{eq:bij}). Then \begin{equation}\label{eq:polya2}
		\intrd \intrd |b_{ij}(\mu,\omega)|^2d\mu\, d\omega < \infty.
	\end{equation}.
\end{lemma}

\begin{proof}
	As $\calD=[-T,T]^d$, any $b_{ij}$ as in (\ref{eq:bij}) can be written, using change of variable $h \to -h$, as $
	b_{ij}(\mu,\omega) = (2\pi)^{-2d} \int_\calD \int_\calD \exp(-\iota a^T\mu - \iota h^T \omega)\rho_{ij}(a,-h) da\, dh$. Let $\zeta$ denote a $2d \times 1$ vector in $\mathbb R^{2d}$ stacking up $\mu$ and $\omega$, and we can write $b_{ij}(\mu,\omega) = \tilde b_{ij}(\zeta)$. Similarly, we create a $2d \times 1$ vector  $t$ in $\calD_2 = [-T,T]^{2d}$ by stacking $a$ and $h$ and write $\tilde \rho_{ij}(t)=\rho_{ij}(a,-h)$. Then  $\tilde \rho_{ij}(t) \in \calL_2(\mathbb R^{2d})$ if $\rho_{ij}(a,h) \in \calL_2(\mathbb R^d \times \mathbb R^d)$ and is zero outside  $\calD_2$. Then we have $\tilde b_{ij}(\zeta)=(2\pi)^{-2d} \int_{\calD_2} \exp(-\iota t^T \zeta) \tilde \rho_{ij}(t) dt$. Hence, $\tilde b_{ij}$ is the Fourier transform of $\tilde \rho_{ij}$ and as $\tilde \rho_{ij}(t) \in \calL_2(\mathbb R^{2d})$, applying the Plancherel theorem once again, we have  $\int_{\mathbb R^{2d}} |\tilde b_{ij}(\zeta)|^2d\zeta < \infty$, which implies the result.
\end{proof}

The proof of Theorem \ref{th:equiv} relies on the following more general but technical result on the equivalence of multivariate random fields. 
\begin{theorem}\label{th:techequiv} Let $\calP_0$ and $\calP_1$ denote two $p$-dimensional stationary zero-mean Gaussian random field measures on $\calD$. Let $C^{(i)}$ and $F^{(i)}$ denote their respective covariance functions and spectral densities. Suppose Condition \eqref{eq:cond1} is satisfied, and there exists a function $B(\omega,\mu) \in  \calW_{2,\Fz}$ such that $B(\omega,\mu)=B(\mu,\omega)^*$,
	and for all $s,s' \in \calD$, 
	\begin{equation}\label{eq:equivtech}
		C^{(1)}(s-s') -  C^{(0)}(s-s') = \intrd \intrd \exp(-\iota {s}^T \omega+\iota {s'}^T \mu)  \Fz(\omega) B(\omega,\mu) \Fz(\mu)  d\omega \, d\mu.
	\end{equation}
	Then $\calP_0 \equiv \calP_1$.
\end{theorem}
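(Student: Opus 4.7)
The plan is to adapt the Feldman--Hajek / Skorokhod criterion for equivalence of centered Gaussian measures to the multivariate Paley--Wiener framework of Section \ref{sec:notation}. The approach has two steps: (i) show that under \eqref{eq:cond1} the Cameron--Martin (reproducing-kernel) Hilbert spaces associated with $\calP_0$ and $\calP_1$ coincide; and (ii) show that the operator encoding the covariance difference in \eqref{eq:equivtech} is Hilbert--Schmidt on that common space. Together these yield $\calP_0 \equiv \calP_1$ via the standard Gaussian equivalence criterion, the multivariate analogue of the approach of \cite{skorokhod1973absolute} in the univariate setting.

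Step (i) is immediate from \eqref{eq:cond1}: the sandwich $c_1 \Phi \le F^{(i)} \le c_2 \Phi$ gives $c_1 \|u\|_\Phi^2 \le \|u\|_{F^{(i)}}^2 \le c_2 \|u\|_\Phi^2$ for every $u \in \calW_\calD$, where $\|u\|_\Phi^2$ is the metric \eqref{eq:metric} with $F=\Phi$. Hence $\calW_\calD(\Fz)$ and $\calW_\calD(\Fo)$ coincide as sets with equivalent inner products; denote the common space by $\calH$. For step (ii), I would introduce a bounded self-adjoint operator $T$ on $\calH$ determined by
$$\langle T u, v \rangle_{\Fz} = \intrd \intrd v(\mu)^* \Fz(\mu) B(\mu,\omega) \Fz(\omega) u(\omega)\, d\omega\, d\mu, \quad u,v \in \calW_\calD,$$
which is well-defined because $B \in \calW_\calD^2(\Fz)$ and self-adjoint because $B(\omega,\mu)=B(\mu,\omega)^*$. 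The key calculation is that $\|T\|_{\mathrm{HS}}^2$ is comparable to $\|B\|_{2,\Fz}^2$ up to constants controlled by \eqref{eq:cond1}. This uses the tensor identity noted just after \eqref{eq:w2}: for rank-one kernels $u(\mu)v(\omega)^* \in \calW_\calD^2(\Fz)$ one has $\|uv^*\|_{2,\Fz}^2 = \|u\|_{\Fz}^2\|v\|_{\Fz}^2$, so picking an orthonormal basis $\{e_k\}$ of $\calH$ and expanding $B$ in the tensor basis $\{e_k e_\ell^*\}$ of $\calW_\calD^2(\Fz)$ yields $\|T\|_{\mathrm{HS}}^2 = \sum_{k,\ell} |\langle T e_k, e_\ell\rangle_{\Fz}|^2 \asymp \|B\|_{2,\Fz}^2 < \infty$.

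The main obstacle is the functional-analytic bookkeeping required to identify $T$, constructed above, with a genuine perturbation of the covariance operator of $\calP_0$ at the Hilbert-space level, i.e.\ to verify an operator identity of the shape $C^{(1)}=C^{(0)}+C^{(0)} T C^{(0)}$ in the spectral representation on $\calH$. This requires extending the bilinear form from the dense subspace $\calW_\calD \subset \calH$ (and from $\calW_\calD^2 \subset \calW_\calD^2(\Fz)$) using the Pólya--Plancherel bound \eqref{eq:polya2}, matching \eqref{eq:equivtech} to the spectral form of the covariance operator of $\calP_0$, and checking that $I+T$ is strictly positive, which follows from the lower bound $\Fo \ge c_1 \Phi > 0$ in \eqref{eq:cond1}. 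Once these identifications are in place, the Hilbert--Schmidt property of $T$ together with equality of Cameron--Martin spaces delivers $\calP_0 \equiv \calP_1$ through Feldman--Hajek.
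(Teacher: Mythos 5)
Your proposal is correct and follows essentially the same route as the paper: both define the integral operator on the Paley--Wiener space $\calW_\calD(\Fz)$ induced by the kernel $B$, show it is Hilbert--Schmidt with norm controlled by $\|B\|_{2,\Fz}$, use the sandwich condition \eqref{eq:cond1} to identify the Cameron--Martin spaces and guarantee positivity, and conclude via the Feldman--Hajek/Skorokhod criterion (the paper executes this by patching the proof of Theorem 1 of Bachoc et al.\ at exactly the points you flag, namely well-definedness of the operator, finiteness of its norm, and strict positivity of the eigenvalues of the covariance operator).
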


Proof of Theorem \ref{th:techequiv} is provided in Section \ref{sec:otherproofs}.

\begin{proof}[\textbf{Proof of Theorem \ref{th:equiv}}] 
	For a spectral density $F$, let $\calP(F)$ denote the measure on the paths of the zero-mean stationary Gaussian random field on $\mathbb R^d$ which has spectral density $F$.
	
	We first consider the case where $\Fo(\omega) \geq \Fz(\omega)$ for all $\omega$. Define
	\begin{align*}
		\widetilde \Fz = c_1 \Phi; \widetilde \Fo = c_1 \Phi + \Fo - \Fz; \widetilde \Ft = \Fz - c_1\Phi.
	\end{align*}
	
	Then for all $i$ and $\omega$, the matrix $\widetilde \Fi(\omega)$ is positive definite and $\tfz$ and $\tfo$ also satisfy (\ref{eq:cond1}). Let $\widetilde Z^{(i)}$ denote independent zero-mean GRFs with spectral densities $\widetilde \Fi$, for $i=0,1,2$. Then for $i=0,1$, defining $Z^{(i)} = \widetilde Z^{(i)} + \widetilde Z^{(2)}$, we have $Z^{(i)}$'s to be zero-mean GRFs with respective spectral densities $\Fi$, and it suffices to show that the measures corresponding to the paths of $\widetilde Z^{(i)}$ for $i=0,1$ are equivalent. 
	
	Let $\{g_k\}_{k \in \mathbb N}$ denote an orthonormal basis function of $\calW_\calD(\widetilde\Fz)$ and let $H=\Phi^{-1/2}(\Fo-\Fz)\Phi^{-1/2} = \Phi^{-1/2}(\widetilde\Fo-\widetilde\Fz)\Phi^{-1/2}$.
	Then 
	\begin{equation}\label{eq:basis-bound-step}
		\begin{aligned}
			\sum_k \left[\|g_k\|^2_{\widetilde \Fo} - \|g_k\|^2_{\widetilde \Fz} \right]^2 = & \sum_k \left(\intrd g_k(\omega)^* \Phi^{1/2}(\omega) H(\omega) \Phi^{1/2}(\omega) g_k(\omega) d\omega \right)^2\\
			\leq & \sum_k  \left(\intrd g_k(\omega)^* \Phi^{1/2}(\omega) H(\omega) H^*(\omega) \Phi^{1/2}(\omega) g_k(\omega) d\omega \right) \times \\
			& \qquad \left(\intrd g_k(\omega)^* \Phi(\omega) g_k(\omega) d\omega \right) \\
			= & \frac 1{c_1} \sum_k  \left(\intrd g_k(\omega)^* \Phi^{1/2}(\omega) H(\omega) H^*(\omega) \Phi^{1/2}(\omega) g_k(\omega) d\omega \right) \\
			\leq & \frac 1{c_1} \sum_k  \left(\intrd g_k(\omega)^* \Phi(\omega)g_k(\omega) \| H(\omega) \|^2 d\omega \right) \\
			= & \frac 1{c_1} \intrd \left(\sum_k g_k(\omega)^* \Phi(\omega)g_k(\omega) \right) \| H(\omega) \|^2 d\omega . 
		\end{aligned}
	\end{equation}
	Here, the first inequality is due to the Cauchy-Schwartz inequality and the second equality uses the fact that the $g_k$'s are an orthonormal basis set in $\calW_\calD(\widetilde\Fz)$ and that $\Phi = \tfz /c_1$. Also note that we assumed the basis $g_k$ of $\calW_\calD(\widetilde\Fz)$ lies in $\calW_\calD$. This is because, following the lemma on page 34 of \cite{skorokhod1973absolute}, it is enough to prove the case when $g_k \in \calW_\calD$ for all $k$, as $\calW_\calD$ is dense in $\calW_\calD(\widetilde\Fz)$. See Lemma 2 of
	\cite{bachoc2022asymptotically} for a formal proof of this.
	
	Let $g_k=(g_{k,1},\ldots,g_{k,p})^T$ and $h_k=(h_{k,1},\ldots,h_{k,p})^T$, where $h_{k,i}(\omega)=\phi_i(\omega)g_{k,i}(\omega)$. Then as both $\phi_i$ and $g_{k,i}$ lie in $\calW_\calD$, by convolution, there exists a square-integrable function $\psi_{k,i}: \mathbb R^d \to \mathbb C$ that is zero outside of $[-2T,2T]^d$ such that $h_{k,i}$ is the Fourier transform of $\psi_{k,i}$. 
	
	Since $\langle g_k, g_{k'}\rangle_{\widetilde \Fz} = \delta_{kk'}$, using Parseval's identity we have 
	$$
	\begin{aligned}
		\delta_{kk'} = c_1 \intrd g_k(\omega)^* \Phi(\omega)g_{k'}(\omega) =& c_1 \intrd \sum_{i=1}^p \overline{g_{k,i}}(\omega)  {g_{k',i}}(\omega) \phi_i(\omega)^2 d\omega \\
		=& c_1 (2\pi)^{-d} \int_[-2T,2T]^d \sum_{i=1}^p 
		\overline
		{\psi_{k,i}(h)}\psi_{k',i}(h) dh \\
		=& c_1 (2\pi)^{-d} \int_[-2T,2T]^d \psi_{k}(h)^*\psi_{k'}(h) dh .\end{aligned}
	$$
	
	So $\{\sqrt{c_1 (2\pi)^{-d}} \psi_k \}_k$ is an orthonormal system of $\calL_2([-2T,2T]^d)$. Writing $e_i$ for the $i^{th}$ row of $I_{p \times p}$, we then have
	
	$$
	\begin{aligned}
		\sum_k g_k(\omega)^* \Phi(\omega)g_k(\omega) &= \sum_k \sum_{i=1}^p |h_{k,i}(\omega)|^2 \\
		&= \frac 1{(2\pi)^{2d}} \sum_k \sum_{i=1}^p \Big| \int_[-2T,2T]^d \exp(-\iota h^T \omega) \psi_{k,i}(h) dh \Big|^2 \\
		&= \frac 1{c_1(2\pi)^{d}} \sum_{i=1}^p \sum_k  \Big| \int_[-2T,2T]^d \exp(-\iota h^T \omega) \sqrt{c_1 (2\pi)^{-d}} \psi_{k,i}(h) dh \Big|^2 \\
		&\leq \frac 1{c_1(2\pi)^{d}} \sum_{i=1}^p \int_[-2T,2T]^d \|\exp(-\iota h^T \omega) e_i\|^2 dh  \\
		& \leq \frac {p(\blue 4 T)^d}{c_1(2\pi)^{d}}.
	\end{aligned}
	$$
	Here, the penultimate step follows from Bessel's inequality. Using (\ref{eq:equivgeneral}), we have   
	\begin{equation}\label{eq:basis}
		\begin{aligned}
			\sum_k \left[\|g_k\|^2_{\widetilde \Fo} - \|g_k\|^2_{\widetilde \Fz} \right]^2 &\leq \frac {p(\blue 4T)^d}{c_1^\blue 2(2\pi)^{d}} \intrd \|H(\omega)\|^2 d\omega < \infty.
		\end{aligned}
	\end{equation}
	
	Let $V$ denote a symmetric operator on $\calW_\calD(\widetilde \Fz)$ such that for $u,v \in \calW_\calD(\widetilde \Fz)$ we have $$\langle Vu,v\rangle_{\widetilde \Fz} = \intrd u(\omega)^*\widetilde \Fo(\omega)v(\omega)d\omega.$$
	Existence of such a $V$ follows from the Riesz representation theorem.
	Using (\ref{eq:basis}), for every orthonormal basis $\{g_k\}$ of $\calW_\calD(\tfz)$, we have $$\sum_k \langle(V-I)g_k,g_k\rangle_{\tfz}\, < \infty.$$ Also, as $\tfo \geq \tfz$, $V-I$ is positive definite. So, $V-I$ is a Hilbert-Schmidt operator with eigenfunctions $\{v_k(\omega)\}_k$ and non-negative eigenvalues $\{\lambda_k\}$ such that $\sum_k \lambda_k^2 < \infty$. 
	
	Let $B_K(\mu,\omega)=\sum_{k=1}^K \lambda_k v_k(\mu)v_k(\omega)^*$. As $v_k \in \calW_\calD(\tfz)$, $B_K(\mu,\omega) \in \calW^2_\calD(\tfz)$ (see discussion after (\ref{eq:w2})). Following (\ref{eq:w2}), we have 
	$$
	\begin{aligned}
		& \|B_K\|^2_{2,\tfz} \\ & = \intrd \intrd \mbox{trace} \left[ B_K(\mu,\omega) \tfz(\omega) B_K(\mu,\omega)^* \tfz(\mu) \right] d\mu d\omega\\
		& = \intrd \intrd \mbox{trace} \left[  \left( \sum_{k=1}^K \lambda_k v_k(\mu) v_k(\omega)^* \right) \tfz(\omega) 
		{\left( \sum_{k'=1}^K \lambda_{k'} v_{k'}(\omega)v_{k'}(\mu)^*  \right)}\tfz(\mu) \right] d\mu d\omega \\
		& = \sum_{k,k'=1}^K \lambda_k \lambda_{k'}  \left( \intrd v_{k'}(\mu)^*  \tfz(\mu)v_{k}(\mu)  d\mu \right) \left( \intrd    
		v_{k}(\omega)^* \tfz(\omega) 
		v_{k'}(\omega) d\omega \right) \\  
		& = \sum_{k=1}^K \lambda_k^2. 
	\end{aligned}
	$$
	
	As $\sum_{k=1}^\infty \lambda_k^2 < \infty$, the limit of $B_K$ as $K \to \infty$ is well-defined as 
	\begin{equation}\label{eq:b}
		B(\mu,\omega) = \sum_k \lambda_k v_k(\mu)v_k(\omega)^* \mbox{ with } B \in \calW_{2,\tfz} \mbox{ and }  \|B\|_{2,\tfz}^2 = \sum_{k=1}^\infty \lambda^2_k. 
	\end{equation}
	
	Then, letting $a_{is}(\mu)=\exp(-\iota s^T \mu) e_i$, we have
	\begin{equation*}
		\begin{aligned}
			\widetilde  C_{ij}^{(1)} & (s-s') - \widetilde C_{ij}^{(0)}(s-s') \\
			&= \intrd \exp(\iota (s-s')^T\mu) \tfo_{ij}(\mu)d\mu - \intrd \exp(\iota (s-s')^T\mu) \tfz_{ij}(\mu)d\mu \\
			&= \intrd a_{is}(\mu)^* \left(\tfo(\mu) - \tfz(\mu)\right) a_{js'}(\mu) d\mu \\
			&= \langle(V-I)a_{is},a_{js'}\rangle_{\tfz} \\
			&= \sum_k \lambda_k \langle a_{is},v_{k}\rangle_{\tfz}\langle v_k,a_{js'}\rangle_{\tfz}\\
			&= \sum_k \lambda_k \intrd  \intrd  a^*_{is}(\omega) \tfz(\omega)v_{k}(\omega) v_k^*(\mu)\tfz(\mu)a_{js'}(\mu) d\omega\, d\mu \\
			&= \intrd \intrd \exp(\iota {s}^T \omega - \iota {s'}^T \mu)   \left[ \tfz(\omega) \left( \sum_k \lambda_k v_{k}(\omega) v_k^*(\mu) \right) \tfz(\mu) \right]_{ij} d\omega\, d\mu.
		\end{aligned}
	\end{equation*}
	
	Since this holds for all $s,s'$, noting that $s-s'=(-s') - (-s)$ and using $B(\mu,\omega)^*=B(\omega,\mu)$, we have 
	$$
	\begin{aligned}
		& \widetilde C^{(1)}(s-s') - \widetilde C^{(0)}(s-s') \\
		&= \intrd \intrd \exp(-\iota {s'}^T \omega + \iota {s}^T \mu)  \left[ \tfz(\omega) B(\mu,\omega)^* \tfz(\mu) \right] d\omega\, d\mu\\
		&= \intrd \intrd \exp(-\iota {s}^T \omega + \iota {s'}^T \mu)  \left[ \tfz(\omega) B(\omega,\mu) \tfz(\mu) \right] d\omega\, d\mu.
	\end{aligned}
	$$
	As we have already shown in (\ref{eq:b}) that $B \in \calW_{2,\Fz}$, the matrix $B$ then satisfies all the conditions of Theorem \ref{th:techequiv}
	and we have $\calP(\tfz)\equiv\calP(\tfo)$ 
	and consequently, $\calP_0\equiv\calP_1$. 
	
	Now we consider the case where neither $F^{(1)} \geq F^{(0)}$ or $F^{(0)} \geq F^{(1)}$. Then we have $\sup_{\|x\|=1} x^* \Phi^{-1/2}(\Fo - \Fz)\Phi^{-1/2}x \geq 0$. We define the following spectral densities:
	
	\begin{align*}
		\widehat \Fo =& \tfz + \left(\sup_{\|x\|=1} x^* \Phi^{-1/2}(\Fo - \Fz)\Phi^{-1/2}x\right)\Phi;\, 
		F^{(3)} = \widehat \Fo + \widetilde{F^{(2)}}.
	\end{align*}
	
	We will show that $\calP(\Fz) \equiv \calP(\Fo)$
	by showing that both of them are equivalent to and dominated by $F^{(3)}$. Note that, as $\Fz = \tfz + \widetilde{F^{(2)}}$, we have $\tfz=c_1 \Phi$ and $\tfz \leq \widehat \Fo$.
	Also, as $\Fo - \Fz \leq (c_2 - c_1) \Phi$, we have $\sup_{\|x\|=1} x^* \Phi^{-1/2}(\Fo - \Fz)\Phi^{-1/2}x \leq (c_2 - c_1)$ and thus $\widehat \Fo < c_2 \Phi$. So both $\tfz$ and $\widehat \Fo$ are bounded from below and above respectively by $c_1 \Phi$ and $c_2 \Phi$. Hence, it is enough to show $\calP(\tfz) \equiv \calP(\widehat \Fo)$. 
	
	We have
	
	$$
	\begin{aligned}
		\|\Phi(\omega)^{-1/2}(\widehat{F^{(1)}}-\tfz)\Phi(\omega)^{-1/2}\|^2 & = \left(\sup_{\|x\|=1} x^* \Phi^{-1/2}(\Fo - \Fz)\Phi^{-1/2}x\right)^2 \\
		& \leq \|\Phi(\omega)^{-1/2}(F^{(1)}-F^{(0)})\Phi(\omega)^{-1/2}\|^2. 
	\end{aligned}
	$$
	Using (\ref{eq:equivgeneral}), we then have $\calP(\tfz) \equiv \calP(\widehat \Fo)$ implying $\calP(\Fz) \equiv \calP(F^{(3)})$. 
	
	Next to show $\calP(\Fo) \equiv \calP(F^{(3)})$, we have for any $x$ with $\|x\|=1$, 
	\begin{align}
		&  x^*\Phi^{-1/2}(F^{(3)}-\Fo)\Phi^{-1/2}x \\
		& \quad = \left(\sup_{\|x\|=1} x^* \Phi^{-1/2}(\Fo - \Fz)\Phi^{-1/2}x\right) - x^* \Phi^{-1/2}(\Fo - \Fz)\Phi^{-1/2}x \geq 0.
	\end{align}

	So, $\Phi^{-1/2}F^{(3)}\Phi^{-1/2} \geq \Phi^{-1/2}\Fo\Phi^{-1/2}$ which implies $F^{(3)} \geq \Fo \geq c_1 \Phi$ as $\Phi$ is a diagonal matrix with positive entries. 
	Also, 
	$$
	\begin{aligned}
		F^{(3)}(\omega)&=F^{(0)}(\omega)+\lambda_{\max}\!\left(\Phi(\omega)^{-1/2}\{F^{(1)}(\omega)-F^{(0)}(\omega)\}\Phi(\omega)^{-1/2}\right)\Phi(\omega) \\
		&\le F^{(0)}(\omega)+(c_2-c_1)\Phi(\omega)\\
		& \le (2c_2-c_1)\Phi(\omega). 
	\end{aligned}
	$$
	So, $F^{(3)}$ satisfies the condition of Theorem \ref{th:equiv} with constants $c_1$ and $c_2'=2c_2-c_1$.
	As $$\|\Phi^{-1/2}(F^{(3)}-\Fo)\Phi^{-1/2} \|^2 \leq 4 \|\Phi^{-1/2}(F^{(1)}-\Fz)\Phi^{-1/2}\|^2,$$ by (\ref{eq:equivgeneral}), we have $$\intrd \|\Phi(\omega)^{-1/2}(F^{(3)}(\omega)-\Fo(\omega))\Phi(\omega)^{-1/2}\|^2 d\omega < \infty,$$ yielding $\calP(\Fo)\equiv \calP(F^{(3)})$, and proving the theorem. 
	
\end{proof}

\begin{proof}[\textbf{Proof of Theorem \ref{th:nonidgen}}]
	Without loss of generality, we can take $\calD=\calD^*=[-T,T]^d$ for some $T$ as we can always embed the original $\calD$ in such a larger rectangle. Let $Z=(X,\beta X + W)$ and $\calP_0$ and $\calP_1$ denote two measures for $\beta = 0$ and $\beta=1$. We will use Theorem \ref{th:equiv} to prove the equivalence of $\calP_0$ and $\calP_1$ when (\ref{eq:nec}) holds.
	
	Let $F^{(i)}$ denote the $2\times 2$ spectral density matrix of $Z$ under $\calP_i$.
	Then we have 
	\begin{equation}\label{eq:spectral}
		\Fz(\omega)=\left(\begin{array}{cc}
			f_X(\omega) & 0 \\
			0 & f_W(\omega)
		\end{array}\right) \mbox{ and } 
		\Fo(\omega)=
		\left(\begin{array}{cc}
			f_X(\omega)  &\quad f_X(\omega) \\
			f_X(\omega) &\quad f_X(\omega) + f_W(\omega)
		\end{array}\right).
	\end{equation}
	Let $\Phi(\omega)=\diag(\phi^2_X(\omega),\phi^2_W(\omega))$. Then $c_1 \Phi(\omega) \leq \Fz \leq c_2 \Phi(\omega)$ for all $\omega$. 
	
	Let $K = 1+\sup_{\omega \in \mathbb R^d} f_X(\omega)/f_W(\omega)$ which is finite by the statement of the theorem.
	We then have 
	$$ (2K+1) c_2 \Phi(\omega) -  \Fo(\omega) \geq (2K+1) \Fz -  \Fo(\omega) = \left(\begin{array}{cc}
		2K f_X(\omega)  & -f_X(\omega) \\
		-f_X(\omega) & 2Kf_W(\omega) - f_X(\omega)
	\end{array}\right).$$
	This is a diagonally dominant symmetric matrix, and is thus positive definite. So $\Fo \leq (2K+1)c_2 \Phi$. Also, 
	$$ \Fo(\omega) - \frac {c_1}{K+2} \Phi(\omega) \geq \Fo(\omega) - \frac {1}{K+2} \Fz(\omega) = \left(\begin{array}{cc}
		\frac{K+1}{K+2} f_X(\omega)  & f_X(\omega) \\
		f_X(\omega) & f_X(\omega) + \frac{K+1}{K+2} f_W(\omega).
	\end{array}\right).$$
	The $(1,1)^{th}$ entry of this matrix is positive, and the determinant is 
	$$ f_X(\omega) \left\{\left(\frac{K+1}{K+2}\right)^2 f_W(\omega) - \frac{1}{K+2}f_X(\omega) \right\} > 0\; \forall \omega.$$ 
	
	So, $\Fo(\omega) - \frac {c_1}{K+2} \Phi(\omega)$ is positive definite for all $\omega$. Redefining $c_1=c_1/(K+2)$ and $c_2=(2K+1)c_2$, (\ref{eq:cond1}) is satisfied. 
	
	Applying Theorem \ref{th:equiv}, $\calP_0 \equiv \calP_1$ if we can show 
	\begin{equation*}
		\intrd \|\Phi(\omega)^{-1/2}(F^{(1)}(\omega)-F^{(0)}(\omega))\Phi(\omega)^{-1/2}\|^2 d\omega < \infty . 
	\end{equation*}
	
	From (\ref{eq:spectral}) we have for some constant $c$, 
	\begin{align*}
		& \intrd \|\Phi(\omega)^{-1/2}(F^{(1)}(\omega)-F^{(0)}(\omega))\Phi(\omega)^{-1/2}\|^2 d\omega \\
		&\leq c \intrd \mbox{trace}\left[ \left(F^{(1)}(\omega)F^{(0)}(\omega)^{-1} - I_{2 \times 2} \right)^2 \right] d\omega \\
		&= c \intrd \mbox{trace}\left[ \left(\begin{array}{cc}
			0  & \frac{f_X(\omega)}{f_W(\omega)} \\
			1 & \frac{f_X(\omega)}{f_W(\omega)} 
		\end{array} \right)^2 \right] d\omega \\
		&= c \intrd \left(2 \frac{f_X(\omega)}{f_W(\omega)} + \frac{f_X(\omega)^2}{f_W(\omega)^2} \right)  d\omega. 
	\end{align*}
	Here, the first inequality follows from $\| A^2 \| \leq \mbox{trace}(A^2)$ for the  Hermitian matrix $A=\Phi(\omega)^{-1/2}(F^{(1)}(\omega)-F^{(0)}(\omega))\Phi(\omega)^{-1/2}$ and  $\|\Phi^{-1/2}\Fz\Phi^{-1/2}\| \leq c_2$. 
	
	As $f_X(\omega)/f_W(\omega) < K$, $f_X(\omega)^2/f_W(\omega)^2$ is dominated by $K f_X(\omega)/f_W(\omega)$, so equivalence holds when 
	$$
	\begin{aligned}
		\intrd \frac{f_X(\omega)}
		{f_W(\omega)} d\omega < \infty. 
	\end{aligned}
	$$
\end{proof}

\subsection{Proofs for main examples in Section \ref{sec:examples}}\label{eq:proofsmain}

\begin{proof}[\textbf{Proof of Corollary \ref{cor:matern}}]
	{\em Part (a):} Let $(K_{k\ell})_{1 \leq k,l \leq 2}$ denote the matrix-valued covariance function of $(X,W)$. Each $K_{k\ell}$ satisfies Assumption \ref{eq:K.assump} with $\alpha_{11}=2\nu_X$, $\alpha_{22}=2\nu_W$ and $\alpha_{12}=2\nu_{XW}$. Then $\alpha_{11} < \alpha_{12}$ and $\alpha_{11} < \alpha_{22} + d$ and the conditions of Theorem \ref{th:suff}
	(for $d=1$) or Theorem \ref{th:suff.rd} (for $d > 1$) are satisfied. 
	
	{\em Part (b):} Let $\theta$ be the total set of unknown parameters which includes $\beta$ and all the parameters of the bivariate Matérn covariance for $(X,W)$. The parameter $\beta$ will not be consistently estimable if for two sets of values of $\theta$ with different choices of $\beta$, the corresponding measures on the paths of the Gaussian random fields $(X,Y)$ are equivalent. We show that this happens on the two following choices: $(\beta=0,\rho_{XW}=0)$ and $(\beta=1,\rho_{XW}=0)$. Here $\rho_{XW}$ is the intra-site correlation parameter between $X$ and $W$ for the bivariate Matérn process, and $\rho_{XW}=0$ implies independence of $X$ and $W$.
	
	For the univariate Matérn covariance functions of $X$ and $W$, using Theorem 3.6 (iii) and Theorem 6.1 (i) in \cite{zastavnyi2006some}, we have that
	there exist positive functions $\phi_X$ and $\phi_W$ that are Fourier transforms of 
	compactly supported functions in $\mathbb R^d$ satisfying 
	\begin{equation}\label{eq:maternbound}
		\begin{aligned}
			c \phi^2_X(\omega) &\leq (1+\|\omega\|)^{-2\nu_X-d} \leq c' \phi^2_X(\omega), \mbox{ and }\\
			c \phi^2_W(\omega) &\leq (1+\|\omega\|)^{-2\nu_W-d} \leq c' \phi^2_W(\omega)
		\end{aligned}
	\end{equation}
	for all $\omega \in \mathbb R^d$ and for some positive constants $c,c'$. We refer to the proof of Lemma A.4 in \cite{bachoc2022asymptotically} for a detailed discussion on how (\ref{eq:maternbound}) is established.
	
	As $\sup_{\omega \in \mathbb R^d} (1+\|\omega\|)^{-2\nu_X-d}/f_X(\omega)$ and $\sup_{\omega \in \mathbb R^d} (1+\|\omega\|)^{-2\nu_W-d}/f_W(\omega)$ are uniformly bounded away from $0$ and $\infty$, the condition (\ref{eq:bound}) of Theorem \ref{th:nonidgen} is satisfied. Also, as $\nu_X > \nu_W$, we have $\sup_{\omega \in \mathbb R^d} f_X(\omega)/f_W(\omega) < \infty$.
	
	So (\ref{eq:nec}) will be established if we can show
	\begin{align*}
		\int \frac{(\phi_W^2+\|\omega\|^2)^{\nu_W+d/2}}{(\phi_X^2 + \|\omega\|^2)^{\nu_X + d/2}} d\omega < \infty. 
	\end{align*}
	
	Using the transformation $u=\|\omega\|$ we have
	$$
	\int \frac{(\phi_W^2+\|\omega\|^2)^{\nu_W+d/2}}{(\phi_X^2 + \|\omega\|^2)^{\nu_X + d/2}} d\omega = M \int_{0}^\infty u^{d-1}\frac{(\phi_W^2+u^2)^{\nu_W+d/2}}{(\phi_X^2 + u^2)^{\nu_X + d/2}} du$$
	for some constant $M$. The function within the integral on the right is bounded away from $\infty$ near $u=0$ and is $O(u^{-(2\nu_X - 2\nu_W - d + 1)})$ as $u \to \infty$. So, the integral is finite when 
	$2\nu_X - 2\nu_W - d + 1 > 1$, i.e., when $\nu_X > \nu_W + d/2$.
\end{proof}

\begin{proof}[Proof of Theorem \ref{thm:error}] We prove the result when $(X,W)$ is a bivariate Matérn GRF on an interval $[0,L]$ of positive length in $\mathbb R$ with smoothness parameters $\nu_X$ and $\nu_W$ respectively and cross-smoothness $\nu_{XW}$. Let $\alpha_{X}=2\nu_X$, $\alpha_{W}=2\nu_X$, and $\alpha_{XW}=2\nu_{XW}$. The same proof technique will hold for higher-dimensional spatial domains and for the other covariance families. 
	
	As $d=1$, when $\alpha_{11} < \alpha_{22} + 1$, $\beta$ is consistently estimable when there is no noise and $\alpha_{12} > \alpha_{11}$. We want to show that $\beta$ is consistently estimable even when there is noise. For simplicity, we consider the case when $\alpha_X < 1$, where taking first differences suffices (see the proof of Theorem \ref{th:beta.con}). The results for larger $\alpha_X$ can be proved by taking differences of higher order. 
	
	We first consider the case where only the outcome is observed with noise, and the exposure is noise-free, i.e., we observe $X(s)$ and $Z(s) = Y(s) + \epsilon(s)$ where $Y(s)=X(s)\beta + W(s)$. In Theorem \ref{th:suff}, based on noise-free data observed on a regular 1-dimensional lattice, the OLS estimator regressing first differences of $Y$ on those of $X$ was shown to be consistent.
	This estimator may no longer be consistent when replacing $Y$ with $Z$, as differencing the noisy $Z$ inflates the noise.
	Instead, we will first do local averaging to make the noise variance as small as desired and then do differencing. 
	
	We consider observations on a $\calG_{n^*}$ where $n^*=n^{\rho+2}$ for some integer $\rho$ whose value will be specified later. We create two grids from this. The coarse grid is $\calG_n=\{0,hL,2hL,\ldots,nhL\}$ where $h=1/n$. For some $\rho > 1$, at each $hj$ for $0 < j < n$ we consider the process $X^*(hj)=\frac 1{2n^\rho + 1} \sum_{k=-n^\rho}^{n^\rho} X(hj+\frac k{n^{\rho+2}})$. So $X^*(hj)$ is the average of $X(s)$ at a fine regular subgrid of $2n^\rho + 1$ locations in $[hj-\frac 1{n^2},hj+\frac 1{n^2}]$, centered around $hj$. Define $Y^*(hj)$, $\epsilon^*(hj)$ similarly. As $(X,W)$ is a stationary process, and $(X^*,W^*)$ is defined on a regular grid, based on averaging of $(X,Y)$ over a regular sub-grid, $(X^*,Y^*)$ is also stationary over the interior of $\calG_n$. 
	
	Let
	\(
	M=2n^\rho+1,\; \eta=n^{-(\rho+2)},\; x=h(j-j').
	\)
	Then
	\[
	\begin{aligned}
		K^*_{11}(hj,hj') & = \text{Cov}(X^*(hj),X^*(hj'))\\
		&=
		\frac{1}{M^2}
		\sum_{k,k'=-n^\rho}^{n^\rho}
		K_{11}\{x+(k-k')\eta\}  \\
		&=
		K_{11}(x)
		+
		\frac{1}{M^2}
		\sum_{m=1}^{2n^\rho}
		(M-m)\Delta_{m\eta}K_{11}(x),
	\end{aligned}
	\]
	where
	\[
	\Delta_a K_{11}(x)=K_{11}(x+a)-2K_{11}(x)+K_{11}(x-a).
	\]
	
	Let
	\[
	R_{jj'}
	=
	\frac{1}{M^2}
	\sum_{m=1}^{2n^\rho}
	(M-m)\Delta_{m\eta}K_{11}\{h(j-j')\}.
	\]
	Since
	\[
	\frac{1}{M^2}\sum_{m=1}^{2n^\rho}(M-m)\le 1,
	\]
	it suffices to bound \(\Delta_{m\eta}K_{11}\{h(j-j')\}\) uniformly over
	\(1\le m\le 2n^\rho\).
	
	If \(j=j'\), then \(x=h(j-j')=0\). By the Matérn local expansion,
	\[
	\Delta_{m\eta}K_{11}(0)
	=
	2\{K_{11}(m\eta)-K_{11}(0)\}
	=
	O\{(m\eta)^\alpha\}
	=
	O(h^{2\alpha})
	=
	o(h^\alpha),
	\]
	because \(m\eta\le 2n^{-2}=2h^2\).
	
	If \(j\ne j'\), then \(|x|=h|j-j'|\ge h\), while \(m\eta\le 2h^2=o(|x|)\).
	Using \(K_{11}''(t)=O(|t|^{\alpha-2})\) away from zero, Taylor's theorem gives
	\[
	\Delta_{m\eta}K_{11}(x)
	=
	O\{(m\eta)^2 |x|^{\alpha-2}\}
	\le
	O(h^4 h^{\alpha-2})
	=
	O(h^{\alpha+2})
	=
	o(h^\alpha).
	\]
	Therefore \(R_{jj'}=o(h^\alpha)\) uniformly in \(j,j'\).
	
	Since \(m\eta\le 2n^{-2}=2h^2\), the summation term is
	\(o(h^\alpha)\) uniformly in \(j,j'\). Hence
	\[
	K^*_{11}(hj,hj')
	=
	K_{11}\{h(j-j')\}+o(h^\alpha),
	\]
	uniformly in \(j,j'\).
	
	So, on the grid $\calG_n$, $K^*_{11}$ satisfies Assumption \ref{eq:K.assump} and has similar near-zero distance behavior. Similar, results hold for $K^*_{12}$ and $K^*_{22}$ and by Theorem \ref{th:beta.con}, the OLS estimator on first differences of $Y^*=X^*\beta + W^*$ on those of $X^*$ is consistent for $\beta$. As $Y^*$ is not observed, we will use $Z^*=Y^*+\epsilon^*$, which is simply averaging the observed $Z$ process on the finer sub-grid. We show that the extra-term $\frac{\sum_{i=1}^n (\epsilon^*_{i}-\epsilon^*_{i-1})(X^*_{i}-X^*_{i-1})}{\sum_{i=1}^n (X^*_{i}-X^*_{i-1})(X^*_{i}-X^*_{i-1})} \to 0$.
	
	Using the proof of Theorem \ref{th:beta.con}, $\frac 1{nh^{\alpha_{11}}} \sum_{i=1}^n (X^*_{i}-X^*_{i-1})(X^*_{i}-X^*_{i-1}) \to c_{11} \neq 0$. 
	As $X \perp \epsilon$, the numerator has mean $0$. So it is enough to show that $$\frac 1{n^2h^{2\alpha_{11}}} \text{Var}\left(\sum_{i=1}^n (\epsilon^*_{i}-\epsilon^*_{i-1})(X^*_{i}-X^*_{i-1})\right) \to 0.$$ 
	
	Let $A$ denote the Laplacian matrix corresponding to taking first differences on a grid. Note that as $\epsilon(s) \iid N(0,\tau^2)$, $\epsilon^*(hj) \iid N(0, \frac{\tau^2}{2n^\rho+1})$. As we can write $\sum_{i=1}^n (\epsilon^*_{i}-\epsilon^*_{i-1})(X^*_{i}-X^*_{i-1}) = X^{*T}A\epsilon^*$,  using the law of total variance, we have 
	$$
	\begin{aligned}
		\text{Var}(X^{*T}A\epsilon^*) &= 
		E[\text{Var}(X^{*T}A\epsilon^* \given X^*)] + \text{Var}(E[X^{*T}A\epsilon^*\given X^*]) \\
		&= \frac{\tau^2}{2n^\rho+1}  E(X^*A^2X^*) + 0\\
		&\leq \frac{\tau^2}{2n^\rho+1} \lambda_{\max}(A) E(X^{*T} A X^*) \\
		& \leq \frac{4\tau^2}{2n^\rho+1}  E(\sum_{i=1}^n (X^*_{i}-X^*_{i-1})(X^*_{i}-X^*_{i-1})) \\
		& = 4 \frac{\tau^2}{2n^\rho+1} nc_{11}h^{\alpha_{11}} \\
		& = O(n^{1-\alpha_{11}-\rho}).
	\end{aligned}
	$$
	In the above, we have bounded $\lambda(A)$ by $4$ using the Gershgorin circle theorem, and the penultimate equality comes from the proof of Theorem \ref{th:beta.con}. 
	Then $$\frac 1{n^2h^{2\alpha_{11}}} \text{Var}\left(\sum_{i=1}^n (\epsilon^*_{i}-\epsilon^*_{i-1})(X^*_{i}-X^*_{i-1})\right) = O(n^{1 - \alpha_{11} - \rho -2 + 2\alpha_{11}}) = O(n^{\alpha_{11} - \rho -1}) \to 0$$
	by choosing $\rho > \max(0,\alpha_{11} - 1)$ and we have the proof completed for this case. 
	
	For the case when $X$ is also observed with noise, i.e., we observe $\tilde X_i = X_i + \varepsilon_i$ where $\varepsilon_i$ is iid with zero mean and finite variance, we need to average both $Z$ and $\tilde X$ before taking differences. To make the error in $X$ sufficiently small, we need a larger $\rho=\max(0,\alpha_{11})$. With this choice of $\rho$, the additional terms coming from the noise in $X$ vanish asymptotically, and the OLS estimator regressing differences of $Z$ on differences of $\tilde X$ is consistent. 
	
	For the other direction, when $\alpha_{11} > \alpha_{22} + d$, we follow part of the proof of Theorem 6 of \cite{stein1999interpolation}.
	Note that 
	$\beta$ is not consistently estimable on the paths of $\{(Y(s),X(s)) : s\in \calD\}$ where $Y(s)=X(s)\beta+W(s)$. Hence, it is not consistently estimable on the measure generated by $Y(s_1), X(s_1),Y(s_2),X(s_2),\ldots$ for any countable sequence of locations $s_1, s_2, \ldots$. As the errors $\epsilon$'s and $\varepsilon$'s are independent of $(X,Y)$ with their distributions not depending on $\beta$,
	it is evident that $\beta$ is not consistently estimable on the measure generated by ${\cal Y}=\{Y(s_1),X(s_1),\epsilon(s_1),\varepsilon(s_1)Y(s_2),X(s_2),\epsilon(s_2),\varepsilon(s_2)\ldots\}$. As the $\sigma$-algebra generated by ${\cal Y}$ contains that generated by ${\cal Z} =\{Z(s_1),\tilde X(s_1),Z(s_2),\tilde X(s_2),\ldots\}$, $\beta$ cannot be identified on the measure generated by $\cal Z$ for any sequence of locations. 
\end{proof}

\section{Remaining proofs}\label{sec:otherproofs}

\subsection{Proofs for consistent estimability in higher dimension}\label{sec:proof2d}

Before proving Theorem \ref{th:suff.rd}, we first state and prove a generalization of Theorem \ref{th:beta.con} for $\mathbb R^d$.

\begin{theorem}\label{th:beta.con.Lap}
	Let $Z=(Z_1,Z_2)$ denote a bivariate stationary GRF on a set $\calD \in \mathbb R^d, d > 1,$ such that $\calD$ contains a $d$-dimensional open ball. Let the  covariance function of $Z$ be $C=(C_{k\ell})_{\{1 \leq k,\ell \leq 2\}}$ 
	which satisfies Assumption \ref{as:rd}. Further assume that the constants $c_{k\ell}$ and the exponents $\alpha_{k\ell}$ in $C$ satisfy Equation (\ref{eq:K12limit}) for some $\beta$. 
	Then the measures on the paths of the bivariate random fields $(Z_1,Z_2)$ are orthogonal for different values of $\beta$ if $\alpha_{11}-d < \alpha_{22} \le \alpha_{11}$. In particular, letting $m$ be an integer such that $\alpha_{11} < 4m$, $\mbox{Lap}_{n}^{(m)}(Z_1,Z_2) \to \beta$ in probability as $n\to\infty$.
\end{theorem}

Theorem \ref{th:beta.con.Lap} provides a general result on consistent estimability of the ratio of the coefficients of principal irregular terms for two correlated GRFs on $\mathbb R^d$ using discrete Laplacians. This result does not rely on any specific parametric form for the covariance functions and immediately leads to Theorem \ref{th:suff.rd} on consistent estimability of the regression slope under spatial confounding for processes in $\mathbb R^d$. 

The proof of Theorem \ref{th:beta.con.Lap} relies on several technical results on Laplacians of isotropic covariance functions and radially symmetric functions, which we state and prove first. 

\begin{lemma}\label{lem:lapmean}
	Let $(Z_1,Z_2)$ be a stationary GRF on $\mathbb R^d$ with covariance function $C=(C_{k\ell})$
	satisfying Assumption \ref{as:rd} with parameters $(c_{k\ell})$ and $(\alpha_{k\ell})$ with $\max$$_{k\ell} \,\alpha_{k\ell} < 4$.
	Let $Z^{(1)}=(Z_1^{(1)},Z_2^{(1)})$ where $Z_i^{(1)}=\Delta_h Z_i$, defined
	in the interior $\calG_n^{(1)}$ of the grid $\calG_n$.
	Then, for any $s_i \in \calG_n^{(1)}$,  $E(Z_k^{(1)}(s_i)Z_l^{(1)}(s_i)) = \gamma_{k\ell}
	c_{k\ell} h^{\alpha_{k\ell}-4} + o(h^{\alpha_{k\ell}-4})$, where $\gamma_{k\ell}
	$ is some constant depending on $\alpha_{k\ell}$ and $d$. 
\end{lemma}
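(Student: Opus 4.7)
Write the first-order discrete Laplacian as $Z_i^{(1)}(s) = h^{-2} \sum_{g=1}^d \delta_g^2 Z_i(s)$, where $\delta_g^2 f(s) = f(s+he_g) + f(s-he_g) - 2f(s)$ is the second difference in direction $e_g$. Then
$$
E[Z_k^{(1)}(s) Z_\ell^{(1)}(s)] = h^{-4} \sum_{g,g'=1}^d E\bigl[\delta_g^2 Z_k(s)\, \delta_{g'}^2 Z_\ell(s)\bigr].
$$
Using stationarity and the symmetry $C_{k\ell}(-u) = C_{k\ell}(u)$ (which follows from $C_{12} = C_{21}$ combined with stationarity), each inner expectation equals a fixed linear combination of $C_{k\ell}$ at the lag points $0, \pm h e_g, \pm h e_{g'}, \pm h(e_g \pm e_{g'})$, specifically $2C_{k\ell}(h(e_g-e_{g'})) + 2C_{k\ell}(h(e_g+e_{g'})) - 4C_{k\ell}(he_g) - 4C_{k\ell}(he_{g'}) + 4C_{k\ell}(0)$. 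Splitting the outer sum over $(g,g')$ into diagonal ($g=g'$) and off-diagonal ($g \neq g'$) parts, all lag norms collapse to one of the four values $0, h, h\sqrt{2}, 2h$, so under Assumption \ref{as:rd} the double sum reduces to a short linear combination of $K_{k\ell}(0), K_{k\ell}(h), K_{k\ell}(h\sqrt{2}), K_{k\ell}(2h)$, plus a residual from the anisotropic remainders $r_{k\ell}$.

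Decomposing $K_{k\ell} = A_{k\ell} + B_{k\ell}$ per Assumption \ref{eq:K.assump}, the analytic and irregular pieces can be handled separately. For the analytic part, Taylor expand $A_{k\ell}(t) = a_0 + a_2 t^2 + a_4 t^4 + O(t^6)$ (only even powers, since $A_{k\ell}$ is even); a direct computation with the four lag norms then shows that in this specific combination the coefficients of $a_0$ and $a_2$ both vanish (the discrete analogue of the fact that a Laplacian kills constants, and that the iterated Laplacians $\Delta_{s_1}\Delta_{s_2}$ annihilate $\|s_1-s_2\|^2$). Consequently, the analytic contribution to the inner sum is $O(h^4)$. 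For the principal irregular part, substituting $B_{k\ell}(t) = c_{k\ell} t^{\alpha_{k\ell}} + o(t^{\alpha_{k\ell}})$ and keeping track of the contributions at norms $h$, $h\sqrt{2}$, and $2h$ yields leading term $c_{k\ell} h^{\alpha_{k\ell}}\, \gamma(\alpha_{k\ell}, d)$ where
$$
\gamma(\alpha, d) = 2d\, 2^\alpha + 4d(d-1)\, 2^{\alpha/2} - 8d^2,
$$
with an $o(h^{\alpha_{k\ell}})$ error. Finally, each remainder $r_{k\ell}$ evaluated at the finitely many lag points satisfies $|r_{k\ell}(u)| = o(\|u\|^{\alpha_{k\ell}}) = o(h^{\alpha_{k\ell}})$, contributing $o(h^{\alpha_{k\ell}})$ in aggregate.

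Combining these three pieces and dividing by $h^4$ gives $E[Z_k^{(1)}(s_i) Z_\ell^{(1)}(s_i)] = \gamma(\alpha_{k\ell}, d) c_{k\ell} h^{\alpha_{k\ell}-4} + O(1) + o(h^{\alpha_{k\ell}-4})$. The hypothesis $\max_{k,\ell} \alpha_{k\ell} < 3$ forces $h^{\alpha_{k\ell}-4} \to \infty$ strictly faster than any bounded quantity, so the $O(1)$ analytic tail is absorbed into the $o(h^{\alpha_{k\ell}-4})$, yielding the stated expansion. The main technical step — and essentially the only nontrivial point — is verifying the cancellation of the $a_0$ and $a_2$ Taylor coefficients in the combinatorial sum over $(g,g')$; this is exactly what guarantees that the leading behavior of $E[Z_k^{(1)} Z_\ell^{(1)}]$ is controlled purely by the principal irregular term $c_{k\ell} t^{\alpha_{k\ell}}$ of the covariance rather than being contaminated by its smooth part.
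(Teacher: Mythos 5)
Your proposal is correct and follows essentially the same route as the paper's proof: expand $E[Z_k^{(1)}Z_\ell^{(1)}]$ into covariance values at lags of norm $0,h,\sqrt2\,h,2h$, split the $(g,g')$ sum into diagonal and off-diagonal pairs, verify that the $a_0$ and $a_2$ coefficients of the analytic part cancel, and read off the leading constant from the principal irregular term. Your explicit constant $\gamma(\alpha,d)=2d\,2^{\alpha}+4d(d-1)2^{\alpha/2}-8d^{2}$ agrees exactly with the paper's $d(2^{\alpha+1}-8)+d(d-1)(2^{\alpha/2+2}-8)$.
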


\begin{proof} 
	As the first discrete Laplacian process of a stationary process is stationary on the interior $\calG_n^{(1)}$, it is enough to prove for one $s_i \in \calG_n^{(1)}$.
	We will use $i' \sim i$ to indicate that two grid locations $s_i$ and $s_{i'}$ in $\calG_n$ are adjacent, i.e., $\|s_i - s_{i'}\|=h$. 
	If $i' \sim i$  
	for $s_i \in \calG_n^{(1)}$,
	then $s_{i'}=s_i \pm he_g$ for some $g \in 1,\ldots,d$. For notational simplicity, we drop the subscript $k\ell$ and first prove a result for a stationary univariate GRF $X$ on $\calG_n$ with covariance $C$. By Assumption \ref{as:rd}, $C(u)=A(\|u\|) + B(\|u\|) + o(\|u\|^\alpha)$.
	As $\alpha <
	4$, for $t > 0$, we have $A(t) = a_0 + a_2 t^2
	+ o(t^\alpha)$ and $B(t) = ct^\alpha + o(t^\alpha)$. As $\|e_g\|=1$, we have
	$$
	\begin{aligned}
		E \left[  2X(0) - X(h e_g) - X(-h e_g) \right]^2 &=
		6C(0) - 4C(h e_g) - 4C(-h e_g) + 2C(2h e_g) \\
		&= - 8a_2 h^2 + 8a_2 h^2
		- 8c h^\alpha + 2^{\alpha+1} c h^\alpha +  o(h^\alpha)\\
		&= (2^{\alpha+1} - 8) c h^\alpha + o(h^\alpha).
	\end{aligned}
	$$
	
	Also, for any $g \neq g'$, as $\|e_g\|=1$ and $\|e_g \pm e_{g'}\|=\sqrt 2$, we have 
	$$
	\begin{aligned}
		& E \left[ \left( 2X(0) - X(h e_g) - X(-h e_g) \right) 
		\left( 2X(0) - X(h e_{g'}) - X(-h e_{g'}) \right) \right]\\
		\qquad &= \left[ 
		4C(0) - 2C(h e_g) - 2C(-h e_g)
		- 2C(h e_g') - 2C(-h e_g') \right. \\
		& \qquad 
		\left. + C(h(e_g - e_g')) + C(h(e_g + e_g')) + C(-h(e_g - e_g')) + C(-h(e_g + e_g'))\right] \\
		&= - 8a_2 h^2 + 8a_2 h^2
		- 8c h^\alpha + 2^{\frac\alpha 2 + 2} c h^\alpha +  o(h^\alpha)\\
		&= (2^{\frac \alpha 2+2} - 8) c h^\alpha + o(h^\alpha).
	\end{aligned}
	$$
	
	Returning to the setup of the Lemma, using the above results, we have 
	$$
	\begin{aligned}
		& E(Z^{(1)}_{ki}Z^{(1)}_{\ell i}) \\
		&= \frac 1{h^4} \sum_{g,g'=1}^d E \left[ \left( 2Z_k(0) - Z_k(h e_g) - Z_k(-h e_g) \right) 
		\left( 2Z_\ell(0) - Z_\ell(h e_{g'}) - Z_\ell(-h e_{g'}) \right) \right] \\
		& = \gamma_{k\ell} c_{k\ell} h^{\alpha_{k\ell}-4} + o(h^{\alpha_{k\ell}-4})
	\end{aligned}
	$$ 
	where $\gamma_{k\ell} = d (2^{\alpha_{k\ell}+1} - 8) + d(d-1) (2^{\frac {\alpha_{k\ell}}2+2} - 8)$. 
\end{proof}

\begin{lemma}\label{lem:lapbound}
	Let $(Z_1,Z_2)$ be a GRF on an $(n+1)^d$-sized regular grid $\calG_n \in [0,L]^d$ with a covariance function $C=(C_{k\ell})$ that
	satisfies Assumption \ref{as:rd} for some covariances $K_{k\ell}$ on $\mathbb R$, and constants $c_{k\ell}$ and $\alpha_{k\ell}$. Let $m$ be a positive integer such that  $\alpha_{k\ell} <4m$ for all $k,\ell$, and define $Z^{(m)}=(Z_1^{(m)},Z_2^{(m)})$ where $Z_i^{(m)}=\Delta_h^{(m)} Z_i$. Then on $\calG_n^{(m)}$, $Z^{(m)}$ is stationary with covariance $C^{(m)}=(C_{k\ell}^{(m)})$, such that
	$C_{k\ell}^{(m)}(u)$ satisfies Assumption \ref{as:rd} with the isotropic part being $K^{(m)}_{k\ell}$
	which satisfies Assumption \ref{eq:K.assump}
	with constant $c=c_{k\ell}M$ and exponent $\alpha_{k\ell}-4m$, where $M$ is a constant depending only on $\alpha_{k\ell}$ and $d$. 
\end{lemma}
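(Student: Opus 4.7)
The plan is to reduce the covariance $C^{(m)}_{k\ell}$ to a double sum of shifted evaluations of $C_{k\ell}$ via an explicit representation of the iterated discrete Laplacian, and then analyze this sum piecewise using the decomposition in Assumption \ref{as:rd}. Writing $\Delta_h^{(m)} = h^{-2m}\bigl[\sum_g(T_g+T_{-g}-2I)\bigr]^m$, where $T_{\pm g}$ shifts by $\pm h e_g$, binomial expansion gives $Z_i^{(m)}(s) = h^{-2m}\sum_j w_j\, Z_i(s+hv_j)$ with integer vectors $v_j\in\mathbb{Z}^d$, $\|v_j\|_\infty \le m$, and integer weights $w_j$. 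Crucially, since each $\Delta_h$ kills constants and linear functions, $\Delta_h^{(m)}$ annihilates every polynomial of total degree strictly less than $2m$; equivalently, $\sum_j w_j v_j^\beta = 0$ for $|\beta|<2m$. Stationarity of $Z^{(m)}$ on $\calG_n^{(m)}$ follows immediately by induction on $m$, and the covariance takes the form
\[
C^{(m)}_{k\ell}(u) = h^{-4m}\sum_{j,j'} w_j w_{j'}\, C_{k\ell}\bigl(u+h(v_j-v_{j'})\bigr).
\]
I would then split $C_{k\ell}(u) = A_{k\ell}(\|u\|) + B_{k\ell}(\|u\|) + r_{k\ell}(u)$ using Assumption \ref{as:rd}, with $A_{k\ell}$ the analytic part of $K_{k\ell}$, $B_{k\ell}(t)=c_{k\ell}t^{\alpha_{k\ell}}+o(t^{\alpha_{k\ell}})$ the principal irregular part, and $r_{k\ell}$ the anisotropic remainder.

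Two of the pieces are comparatively straightforward. The remainder $r_{k\ell}$ has mixed partial derivatives of every order $p \le 4m$ at $u \neq 0$ bounded by $o(\|u\|^{\alpha_{k\ell} - p})$, so a multivariate Taylor expansion of $r_{k\ell}(u+h(v_j-v_{j'}))$ around $u$, combined with the standard error estimate for a $4m$-th order finite-difference operator, shows its contribution to $C^{(m)}_{k\ell}(u)$ is $o(\|u\|^{\alpha_{k\ell}-4m})$. The analytic part $A_{k\ell}(\|u\|)$ has bounded mixed partials of all orders in a neighborhood of the origin, so the corresponding $4m$-th order finite difference is $O(1)$ in magnitude; since $4m > \alpha_{k\ell}$, this is also $o(\|u\|^{\alpha_{k\ell}-4m})$.

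The heart of the argument is the principal irregular term. By homogeneity of $\|\cdot\|^{\alpha_{k\ell}}$, the substitution $y = u/h$ reduces its contribution to $c_{k\ell} h^{\alpha_{k\ell}-4m}\, G_{k\ell}(y)$ with $G_{k\ell}(y) = \sum_{j,j'} w_j w_{j'} \|y+v_j-v_{j'}\|^{\alpha_{k\ell}}$. Taylor-expanding $\|y+z\|^{\alpha_{k\ell}}$ in $z$ around $z=0$ and interchanging summations, the coefficient of $\partial^\gamma \|y\|^{\alpha_{k\ell}}/\gamma!$ factors as
\[
\sum_{\beta \le \gamma}\binom{\gamma}{\beta}(-1)^{|\gamma-\beta|}\Bigl(\sum_j w_j v_j^\beta\Bigr)\Bigl(\sum_{j'} w_{j'} v_{j'}^{\gamma-\beta}\Bigr),
\]
which by the polynomial-annihilation property vanishes unless both $|\beta| \ge 2m$ and $|\gamma-\beta| \ge 2m$, that is, unless $|\gamma| \ge 4m$. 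At $|\gamma|=4m$ the resulting constant-coefficient differential operator collapses to the isotropic biharmonic-type operator $\Delta^{2m}$—this is ultimately the assertion that the discrete Laplacian $\Delta_h^{(m)}$ is consistent with the continuous $\Delta^m$ under iteration—yielding the leading term $M\|y\|^{\alpha_{k\ell}-4m}$ with $M = \prod_{k=0}^{2m-1}(\alpha_{k\ell}-2k)(\alpha_{k\ell}+d-2-2k)$ depending only on $\alpha_{k\ell}$ and $d$; contributions at $|\gamma|>4m$ are strictly lower order in $\|y\|$. Restoring $y=u/h$ produces $c_{k\ell}M\|u\|^{\alpha_{k\ell}-4m}$ as the leading isotropic term, and taking $K^{(m)}_{k\ell}$ to be an isotropic covariance on $\mathbb{R}$ with this as its principal irregular term verifies Assumption \ref{eq:K.assump} with exponent $\alpha_{k\ell}-4m$ and coefficient $c_{k\ell}M$; since $M$ depends only on $(\alpha_{k\ell},d)$ and $\alpha_{11}=\alpha_{12}$, Equation (\ref{eq:K12limit}) for the rescaled coefficients is inherited from the original one.

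The main obstacle I anticipate is the identification at order $|\gamma|=4m$ that the bilinear form on the weights produces precisely the isotropic operator $\Delta^{2m}$, rather than an anisotropic fourth-order constant-coefficient operator; this requires carefully matching the multinomial structure of $\sum_{j,j'} w_j w_{j'}(v_j-v_{j'})^\gamma$ to the coefficients in $(\xi_1^2+\cdots+\xi_d^2)^{2m}$, so that the anisotropic contributions cancel and only an isotropic principal term survives. A secondary technical nuisance is uniformly controlling the Taylor remainder for the non-smooth radial function $\|\cdot\|^{\alpha_{k\ell}}$ while ensuring $\|y+v_j-v_{j'}\|$ stays bounded away from zero across the bilinear sum.
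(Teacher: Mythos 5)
Your argument is correct in outline and reaches the right constant, but it takes a genuinely different route from the paper. The paper proves only the $m=1$ case and then iterates: it replaces the discrete double Laplacian $\sum_{g,g'}\Delta_{h,g}\Delta_{h,g'}$ by the continuous biharmonic operator $\sum_{g,g'}\partial^4/\partial u_g^2\partial u_{g'}^2$ up to an error controlled by mean-value/Taylor arguments at intermediate points, and computes the action of that operator on the analytic part and on $\|u\|^\alpha$ separately (Technical Lemmas \ref{lem:norm}, \ref{lem:normalpha}, \ref{lem:lapgeneral}), obtaining $M(\alpha,d)=\alpha(\alpha-2)\left[(\alpha-4)(\alpha+2d-2)+d^2+2d\right]$, which agrees with your $\prod_{k=0}^{2m-1}(\alpha-2k)(\alpha+d-2-2k)$ at $m=1$. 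You instead do a one-shot expansion of the full $m$-fold operator as a weighted lattice sum, use the moment-annihilation property $\sum_j w_j v_j^\beta=0$ for $|\beta|<2m$ (equivalently, that the symbol $[\sum_g(e^{z_g}+e^{-z_g}-2)]^m=\|z\|^{2m}+O(\|z\|^{2m+2})$), and identify the surviving degree-$4m$ operator as $\Delta^{2m}$; the generating-function identity confirms the "collapse to the isotropic operator" that you flag as the main obstacle, so that step is sound. Your approach buys a cleaner treatment of the iteration: the paper's recursion tacitly requires the output of each step (in particular the new remainder $r^{(1)}$, which is an $h$-dependent discrete object) to satisfy Assumption \ref{as:rd} again, including derivative bounds it never re-verifies, whereas you avoid recursion entirely at the price of needing mixed partials of $r_{k\ell}$ and of the sub-principal part of $B_{k\ell}$ up to order $4m$ in one pass (which Assumption \ref{as:rd} and Assumption \ref{eq:K.assump} do supply). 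The one substantive caveat is the regime $\|u\|/h=O(1)$, where the Taylor expansion of $\|y+z\|^{\alpha}$ is not available because some shifted points $y+v_j-v_{j'}$ approach the origin; there the asymptotic identity degrades to the bound $|G_{k\ell}(y)|\le C(1+\|y\|)^{\alpha}$, giving only $O(\|u\|^{\alpha-4m})$ rather than the stated expansion. You correctly flag this, it is exactly what the downstream variance calculation actually uses, and the paper's own proof (whose intermediate points $\xi_i$ can likewise approach the origin when $\|u\|\asymp h$) suffers from the same limitation, so I do not regard it as a gap specific to your argument.
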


\begin{proof} It is enough to prove this for $m=1$, which implies $\alpha_{k\ell} < 4$ for all $k,\ell$.
	For larger $m$, we can then apply the result recursively. Let $s_i \neq s_j \in \calG_n^{(1)}$ with $s_i - s_j = u$. Let $C^{(1)}(u) = \text{Cov} \left( \Delta_h Z_k (s_j), \Delta_h Z_\ell (s_j) \right)$.
	
	For a function $f: \mathbb R^d \to R$, define the directional discrete Laplacian (at lag $h$) along a direction $g \in \{1,2,\ldots,d\}$ as $\Delta_{h,g} f$. For notational simplicity, we drop the subscript $k\ell$ and first prove a result for a stationary univariate GRF $Z$ on $\calG_n$ with covariance $C$ satisfying
	$C(u)=C^*(u) + r(u)$ where $C^*(u)$ is the isotropic part, i.e., $ C^*(u)= K(\|u\|) = A(\|u\|) + B(\|u\|)$. Then we have 
	$$
	\begin{aligned}
		C^{(1)}(u) & = \text{Cov} \left( \Delta_h Z (s_i), \Delta_h Z (s_j) \right) \\ 
		& =  \sum_{g,g' = 1}^{d} \Delta_{h,g}\Delta_{h,g'} C(u) \\
		& = \sum_{g,g' = 1}^{d} \Delta_{h,g}\Delta_{h,g'} \left[K(\|u\|) + r(u)\right] \\
		& = \sum_{g,g' = 1}^{d} \left[ \frac{\partial^4 K(\|u\|)}{\partial u_g^2 \partial u_{g'}^2} + \left(\Delta_{h,g}\Delta_{h,g'} \left[K(\|u\|) + r(u)\right] - \frac{\partial^4 K(\|u\|)}{\partial u_g^2 \partial u_{g'}^2} \right) \right].  \\
	\end{aligned}
	$$
	Let $C^{*(1)}(u)=\sum_{g,g' = 1}^{d} \frac{\partial^4 K(\|u\|)}{\partial u_g^2 \partial u_{g'}^2}$ and 
	\[r^{(1)}(u)=\sum_{g,g' = 1}^{d} \left[\Delta_{h,g}\Delta_{h,g'} [K(\|u\|) + r(u)] - \frac{\partial^4 K(\|u\|)}{\partial u_g^2 \partial u_{g'}^2} \right].\]
	Then $C^{(1)}(u)=C^{*(1)}(u)+r^{(1)}(u)$, and it is enough to show $C^{*(1)}$ and $r^{(1)}$ are of the form as in Assumption \ref{as:rd} with parameter $\alpha-4$.  
	
	As $A$ is even and analytic, then $\sum_{g,g'} \frac{\partial^4 A(\|u\|)}{\partial u_g^2 \partial u_{g'}^2} = A^{(1)}(\|u\|)$ for some even analytic function $A^{(1)}$ on $\mathbb R$ (Technical Lemma \ref{lem:norm}). Similarly, by Technical Lemma \ref{lem:normalpha}, $\sum_{g,g'} \frac{\partial^4 B(\|u\|)}{\partial u_g^2 \partial u_{g'}^2} = B^{(1)}(\|u\|)$ where $B^{(1)}$ is a function on $\mathbb R$ which has the same properties as the function $B$ in Assumption \ref{eq:K.assump}, with some constant $c^{(1)}$ and exponent $\alpha-4$. Let $K^{(1)}(\|u\|)=A^{(1)}(\|u\|)+B^{(1)}(\|u\|)$, we have shown that $C^*(u)=K^{(1)}(\|u\|)$ satisfies  the conditions of Assumption \ref{as:rd}. 
	
	Next we will show that $r^{(1)}(u)=o(\|u\|^{\alpha-4})$ which will complete the proof. Let $B^*(u) = B(\|u\|)$. We can write 
	\[
	\begin{aligned}
		r^{(1)}(u)&=\sum_{g,g' = 1}^{d} \left[\left( \Delta_{h,g}\Delta_{h,g'} C(u) - \frac{\partial^4 C(u)}{\partial u_g^2 \partial u_{g'}^2} \right)+  \frac{\partial^4 r(u)}{\partial u_g^2 \partial u_{g'}^2} \right] \\
		&=\sum_{g,g' = 1}^{d} \left[\left( \Delta_{h,g}\Delta_{h,g'} B^*(u) - \frac{\partial^4 B^*(u)}{\partial u_g^2 \partial u_{g'}^2} \right)+ o(\|u\|^{\alpha-4})
		\right]
	\end{aligned}
	\]
	In the expression above, the  term $\frac{\partial^4 r(u)}{\partial u_g^2 \partial u_{g'}^2}$ is immediately $o(\|u\|^{\alpha-4})$ from Assumption \ref{as:rd}. Also, the $O(1)$ term from the analytic part of $C$ is absorbed into the $o(\|u\|^{\alpha-4})$ term as $\alpha<4$. 
	
	Hence, we focus on the first term. Let $\rho_{gg'}=\Delta_{h,g}\Delta_{h,g'} C(u) - \frac{\partial^4 C(u)}{\partial u_g^2 \partial u_{g'}^2}$. 
	
	We first express
	$\rho_{gg'}\,$
	in terms of $4^{th}$ order mixed partial derivatives of $B$.
	$$
	\begin{aligned}
		& \Delta_{h,g}\Delta_{h,g'} B^*(u) \\
		&= \frac{1}{h^4} \left[ 4 B^*(u) - 2 B^*(u + h e_g) - 2 B^*(u - h e_g) - 2 B^*(u + h e_{g'}) - 2 B^*(u - h e_{g'}) \right. \\
		& \qquad+ B^*(u + h (e_g + e_{g'})) + B^*(u - h (e_g + e_{g'})) \\
		& \left. \qquad+ B^*(u + h (e_g - e_{g'})) + B^*(u - h (e_g - e_{g'})) \right]\\
		&= \frac{1}{h^2} \left[
		2 \left( \frac{2 B^*(u) - B^*(u + h e_g) - B^*(u - h e_g)}{h^2} \right)
		\right. \\
		&\quad \left.
		- \left( \frac{2 B^*(u + h e_{g'}) - B^*(u + h e_{g'} + h e_g) - B^*(u + h e_{g'} - h e_g)}{h^2} \right)
		\right. \\
		&\quad \left.
		- \left( \frac{2 B^*(u - h e_{g'}) - B^*(u - h e_{g'} + h e_g) - B^*(u - h e_{g'} - h e_g)}{h^2} \right)
		\right] \\
		&= \left[
		\frac{
			-2 \frac{\partial^2 B^*(u)}{\partial u_g^2} 
			+ \frac{\partial^2 B^*(u + h u_{g'})}{\partial u_g^2}
			+ \frac{\partial^2 B^*(u - h e_{g'})}{\partial u_g^2}}{h^2} \right.\\
		& \quad \left. + \frac{\frac {h}6 \left( \frac{2\partial^3 B^*(\xi_{1-})}{\partial u_g^3} - \frac{2\partial^3 B^*(\xi_{1+})}{\partial u_g^3} + \frac{\partial^3 B^*(\xi_{2+})}{\partial u_g^3} - \frac{\partial^3 B^*(\xi_{2-})}{\partial u_g^3} + \frac{\partial^3 B^*(\xi_{3+})}{\partial u_g^3} - \frac{\partial^{3-} B^*(\xi_{3Blue{-}})}{\partial u_g^3}\right) 
		}{h^2}
		\right]\\
		&= 
		\frac{\partial^4 B^*(u)}{\partial u_g^2 \partial u_{g'}^2} + \left(\frac 12 \frac{\partial^4 B^*(\xi^*_1)}{\partial u_g^2 \partial u_{g'}^2} + \frac 12 \frac{\partial^4 B^*(\xi^*_2)}{\partial u_g^2 \partial u_{g'}^2} - \frac{\partial^4 B^*(u)}{\partial u_g^2 \partial u_{g'}^2} \right) + \\
		& \quad \frac { -2\frac{\partial^4 B^*(\xi_{1})}{\partial u_g^4}(\xi_{1+}-\xi_{1-})_g + \frac{\partial^4 B^*(\xi_{2})}{\partial u_g^4}(\xi_{2+}-\xi_{2-})_g + \frac{\partial^4 B^*(\xi_{3})}{\partial u_g^4}(\xi_{3+}-\xi_{3-})_g}{6h} \\
		&= \frac{\partial^4 B^*(u)}{\partial u_g^2 \partial u_{g'}^2} 
		+ o(\|u\|^{\alpha-4}).
	\end{aligned}
	$$
	Here $\xi_{1\pm} \in (u-he_g,u+he_g)$, $\xi_{2\pm} \in (u+he_{g'}-he_g,u+he_{g'}+he_g)$, $\xi_{3\pm} \in (u-he_{g'}-he_g,u-he_{g'}+he_g)$, $\xi_i \in (\xi_{i-},\xi_{i+})$ for $i=1,2,3$ and $\xi^*_1 \in (u,u+he_{g'})$, $\xi_2^* \in (u-he_{g'},u)$. 
	Hence, $|(\xi_{i+}-\xi_{i-})_g| \leq 2h$, and, 
	$\|\xi_i\| < 2\|u\|$ and $\|\xi^*_i\| < 2\|u\|$ for all $i$, implying that all 
	$4^{th}$ order mixed derivatives of $B$ at $\xi_i$ or $\xi_i^*$ are $o(\|u\|^{\alpha-4})$ by Technical Lemma \ref{lem:lapgeneral}.
	This leads to the $o(\|u\|^{\alpha-4})$ term in the expression above, proving the result. 
	
\end{proof}

\begin{proof}[Proof of Theorem \ref{th:beta.con.Lap}]
	Let $N=|\calG_n^{(m)}|=(n+1-2m)^d$.
	We write $\mbox{Lap}_n^{(m)}(Z_1,Z_2) = T_2/T_1$ where 
	$$T_\ell = \frac 1{Nh^{\alpha_{11}-4m}} Z_1^{(m)T}Z_\ell^{(m)} = \frac 1{Nh^{\alpha_{11}-4m}} \sum_{s_i \in \calG_n^{(m)}} Z^{(m)}_{1}(s_i)Z^{(m)}_{\ell}(s_i).$$
	
	By the assumptions of the theorem, $\alpha_{11} < 4$ implies $\alpha_{k\ell} < 4$ for all $k,\ell$. 
	It is enough to prove the result for $m=1$, i.e., $\alpha_{k\ell} < 4$ for all $k,l$, as the proof for higher $m$ can be obtained by recursive use of Lemmas \ref{lem:lapmean} and \ref{lem:lapbound}.
	By Lemma \ref{lem:lapmean}, $E(Z_{1}^{(1)}(s_i)Z^{(1)}_{\ell}(s_i)) = \gamma c_{1\ell} h^{\alpha_{1\ell}-4} + o(h^{\alpha_{1\ell}-4})$
	where $\gamma$ is some constant (depending only on $d$ and $\alpha_{1\ell}$).
	As, by (\ref{eq:K12limit}), $\alpha_{11}=\alpha_{12}$, this implies $$E(T_\ell) = \frac 1{Nh^{\alpha_{11}-4}} \left(\sum_{i \in \calG_n^{(1)}} \gamma c_{1\ell} h^{\alpha_{11}-4} + o(h^{\alpha_{11}-4}) \right) \to \gamma c_{1\ell}.$$
	As (\ref{eq:K12limit}) also implies $c_{12}=\beta c_{11}$, we have $E(T_1) \to \gamma c_{11}$ and $E(T_2) \to \gamma \beta c_{11}$. So it is enough to show that $Var(t_\ell) \to 0$ implying $T_2/T_1 \to \beta$. 
	
	Let $Z^{(1)}=(Z_1^{(1)},Z_2^{(1)})$.
	and $C^{(1)}=\mbox{Cov}(Z^{(1)})$ on $\calG_n^{(1)}$, with blocks $C^{(1)}_{k\ell}$ for $1 \leq k,l \leq 2$. 
	$$Var(Z_1^{(1)T} Z_2^{(1)}) =
	\mbox{trace}\left[\left(C_{12}^{(1)}\right)^2\right] + \mbox{trace}(C^{(1)}_{11}C^{(1)}_{22}).
	$$
	
	Note that as $C_{12}$ is symmetric (Assumption \ref{as:rd}), it is immediate that so is $C^{(1)}_{12}$.
	So $$
	\begin{aligned}
		\mbox{trace}\left[\left(C_{12}^{(1)}\right)^2\right] = \mbox{trace}\left[C_{12}^{(1)}C_{12}^{T(1)}\right] =
		\sum_{s_i,s_j \in \calG_n^{(1)}} \left(C^{(1)}_{12}(s_i-s_j)\right)^2.
	\end{aligned}
	$$
	
	We can separate this sum into $s_i=s_j$ and $s_i \neq s_j$. By Lemma \ref{lem:lapmean}, we have 
	\begin{equation}\label{eq:match}
		\begin{aligned}
			\sum_{s_i \in \calS}
			\left(C^{(1)}_{12}(0)\right)^2 \asymp n^d
			h^{2\alpha_{11}-8}.
		\end{aligned}
	\end{equation}
	
	From Lemma \ref{lem:lapbound},
	we have for $u \neq 0$,
	
	$$
	\begin{aligned}
		\left(C^{(1)}_{12}(u)\right)^2 &=
		A^{(1)}_{12}(\|u\|)^2+c^2_{12}M^2(\alpha_{11},d)\|u\|^{2\alpha_{11}-8}  + o(\|u\|^{2\alpha_{11}-8}) \\
		&= O(1) + c^2_{12}M^2(\alpha_{11},d)O\left(\|u\|^{2\alpha_{11}-8}\right) + o(\|u\|^{2\alpha_{11}-8}) \\
		& \asymp \|u\|^{2\alpha_{11}-8}. \\
	\end{aligned}
	$$
	In the above, we have used the fact that $A^{(1)}_{12}$ is analytic, implying it is $O(1)$ in a bounded interval, and is dominated by $\|u\|^{2\alpha_{11}-8}$ as $\alpha_{11}<4$.
	Summing over all $s_i \neq s_j \in \calG_n^{(1)}$, we have 
	$$
	\begin{aligned}
		\sum_{s_i \neq s_j \in \calS}
		\left(C^{(1)}_{12}(s_i-s_j)\right)^2 \asymp
		\sum_{s_i \neq s_j \in \calG_n^{(1)}}  \|s_i - s_j\|^{2\alpha_{11}-8}.
	\end{aligned}
	$$
	
	As $\alpha_{11} < 4$, we have
	by Technical Lemma \ref{lem: grid},
	that the above summation is 
	
	\begin{equation}\label{eq:gridsum}
		\begin{aligned}
			\sum_{s_i \neq s_j \in \calS}
			\left(C^{(1)}_{12}(s_i-s_j)\right)^2&\asymp
			\begin{cases}
				n^{2d-8+2\alpha_{11}}\, h^{2\alpha_{11}-8}, & 0<8-2\alpha_{11}<d,\\[4pt]
				n^d \log n \, h^{2\alpha_{11}-8}, & 8-2\alpha_{11}=d,\\[4pt] 
				n^d \, h^{2\alpha_{11}-8}, & 8-2\alpha_{11}>d
			\end{cases} \\[8pt]
			&\asymp \begin{cases}
				n^{2d}, & 4 > \alpha_{11}> 4 - d/2,\\[4pt]
				n^{2d} \log n, & 4-\alpha_{11}=\frac d2,\\[4pt] 
				n^{d+8-2\alpha_{11}}, & 0 <\alpha_{11} < 4 - d/2,
			\end{cases}
		\end{aligned}
	\end{equation}
	
	Combining with \eqref{eq:match},
	$\mbox{trace}\left[\left(C_{12}^{(1)}\right)^2\right]$ has the same rate as (\ref{eq:gridsum}). 
	
	Similarly, as $\alpha_{22} \leq \alpha_{11}$, we have $8 - \alpha_{11} - \alpha_{22}> 0$, and by application of Lemma \ref{lem:lapmean}, Lemma \ref{lem:lapbound} and Technical Lemma \ref{lem: grid}, we get 
	
	\begin{equation}\label{eq:prodtrace}
		\mbox{trace}\left[(C^{(1)}_{11}C^{(1)}_{22})\right] \asymp
		\begin{cases}
			n^{2d}, & 4 > \frac{\alpha_{11}+\alpha_{22}}2> 4 - d/2,\\[4pt]
			n^{d+8-\alpha_{11}-\alpha_{22}}\log n \, , & \frac{\alpha_{11}+\alpha_{22}}2=4-d/2,\\[4pt]
			n^{d+8-\alpha_{11}-\alpha_{22}}, & 0 <\frac{\alpha_{11}+\alpha_{22}}2 < 4 - d/2,
		\end{cases}
	\end{equation}
	
	Let $\rho=\max\{2d,d+8-2\alpha_{11}\}$ and $\rho'=\max\{2d,d+8-\alpha_{11}-\alpha_{22}\}$. 
	We have 
	\begin{equation}\label{eq:raterd}
		\begin{aligned}
			\mbox{Var}(T_2) &= \frac 1{4n^{2d}h^{2\alpha_{11}-8}} 
			\left(\mbox{trace}(C_{12}^{(1)2})+ \mbox{trace}(C_{11}^{(1)}C_{22}^{(1)}) \right) \\
			&=\frac 1{4n^{2d+8-2\alpha_{11}}}\left(O(n^{\rho}) + O(n^{\rho'})\right) \\
			&= O\left(n^{-\min\{d,8-2\alpha_{11}\}} + O(n^{-\min\{d-\alpha_{11}+\alpha_{22},8-2\alpha_{11}\}}\right)\\
			& \to 0 \mbox{ as } \alpha_{11} < d + \alpha_{22}, \alpha_{11} < 4
		\end{aligned}
	\end{equation}
	Similarly, $\mbox{Var}(T_1)=O\left(n^{-\min\{d,8-2\alpha_{11}\}}\right)$. Hence, $\mbox{Var}(T_\ell) \to 0$ if $\alpha_{22} \leq \alpha_{11} < \alpha_{22} + d$ and the proof is complete. 
	
\end{proof}

\begin{proof}[\textbf{Proof of Theorem \ref{th:suff.rd}}]
	
	Proof of Theorem \ref{th:suff.rd} follows from Theorem \ref{th:beta.con.Lap} exactly as the proof of Theorem \ref{th:suff} follows from Theorem \ref{th:beta.con}.
\end{proof}

\subsection{Remaining proofs of results on equivalence of multivariate GRFs}\label{sec:texequiv}

\begin{proof}[\textbf{Proof of Theorem \ref{th:techequiv}}] This theorem is a generalization of Theorem 1 of \cite{bachoc2022asymptotically}, relaxing the assumption that the components in the multivariate GRFs have the same smoothness or tail behavior of the spectral density. That is, instead of Condition 1 of \cite{bachoc2022asymptotically}, we assume our Condition \eqref{eq:cond1}, which is weaker. Much of the proof of this theorem is identical to the proof of Theorem 1 of \cite{bachoc2022asymptotically}, and we only prove the parts where their Condition 1 was needed, using our Condition (\ref{eq:cond1}). 
	
	We first show that an integral operator on $\calW_\calD(\Fz)$ defined by 
	$$(Vf)(\mu) = \intrd B(\mu,\omega)\Fz(\omega)f(\omega)d\omega$$ is well-defined for almost all $\mu \in \mathbb R^d$. Then $\calW_\calD(F^{(i)})$ is a complex, separable Hilbert space. We note that under (\ref{eq:cond1}), $\calW_\calD(F^{(i)})$ is equipped with the corresponding inner product 
	\begin{equation}\label{eq:metriclim}
		\langle u,v \rangle_{F^{(i)}}
		= \int_{\mathbb R^d} u(\omega)^*F^{(i)}(\omega)v(\omega)d\omega.
	\end{equation} 
	
	Denoting the $i^{th}$ row of $B(\mu,\omega)$ by $b_i(\mu,\omega)^*$ we have by Cauchy-Schwartz inequality,
	$$
	\begin{aligned}
		& \intrd |b_i(\mu,\omega)^* \Fz(\omega) f(\omega)| d\omega \\
		&\leq  \left(   \intrd b_i(\mu,\omega)^* \Fz(\omega) b_i(\mu,\omega) d\omega \intrd f(\omega)^* \Fz(\omega) f(\omega)  d\omega \right)^{\frac 12}
	\end{aligned}
	$$
	Using (\ref{eq:polya2}), $\Fz \leq c_2 \Phi$ and $\sup_\omega \|\Phi(\omega)\| < M$, the first integral on the right-hand side above is finite for almost all $\mu \in \mathbb R^d$. Also, the second integral on the right is well-defined as $f \in \calW_\calD(\Fz)$. So, the integral operator $V$ is well-defined on $\calW_\calD(\Fz)$. 
	
	We next show that for any $f \in \calW_\calD(\Fz)$, $\|Vf\|_{\Fz}$ is finite. \cite{bachoc2022asymptotically} used their Condition 1 for this result. We show that this can be proved by using the multiplicative property of $\|\cdot\|$ norm, the Cauchy-Schwartz inequality, the equivalence of $\|\cdot\|$ and Frobenius norms for fixed-dimensional matrices, and $B \in \calW^2_{\calD}(\Fz)$.  We have  
	$$
	\begin{aligned}
		\|Vf\|^2_{\Fz} &= \intrd (Vf)(\mu)^* \Fz(\mu) (Vf)(\mu) d\mu \\
		& = \intrd \intrd \intrd f(\omega)^*\Fz(\omega) B(\mu,\omega)^*\Fz(\mu) B(\mu,\lambda)\Fz(\lambda)f(\lambda) d\omega\, d\lambda\,  d\mu \\
		& \leq \intrd \intrd \intrd \left( \sqrt{f(\omega)^*\Fz(\omega) f(\omega)} \|\Fz(\omega)^{1/2}B(\mu,\omega)^*\Fz(\mu)^{1/2}\|\right. \\  
		& \qquad \left. \| \Fz(\mu)^{1/2} B(\mu,\lambda)\Fz(\lambda)^{1/2}\| 
		\sqrt{f(\lambda)^*\Fz(\lambda) f(\lambda)} \right) d\omega\, d\mu\, d\lambda \\
		& \leq \sqrt{\intrd \intrd \intrd  f(\omega)^*\Fz(\omega) f(\omega) \|\Fz(\mu)^{ 1/2}B(\mu,\lambda)\Fz(\lambda)^{1/2}\|^2 d\omega\, d\mu \, d\lambda} \,\times \\
		& \qquad \sqrt{\intrd \intrd \intrd  f(\lambda)^*\Fz(\lambda) f(\lambda) \|\Fz(\omega)^{ 1/2}B(\mu,\omega)^*\Fz(\mu)^{1/2}\|^2 d\omega\, d\mu \, d\lambda} \\
		& \leq C \|f\|^2_{\Fz} \|B\|^2_{2,\Fz} < \infty. 
	\end{aligned}$$
	Here $C$ is some constant depending only on $p$.
	
	The operator $V$ was shown in \cite{bachoc2022asymptotically} to be a Hilbert-Schmidt operator, with eigenvalues $\lambda_k$ and eigenfunctions $g_k$ such that $Vg_k =\lambda_k g_k$ and $\sum_k \lambda_k^2 < \infty$.
	Let $u_{k,n}$ denote a sequence of functions in $\calW_\calD$ with $u_{k,n} \to g_k$ in $\calW_\calD(\Fz)$. Then $u_{k,n} = \calF(\phi_{k,n})$ for some square-integrable $\phi_{k,n}$ that is zero outside of $\calD$. For functions $u,v$ in $\calW_\calD(\Fz)$, define $$q(u,v)=\intrd\intrd u^*(\omega)\Fz(\omega)B(\omega,\mu)\Fz(\mu)v(\mu)  d\omega\, d\mu.$$
	\cite{bachoc2022asymptotically} used their Condition 1 to show that $q(u_{k,n},u_{j,n}) \to q(g_k,g_j)$. We prove the same result below using our assumptions. Note that $|q(u,v)| \leq \|u\|_{\Fz} \|v\|_{\Fz} \|B\|_{2,\Fz}$. We then have 
	$$
	\begin{aligned}
		|q(u_{k,n},u_{j,n}) - q(g_{k},g_{j})| & \leq  |q(u_{k,n} - g_{k},u_{j,n})| +  |q( g_k,u_{j,n}-g_j)| \\
		& \to 0 \mbox{ as } n \to \infty.
	\end{aligned}
	$$
	Here the limit follows as $\|u_{k,n} - g_k\|_{\Fz} \to 0$, $\|u_{j,n} - g_j\|_{\Fz} \to 0$, and $\|B\|_{2,\Fz}$, $\|g_k\|_{\Fz}$ and $\|g_j\|_{\Fz}$ are all finite. 
	
	Let $\calL_{2,p}(\calD)$ denote the Hilbert space of functions $h=(h_1,\ldots,h_p)^T$ from $\calD \to \mathbb C^p$ such that each $h_i$ is  square integrable on $\calD$. The space $\calL_{2,p}(\calD)$ is equipped with the inner product $\langle h,g\rangle_{\calD}=\sum_{i=1}^p \int_\calD \overline{h_i(t)}g_i(t)dt$. For $i = 0,1$, let $\calB^{(i)}$ denote the operator on $\calL_{2,p}(\calD)$ defined as 
	\begin{equation}\label{eq:opl2p}
		\calB^{(i)}(f)(s)=\int_\calD C^{(i)}(s-u)f(u)du.
	\end{equation}
	Let $\{h_k\}_k$ denote an orthonormal basis of $\calL_{2,p}(\calD)$ composed of the eigenfunctions of $\calB^{(0)}$ with eigenvalues $\rho_k$. Then \cite{bachoc2022asymptotically} uses their Condition (1) to prove that $\rho_k > 0$ for every $k$. This is also true assuming our condition \eqref{eq:cond1} as $F^{(i)} > c_1 \Phi$ for $i = 0,1$ and $\Phi$ is a diagonal matrix with strictly positive entries $\phi^2_i$ where $\phi_i \in \calW_\calD$. This proves that all the parts of the proof of Theorem 1 of \cite{bachoc2022asymptotically} which relied on their Condition (1) can be proved using the weaker condition (\ref{eq:equivgeneral}). The rest of the proof of Theorem \ref{th:techequiv} is identical to that of Theorem 1 of \cite{bachoc2022asymptotically}.
	
\end{proof}

\subsection{Proofs of results for specific covariance functions}\label{sec:proofexamples}

\begin{proof}[Proof of Corollary \ref{cor:powexp}] We first prove consistent estimability when $\delta_X < \delta_W + 1$. Let $(K_{k\ell})_{1 \leq k,l \leq 2}$ be the covariance function of $(X,W)$. By the statement of the corollary and using the expansion of the power exponential covariance near zero, each $K_{11}$, $K_{22}$ and $K_{12}$ respectively satisfies Assumption 1 with parameters $\alpha_{11}=\delta_X < 1 + \alpha_{22}$ where $\alpha_{22}=\delta_{W}$. Also, $\alpha_{12} > \alpha_{11}$. We can then directly apply Theorem \ref{th:suff} to establish consistent estimability. 
	
	Let $f_X(\omega)$ and $f_W(\omega)$ denote the spectral densities of $X$ and $W$ for $\omega \in \mathbb R$. By Theorem 1.1 of \cite{nolan2020univariate}, these densities are continuous.
	Also, by Theorem 1.2 of \cite{nolan2020univariate}, $f_X(\omega) \asymp (1+|\omega|)^{-\delta_X-1}$ and $f_W(\omega) \asymp (1+|\omega|)^{-\delta_W-1}$ as $\omega \to \pm \infty$. Finally, $f_X(\omega)$ and $f_W(\omega)$ are bounded away from $0$ and $\infty$ as $\omega \to 0$ \citep[Equation 2.2 of][]{garoni2002d}. Hence, there exists constants $c$ and $C$ such that 
	$$
	\begin{aligned}
		c (1+|\omega|)^{-\delta_X-1} &\leq   f_X(\omega) \leq C (1+|\omega|)^{-\delta_X-1}, \\
		c (1+|\omega|)^{-\delta_W-1} &\leq   f_W(\omega) \leq C (1+|\omega|)^{-\delta_W-1}. \\
	\end{aligned}
	$$
	Applying Equation \ref{eq:maternbound}, we have shown that $f_X$ and $f_W$ satisfy (\ref{eq:bound}). 
	
	For large enough $M$, as $\delta_X - \delta_W > 1$, we have 
	$$\int_{M}^\infty \frac{f_X(\omega)}{f_W(\omega)} d\omega \asymp \int_{M}^\infty \frac{1}{(1+|w|)^{\delta_W - \delta_X}}\, d\omega < \infty.$$
	By Theorem \ref{th:nonidgen}, $\beta$ is not consistently estimable. 
\end{proof}

\begin{proof}[Proof of Corollary \ref{cor:cauchy}] 
	For a small distance $h$, the Cauchy correlation function satisfies, $C(h) \asymp (1+\|h\|^\delta)^{-\kappa} = 1 - \kappa \|h\|^\delta + o(\|h\|^\delta)$ \citep[Equation (2.3) of ][]{lim2009gaussian}. Also, for large frequencies $\omega \in \mathbb R^d$, the spectral density $f(\omega) \asymp \|\omega\|^{-\delta-d}$ \citep[Equation (3.13) of ][]{lim2009gaussian}. Finally, note that the generalized Cuachy covariance at large distances $h \in \mathbb R^d$ behaves like $\|h\|^{-\kappa\delta}$. When $\kappa\delta > d$, $C(h)$ is integrable in $\mathbb R^d$, which implies continuity of the spectral density. In this scenario, the spectral density is also bounded away from $0$ and $\infty$ as $\omega \to 0$ \citep[Proposition (3.3) of ][]{lim2009gaussian} implying that we can write $ c (1+\|\omega\|)^{-\delta-d} < f(\omega) < C (1+\|\omega\|)^{-\delta-d}$ for some constants $c,C$ for all $\omega$. The rest of the proof is then exactly similar to the proofs of Corollaries \ref{cor:matern} and \ref{cor:powexp}.
\end{proof}

\begin{proof}[Proof of Corollary \ref{cor:lmc}]
	The proof for this corollary relies on the fact that the smoothness of a process that is a linear combination of independent component processes is the same as the smoothness of the roughest component process. Formally, as the covariance of each $U_r$ satisfies Assumption \ref{eq:K.assump} with $\delta_r$ being the exponent of the principal irregular term, the covariance of a linear combination of $U_r$ will also satisfy Assumption \ref{eq:K.assump} and the exponent of the principal irregular term will be the minimum $\delta_r$ from the $U_r$'s supporting the linear combination. This immediately allows the application of Theorem \ref{th:suff.rd} to establish consistent estimability of $\beta$ when $\delta_X < \delta_W + d$ and $\delta_{XW} > \delta_X$. 
	
	As each $U_r$ is either from the Matérn, power exponential, or generalized Cauchy family, its spectral density is well-behaved in the sense of being continuous, bounded away from $0$ and $\infty$ at low frequencies and having polynomial decay at high frequencies. So the spectral density of each of $X$ and $W$, being simply the linear combination of the component spectral densities, is also well behaved and satisfies condition \eqref{eq:bound}. Additionally, the spectral density of the coregionalization process, being a linear combination of independent component processes, has the same order of decay as the slowest decaying spectral density among the component processes. Hence, the spectral densities of $X$ and $W$ have algebraic decay with rates $\delta_X+d$ and $\delta_W+d$, respectively. When $\delta_X > \delta_W + d$, (\ref{eq:nec}) is satisfied and $\beta$ cannot be consistently estimable. 
\end{proof}

\subsection{Technical results on distances and radially symmetric functions}\label{sec:tl}

\begin{tl}\label{lem:norm} Let $A$ denote an even analytic function on $\mathbb R$ and $u \in \mathbb R^d$. Then $\sum_{g,g'} \frac{\partial^4 A(\|u\|)}{\partial u_g^2 \partial u_{g'}^2}=A^{(1)}(\|u\|)$ where $A^{(1)}$ is also an even analytic function on $\mathbb R$. 
\end{tl}

\begin{proof} Since $ A $ is an even analytic function on $ \mathbb{R}^d $, it can be expressed as a power series in terms of $r= \|u\|^2 $, that is,
	\[
	A(\|u\|) = \sum_{k=0}^{\infty} c_k \|u\|^{2k} = \sum_{k=0}^{\infty} c_k r^k.
	\]
	Computing the first derivative with respect to $ u_g $:
	\[
	\frac{\partial A}{\partial u_g} = \sum_{k=1}^{\infty} c_k \cdot 2k r^{k-1} u_g.
	\]
	Differentiating again:
	\[
	\frac{\partial^2 A}{\partial u_g^2} = \sum_{k=2}^{\infty} c_k \cdot 2k \left[ (2k-2) r^{k-2} u_g^2 + r^{k-1} \right].
	\]
	
	Now differentiating $ \frac{\partial^2 A}{\partial u_g^2} $ with respect to $ u_{g'} $:
	\[
	\frac{\partial^3 A}{\partial u_g^2 \partial u_{g'}} =
	\sum_{k=2}^{\infty} c_k \cdot 2k(2k-2) \left[ (2k-4) r^{k-3} u_g^2 u_{g'}  
	+ r^{k-2}u_{g'}  + \delta_{gg'} 2 r^{k-2} u_g \right] .
	\]
	$$
	\begin{aligned}
		\frac{\partial^4 A}{\partial u_g^2 \partial u_{g'}^2} &=
		\sum_{k=2}^{\infty} c_k 2k (2k-2) 
		\Big((2k-4) \left[ (2k-6) r^{k-4} u_g^2 u_{g'}^2 + r^{k-3}u_g^2 + r^{k-3}4\delta_{gg'}u_g^2 + \right.\\
		& \qquad \qquad \left. + r^{k-3}u_{g'}^2  
		\right]  + r^{k-2}(2\delta_{gg'} +1)\Big).    
	\end{aligned}
	$$
	Summing over $ g, g' $, we obtain
	$$
	\begin{aligned}
		\sum_{g,g'} \frac{\partial^4 A}{\partial u_g^2 \partial u_{g'}^2} &=
		\sum_{k=2}^{\infty} c_k 2k (2k-2) 
		\Big((2k-4) \left[ (2k-6) r^{k-2}  + 2dr^{k-2} + 4r^{k-2} 
		\right]  \\
		& \qquad\qquad + (2d+d^2)r^{k-2} \Big).\\
		& = \sum_{k=0}^\infty c'_k r^k = \sum_{k=0}^\infty c'_k \|u\|^{2k} = A^{(1)}(\|u\|). 
	\end{aligned}
	$$
	This completes the proof.
\end{proof}

\begin{tl}\label{lem:normalpha} Suppose $B$
	is the function as in Assumption \ref{eq:K.assump} for
	some constant $c$ and exponent $\alpha$
	and $u \in \mathbb R^d$. 
	Then $\sum_{g,g'} \frac{\partial^4 B(\|u\|)}{\partial u_g^2 \partial u_{g'}^2} = B^{(1)}(\|u\|)$ where $B^{(1)}$ also is of the same form of $B$ as in Assumption \ref{eq:K.assump} for some constant $c_1$ and exponent $\alpha_1$ where $c_1$ depends only on $c$, $\alpha$ and $d$,  and $\alpha_1=\alpha-4$.
\end{tl}

\begin{proof} Note that $B(t) = ct^\alpha + o(t^\alpha)$ for some $c,\alpha$.
	Let $r = \|u\|^2$ and $f(u)=\|u\|^\alpha = r^{\frac \alpha 2}$. Then for any $u \neq 0$, we have 
	
	$$
	\begin{aligned}
		\frac{\partial f}{\partial u_g} &= \alpha r^{\frac \alpha 2-1} u_g.\\
		\frac{\partial^2 f}{\partial u_g^2} &= \alpha \left[( \alpha-2) r^{\frac \alpha 2-2} u_g^2 + r^{\frac \alpha 2-1} \right].
		\\
		\frac{\partial^3 f}{\partial u_g^2 \partial u_{g'}} &= \alpha( \alpha-2) \left[ (\alpha-4) r^{\frac \alpha 2-3} u_g^2 u_{g'} + r^{\frac \alpha 2-2} 2\delta_{gg'} u_g  + r^{\frac \alpha 2-2}u_{g'} \right].\\
		\frac{\partial^4 f}{\partial u_g^2 \partial u_{g'}^2} &= \alpha( \alpha-2) \left[ (\alpha-4) \left( (\alpha-6) r^{\frac \alpha 2-4} u_g^2 u_{g'}^2 + r^{\frac \alpha 2-3} u_g^2 + r^{\frac \alpha 2-3}4\delta_{gg'}u_g^2  + r^{\frac \alpha 2-3}u^2_{g'} \right)\right.\\
		& \qquad\qquad\qquad \left. + \, r^{\frac \alpha 2-2} (2\delta_{gg'} + 1)  \right].\\
	\end{aligned}
	$$
	Summing over all $g,g'$, we have, $$
	\begin{aligned}
		\sum_{g,g'} \frac{\partial^4 f}{\partial u_g^2 \partial u_{g'}^2} &= \alpha(\alpha-2) \left[ (\alpha-4) \left( (\alpha-6) + 2d + 4 \right) + 2d + d^2  \right] \|u\|^{\alpha-4} \\
		&= M(\alpha,d) \|u\|^{\alpha-4}. 
	\end{aligned}
	$$
	
	Let $R(t)=B(t) - ct^\alpha$, then by Assumption \ref{eq:K.assump},
	the $k^{th}$ derivative of $R(t)$ is $o(t^{\alpha-k})$. Then by Technical Lemma \ref{lem:lapgeneral},
	\[\sum_{g,g'} \frac{\partial^4 R(\|u\|)}{\partial u_g^2 \partial u_{g'}^2} = o(\|u\|^{\alpha-4}) = R^{(1)}(\|u\|).
	\]
	Defining $B^{(1)}(t)=cM(\alpha,d)t^{\alpha-4} + R^{(1)}(t)$, we have our result.
\end{proof}

\begin{tl}\label{lem:lapgeneral}
	Let $ f:\mathbb{R}\rightarrow \mathbb{R} $ be a four times differentiable function, and let $ u\in \mathbb{R}^d $. Define $ r=\|u\|=\sqrt{u_1^2+\dots+u_d^2} $. Then for any $u \neq0$, the following identity holds:
	\[
	\sum_{g,g'=1}^{d}\frac{\partial^4 f(\|u\|)}{\partial u_g^2 \partial u_{g'}^2}
	= f^{(4)}(r) 
	+ \frac{2(d-1)}{r}f^{(3)}(r) 
	+ \frac{(d-1)(d-3)}{r^2}f''(r)
	- \frac{(d-1)(d-3)}{r^3}f'(r).
	\]
\end{tl}

\begin{proof}
	Let $ r=\|u\| $. First note that
	\[
	\frac{\partial f(r)}{\partial u_g} = f'(r)\frac{u_g}{r}.
	\]
	Differentiating again with respect to $u_g$, we obtain
	\[
	\frac{\partial^2 f(r)}{\partial u_g^2} 
	= f''(r)\frac{u_g^2}{r^2} + f'(r)\frac{r^2 - u_g^2}{r^3}.
	\]
	Summing this second derivative over $g=1,\dots,d$, we have the radial Laplacian:
	\[
	\sum_{g=1}^{d}\frac{\partial^2 f(r)}{\partial u_g^2} 
	= f''(r) + \frac{d-1}{r}f'(r).
	\]
	
	Define
	\[
	h(r)=f''(r)+\frac{d-1}{r}f'(r).
	\]
	Then the expression we aim to compute becomes
	\[
	\sum_{g,g'=1}^{d}\frac{\partial^4 f(r)}{\partial u_g^2 \partial u_{g'}^2}
	=\sum_{g'=1}^{d}\frac{\partial^2 h(r)}{\partial u_{g'}^2}.
	\]
	Differentiating $ h(r) $, we have
	\[
	\frac{\partial h(r)}{\partial u_{g'}}=h'(r)\frac{u_g}{r},\quad
	\frac{\partial^2 h(r)}{\partial u_{g'}^2}=h''(r)\frac{u_{g'}^2}{r^2}+h'(r)\frac{r^2-u_{g'}^2}{r^3}.
	\]
	Summing over $ g'=1,\dots,d $, and noting $\sum_{g'=1}^{d}u_{g'}^2=r^2$, we get
	\[
	\sum_{g'=1}^{d}\frac{\partial^2 h(r)}{\partial u_{g'}^2}=h''(r)+(d-1)\frac{h'(r)}{r}.
	\]
	We now substitute back the definition of $h(r)$:
	\[
	h'(r)=f'''(r)+(d-1)\left(\frac{f''(r)}{r}-\frac{f'(r)}{r^2}\right),
	\]
	and
	\[
	h''(r)=f^{(4)}(r)+(d-1)\frac{d}{dr}\left(\frac{f''(r)}{r}-\frac{f'(r)}{r^2}\right).
	\]
	
	Evaluating this explicitly, we have
	\[
	h''(r)=f^{(4)}(r)+(d-1)\left(\frac{f'''(r)r - f''(r)}{r^2}-\frac{f''(r)r^2 - 2r f'(r)}{r^4}\right).
	\]
	Simplifying the expression carefully, we arrive at the identity
	\[
	\sum_{g,g'=1}^{d}\frac{\partial^4 f(r)}{\partial u_g^2 \partial u_{g'}^2}
	=f^{(4)}(r) 
	+\frac{2(d-1)}{r}f^{(3)}(r) 
	+\frac{(d-1)(d-3)}{r^2}f''(r)
	-\frac{(d-1)(d-3)}{r^3}f'(r).
	\]
	This completes the proof.
\end{proof}

\begin{tl}\label{lem: grid}
	Let $d\in\mathbb{N}$ and $\calI_n=\{1,2,\ldots,n\}^d\subset\mathbb{R}^d$. For $\alpha>0$ define
	\[
	S_{n,\alpha}=\sum_{\substack{u,v\in\calI_n\\u\neq v}}\frac{1}{\|u-v\|^\alpha}.
	\]
	Then, as $n\to\infty$,
	\[
	S_{n,\alpha}\asymp
	\begin{cases}
		n^{2d-\alpha}, & 0<\alpha<d,\\[4pt]
		n^d\log n, & \alpha=d,\\[4pt]
		n^d, & \alpha>d,
	\end{cases}
	\]
	with constants depending only on $d$ and $\alpha$.
\end{tl}

\begin{proof}
	Write $h\in\mathbb{Z}^d\setminus\{0\}$ and let $N_n(h)$ be the number of pairs $(u,v)\in\calI_n^2$ with $v-u=h$. A direct count gives
	\[
	N_n(h)=\prod_{i=1}^d\bigl(n-|h_i|\bigr)_+\quad\text{where}\quad (x)_+=\max\{x,0\}.
	\]
	Hence
	\[
	S_{n,\alpha}=\sum_{\substack{h\in\mathbb{Z}^d\\h\neq 0}}\frac{N_n(h)}{\|h\|^\alpha}.
	\]
	For the upper bound, $N_n(h)\le n^d$ and $N_n(h)=0$ if $\|h\|_\infty\ge n$, so
	\[
	S_{n,\alpha}\le n^d\sum_{\substack{h\in\mathbb{Z}^d\\0<\|h\|_\infty\le n}}\frac{1}{\|h\|^\alpha}.
	\]
	For the lower bound, if $\|h\|_\infty\le n/2$ then $N_n(h)\ge (n/2)^d$, so
	\[
	S_{n,\alpha}\ge \Bigl(\frac{n}{2}\Bigr)^{\!d}\sum_{\substack{h\in\mathbb{Z}^d\\0<\|h\|_\infty\le n/2}}\frac{1}{\|h\|^\alpha}.
	\]
	Therefore there exist constants $c_1,c_2>0$ depending only on $d$ such that
	\[
	c_1\,n^d\sum_{0<\|h\|_\infty\le n/2}\frac{1}{\|h\|^\alpha}
	\;\le\;
	S_{n,\alpha}
	\;\le\;
	c_2\,n^d\sum_{0<\|h\|_\infty\le n}\frac{1}{\|h\|^\alpha}.
	\]
	Compare the lattice sums with shell counts. There exist constants $a_d,b_d,c_d>0$ depending only on $d$ such that for all integers $m\ge 1$,
	\[
	a_d\sum_{r=1}^{m} r^{d-1-\alpha}
	\;\le\;
	\sum_{0<\|h\|_\infty\le m}\frac{1}{\|h\|^\alpha}
	\;\le\;
	b_d\sum_{r=1}^{\lceil c_d m\rceil} r^{d-1-\alpha}.
	\]
	By the integral test,
	\[
	\sum_{r=1}^{m} r^{d-1-\alpha}\asymp
	\begin{cases}
		m^{\,d-\alpha}, & 0<\alpha<d,\\[4pt]
		\log m, & \alpha=d,\\[4pt]
		1, & \alpha>d.
	\end{cases}
	\]
	Taking $m$ proportional to $n$ and combining with the two sided bound for $S_{n,\alpha}$ yields
	\[
	S_{n,\alpha}\asymp
	\begin{cases}
		n^{2d-\alpha}, & 0<\alpha<d,\\[4pt]
		n^d\log n, & \alpha=d,\\[4pt]
		n^d, & \alpha>d,
	\end{cases}
	\]
	which completes the proof.
\end{proof}

\subsection{Proofs for non-stationary and non-Gaussian examples of Section \ref{sec:nsandng}}\label{sec:pfns}

\begin{proof}[\textbf{Proof of Theorem \ref{th:ns}}] For simplicity of the expressions we take $L=1$. More generally, the expressions will involve a fixed $L>0$ which does not change the rates. The error of the difference-based OLS estimator is given by $\frac {U_n}{D_n}$ where
	
	\[
	\begin{aligned}
		U_n &= \frac 1{nh^{\alpha_{11}}} \sum_{i=1}^n (X(hi) - X(h(i-1)))(W(hi) - W(h(i-1))), \\
		D_n &= \frac 1{nh^{\alpha_{11}}} \sum_{i=1}^n (X(hi) - X(h(i-1))^2.
	\end{aligned}
	\]
	By the constraints on $\alpha_{11}$, using Lemmas \ref{lem:nsmean} and \ref{lem:nsvar}, we have $E U_n \to 0$, $\var(U_n) \to 0$, $E D_n \to -2\int_{0}^{1} \Gamma_{\alpha_{11}}(s,s)\,ds \neq 0$, $\var(D_n) \to 0$ implying $U_n/D_n \to 0$. 
\end{proof}

\begin{lemma}\label{lem:nsmean}
	Let $K_\alpha$ be as in (\ref{eq:nsclass}) for $L=1$, and let $s_i = hi$. Define $\Delta_{h}K_\alpha(s_{i},s_{i})
	= K_\alpha(s_{i},s_{i}) - K_\alpha(s_{i},s_{i-1})
	- K_\alpha(s_{i-1},s_{i}) + K_\alpha(s_{i-1},s_{i-1})$ for $i=1,\ldots,n$. Let
	$
	S_{n} 
	= \frac{1}{n h^{\alpha_{11}}} \sum_{i=1}^{n} \Delta_{h}K_{\alpha}(s_{i},s_{i})
	$ and $\alpha_{11} < 2$. 
	(i) If $\alpha > \alpha_{11}$, then $S_{n} \to 0$. 
	(ii) If $\alpha = \alpha_{11}$, then
	$
	S_{n} \longrightarrow -2 \int_{0}^{1} \Gamma_{\alpha}(s,s)\,ds.
	$
\end{lemma}

\begin{proof}
	Let $K=K_\alpha$ and write $K = K^{\mathrm{an}} + H + r$ with 
	\[
	K^{\mathrm{an}}(s,s') = \Gamma_{0}(s,s') + \Gamma_{2}(s,s')(s-s')^{2},
	\qquad
	H(s,s') = \Gamma_{\alpha}(s,s')\,|s-s'|^{\alpha},
	\]
	and $r$ being the remainder part. 
	
	First, consider the analytic part. Let $x_{i} = (i-\tfrac12)h$ for $i=1,\ldots,n$. A Taylor expansion of $K^{\mathrm{an}}$ around $(x_{i},x_{i})$ gives
	\[
	\Delta_{h} K^{\mathrm{an}}(s_{i},s_{i})
	= h^{2}\,\partial_1 \partial_2 K^{\mathrm{an}}(x_{i},x_{i}) + O(h^{3})
	= O(h^{2}),
	\]
	uniformly in $i$ (as $K^{an}$ is analytic). Hence
	\[
	\frac{1}{n h^{\alpha_{11}}} \sum_{i=1}^{n} \Delta_{h}K^{\mathrm{an}}(s_{i},s_{i})
	\leq h^{2-\alpha_{11}} \sup_i \partial_1 \partial_2K^{an}(x_i,x_i) +o(h^{2-\alpha_{11}}) = O\big(h^{2-\alpha_{11}}\big) \to 0
	\]
	since $\alpha_{11}<2$.
	
	Next, we treat the irregular term. On the diagonal, $H(s_{i},s_{i}) = H(s_{i-1},s_{i-1}) = 0$ and $|s_{i}-s_{i-1}| = h$. Then
	\[
	\begin{aligned}
		\Delta_{h}H(s_{i},s_{i})
		&= H(s_{i},s_{i}) - H(s_{i},s_{i-1})
		- H(s_{i-1},s_{i}) + H(s_{i-1},s_{i-1}) \\
		&= - H(s_{i},s_{i-1}) - H(s_{i-1},s_{i}) \\
		&= -\Gamma_{\alpha}(s_{i},s_{i-1})\,h^{\alpha}
		-\Gamma_{\alpha}(s_{i-1},s_{i})\,h^{\alpha}\\
		&= -2\,\Gamma_{\alpha}(x_{i},x_{i})\,h^{\alpha}
		+ O\bigl(h^{\alpha+1}\bigr),
	\end{aligned}
	\]
	where the last line uses analyticity of $\Gamma_{\alpha}$.
	
	Hence
	\[
	\frac{\Delta_{h}H(s_{i},s_{i})}{h^{\alpha_{11}}}
	=
	-2\,\Gamma_{\alpha}(x_{i},x_{i})\, h^{\alpha-\alpha_{11}}
	+ O\big(h^{\alpha-\alpha_{11}+1}\big).
	\]
	
	For the remainder term, note that for $|s-s'| \in \{0,h\}$ we have
	\[
	r(s,s') = o(|s-s'|^{\alpha'}) = o(h^{\alpha'}),
	\]
	so $\Delta_{h} r(s_{i},s_{i}) = o(h^{\alpha'})$ uniformly in $i$, and therefore
	\[
	\frac{\Delta_{h} r(s_{i},s_{i})}{h^{\alpha_{11}}}
	= o\big(h^{\alpha'-\alpha_{11}}\big),
	\]
	again uniformly in $i$.
	
	If $\alpha > \alpha_{11}$, then $\alpha' = \min(\alpha,2) > \alpha_{11}$, so both
	$
	\Gamma_{\alpha}(x_{i},x_{i})\, h^{\alpha-\alpha_{11}}
	$ and the $o(h^{\alpha'-\alpha_{11}})$ term
	tend to zero uniformly in $i$. Summing over $i$ and dividing by $n$ preserves convergence to zero, which gives $S_{n} \to 0$.
	
	If $\alpha = \alpha_{11} < 2$, then $\alpha' = \alpha_{11}$ and
	\[
	\frac{\Delta_{h} H(s_{i},s_{i})}{h^{\alpha_{11}}}
	= -2\,\Gamma_{\alpha_{11}}(x_{i},x_{i}) + O(h),
	\qquad
	\frac{\Delta_{h} r(s_{i},s_{i})}{h^{\alpha_{11}}} = o(1),
	\]
	uniformly in $i$. Thus
	\[
	S_{n}
	= -\frac{2}{n} \sum_{i=1}^{n} \Gamma_{\alpha_{11}}(x_{i},x_{i}) + o(1).
	\]
	Since $\Gamma_{\alpha_{11}}$ is continuous on $[0,1]^{2}$, the Riemann sum converges:
	\[
	\frac{1}{n} \sum_{i=1}^{n} \Gamma_{\alpha_{11}}(x_{i},x_{i})
	\longrightarrow \int_{0}^{1} \Gamma_{\alpha_{11}}(s,s)\,ds,
	\]
	which proves the claimed limit in the case $\alpha = \alpha_{11}$.
\end{proof}

\begin{lemma}\label{lem:nsvar}
	Let $K_{\alpha_k}$, $k=1,2$ be functions as of the form (\ref{eq:nsclass}) for $L=1$. Define
	\[
	T_{n} 
	= \frac{1}{n^{2} h^{2\alpha_{11}}}
	\sum_{i=1}^{n}\sum_{j=1}^{n}
	\bigl(\Delta_{h} K_{\alpha_1}(s_{i},s_{j})\bigr)\bigl(\Delta_{h} K_{\alpha_2}(s_{i},s_{j})\bigr),
	\]
	where $h = 1/n$ and $s_i = ih$. If $\alpha_{1} + \alpha_{2}> 2\alpha_{11} - 1$ and $\alpha_{11}<2$, then $T_{n} \to 0$.
\end{lemma}

\begin{proof}
	Write, for $k=1,2$,
	\[
	K_{\alpha_k} = K_k^{\mathrm{an}} + H_k + r_k,
	\]
	with
	\[
	K_k^{\mathrm{an}}(s,s') = \Gamma_{0}(s,s') + \Gamma_{2}(s,s')(s-s')^{2},
	\qquad
	H_k(s,s') = \Gamma_{\alpha_k}(s,s')\,|s-s'|^{\alpha_k},
	\]
	and $r_k$ is the same remainder form as in the mean result. All the $\Gamma$’s are real analytic on $[0,1]^2$. Thus
	\[
	\Delta_h K_{\alpha_k}
	= \Delta_h K_k^{\mathrm{an}} + \Delta_h H_k + \Delta_h r_k.
	\]
	
	Expanding the product
	\[
	\Delta_h K_{\alpha_1}\,\Delta_h K_{\alpha_2}, 
	\]
	we get a sum of nine interaction terms. We prove for the case where $\alpha_k <2$ for $k=1,2$ where the interaction of the analytic terms $H_k$ will dominate this product. The other cases follow similarly as $\alpha_{11}<2$. As in the proof of Lemma \ref{lem:nsmean}, 
	$\Delta_h K_k^{\mathrm{an}}(s_i,s_j) = O(h^{2})$ uniformly in $i,j$, and each $\Delta_h r_k$ carries an extra small-$o$ factor relative to $\Delta_h H_k$ from the assumptions on the remainders. Hence every interaction involving at least one factor $\Delta_h K_k^{\mathrm{an}}$ or $\Delta_h r_k$ is of strictly higher order in $h$ than the pure $\Delta_h H_1\,\Delta_h H_2$ term. It therefore suffices to show that the $H_1H_2$ contribution to $T_n$ tends to zero.
	
	We split this sum into diagonal and off-diagonal contributions.
	
	\medskip
	\noindent\emph{Diagonal terms $i=j$.}
	
	On the diagonal we have $|s_i - s_{i-1}| = h$ and, as in the proof of Lemma \ref{lem:nsmean},
	\[
	\Delta_h H_k(s_i,s_i) = O\bigl(h^{\alpha_k}\bigr), \qquad k=1,2,
	\]
	uniformly in $i$. Therefore
	\[
	\Delta_h H_1(s_i,s_i)\,\Delta_h H_2(s_i,s_i)
	= O\bigl(h^{\alpha_1+\alpha_2}\bigr),
	\]
	and there are $n$ such diagonal indices. Hence
	\[
	\begin{aligned}
		\frac{1}{n^{2} h^{2\alpha_{11}}}
		\sum_{i=1}^{n} \Delta_h H_1(s_i,s_i)\,\Delta_h H_2(s_i,s_i)
		&= O\!\left(
		\frac{n h^{\alpha_1+\alpha_2}}{n^{2} h^{2\alpha_{11}}}
		\right)
		= O\bigl(h^{\alpha_1+\alpha_2 - 2\alpha_{11} + 1}\bigr).
	\end{aligned}
	\]
	Under the condition $\alpha_1+\alpha_2 > 2\alpha_{11}-1$, this exponent is positive, so the diagonal contribution to $T_n$ tends to zero.
	
	\medskip
	\noindent\emph{Off–diagonal terms $i\neq j$.}
	
	For $i \neq j$, set $t_{ij} = |s_i - s_j| = |i-j|h \in [h,1]$ and define midpoints
	\[
	x_i = s_i - \tfrac{h}{2}, 
	\qquad
	x_j = s_j - \tfrac{h}{2}.
	\]
	Then the four arguments in $\Delta_h H_k(s_i,s_j)$ are
	\[
	(x_i \pm \tfrac{h}{2},\, x_j \pm \tfrac{h}{2}).
	\]
	Since $H_k$ is real analytic on $\{(s,s') : s\neq s'\}$, it admits a two-dimensional Taylor expansion around $(x_i,x_j)$:
	\[
	H_k(x_i+\delta_1,x_j+\delta_2)
	=
	H_k(x_i,x_j) + L_k(\delta_1,\delta_2) + R_k(\delta_1,\delta_2),
	\]
	where $L_k$ is the linear part in $(\delta_1,\delta_2)$, and the remainder satisfies
	\[
	\bigl|R_k(\delta_1,\delta_2)\bigr|
	\le C_k \bigl(|\delta_1|+|\delta_2|\bigr)^{2}\,
	t_{ij}^{\alpha_k-2}
	\]
	for some constant $C_k$ independent of $i,j,h$. 
	
	Evaluating this expansion at the four corners $(\delta_1,\delta_2)\in\{\pm h/2\}\times\{\pm h/2\}$ and forming the alternating sum that defines $\Delta_h$, the constant term $H_k(x_i,x_j)$ and all the linear terms in $L_k$ cancel by symmetry. Hence
	\[
	\Delta_h H_k(s_i,s_j)
	=
	R_k\!\bigl(\tfrac{h}{2},\tfrac{h}{2}\bigr)
	- R_k\!\bigl(\tfrac{h}{2},-\tfrac{h}{2}\bigr)
	- R_k\!\bigl(-\tfrac{h}{2},\tfrac{h}{2}\bigr)
	+ R_k\!\bigl(-\tfrac{h}{2},-\tfrac{h}{2}\bigr),
	\]
	so, using the bound on $R_k$,
	\[
	\bigl|\Delta_h H_k(s_i,s_j)\bigr|
	\le 4 \max_{|\delta_1|,|\delta_2|\le h/2} \bigl|R_k(\delta_1,\delta_2)\bigr|
	\le C_k' h^{2} t_{ij}^{\alpha_k-2},
	\]
	for a constant $C_k'$ independent of $i,j,h$.
	
	Hence, for $i\neq j$,
	\[
	\bigl|\Delta_h H_1(s_i,s_j)\,\Delta_h H_2(s_i,s_j)\bigr|
	\le C\,h^{4} t_{ij}^{\alpha_1+\alpha_2-4},
	\]
	for some $C$ independent of $i,j,h$.
	
	Summing over all off–diagonal pairs,
	\[
	\sum_{i\neq j} \Delta_h H_1(s_i,s_j)\,\Delta_h H_2(s_i,s_j)
	\asymp 
	h^{4} \sum_{i\neq j} t_{ij}^{\alpha_1+\alpha_2-4}
	.
	\]
	Now $t_{ij} = |i-j|h$, and
	\[
	\sum_{i\neq j} t_{ij}^{\alpha_1+\alpha_2-4}
	= \sum_{i\neq j} \bigl(|i-j|h\bigr)^{\alpha_1+\alpha_2-4}
	= 2\sum_{m=1}^{n-1} (n-m)\,(mh)^{\alpha_1+\alpha_2-4}.
	\]
	This yields
	\[
	\sum_{i\neq j} t_{ij}^{\alpha_1+\alpha_2-4}
	\asymp n \sum_{m=1}^{n-1} (mh)^{\alpha_1+\alpha_2-4}
	\asymp n\,h^{-1} \int_{h}^{1} t^{\alpha_1+\alpha_2-4}\,dt,
	\]
	so, using $n = 1/h$,
	\[
	\sum_{i\neq j} \Delta_h H_1(s_i,s_j)\,\Delta_h H_2(s_i,s_j)
	\asymp 
	h^{4} \cdot \frac{1}{h^{2}} \int_{h}^{1} t^{\alpha_1+\alpha_2-4}\,dt
	\asymp
	h^{2} \int_{h}^{1} t^{\alpha_1+\alpha_2-4}\,dt
	.
	\]
	
	Therefore, the off–diagonal contribution to $T_n$ satisfies
	\[
	\frac{1}{n^{2} h^{2\alpha_{11}}}
	\sum_{i\neq j} \Delta_h H_1(s_i,s_j)\,\Delta_h H_2(s_i,s_j)
	\asymp 
	h^{4-2\alpha_{11}} \int_{h}^{1} t^{\alpha_1+\alpha_2-4}\,dt.
	\]
	Let $\gamma = \alpha_1+\alpha_2-4$. Then
	\[
	\int_{h}^{1} t^{\gamma}\,dt
	=
	\begin{cases}
		O(1), & \gamma > -1 \ (\alpha_1+\alpha_2>3),\\[0.2em]
		O\!\bigl(\log(1/h)\bigr), & \gamma = -1 \ (\alpha_1+\alpha_2=3),\\[0.2em]
		O\bigl(h^{\gamma+1}\bigr) = O\bigl(h^{\alpha_1+\alpha_2-3}\bigr), & \gamma < -1 \ (\alpha_1+\alpha_2<3).
	\end{cases}
	\]
	In all three cases, since $\alpha_{11}<2$ implies $4-2\alpha_{11}>0$, the factor $h^{4-2\alpha_{11}}$ drives this off–diagonal contribution to zero as $h\to 0$. For example, in the case $\alpha_1+\alpha_2<3$ we obtain the sharper bound
	\[
	h^{4-2\alpha_{11}} \int_{h}^{1} t^{\gamma}\,dt
	\asymp h^{\alpha_1+\alpha_2-2\alpha_{11}+1},
	\]
	which tends to zero under the condition $\alpha_1+\alpha_2>2\alpha_{11}-1$.
	
	Combining the diagonal and off–diagonal bounds, and recalling that all mixed terms involving $K^{\mathrm{an}}$ or $r_k$ are of strictly higher order in $h$, we conclude that
	\[
	T_n \longrightarrow 0
	\]
	whenever $\alpha_1+\alpha_2>2\alpha_{11}-1$ and $\alpha_{11}<2$.
\end{proof}

\begin{proof}[\textbf{Proof of Corollary \ref{cor:ns_warped}}] 
	
	Proof of Consistency (Part (a)):
	
	We first do the case where $f$ is monotonic (without loss of generality, increasing) and analytic. The piecewise analytic case is proved afterwards. 
	
	Define the analytic extension of derivative of $f$ to $[0,L]^2$ as 
	\[
	\begin{aligned}
		H(s,s') &= \frac {f(s) - f(s')}{s-s'} \mbox{ if } s \neq s' \\
		&= f'(s) \mbox{ if } s=s'. 
	\end{aligned} 
	\]
	Then $H$ is analytic on $[0,L]^2$. Using the monotonicity of $f$, we have $H(s,s') \geq 0$ for all $s,s'$, and thus $|f(s) - f(s')| = H(s,s') |s-s'|$ for all $s,s'$.  
	
	As $K_{k\ell}$ satisfies Assumption \ref{eq:K.assump}, for $t \geq 0$, $K(t) = a_{k\ell,0} + a_{k\ell,2}t^2 + c_{k\ell}t^{\alpha_{k\ell}} + r_{kl}(t)$ where $r^{(m)}_{kl}(t)=c_m o(t^{\alpha'_{k\ell}-m})$ with $\alpha'_{k\ell} = \min\{\alpha_{k\ell},2\}$. 
	
	Let $(K^*_{k\ell})$ denote the covariance function of $(X^*,W^*)$. Then for any $s,s'$, we have,
	\[
	\begin{aligned}
		K_{kl}^*(s,s') &= K_{kl}(f(s),f(s'))\\
		&= a_{kl,0} + a_{kl,2} H(s,s')^2 |s-s'|^2 + c_{kl} H(s,s')^{\alpha_{k\ell}} |s-s'|^{\alpha_{k\ell}} + r_{kl}\bigl(H(s,s')|s-s'|\bigr).
	\end{aligned}
	\]
	Thus $K^*_{k\ell}$ is of the form (\ref{eq:nsclass}) with $\int \Gamma_{\alpha_{11}}(s,s)ds = \int H(s,s)ds = \int f'(s)ds > 0$ (as $f$ is increasing and not constant). The result follows from Theorem \ref{th:ns}.  
	
	Now consider the piecewise case. It suffices to do this for the two-piece case, as the same idea generalizes. Let there by some $\tau \in [0,1]$ such that $f$ is analytic monotonic in $[0,\tau-\epsilon]$ and in $[\tau+\epsilon,1]$ for any $\epsilon > 0$. 
	
	To handle the breakpoint, split the index set using a buffer around $\tau$:
	\[
	\begin{aligned}
		I_1 &= \{ i : s_{i+1} \le \tau - h \},\\
		I_2 &= \{ i : s_i \ge \tau + h \},\\
		I_3 &= \{0,\dots,n-1\} \setminus (I_1 \cup I_2).
	\end{aligned}
	\]
	Because the grid is regular and the buffer has width $2h$, the set $I_3$ consists of at most a fixed number $C_I$ of indices ($C_I$ is at most three), independent of $n$.
	
	It is enough to show $U_n / D_n \to 0$ where
	\[
	\begin{aligned}
		U_n
		&= \frac{1}{n h^{\alpha_{11}}}
		\sum_{i=1}^n
		\bigl(X^*(ih) - X^*((i-1)h)\bigr)
		\bigl(W^*(ih) - W^*((i-1)h)\bigr),\\
		D_n
		&= \frac{1}{n h^{\alpha_{11}}}
		\sum_{i=1}^n
		\bigl(X^*(ih) - X^*((i-1)h)\bigr)^2 .
	\end{aligned}
	\]
	
	Define, for $r=1,2,3$,
	\[
	U_r
	= n^{\alpha_{11}-1} \sum_{i\in I_r} \Delta X_i^* \Delta W_i^*,
	\qquad
	D_r
	= n^{\alpha_{11}-1} \sum_{i\in I_r} (\Delta X_i^*)^2,
	\]
	so that $U_n = U_1+U_2+U_3$ and $D_n = D_1+D_2+D_3$, and
	\[
	\frac{U_n}{D_n}
	= \frac{U_1+U_2+U_3}{D_1+D_2+D_3}
	= \sum_{r=1}^3 \omega_r \frac{U_r}{D_r},
	\qquad
	\omega_r = \frac{D_r}{D_1+D_2+D_3},\ 0\le \omega_r\le 1.
	\]
	
	Without loss of generality, assume $f$ is not constant on the first segment $I_1$ where all increments lie strictly inside $[0,\tau-h]$.
	So, exactly the same argument as in the monotone case shows that the restricted
	covariances are of the form \eqref{eq:nsclass} with the same exponents
	$\alpha_{k\ell}$ and analytic multiplicative factors bounded above and away from
	zero.
	
	Then, from Lemma \ref{lem:nsmean}, 
	\[
	D_1 \xrightarrow{P} c_{11} = -2c_{11} \int_{I_1} |f'(x)|^{\alpha_{11}}dx \neq 0,
	\]
	so $D_1 = c_{11}(1+o_p(1))$.
	
	Applying Theorem~\ref{th:ns} to the first subinterval, and noting that
	$|I_1|$ is of order $n$, we get
	\[
	\frac{U_1}{D_1} \xrightarrow{P} 0
	\quad\text{and hence}\quad
	\omega_r \frac{U_1}{D_1} \xrightarrow{P} 0.
	\]
	If $f$ is also not constant on the second segment, then from a similar argument, 
	\[
	\frac{U_2}{D_2} \xrightarrow{P} 0
	\quad\text{and hence}\quad
	\omega_r \frac{U_2}{D_2} \xrightarrow{P} 0.
	\]
	If $f$ is constant on the second segment, then $U_2 = D_2 = 0$, and we get 
	$\frac {U_n}{D_n} = \frac{U_1 + U_3}{D_1 + D_3}$. For either case, it is enough to show that the contribution of the third term is vanishingly small relative to the first term in both the numerator and denominator. 
	
	For the third term, we have
	\[
	\omega_3 \frac{U_3}{D_3}
	= \frac{U_3}{D_1+D_2+D_3},
	\]
	so it suffices to show that $U_3 / (D_1+D_2+D_3) \to 0$ in probability.
	By the Lipschitz condition, at $\tau$ there exist constants $L>0$ and
	$\delta>0$ such that
	\[
	|f(u) - f(v)| \le L |u - v|
	\qquad\text{whenever } |u-\tau|<\delta,\ |v-\tau|<\delta.
	\]
	For $n$ large enough all $s_i=ih$ with $i\in I_3$ lie in $(\tau-\delta,\tau+\delta)$,
	hence for any $i\in I_3$,
	\[
	t_i = |f(s_i) - f(s_{i-1})|
	\le L |s_i - s_{i-1}| = L h.
	\]
	
	By Assumption~\ref{eq:K.assump}, for the original stationary field we have, as
	$t\to 0$,
	\[
	\var\bigl(X(u+t) - X(u)\bigr) = O\bigl(t^{\alpha_{11}}\bigr),
	\qquad
	\var\bigl(W(u+t) - W(u)\bigr) = O\bigl(t^{\alpha'_{22}}\bigr),
	\]
	uniformly in $u$, where $\alpha'_{22} = \min(\alpha_{22},2)$. Applying this
	with $t=t_i$ and using $t_i\le Lh$ gives, uniformly for $i\in I_3$,
	\[
	\var(\Delta X_i^*) = O\bigl(h^{\alpha_{11}}\bigr),
	\qquad
	\var(\Delta W_i^*) = O\bigl(h^{\alpha'_{22}}\bigr).
	\]
	
	For each fixed $i$, the pair $(\Delta X_i^*,\Delta W_i^*)$ is bivariate Gaussian
	with mean zero. By Cauchy--Schwarz and Isserlis’ formula,
	\[
	\var\bigl(\Delta X_i^* \Delta W_i^*\bigr)
	\le
	2\,\var(\Delta X_i^*) \var(\Delta W_i^*)
	= O\bigl(h^{\alpha_{11} + \alpha'_{22}}\bigr)
	\]
	uniformly in $i\in I_3$. Since $|I_3|\le C_I$,
	\[
	\var\Bigl(\sum_{i\in I_3} \Delta X_i^* \Delta W_i^*\Bigr)
	= O\bigl(h^{\alpha_{11} + \alpha'_{22}}\bigr)
	= O\bigl(n^{-(\alpha_{11} + \alpha'_{22})}\bigr),
	\]
	and therefore
	\[
	\var(U_3)
	= n^{2\alpha_{11}-2}
	\var\Bigl(\sum_{i\in I_3} \Delta X_i^* \Delta W_i^*\Bigr)
	= O\bigl(n^{\alpha_{11} - \alpha'_{22} - 2}\bigr).
	\]
	
	Similarly,
	\[
	\mathbb{E}[U_3]
	= O\bigl(n^{\alpha_{11}-1} \, n^{-(\alpha_{11} + \alpha'_{22})/2}\bigr)
	= O\bigl(n^{(\alpha_{11} - \alpha'_{22} - 2)/2}\bigr).
	\]
	Hence
	\[
	E (U_3^2) = O\bigl(n^{\alpha_{11} - \alpha'_{22} - 2}\bigr) \implies  U_3 = O_p\bigl(n^{(\alpha_{11} - \alpha'_{22} - 2)/2}\bigr) \text{ by Chebyshev's inequality}.
	\]
	
	Combining the displays, and using $D_1 = c_{11}(1+o_p(1))$, we have 
	\[
	\frac{U_3}{D_1+D_2+D_3} = \omega_1 \frac{U_3}{D_1} 
	= O_p\bigl(n^{(\alpha_{11} - \alpha'_{22} - 2)/2}\bigr) \mbox{ as } 0 \leq \omega_1 \leq 1.
	\]
	The smoothness condition in the theorem implies $\alpha'_{22} > \alpha_{11}-1$,
	hence $\alpha_{11} - \alpha'_{22} - 2 < -1$, so the exponent is strictly
	negative and
	\[
	\frac{U_3}{D_1+D_2+D_3} \xrightarrow{P} 0.
	\]
	Consequently,
	\[
	\omega_3 \frac{U_3}{D_3}
	= \frac{U_3}{D_1+D_2+D_3} \xrightarrow{P} 0, 
	\]
	completing the proof. 
	
	Proof of Equivalence (Part (b)):] This is immediate as the $\sigma$-algebra generated by $\{(Y^*(s),X^*(s)): s \in [0,L]\}$ is a subset of that generated by $\{(Y(s),X(s)): s \in [0,
	L]\}$. 
	
\end{proof}

\begin{proof}[\textbf{Proof of Corollary \ref{cor:nspaciorek}}]
	{\em Consistency (Part (a)):} It is enough to show for the case where all the $\sigma$ and $\Phi_{k\ell}$ functions are analytic, the piecewise analytic case follows as in the proof of Corollary \ref{cor:ns_warped}. We show that each covariance $K^{NS}_{k\ell}$ belongs to the class
	\eqref{eq:nsclass} with exponent parameter $\alpha_{k\ell}$ and that the
	diagonal condition in Theorem~\ref{th:ns} holds.  The conclusion then follows
	directly from Theorem~\ref{th:ns}.
	
	Define
	\[
	A_{k\ell}(s,s')
	=
	\sigma_k(s)\sigma_\ell(s')
	\frac{\Phi_{k\ell}(s)^{1/4}
		\Phi_{\ell k}(s')^{1/4}}{
		\dfrac{\big(\Phi_{k\ell}(s)+\Phi_{\ell k}(s')\big)}{2}^{1/2}},
	\quad
	H_{k\ell}(s,s')
	=
	\left(\frac{\Phi_{k\ell}(s)+\Phi_{\ell k}(s')}{2}\right)^{-1/2}.
	\]
	Both $A_{k\ell}$ and $H_{k\ell}$ are positive analytic functions on $[0,L]^2$,
	and
	\[
	\sqrt{Q_{k\ell}(s,s')} = |s-s'|\,H_{k\ell}(s,s').
	\]
	
	By Assumption~\ref{eq:K.assump}, the stationary covariance $K_{k\ell}$ has, as
	$t\to 0$,
	\[
	K_{k\ell}(t)
	=
	a_{k\ell,0}
	+ a_{k\ell,2} t^{2}
	+ c_{k\ell} |t|^{\alpha_{k\ell}}
	+ r_{k\ell}(t),
	\]
	with
	the remainder $r_{k\ell}$ of the
	form described in Assumption~\ref{eq:K.assump}.
	
	Substituting $t = |s-s'|H_{k\ell}(s,s')$ gives, for $s\neq s'$,
	\[
	\begin{aligned}
		K^{NS}_{k\ell}(s,s')
		&= A_{k\ell}(s,s')\,
		K_{k\ell}\bigl(|s-s'|H_{k\ell}(s,s')\bigr)\\
		&= \Gamma_{0,k\ell}(s,s')
		+ \Gamma_{2,k\ell}(s,s')\,(s-s')^{2}
		+ \Gamma_{\alpha_{k\ell}}(s,s')\,|s-s'|^{\alpha_{k\ell}}
		+ r^{*}_{k\ell}(s,s'),
	\end{aligned}
	\]
	where
	\[
	\Gamma_{0,k\ell} = A_{k\ell} a_{k\ell,0},\qquad
	\Gamma_{2,k\ell} = A_{k\ell} a_{k\ell,2} H_{k\ell}^{2},\qquad
	\Gamma_{\alpha_{k\ell}} = A_{k\ell} c_{k\ell} H_{k\ell}^{\alpha_{k\ell}},
	\]
	and
	\[
	r^{*}_{k\ell}(s,s')
	=
	A_{k\ell}(s,s')\,
	r_{k\ell}\bigl(|s-s'|H_{k\ell}(s,s')\bigr).
	\]
	Since compositions and products of analytic functions are analytic, all the
	$\Gamma$ functions are analytic on $[0,1]^2$. Also, the remainder $r^{*}_{k\ell}$ satisfies the form in (\ref{eq:nsclass}). 
	as the required small order bounds for $\rho$ and
	its derivatives then follow from the chain rule and the boundedness of
	$A_{k\ell}$ and $H_{k\ell}$.
	Thus $K^{NS}_{k\ell}$ is of the form \eqref{eq:nsclass} with exponent parameter
	$\alpha_{k\ell}$.
	
	On the diagonal,
	\[
	\Gamma_{\alpha_{11}}(s,s)
	=
	A_{11}(s,s)\,c_{11}\,H_{11}(s,s)^{\alpha_{11}}.
	\]
	Here $A_{11}(s,s)>0$, $H_{11}(s,s)>0$ and $c_{11}\neq 0$, so
	$\Gamma_{\alpha_{11}}(s,s)$ has constant sign and is not identically zero on
	$[0,1]$.  Hence
	\[
	\int_0^1 \Gamma_{\alpha_{11}}(s,s)\,ds \neq 0.
	\]
	As 
	\[
	\alpha_{11} < \min\{\alpha_{12},\alpha_{22}+1,2\}, 
	\]
	all the conditions of Theorem~\ref{th:ns} are thus satisfied for the covariance
	matrix $(K^{NS}_{k\ell})$ on $[0,L]$, proving the result. 
	
	{\em Equivalence (Part (b)):} As the stationary class is subsumed in the non-stationary class, and we have established equivalence of sample paths of $(X,Y)$ for two different values of $\beta$ for the stationary class when $\alpha_{11} > \alpha_{22} + 1$ in Theorem \ref{th:nonidgen}, the result follows.
\end{proof}

\begin{proof}[\textbf{Proof of Corollary \ref{cor:heavy}}]
	As $K_{k\ell}$ satisfies Assumption \ref{eq:K.assump}, given any analytic
	realization of $\sigma=(\sigma_X,\sigma_W)$, the process $(X,W)$ is a non stationary Gaussian
	process with covariance of the form~\eqref{eq:nsclass}. Thus, when
	$\alpha_{11} < \min\{\alpha_{12},\alpha_{22} + 1, 2\}$, the first differences-based estimator is
	consistent for $\beta$ conditional on $\sigma$. Since the paths of $\sigma$
	are almost surely piecewise analytic, this conditional consistency holds for
	almost every realization of $\sigma$, and therefore the estimator is also
	consistent unconditionally.
	
	On the other hand, if $\alpha_{11} > \alpha_{22}+1$, then by Theorem~\ref{th:suff}, $\beta$ is not consistently estimable on the paths of
	$(X^*,Y^*)$, and $\calP_{\beta_1}(X^*,Y^*)
	\equiv \calP_{\beta_2}(X^*,Y^*)$ for $\beta_1\neq\beta_2$. Since
	$\sigma \perp Z^*$ and the distribution of $\sigma$ does not depend on
	$\beta$, we also have
	\[
	\calP_{\beta_1}(X^*,Y^*,\sigma)
	\equiv \calP_{\beta_2}(X^*,Y^*,\sigma).
	\]
	As $X=\sigma X^*$ and $Y=\sigma Y^*$, the pair $(X,Y)$ is a measurable
	function of $(X^*,Y^*,\sigma)$, hence
	\[
	\calP_{\beta_1}(X,Y) \equiv \calP_{\beta_2}(X,Y).
	\]
	Therefore $\beta$ is not consistently estimable from $(X,Y)$ when
	$\alpha_{11}>\alpha_{22}+1$.
\end{proof}

\subsection{Proof for bivariate $X$
}

\begin{proof}[Proof of Theorem \ref{thm:multsharp}] 
	We prove for $d=1$, and the same proof idea generalizes for larger $d$. Assume without loss of generality that $\alpha_{11} \leq \alpha_{22}$, i.e., $X_1$ is not rougher than $X_2$. Then, by positive definiteness,  $\alpha_{12} \geq \alpha_{11}$.
	
	The proof will rely on working with two grids, a finer grid to estimate some quantities that need to converge to zero at a certain rate, and a coarser grid of size $n$ to estimate the remaining quantities.
	Let $Y = \beta_1 X_1 + \beta_2 X_2 + W$ and $Y,X_1,X_2$ be observed on the finer grid $0,h_S,2h_S,..., Sh_S=L$ for some integer $S$ and grid-length $h_S=L/S$. We will later choose the size $S$ of this fine grid to be a function of the size $n$ of the coarser grid, i.e., $S: \mathbb N \to \mathbb N$ to be specified later.
	
	Denote $\nabla_{h_S}^{(1)}$ by $\nabla$. By Theorem \ref{th:beta.con}, $$\hat c = \frac {\nabla X_1^T \nabla X_2}{\nabla X_1^T \nabla X_1} \to c = \frac {c_{12}}{c_{11}}I(\alpha_{12}=\alpha_{11})  \mbox{ as } S \to \infty.$$ 
	
	Define $X_2^* = X_2 - cX_1$ with covariance $K_{22}^*$ and cross-covariance with $X_3=W$ as $K_{23}^*$.
	By the statement of the theorem, $K_{22}^*$ and $K_{23}^*$ satisfy Assumption \ref{eq:K.assump} with exponent $\alpha_{22}^* < \min\{\alpha_{33}+1,\alpha_{23}^*\}$. 
	We consider the case where $\alpha_{11}$ and $\alpha_{22}^*$ are both less than two, so that we can work with first differences. More generally, we would work with $p^{th}$ order differences where $\alpha_{11}$ and $\alpha_{22}^*$ are both less than $2p$. 
	
	Let $\beta_1^* = \beta_1 + c \beta_2$
	and define 
	$\hat \beta_1^* = \frac {\nabla X_1^T \nabla Y}{\nabla X_1^T \nabla X_1}.$
	Set $\widetilde W = \beta_2 X_2^* + W$. 
	If $\alpha_{12} > \alpha_{11}$, we have $c=0$
	and $X_2^*=X_2$. Then $Y = \beta_1 X_1 + \widetilde W$ where $\widetilde W$ satisfies Assumption \ref{eq:K.assump} with exponent $\widetilde \alpha_{33} = \min\{\alpha_{22},\alpha_{33}\} > \alpha_{11} -1$ and cross-exponent with $X_1$, $\widetilde \alpha_{13} \geq \min\{\alpha_{12},\alpha_{13}\} > \alpha_{11}$. Then $\hat \beta_1^* \to \beta_1 = \beta^*_1$ by Theorem \ref{th:suff}. 
	
	If $\alpha_{12}=\alpha_{11}$, then we can write 
	$Y = X_1\beta^*_1 + \widetilde W$. Let  
	$K_{12}^* = Cov(X_1,X_2^*)$, then $K_{12}^*$ satisfies Assumption \ref{eq:K.assump} with some exponent $\alpha^*_{12}$. 
	As $K^*_{12}(0) - K^*_{12}(h) = c_{12}h^{\alpha_{12}} -  \frac{c_{12}}{c_{11}} c_{11}h^{\alpha_{11}} + $smaller order terms, and $\alpha_{12}=\alpha_{11}$ cancelling out the leading order term, we have $\alpha_{12}^* > \alpha_{11}$. So, $\widetilde \alpha_{13} \geq \min\{\alpha^*_{12},\alpha_{13}\} > \alpha_{11}$. 
	Also, $\widetilde \alpha_{33} \geq \min\{\alpha^*_{22},\alpha_{33}\}$. Now $\alpha_{12}=\alpha_{11}$ implies $\alpha_{22}=\alpha_{11}$ (since if  $\alpha_{22}>\alpha_{11}$, then $\alpha_{12} \geq \frac{\alpha_{11}+\alpha_{22}}2 > \alpha_{11}$). So, $\alpha_{22}^* \geq \alpha_{11}$ and we have $\widetilde \alpha_{33} \geq \min\{\alpha_{11},\alpha_{33}\} > \alpha_{11}-1$. Then by Theorem \ref{th:suff}, $\hat \beta_1^* \to \hat \beta^*_1 = \beta_1 + \beta_2 \frac{c_{12}}{c_{11}}$. 
	
	Thus, whether $\alpha_{12} > \alpha_{11}$ or $\alpha_{12} = \alpha_{11}$, we have $\hat \beta_1^* \to \beta_1^*$ as $S \to \infty$.
	
	After estimating $\hat c$ and $\hat \beta_1^*$ on this initial grid of size $S$, we consider a coarser grid $0,h^*,2h^*,\ldots,nh^*=L$, and redefine first differences to be on this grid. 
	Define $Y^* = Y - \beta_1^* X_1 = \beta_2 X_2^* + W$, then we have $\hat \beta_2 = \frac {\nabla Y^{*T} \nabla X_2^*}{\nabla X_2^{*T} \nabla X_2^*} \to \beta_2$. 
	
	The last step is to replace $Y^*$ and $X_\blue 2^*$ with approximations which can be calculated using $Y$, $X_1$, and $X_2$ only (and not on the unknown regression and covariance parameters), and show that the consistency still holds. 
	
	Let $\hat\delta_S=c-\hat c$, $\hat e_S=\beta_1^* - \hat \beta_1^*$, where the subscript $S$ is kept to make the dependence on the fine grid size $S$ explicit. Let $\widehat Y^* = Y - \hat \beta_1^* X_1 = Y^* + \hat e_S X_1$, and $\widehat X_2^* = X_2 - \hat c X_1 = X_2^* + \hat\delta_S X_1$. So both $\widehat Y^*$ and $\widehat X_2^*$ can be calculated from just $(Y,X_1,X_2)$. 
	Define
	\[
	\begin{aligned}
		\widehat{\widehat  \beta_2} &= \frac {\nabla \widehat Y^{*T} \nabla \widehat X_2^*}{\nabla \widehat X_2^{*T} \nabla \widehat X_2^*} \\
		&= \frac {\hat\delta_S \nabla Y^{*T} \nabla X_1 + \hat e_S \nabla X_2^{*T}\nabla X_1 + \hat e_S \hat\delta_S \nabla X_1^T \nabla X_1 + \nabla Y^{*T} \nabla X_2^*}{ 2\hat\delta_S \nabla X_2^{*T}\nabla X_1 + \hat\delta_S^2 \nabla X_1^T \nabla X_1+  \nabla X_2^{*T} \nabla X_2^*} \\
		& = \frac { \frac{\hat\delta_S \nabla W^T \nabla X_1}{\nabla X_2^{*T} \nabla X_2^*} + \frac{(\hat e_S +\hat\delta_S\beta_2) \nabla X_2^{*T}\nabla X_1}{\nabla X_2^{*T} \nabla X_2^*} + \frac{(\hat e_S \hat\delta_S)\nabla X_1^T \nabla X_1}{\nabla X_2^{*T} \nabla X_2^*} + \frac{\nabla Y^{*T} \nabla X_2^*}{\nabla X_2^{*T} \nabla X_2^*}}{ \frac{2\hat\delta_S \nabla X_2^{*T}\nabla X_1}{\nabla X_2^{*T} \nabla X_2^* } + \frac{\hat\delta_S^2 \nabla X_1^T \nabla X_1}{\nabla X_2^{*T} \nabla X_2^*} + 1}.
	\end{aligned}
	\]
	The last term in the denominator is $1$ and the last term in the numerator is $\hat \beta_2$ which goes to $\beta_2$. So it is enough to show that all the other fractions go to 0.
	
	Fix some $\varepsilon_0 > 0$.
	Note that $\hat\delta_S \to 0$ and $\hat e_S \to 0$ as $S \to \infty$. Fix $n$ and define $t_n = n^{\alpha_{11}-\alpha^*_{22}-\varepsilon_0}$. As $\alpha_{11}\leq \alpha_{22}^* < \alpha_{22}^*  +\eps_0$, we have $t_n \in (0,1)$. As $\hat \delta_S$ and $\hat e_S$ go to zero in probability as $S \to \infty$, there exists some $S(n)$, which can be chosen to be strictly increasing in $n$, such that for every $S \geq S(n)$, $ P(|\hat\delta_S| > t_n) < \frac 1n$ and $ P(|\hat e_S| > t_n) < \frac 1n$.
	Then 
	$ P(n^{\alpha^*_{22}-\alpha_{11}}|\hat\delta_{S(n)}| > n^{-\varepsilon_0}) < \frac 1n$ and $ P(n^{\alpha^*_{22}-\alpha_{11}}|\hat\delta_{S(n)}| > n^{-\varepsilon_0}) < \frac 1n$.
	
	For any $\eps_1, \eps_2 > 0$, choose $n$ such that $n^{-\eps_0} < \eps_1$ and $\frac 1n < \eps_2$. Then $$
	\begin{aligned}
		P(n^{\alpha^*_{22}-\alpha_{11}}|\hat\delta_{S(n)}| > \eps_1) &\leq P(n^{\alpha^*_{22}-\alpha_{11}}|\hat\delta_{S(n)}| > n^{-\eps_0})\\
		&= P(|\hat\delta_{S(n)}| > t_n) \leq \frac 1n < \eps_2.
	\end{aligned}$$
	Hence, $n^{\alpha^*_{22}-\alpha_{11}}|\hat\delta_{S(n)}| \to 0$ as $n \to \infty$. Similarly, $n^{\alpha^*_{22}-\alpha_{11}}|\hat e_{S(n)}| \to 0$ in probability as $n \to \infty$. Then, as in the proof of Theorem \ref{th:beta.con}, we have $n^{\alpha_{11}-3}\nabla X_1^T \nabla X_1 \to c_{11}$, and $n^{\alpha^*_{22}-3}\nabla X_2^{*T} \nabla X^*_2 \to c_{22}^*$
	
	\[ 
	\begin{aligned}
		\frac{\hat\delta_{S(n)}^2 \nabla X_1^T \nabla X_1}{\nabla X_2^{*T} \nabla X_2^*} = \frac{n^{\alpha^*_{22}-\alpha_{11}}\hat\delta_{S(n)}^2 n^{\alpha_{11}-3}\nabla X_1^T \nabla X_1}{n^{\alpha^*_{22}-3}\nabla X_2^{*T} \nabla X_2^*} \to \frac{0 \times c_{11}}{c_{22}^*} = 0.
	\end{aligned}
	\]
	
	Similar logic applies for all the other terms (as $\widetilde\alpha_{13} > \alpha_{11}$ and $\alpha^*_{12} > \alpha_{11}$) and we have the result. 
	So $\widehat{\widehat \beta_2} \to \beta_2$ and consequently $\widehat \beta_1 =\widehat \beta_1^* - \hat c \, \widehat{\widehat \beta_2}  \to \beta_1$. 
	
	For larger $p$, replace $\nabla_h^{(1)}$ by the $p^{th}$ order finite-difference operator $\nabla_h^{(p)}$ so that the same quadratic-form and ratio limits apply whenever $2p$ exceeds all exponents appearing in the argument. 
	For $d>1$, we use discrete Laplacians of suitable order.
\end{proof}

\subsection{Proof of the GLS result}
We first state and prove two technical lemmas. 
\begin{lemma}\label{lem:expq_mean}
	Let $(X(s),W(s))^T$ be a zero-mean stationary bivariate GRF on $\mathbb R$ with Mat\`ern cross covariance $K_{12}(\cdot)$ which has a local expansion at $(0,L]$ of the form 
	\[
	K_{12}(t)
	=
	a_0 + a_2 t^2 + c |t|^{\alpha} + o\,\!\big(|t|^{\min(\alpha,2)}\big)
	\quad\text{as }t\to 0,
	\]
	for some constants $a_0,a_2,c,\alpha$.
	Suppose $(X,W)$ is observed at the grid points $s_i = i h$ with $h = \blue L/n$ and $0 \le i \le n$. Let $\Sigma$ be the $(n+1)\times (n+1)$ covariance matrix corresponding to the exponential covariance function over the grid, i.e., with entries
	\[
	\Sigma_{ij} = \exp\!\Big(-\lambda\,|s_i-s_j|\Big),\qquad \lambda>0.
	\]
	If $\alpha > 2$, then 
	$
	\mathbb{E}(X^{T}\Sigma^{-1}W)
	\to 
	a_0\Big(1+\frac{\lambda}{2}\Big)
	-\frac{a_2}{\lambda}. 
	$
\end{lemma}

\begin{proof}
	Let $Q$ be the following tridiagonal matrix
	\[
	Q_{ii} =
	\begin{cases}
		1, & i=0,n,\\
		1+\rho^2, & 1 \le i \le n-1,
	\end{cases}
	\qquad
	Q_{i,i+1} = Q_{i+1,i} = -\rho \quad (0 \le i \le n-1),
	\]
	with $\rho = \exp(-\lambda h)$. Set $X_i = X(s_i)$ and $W_i = W(s_i)$ and write
	\[
	X = (X_0,\ldots,X_n)^T,\qquad W = (W_0,\ldots,W_n)^T.
	\]
	
	Since $(X,W)$ is stationary and has cross covariance $K_{12}$ and $\alpha > 2$,
	\[
	E(X_i W_j) = K_{12}\big(|s_i-s_j|\big) = K_{12}(|i-j|h), \]
	implying
	\[
	E(X_i W_i) = K_{12}(0) = a_0, \mbox{ and }
	E(X_i W_{i-1})
	=
	K_{12}(h)
	=
	a_0 + 
	+ \delta_h
	\]
	for all interior indices $1\le i\le n$, where $\delta_h=a_2 h^2 + o(h^2)$ uniformly in $i$ (as $\alpha>2$).
	
	We then have
	\[
	X^T Q W
	=
	X_0 W_0 + X_n W_n
	+ (1+\rho^2)\sum_{i=1}^{n-1} X_i W_i
	- \rho \sum_{i=0}^{n-1}\big(X_i W_{i+1} + X_{i+1} W_i\big).
	\]
	Taking expectations,
	\[
	\begin{aligned}
		E(X^T Q W)
		&=
		2a_0 + (1+\rho^2)(n-1)a_0
		- 2\rho n a_0
		- 2\rho n \delta_h\\
		&=
		a_0\big[n(1-\rho)^2 + (1-\rho^2)\big]
		- 2\rho n \delta_h.
	\end{aligned}
	\]
	For small $h$,
	\[
	1-\rho
	= \lambda h +
	O(h^2),
	\]
	\[
	1-\rho^2 =
	2\lambda h
	+ O(h^2).
	\]
	Hence, the contribution from $a_0$ is
	\[
	a_0\big[n(1-\rho)^2 + (1-\rho^2)\big]
	=
	a_0(\lambda^2 + 2\lambda)h + O(h^2).
	\]
	
	For the term involving $\delta_h$, use $\rho = 1 + O(h)$ and $n = 1/h$:
	\[
	-2\rho n \delta_h
	=
	-2(1+O(h))\frac{1}{h}\big(a_2 h^2 + o(h^{2})\big)
	=
	-2a_2 h
	+ o(h).
	\]
	since the $O(h)$ factor in $\rho$ only contributes higher order terms.
	Putting the two pieces together,
	\[
	E(X^T Q W)
	=
	a_0(\lambda^2 + 2\lambda)h
	- 2a_2 h
	+ o(h).
	\]
	
	\medskip
	Finally, note that for the AR(1) working covariance matrix $C$ with
	entries $C_{ij} = \exp(-\lambda |s_i-s_j|)$ and $\rho = \exp(-\lambda h)$, the inverse has the form
	\[
	C^{-1} = \frac{1}{1-\rho^{2}}\,Q.
	\]
	Using $\rho = e^{-\lambda h}$ and $h = 1/n$,
	\[
	1-\rho^{2}
	= 1 - e^{-2\lambda h}
	= 2\lambda h - 2\lambda^{2}h^{2} + O(h^{3}),
	\]
	so
	\[
	\frac{1}{1-\rho^{2}}
	= \frac{1}{2\lambda h}\bigl(1 - \lambda h + O(h^{2})\bigr)^{-1}
	= \frac{1}{2\lambda h}\bigl(1 + \lambda h + O(h^{2})\bigr). 
	\]
	
	Multiplying out and collecting the leading orders gives
	\[
	\mathbb{E}(X^{T}C^{-1}W)
	=
	a_0\Big(1+\frac{\lambda}{2}\Big)
	-\frac{a_2}{\lambda}
	+ o(1).
	\]
\end{proof}

\begin{lemma}\label{lem:expq_var}
	Let $(X(s),W(s))^T$ be a zero-mean stationary bivariate GRF on $\mathbb R$ with covariance satisfying Assumption \ref{as:rd} for some $L>0$ with $Cov(X(s),W(s)) \neq 0$ and $\alpha_{12} > \alpha_{11}$ and $\alpha_{11} < \alpha_{22} +1$. 
	Suppose $(X,W)$ is observed at the grid points $s_i = i h$ with $h = 
	\blue L/n$ and $0 \le i \le n$. Let $\Sigma$ be the $(n+1)\times (n+1)$ covariance matrix  over the grid based on the exponential covariance function, i.e., 
	\[
	\Sigma_{ij} = \exp\!\Big(-\lambda\,|s_i-s_j|\Big),\qquad \lambda>0.
	\]
	If $\alpha_{11} > 2$, then for all but at most four values of $\lambda$,  we have  
	$
	\limsup \var(X^{T}\Sigma^{-1}W)
	=  v_{12}$ as $n \to \infty$ for some positive constant $v_{12}$ depending on the parameters of the covariance function $(K_{k\ell})_{k,\ell}$. 
\end{lemma}

\begin{proof} Let $\rho=\exp(-\lambda h)$, then $\Sigma=(\rho^{|i-j|})$ and $\Sigma^{-1}= Q$ where $Q$ is a tridiagonal matrix with entries 
	\[
	Q_{ii} =
	\begin{cases}
		\frac 1{1-\rho^2}, & i=0,n,\\
		\frac {1+\rho^2}{1-\rho^2}, & 1 \le i \le n-1,
	\end{cases}
	\qquad
	Q_{i,i+1} = Q_{i+1,i} = \frac {-\rho}{1-\rho^2} \quad (0 \le i \le n-1).
	\]
	Let $K_{k\ell}$ denote the covariance function $K_{k\ell}(\cdot)$ evaluated at all pairs of locations $s_i,s_j$. Then $\var(X^T \Sigma^{-1} W) = \text{trace}(K_{12}QK_{12}Q) + \text{trace}(K_{11}QK_{22}Q)$. 
	
	We first consider the cross-term, and for simplicity write $K_{12}=K$.
	\[
	\operatorname{tr}(KQKQ)
	= \operatorname{tr}(BB^T)
	= \sum_{i,j} B_{ij}^{2},
	\qquad
	B = L K L^\blue T,\quad
	L^T L = Q,
	\]
	where 
	\[
	L = \frac{1}{\sqrt{1-\rho^{2}}}\,(\ell_{0}:\dots:\ell_{n}),
	\]
	is lower triangular with
	\[
	\ell_{0} = \sqrt{1-\rho^{2}}\,e_{0} - \rho e_{1},\qquad
	\ell_{j} = e_{j} - \rho e_{j+1}\ (1\le j\le n-1),\qquad
	\ell_{n} = e_{n}.
	\]
	
	For $0<i\ne j < n$,
	\begin{align*}
		B_{ij}
		= \frac{1}{1-\rho^{2}}
		\Bigl[ K(|i-j|h)(1+\rho^{2})
		- \rho K(|i-j|h-h)
		- \rho K(|i-j|h+h) \Bigr].
	\end{align*}
	Using the expansion and as $\alpha=\alpha_{12} > 2$, 
	\[
	K(t) = a_{0} + a_{2} t^{2} + o(t^{2}) \quad (t\to 0),
	\]
	and writing $\rho = e^{-\lambda h}$ and $\delta_{ij}=|i-j|h$
	this becomes
	
	\[
	\begin{aligned}
		B_{ij}
		&= \frac{1}{1 - \rho^{2}}
		\Bigl[(1 + \rho^{2}) K(\delta_{ij}) - \rho K(\delta_{ij} + h) - \rho K(\delta_{ij} - h)\Bigr]\\
		&= \frac{1}{1 - \rho^{2}}
		\Bigl[(1 - \rho)^{2} K(\delta_{ij})
		- 2 \rho h^{2} \frac{K'(\xi_{1}) - K'(\xi_{2})}{h}\Bigr] \\
		&= \frac{(1 - \rho) K(\delta_{ij})}{1 + \rho}
		- \frac{2 \rho h^{2}}{1 - \rho^{2}} K''(\xi''_{ij}) \text{ for some } |\xi''_{ij}| \leq 2|i-j|h.\\
	\end{aligned}
	\]
	Furthermore,
	\[
	\begin{aligned}
		K''(\xi''_{ij})
		&= 2a_{2} + c''(\xi''_{ij})^{\alpha-2} + r(\xi''_{ij}), \qquad
		r(\xi''_{ij}) = o\bigl((\xi''_{ij})^{\min\{2,\alpha-2\}}\bigr).
	\end{aligned}
	\]
	So, we have
	\[
	\begin{aligned}
		B_{ij}
		&\asymp \frac{\lambda h}{2} K(\delta_{ij})
		- \frac{h}{\lambda}\Bigl(2a_{2} + c''(\xi''_{ij})^{\alpha-2} + r(\xi''_{ij})\Bigr)  \\
		&\asymp h\Biggl[
		\frac{\lambda}{2} K(\delta_{ij})
		- \frac{1}{\lambda}\Bigl(2a_{2} + c''(\xi''_{ij})^{\alpha-2}\Bigr)
		\Biggr]
		- \frac{h}{\lambda} r(\xi''_{ij}) \\
		&\asymp h A_{ij} - \frac{h}{\lambda} r(\xi''_{ij}), 
	\end{aligned}
	\]
	where $A_{ij} = \frac{\lambda}{2} K(\delta_{ij})
	- \frac{1}{\lambda}\Bigl(2a_{2} + c''(\xi''_{ij})^{\alpha-2}\Bigr)$. 
	Here, the $\asymp$ symbol is used to account for the additional $O(h^2)$ terms in the exact expression coming from the expansions of $1-\rho$ and $\frac 1 {1-\rho^2}$, as in the proof of Lemma \ref{lem:expq_mean}. These terms can be ignored asymptotically as $h \to 0$ as they are of higher order than the leading term. 
	\[
	\begin{aligned}
		\sum B_{ij}^{2}
		&
		\asymp h^{2} \sum  A_{ij}^{2} + \frac{h^{2}}{\lambda^{2}} \sum r(\xi''_{ij})^{2} 
	\end{aligned}
	\]
	We will show that $h^2 \sum_{ij} A_{ij}^2$ converges to a non-zero limit, while $h^2 \sum_{ij} r(\xi''_{ij})^{2} = o(1)$. 
	
	For $A_{ij}$ we write
	\[
	\begin{aligned}
		A_{ij}
		&= \frac{\lambda}{2} K(\delta_{ij})
		- \frac{2a_{2}}{\lambda}
		- \frac{c''}{\lambda}(\xi''_{ij})^{\alpha-2}.
	\end{aligned}
	\]
	We show that each of sums in $\sum_{ij} A_{ij}^2$ is $O(1)$. Using $\delta_{ij}=|i-j|h$, immediately have the following results. 
	\[
	\begin{aligned}
		h^{2}\sum_{i \ne j} K(\delta_{ij})^{2} & \to  2 \int_{0}^{1} (1-t)K(t)^{2}\,dt \text{ (integral limit of the Reimann sum)}\\
		h^{2}\sum_{i,j} 1
		&= 1, \\
		h^{2}\sum_{i,j} (\xi''_{ij})^{2\alpha-4} 
		&\leq 
		2^{2\alpha-4}h^{2\alpha-2}\sum_{i,j} (|i-j|)^{2\alpha-4} \asymp 1 \text{ as } 2\alpha = 2\alpha_{12} > 2\alpha_{11}> 4 > 3.
	\end{aligned}
	\]
	
	Also, as each $A_{ij} = \frac 1\lambda \times$ a quadratic function in $\lambda$, then in the limit, $h^2 \sum_{ij} A_{ij}^2$ is $\frac 1 {\lambda^2} $ times a fourth degree polynomial in can be zero for at most four values of $\lambda$. For any other $\lambda$, we thus have $h^2 \sum_{ij} A_{ij}^2 \asymp 1$.

	Finally, from
	\[
	r(\xi''_{ij}) = o\bigl((\xi''_{ij})^{\min\{2,\alpha-2\}}\bigr) \implies 
	r(\xi''_{ij})^{2}
	= o\bigl((\xi''_{ij})^{2\min\{2,\alpha-2\}}\bigr),
	\]
	and as $\xi^"_{ij} \leq 2L$, the same counting argument as the $h^2 \sum 1$ term
	yields
	\[
	\frac{h^{2}}{\lambda^{2}}\sum_{i,j} r(\xi''_{ij})^{2} = o(1).
	\]
	
	Thus,
	\[
	\begin{aligned}
		0 < \limsup \sum_{i,j} B_{ij}^{2} < \infty.
	\end{aligned}
	\]
	
	Similar approximations apply for the remaining terms in $\text{trace}(K_{12}QK_{12}Q)$, proving that its limsup is positive and finite. The result for the limsup for the square term can be proved exactly in a similar manner as $\alpha_{11} + \alpha_{22} > 2\alpha_{11} - 1 > 3$.
\end{proof}

\begin{proof}[\textbf{Proof of Proposition \ref{prop:gls}}]
	Let $U_n = X^T \Sigma^{-1} W$ and $D_n = X^T \Sigma^{-1} X$. The GLS estimator $\hat \beta_n$ can be written as $\hat \beta_n = \frac{X^T \Sigma^{-1} Y}{X^T \Sigma^{-1} X} =\beta + \frac{X^T \Sigma^{-1} W}{X^T \Sigma^{-1} X} = \beta + \frac{U_n}{D_n}$.
	
	From Lemma \ref{lem:expq_mean} we have
	$E(X^T \Sigma^{-1} W) 
	\to \mu_{12}$ and $E(X^T \Sigma^{-1} X) 
	\to \mu_{11}$, where $\mu_{1j}$ are non-zero constants depending only on the parameters of $K_{1j}$.
	Also, from Lemma \ref{lem:expq_var} that, outside of at most four values of $\lambda$,  $\limsup \var(X^T \Sigma^{-1} W) 
	= v_{12}$ and $\lim \sup \var(X^T \Sigma^{-1} X
	)= v_{11}$, where $v_{1j}$ are positive constants depending only on the parameters of $K_{1j}$.
	
	As $c_{12} \neq 0$, from Lemma \ref{lem:expq_mean}, $\mu_{12} = a_0\Big(1+\frac{\lambda}{2}\Big)
	-\frac{a_2}{\lambda} \neq 0$ (outside of at most two values of $\lambda$). Similarly, 
	$\mu_{11} \neq 0$ (outside of at most two values of $\lambda$) and as $D_n$ is non-negative, we have
	$\mu_{11} > 0$. 
	
	Henceforth, we will consider $\lambda$ to not belong to this set of at most twelve values (two each for the means of the numerator and denominator and four for their variances).
	
	We will show that there exists $\varepsilon > 0$ such that
	\[
	\limsup_{n \to \infty} P(|U_n / D_n| > \varepsilon) > 0.
	\]
	Assume for contradiction that no such $\epsilon$ exists, i.e.,
	$P(|U_n / D_n| > \epsilon) \to 0$ for every $\epsilon > 0$.
	
	The moment convergence implies that $\sup_n E(U_n^2) < \infty$
	and $\sup_n E(D_n^2) < \infty$.
	Hence
	by
	Markov inequality,
	\[
	P(|D_n| > M) \le \frac{E(D_n^2)}{M^2} \le \frac{C}{M^2}
	\]
	for some constant $C > 0$ independent of $n$.
	
	Fix any $\varepsilon > 0$ and $M > 0$. Then
	\[
	P(|U_n| > \varepsilon)
	\le P(|D_n| > M)
	+ P(|U_n| > \varepsilon, |D_n| \le M).
	\]
	On the event $\{|D_n| \le M, |U_n| > \varepsilon\}$,
	we have $|U_n / D_n| > \varepsilon / M$.
	Thus
	\[
	P(|U_n| > \varepsilon)
	\le P(|D_n| > M)
	+ P(|U_n / D_n| > \varepsilon / M).
	\]
	
	Taking upper limits as $n \to \infty$,
	\[
	\limsup_{n \to \infty} P(|U_n| > \varepsilon)
	\le \frac{C}{M^2}
	+ \limsup_{n \to \infty} P(|U_n / D_n| > \varepsilon / M).
	\]
	Under the assumption $U_n / D_n \to 0$ in probability,
	the second term equals $0$ for every fixed $M$,
	so
	\[
	\limsup_{n \to \infty} P(|U_n| > \varepsilon)
	\le \frac{C}{M^2}.
	\]
	Letting $M \to \infty$ yields
	$\lim_{n \to \infty} P(|U_n| > \varepsilon) = 0$
	for all $\varepsilon > 0$,
	that is, $U_n \to 0$ in probability.
	
	Since $\{U_n\}$ is uniformly integrable,
	it follows that $E(U_n) \to 0$.
	This contradicts the previously established $E(U_n) \to \mu_{12} \neq 0$.
	Therefore, $U_n / D_n$ cannot converge to $0$ in probability.
	Hence there exists $\varepsilon > 0$ such that
	\[
	\limsup_{n \to \infty} P(|U_n / D_n| > \varepsilon) > 0.
	\]
	This completes the proof.
\end{proof}

\subsection{Proof of the result for irregular designs}

\begin{lemma}[Mean and variance of spacing–weighted Laplacian quadratic form for irregular design]
	\label{lem:mean-var-2nd}
	Let $0=s_{0}<s_{1}<\cdots<s_{n}=L$ be an irregular grid with spacings
	$h_{i}=s_{i+1}-s_{i}$ such that there exist constants
	$0<m\le M<\infty$ independent of $n$ with
	\[
	\frac{m}{n}\;\le\;h_{i}\;\le\;\frac{M}{n}\qquad(0\le i\le n-1).
	\]
	
	Define first–order weights $w^{(1)}_{i}=h_{i}$ and matrices
	\[
	D^{(1)}\in\mathbb R^{n\times(n+1)},\qquad
	D^{(1)}_{i,i}=-1,\quad D^{(1)}_{i,i+1}=1,
	\]
	\[
	W^{(1)}=\diag\!\bigl(w^{(1)}_{0},\ldots,w^{(1)}_{n-1}\bigr),\qquad
	A^{(1)}=(W^{(1)})^{-1}D^{(1)} .
	\]
	Thus $(A^{(1)}X)_{i}=h_{i}^{-1}\{X(s_{i+1})-X(s_{i})\}$ are spacing–weighted
	first differences.
	
	For $1\le i\le n-1$ define midpoint spacings
	\[
	\tilde h_{i}=\frac{h_{i-1}+h_{i}}{2},
	\]
	and set
	\[
	D^{(2)}\in\mathbb R^{(n-1)\times n},\qquad
	D^{(2)}_{i,i-1}=-1,\quad D^{(2)}_{i,i}=1,
	\]
	\[
	W^{(2)}=\diag(\tilde h_{1},\ldots,\tilde h_{n-1}),\qquad
	A^{(2)}=(W^{(2)})^{-1}D^{(2)}A^{(1)} .
	\]
	Here, we index the rows of \(D^{(1)}\) and \(A^{(1)}\) by \(0,\ldots,n-1\),
	and their columns by \(0,\ldots,n\). We index the rows of \(D^{(2)}\)
	and \(A^{(2)}\) by \(1,\ldots,n-1\), and the columns of \(D^{(2)}\)
	by \(0,\ldots,n-1\).
	
	For $1\le i\le n-1$,
	\[
	(A^{(2)}X)_{i}
	=
	\frac{1}{\tilde h_{i}}
	\left(
	\frac{X(s_{i+1})-X(s_{i})}{h_{i}}
	-
	\frac{X(s_{i})-X(s_{i-1})}{h_{i-1}}
	\right)
	\]
	is the spacing–weighted discrete Laplacian of $X$ at $s_{i}$.
	Define the quadratic form
	\[
	S^{(2)}_{n}=
	X^{T}A^{(2)T}A^{(2)}X
	=
	\sum_{i=1}^{n-1}\bigl(A^{(2)}X\bigr)_{i}^{2}.
	\]
	
	Let $X$ be a zero–mean stationary GRF observed at
	$\{s_{i}\}_{i=0}^{n}$ with covariance
	$K_{X}(s,t)=K(|s-t|)$ satisfying Assumption~\ref{eq:K.assump} on $[0,L]$ with
	parameters $c,\alpha>0$. Then:
	
	\begin{enumerate}
		\item[(i)] \emph{Mean.}
		There exist constants $0<C_{1}\le C_{2}<\infty$ depending only on
		$(c,\alpha,m,M,A_{0})$ such that, for all $n$ large enough,
		\[
		E\bigl[S^{(2)}_{n}\bigr]
		\le
		\begin{cases}
			C_{2}\,n^{5-\alpha}, & 0<\alpha<4,\\[4pt]
			C_{2}\,n,            & \alpha\ge 4,
		\end{cases}
		\]
		and, when $0<\alpha<4$ and $\alpha \neq 2$,
		\[
		C_{1}\,n^{5-\alpha}
		\;\le\;
		E\bigl[S^{(2)}_{n}\bigr].
		\]
		
		\item[(ii)] \emph{Variance.}
		There exists a constant $C=C(c,\alpha,m,M,A_{0})$ independent of $n$
		such that
		\[
		\var\bigl(S^{(2)}_{n}\bigr)
		\le
		\begin{cases}
			C n^{9-2\alpha}, & 0<\alpha<\dfrac{7}{2},\\[4pt]
			C n^{2}\log n,   & \alpha=\dfrac{7}{2},\\[4pt]
			C n^{2},         & \alpha>\dfrac{7}{2}.
		\end{cases}
		\]
	\end{enumerate}
\end{lemma}

\begin{proof}
	Define the increments
	\[
	d_i = X(s_{i+1}) - X(s_i), \qquad e_i = X(s_i) - X(s_{i-1}),
	\]
	so that
	\[
	L_iX
	=
	\tilde h_i^{-1}\bigl(h_i^{-1} d_i - h_{i-1}^{-1} e_i\bigr),
	\qquad
	1\le i\le n-1.
	\]
	The spacing assumptions ensure the existence of constants
	$0<c_{1}\le c_{2}<\infty$ independent of $n$ such that
	\[
	c_1 n^{-1} \le h_i,\tilde h_i \le c_2 n^{-1},\qquad
	c_1 n \le h_i^{-1},\tilde h_i^{-1} \le c_2 n.
	\]
	
	Stationarity implies the exact covariance identities
	\[
	E[d_i^2]=2(K(0)-K(h_i)),\qquad
	E[e_i^2]=2(K(0)-K(h_{i-1})),
	\]
	and
	\[
	E[d_i e_i]
	=
	K(h_i) - K(h_i+h_{i-1}) - K(0) + K(h_{i-1}).
	\]
	Therefore
	\[
	E(A^{(2)}X)_i^2
	=
	\tilde h_i^{-2}
	\Bigl(
	h_i^{-2} E[d_i^2]
	+
	h_{i-1}^{-2} E[e_i^2]
	-
	2 h_i^{-1} h_{i-1}^{-1} E[d_i e_i]
	\Bigr),
	\]
	that is,
	\begin{align}
		E(A^{(2)}X)_i^2
		=
		\frac{2}{\tilde h_i^{2}}
		\biggl[ &
		\frac{K(0)-K(h_i)}{h_i^2}
		+
		\frac{K(0)-K(h_{i-1})}{h_{i-1}^2}  \nonumber \\
		& -
		\frac{K(h_i)-K(h_i+h_{i-1})-K(0)+K(h_{i-1})}{h_i h_{i-1}}
		\biggr].
		\label{eq:mean-2nd-bracket}
	\end{align}
	
	Write $K=A+B$ as in Assumption~\ref{eq:K.assump}, where $A$ is analytic
	and $B(t)=c\,t^{\alpha}+o(t^{\alpha})$ as $t\to 0$. Denote by $E_A, E_B, var_A, var_B$ respectively the terms in the means and variance of $S_n^{(2)}$ corresponding to the analytic and irregular part. 
	
	\medskip
	\emph{Analytic part.}
	For the analytic part $A$, the Taylor expansion
	\[
	A(t)=a_0 + a_2 t^2 + a_4 t^4 + O(t^6)
	\]
	and a direct substitution into \eqref{eq:mean-2nd-bracket} shows that the
	constant and quadratic terms cancel exactly, and the remaining
	contribution is uniformly bounded in $i$ and $n$. In particular there is
	a constant $C_{A}$ such that
	\[
	E_A(A^{(2)}X)_i^2 \le C_{A}
	\qquad(1\le i\le n-1,\ n\ge 1),
	\]
	and hence
	\[
	\sum_{i=1}^{n-1}E_A(A^{(2)}X)_i^2 \le C_{A} n.
	\]
	
	\medskip
	\emph{Irregular part: upper bound.}
	For the irregular part $B$, write
	\[
	B(t) = c\,t^{\alpha} + r(t), 
	\qquad r(t) = o\bigl(t^{\alpha}\bigr)\ \text{as } t \to 0.
	\]
	Define $u_i = h_i n$. We first consider the leading term  
	
	\[
	\begin{aligned} 
		&  \frac{1}{\tilde h_i^{2}}
		\left[
		- h_i^{\alpha-2}
		- h_{i-1}^{\alpha-2}
		- \frac{h_i^{\alpha} + h_{i-1}^{\alpha} - (h_i + h_{i-1})^{\alpha}}
		{h_i h_{i-1}}
		\right] \\
		&\qquad=
		\frac{4}{(h_i + h_{i-1})^{2}}
		\left[
		- h_i^{\alpha-2}
		- h_{i-1}^{\alpha-2}
		- \frac{h_i^{\alpha-1}}{h_{i-1}}
		- \frac{h_{i-1}^{\alpha-1}}{h_i}
		+ \frac{(h_i + h_{i-1})^{\alpha}}{h_i h_{i-1}}
		\right] \\
		&\qquad=
		\frac{4 n^{4-\alpha}}{(u_i + u_{i-1})^{2}}
		\left[
		- u_i^{\alpha-2}
		- u_{i-1}^{\alpha-2}
		- \frac{u_i^{\alpha-1}}{u_{i-1}}
		- \frac{u_{i-1}^{\alpha-1}}{u_i}
		+ \frac{(u_i + u_{i-1})^{\alpha}}{u_i u_{i-1}}
		\right] \\
		&\qquad=
		\frac{4 n^{4-\alpha}}{(u_i + u_{i-1})^{2}}
		\cdot
		\frac{u_i + u_{i-1}}{u_i u_{i-1}}
		\left[
		(u_i + u_{i-1})^{\alpha-1}
		- \bigl(u_i^{\alpha-1} + u_{i-1}^{\alpha-1}\bigr)
		\right] \\
		&\qquad=
		\frac{4 n^{4-\alpha}}{u_i u_{i-1} (u_i + u_{i-1})}
		\left[
		(u_i + u_{i-1})^{\alpha-1}
		- \bigl(u_i^{\alpha-1} + u_{i-1}^{\alpha-1}\bigr)
		\right].
	\end{aligned}
	\]
	
	The prefactor
	\[
	\frac{4 n^{4-\alpha}}{u_i u_{i-1} (u_i + u_{i-1})} > 0
	\]
	for all $u_i,u_{i-1} \in [m,M]$.
	By Jensen's inequality, the bracketed term is strictly positive for $\alpha > 2$ and strictly negative for $\alpha < 2$, and by continuity, its absolute value is bounded away from zero on $[m,M]^{2}$. Hence, for any $\alpha \neq 2$, the absolute value of the leading term is bounded below by $n^{4-\alpha}c_1$ for some $c_1 > 0$. 
	
	Similarly, using simpler calculations, one can show the absolute value of the leading term is bounded above by $
	n^{4-\alpha} c'_{2} < \infty.$
	Hence
	\[
	E\bigl[S_{n}^{(2)}\bigr]
	=
	\sum_{i=1}^{n-1}E(A^{(2)}X)_i^2
	\asymp 
	C_{A} n + C_{3} n^{5-\alpha}.
	\]
	and if $\alpha < 4$, 
	\[
	E\bigl[S_{n}^{(2)}\bigr]
	\asymp
	C_{3} n^{5-\alpha}.
	\]\\
	
	\noindent For the variance,
	\[
	\var(S_n^{(2)}) = 2\sum_{i,j}\cov\left[(A^{(2)}X)_i,(A^{(2)}X)_j\right]^2.
	\]
	Using the exact first–difference operators, for any function $f$,
	\[
	\Delta^{(1)}_{a,b} f(t)
	=
	\frac{f(t+a)-f(t)}{a}
	-
	\frac{f(t)-f(t-b)}{b},
	\]
	and
	\[
	\cov((A^{(2)}X)_i,(A^{(2)}X)_j)
	=
	\tilde h_i^{-1} \tilde h_j^{-1}\,
	\Delta^{(1)}_{h_i,h_{i-1}}
	\Delta^{(1)}_{h_j,h_{j-1}} K(d_{ij}),
	\qquad d_{ij}=|s_i-s_j|.
	\]
	
	\emph{Analytic part.}
	
	Fix $d=d_{ij}$ and denote $u_1=h_i$, $u_1'=h_{i-1}$, $u_2=h_j$, $u_2'=h_{j-1}$.  
	Expanding $A$ around $d$ yields
	\[
	A(d+v)
	=
	A(d)
	+ A'(d)v
	+ \tfrac12 A''(d)v^{2}
	+ \tfrac16 A^{(3)}(d)v^{3}
	+ \tfrac1{24}A^{(4)}(d)v^{4}
	+ O(v^{5}).
	\]
	Set $D_{a,b}f =\Delta^{(1)}_{a,b} f(d)$ for notational clarity.
	
	Compute $D_{a,b}$ acting on each monomial $v^{k}$.
	For $v^{0}$:
	\[
	D_{a,b}(1)=0.
	\]
	For $v^{1}$, using $d+a-(d)=a$ and $d-(d-b)=b$:
	\[
	D_{a,b}(v)
	=
	\frac{a}{a}-\frac{b}{b}
	=
	0.
	\]
	For $v^{2}$:
	\[
	D_{a,b}(v^{2})
	=
	\frac{(d+a)^{2}-d^{2}}{a}
	-
	\frac{d^{2}-(d-b)^{2}}{b}
	=
	(2d+a)-(2d-b)
	=
	a+b.
	\]
	For $v^{3}$:
	\begin{align*}
		D_{a,b}(v^{3})
		& =
		\frac{(d+a)^{3}-d^{3}}{a}
		-
		\frac{d^{3}-(d-b)^{3}}{b} \\
		& =
		3d^{2}+3ad+a^{2}-(3d^{2}-3bd+b^{2})
		=
		3d(a+b)+(a^{2}-b^{2}).
	\end{align*}
	
	Now apply the second operator $D_{c,d}$ to each result.
	For constants and linears, the first operator already produced zero, so the second yields zero.
	For $v^{2}$, $D_{a,b}(v^{2})=a+b$ is a constant, so
	$
	D_{c,d}(D_{a,b}v^{2})=0$.
	For $v^{3}$, $D_{a,b}(v^{3})$ is affine in $d$, hence
	$D_{c,d}(D_{a,b}v^{3})=0$
	because $D_{c,d}$ kills all affine functions (it annihilates constants and linears).
	
	Thus, all Taylor terms up to cubic order vanish under the composition:
	\[
	D_{c,d}D_{a,b}(v^{k}) = 0\qquad\text{for }k=0,1,2,3.
	\]
	
	Now evaluate the quartic term.  For $v^{4}$:
	\[
	D_{a,b}(v^{4})
	=
	\frac{(d+a)^{4}-d^{4}}{a}
	-
	\frac{d^{4}-(d-b)^{4}}{b}.
	\]
	A direct binomial expansion shows that every coefficient depending on $d$ cancels in $D_{c,d}D_{a,b}$, and the remaining term is
	\[
	D_{c,d}D_{a,b}(v^{4})
	= 6(a+b)(c+d).
	\]
	
	Therefore
	\[
	\Delta^{(1)}_{h_i,h_{i-1}}\Delta^{(1)}_{h_j,h_{j-1}} A(d_{ij})
	=
	\frac 14 A^{(4)}(d_{ij}) \,(h_i+h_{i-1})(h_j+h_{j-1})
	+ O(h^3).
	\]
	As $h_i,h_{i-1},h_j,h_{j-1}\asymp n^{-1}$,
	multiplying by $\tilde h_i^{-1}\tilde h_j^{-1}\asymp n^{2}$ we have:
	\[
	|\cov_A((A^{(2)}X)_i,(A^{(2)}X)_j)|
	\asymp
	C (h_i+h_{i-1})(h_j+h_{j-1}) n^{2}
	+ C n^{-1}
	\asymp 2C.
	\]
	So $\var_A(S_n^{(2)}) \asymp n^2$. 
	
	\emph{Irregular part.}
	
	Recall
	\[
	\cov_B((A^{(2)}X)_i,(A^{(2)}X)_j)
	=
	\tilde h_i^{-1}\tilde h_j^{-1}\,
	\Delta^{(1)}_{h_i,h_{i-1}}
	\Delta^{(1)}_{h_j,h_{j-1}} B(d_{ij}),
	\qquad d_{ij} = |s_i-s_j|.
	\]
	We use the following elementary bound for unequal first differences.
	Let $f$ be twice continuously differentiable on an interval containing
	$[t-b,t+a]$. Then
	\begin{equation}
		\bigl|\Delta^{(1)}_{a,b} f(t)\bigr|
		\le
		(a+b)\, \sup_{u\in[t-b,t+a]} |f''(u)|.
		\label{Eq:tag}
	\end{equation}
	Therefore, by Assumption \ref{eq:K.assump}, 
	\[
	\bigl|
	\Delta^{(1)}_{h_j,h_{j-1}}
	\Delta^{(1)}_{h_i,h_{i-1}} B(d_{ij})
	\bigr|
	\le
	C (h_i+h_{i-1})(h_j+h_{j-1}) d_{ij}^{\alpha-4}.
	\]
	Since $h_i+h_{i-1}\asymp n^{-1}$ and
	$h_j+h_{j-1}\asymp n^{-1}$,
	\begin{equation}
		\bigl|
		\Delta^{(1)}_{h_j,h_{j-1}}
		\Delta^{(1)}_{h_i,h_{i-1}} B(d_{ij})
		\bigr|
		\asymp
		C n^{-2} d_{ij}^{\alpha-4},
		\qquad i\ne j.
		\label{Eq:tag2}
	\end{equation}
	
	Using $\tilde h_i^{-1},\tilde h_j^{-1} \asymp n$, we deduce from
	(\ref{Eq:tag2}) that
	\[
	|\cov_B((A^{(2)}X)_i,(A^{(2)}X)_j)|
	\asymp
	C n^{2} \cdot n^{-2} d_{ij}^{\alpha-4}
	=
	C d_{ij}^{\alpha-4}
	=
	C n^{4-\alpha} |i-j|^{\alpha-4},
	\qquad i\ne j.
	\]
	Hence
	\begin{align*}
		\var_B(S_n^{(2)})
		& \le
		2\sum_{i,j} \cov_B((A^{(2)}X)_i,(A^{(2)}X)_j)^2 \\
		& \le C n^{2(4-\alpha)}
		\sum_{i,j} |i-j|^{2\alpha-8}
		=
		C n^{8-2\alpha}
		\sum_{k=1}^{n-1} (n-k) k^{2\alpha-8}.
	\end{align*}
	Let $p = 2\alpha-8$. A standard comparison with integrals gives
	\[
	\sum_{k=1}^{n-1} (n-k) k^{p}
	\asymp
	\begin{cases}
		n, & p<-1\ (\alpha<\tfrac72),\\[4pt]
		n \log n, & p=-1\ (\alpha=\tfrac72),\\[4pt]
		n^{p+2} = n^{2\alpha-6}, & p>-1\ (\alpha>\tfrac72).
	\end{cases}
	\]
	Substituting into the previous display yields
	\[
	\var_B(S_n^{(2)})
	\le
	\begin{cases}
		C n^{9-2\alpha}, & 0<\alpha<\dfrac{7}{2},\\[4pt]
		C n^{2}\log n,   & \alpha=\dfrac{7}{2},\\[4pt]
		C n^{2},         & \alpha>\dfrac{7}{2}.
	\end{cases}
	\]
	This is the stated bound.
\end{proof}

\begin{proof}[\textbf{Proof of Theorem \ref{th:irreg-second}}]
	Let $U_n = \Delta X ^T \Delta W$, $V_n = \Delta X ^T \Delta X$. 
	
	\emph{Step 1: Denominator.}
	Apply Lemma~\ref{lem:mean-var-2nd} with parameter $\alpha_{11}$.
	Since $\alpha_{11}<4$ and $\alpha_{11} \neq 2$,
	\[
	E(V_n) \asymp n^{5-\alpha_{11}},
	\]
	and
	\[
	\var(V_n)
	\le
	\begin{cases}
		C n^{9-2\alpha_{11}}, & 0<\alpha_{11}<\tfrac{7}{2},\\[3pt]
		C n^{2}\log n,        & \alpha_{11}=\tfrac{7}{2},\\[3pt]
		C n^{2},              & 4> \alpha_{11}>\tfrac{7}{2}.
	\end{cases}
	\]
	Normalize by $n^{5-\alpha_{11}}$. Then
	\begin{equation}
		\frac{\var(V_n)}{n^{2(5-\alpha_{11})}}
		\le
		C n^{\max\{2,\,9-2\alpha_{11}\}-10+2\alpha_{11}} (\log n)^{I(\alpha_{11}=\frac 72)}.
		\label{Eq:expn}
	\end{equation}
	If $\alpha_{11}\le\frac{7}{2}$, the exponent of $n$ on the right-hand side of (\ref{Eq:expn}) equals
	$-1$ and, if $\alpha_{11}>\frac{7}{2}$, it equals $2\alpha_{11}-8$,
	which is negative because $\alpha_{11}<4$.
	Thus
	$V_n = O_p( n^{5-\alpha_{11}})$.
	
	\emph{Step 2: Numerator mean.}
	Lemma~\ref{lem:mean-var-2nd} with parameter $\alpha_{12}$ yields
	\[
	E(U_n)=O\bigl(n^{\max\{1,\,5-\alpha_{12}\}}\bigr).
	\]
	Therefore
	\[
	\frac{E(U_n)}{n^{5-\alpha_{11}}}
	=
	O\Bigl(
	n^{\max\{1,\,5-\alpha_{12}\}-(5-\alpha_{11})}
	\Bigr).
	\]
	If $\alpha_{12}\le4$ the exponent equals
	$\alpha_{11}-\alpha_{12}<0$
	since $\alpha_{12}>\alpha_{11}$; if $\alpha_{12}>4$ it equals
	$\alpha_{11}-4<0$ because $\alpha_{11}<4$.
	Hence $E(U_n)/n^{5-\alpha_{11}}\to0$.
	
	\emph{Step 3: Numerator variance.}
	From Lemma~\ref{lem:mean-var-2nd},
	\[
	\var(U_n)\le a_n+b_n,
	\]
	with
	\[
	a_n
	\le
	\begin{cases}
		C n^{9-2\alpha_{12}}, & \alpha_{12}<\frac{7}{2},\\[6pt]
		C n^{2}\log n,        & \alpha_{12}=\frac{7}{2},\\[6pt]
		C n^{2},              & \alpha_{12}>\frac{7}{2},
	\end{cases}
	\qquad
	b_n
	\le
	\begin{cases}
		C n^{9-\alpha_{11}-\alpha_{22}}, & \frac {\alpha_{11}+\alpha_{22}}2 <\frac{7}{2},\\[6pt]
		C n^{2}\log n,                   & \frac{\alpha_{11}+\alpha_{22}}2=\frac{7}{2},\\[6pt]
		C n^{2},                         & \frac{\alpha_{11}+\alpha_{22}}2>\frac{7}{2}.
	\end{cases}
	\]
	
	Thus,
	\[
	\frac{a_n}{n^{2(5-\alpha_{11})}}
	\le
	C n^{\max\{2,\,9-2\alpha_{12}\}-10+2\alpha_{11}} (\log n)^{I(\alpha_{12}=\frac 72)}.
	\]
	If $\alpha_{12}\le\frac{7}{2}$, the exponent is
	$-1+2(\alpha_{11}-\alpha_{12})< -1$
	because $\alpha_{12}>\alpha_{11}$.
	If $\alpha_{12}>\frac{7}{2}$, the exponent is
	$2\alpha_{11}-8<0$ since $\alpha_{11}<4$.
	Thus $a_n/n^{2(5-\alpha_{11})}\to 0$.
	
	Similarly,
	\[
	\frac{b_n}{n^{2(5-\alpha_{11})}}
	\le
	C n^{\max\{2,\,9-\alpha_{11}-\alpha_{22}\}-10+2\alpha_{11}}.
	\]
	If $\alpha_{11}+\alpha_{22}\le7$, the exponent equals
	$-1+\alpha_{11}-\alpha_{22}$,
	which is strictly negative because $\alpha_{22}>\alpha_{11}-1$.
	If $\alpha_{11}+\alpha_{22}>7$, the exponent is again
	$2\alpha_{11}-8<0$.
	Hence $b_n/n^{2(5-\alpha_{11})}\to 0$.
	
	Consequently
	\[
	\frac{\var(U_n)}{n^{2(5-\alpha_{11})}}\to0
	\qquad\mathbb\Rightarrow\qquad
	\frac{U_n}{n^{5-\alpha_{11}}}\xrightarrow{P}0.
	\]
	
	\emph{Step 4: Ratio.}
	Combining the numerator and denominator,
	\[
	\widehat\beta_n-\beta
	=
	\frac{U_n}{V_n}
	=
	\frac{U_n/n^{5-\alpha_{11}}}{V_n/n^{5-\alpha_{11}}}
	\xrightarrow{P}0,
	\]
	because the numerator converges to $0$ in probability and the denominator
	converges in probability to $c_V\in(0,\infty)$.
	This proves $\widehat\beta_n\to\beta$ in probability under
	$\alpha_{11}<4$, $\alpha_{11} \neq 0$,  $\alpha_{12}>\alpha_{11}$ and
	$\alpha_{22}>\alpha_{11}-1$.
\end{proof}

\newpage
\section{Additional Figures and Tables}
In this section, we provide the additional figures and tables for the numerical experiments.
\begin{figure}[!h]
	\centering
	\includegraphics[width=0.8\linewidth]{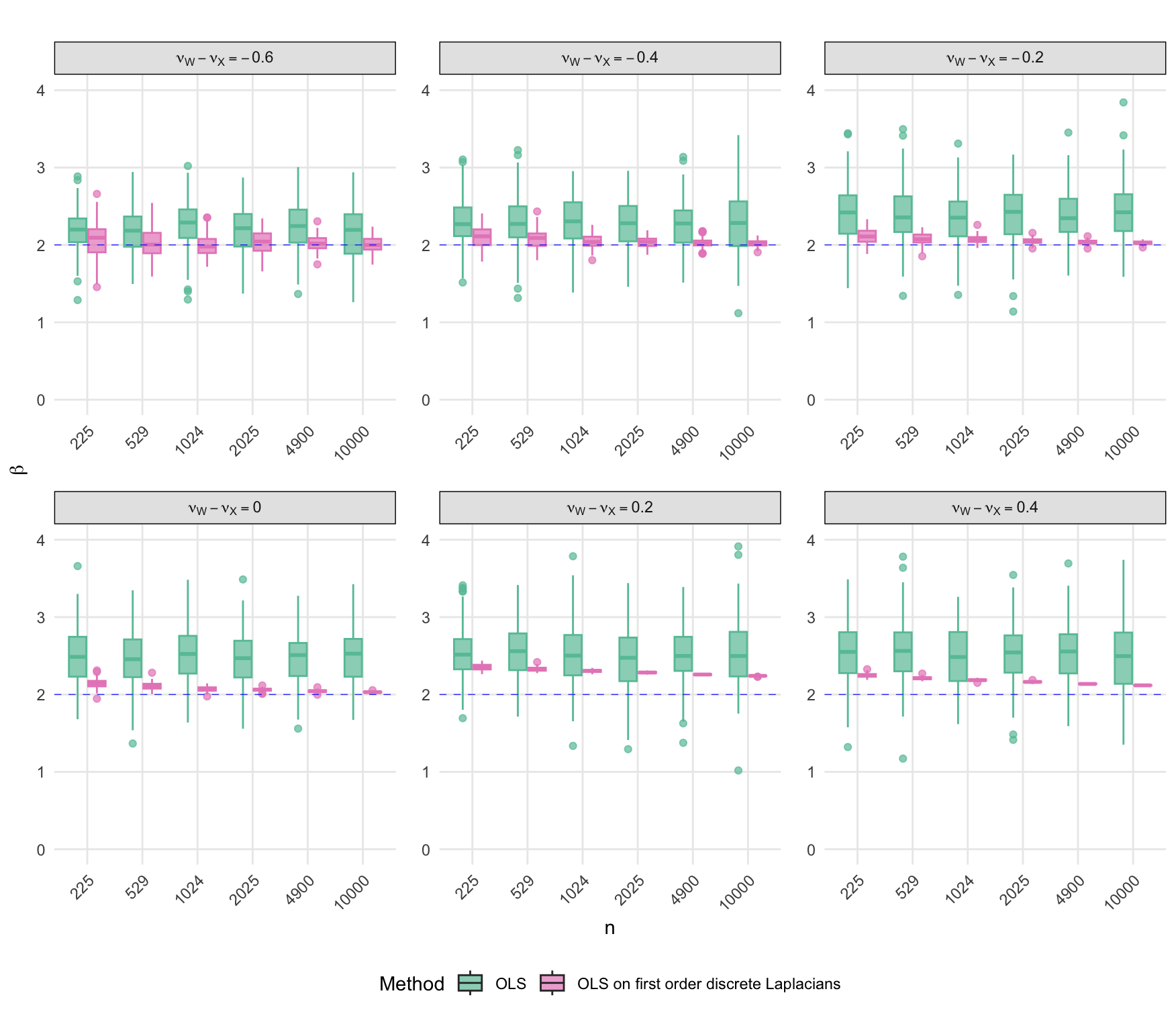}
	\caption{Estimates of $\beta$ for regression between GRF $Y=X\beta+W$ and $X$ in $\mathbb R^2$ when both the exposure $X$ and the confounder $W$ have Matérn covariances with smoothnesses $\nu_X$ and $\nu_W$ respectively.}
	\label{fig:mat2d}
\end{figure}

\begin{table}[!h]
	\centering
	\caption{\label{tab:rmse_table}Root Mean Squared Error (RMSE) for estimation of $\beta$ by different methods under spatial confounding in $1$-dimensional domain.}
	\centering
	\begin{tabular}[t]{rrrrrr}
		\toprule
		$\nu_X$ & $\nu_W - \nu_X$ & $n$ & OLS$_\blue n(X,Y)$ & OLS$_\blue n^{(1)}(X,Y)$ & OLS$_\blue n^{(2)}(X,Y)$\\
		\midrule
		0.7 & -0.6 & 100 & 0.28 & 0.53 & 1.00\\
		0.7 & -0.6 & 500 & 0.31 & 0.54 & 1.03\\
		0.7 & -0.6 & 1000 & 0.30 & 0.57 & 1.17\\
		0.7 & -0.6 & 2000 & 0.35 & 0.60 & 1.15\\\addlinespace
		0.7 & -0.3 & 100 & 0.61 & 0.28 & 0.38\\
		0.7 & -0.3 & 500 & 0.61 & 0.16 & 0.20\\
		0.7 & -0.3 & 1000 & 0.58 & 0.16 & 0.22\\
		0.7 & -0.3 & 2000 & 0.58 & 0.12 & 0.18\\\addlinespace
		0.7 & 0.0 & 100 & 0.53 & 0.19 & 0.16\\
		0.7 & 0.0 & 500 & 0.56 & 0.10 & 0.06\\
		0.7 & 0.0 & 1000 & 0.55 & 0.07 & 0.05\\
		0.7 & 0.0 & 2000 & 0.53 & 0.05 & 0.03\\\addlinespace
		0.7 & 0.3 & 100 & 0.63 & 0.17 & 0.10\\
		0.7 & 0.3 & 500 & 0.57 & 0.09 & 0.05\\
		0.7 & 0.3 & 1000 & 0.58 & 0.07 & 0.03\\
		0.7 & 0.3 & 2000 & 0.59 & 0.05 & 0.02\\\addlinespace
		1.2 & -0.6 & 100 & 0.68 & 0.37 & 0.70\\
		1.2 & -0.6 & 500 & 0.68 & 0.38 & 0.87\\
		1.2 & -0.6 & 1000 & 0.70 & 0.34 & 0.97\\
		1.2 & -0.6 & 2000 & 0.71 & 0.36 & 0.93\\
		\addlinespace
		1.2 & -0.3 & 100 & 0.80 & 0.38 & 0.31\\
		1.2 & -0.3 & 500 & 0.86 & 0.30 & 0.21\\
		1.2 & -0.3 & 1000 & 0.84 & 0.32 & 0.18\\
		1.2 & -0.3 & 2000 & 0.74 & 0.31 & 0.13\\\addlinespace
		1.2 & 0.0 & 100 & 0.67 & 0.32 & 0.16\\
		1.2 & 0.0 & 500 & 0.73 & 0.28 & 0.06\\
		1.2 & 0.0 & 1000 & 0.73 & 0.28 & 0.04\\
		1.2 & 0.0 & 2000 & 0.73 & 0.28 & 0.04\\\addlinespace
		1.2 & 0.3 & 100 & 0.63 & 0.31 & 0.12\\
		1.2 & 0.3 & 500 & 0.72 & 0.28 & 0.05\\
		1.2 & 0.3 & 1000 & 0.73 & 0.27 & 0.04\\
		1.2 & 0.3 & 2000 & 0.73 & 0.26 & 0.03\\
		\bottomrule
	\end{tabular}
\end{table}

\begin{table}[!t]
	\centering
	\caption{\label{tab:bias_table}Biases of the estimators of $\beta$ by different methods under spatial confounding in $1$-dimensional domain.}
	\centering
	\begin{tabular}[t]{rrrrrr}
		\toprule
		$\nu_X$ & $\nu_W - \nu_X$ & $n$ & OLS$_\blue n(X,Y)$ & OLS$_\blue n^{(1)}(X,Y)$ & OLS$_\blue n^{(2)}(X,Y)$\\
		\midrule
		0.7 & -0.6 & 100 & -0.010 & 0.018 & -0.050\\
		0.7 & -0.6 & 500 & -0.016 & -0.052 & -0.093\\
		0.7 & -0.6 & 1000 & -0.014 & -0.077 & -0.167\\
		0.7 & -0.6 & 2000 & -0.036 & -0.047 & -0.074\\\addlinespace
		0.7 & -0.3 & 100 & 0.465 & 0.155 & 0.083\\
		0.7 & -0.3 & 500 & 0.450 & 0.083 & 0.020\\
		0.7 & -0.3 & 1000 & 0.446 & 0.094 & 0.079\\
		0.7 & -0.3 & 2000 & 0.426 & 0.045 & 0.006\\\addlinespace
		0.7 & 0.0 & 100 & 0.360 & 0.163 & 0.105\\
		0.7 & 0.0 & 500 & 0.376 & 0.083 & 0.040\\
		0.7 & 0.0 & 1000 & 0.359 & 0.065 & 0.031\\
		0.7 & 0.0 & 2000 & 0.359 & 0.048 & 0.019\\\addlinespace
		0.7 & 0.3 & 100 & 0.372 & 0.154 & 0.096\\
		0.7 & 0.3 & 500 & 0.351 & 0.088 & 0.044\\
		0.7 & 0.3 & 1000 & 0.361 & 0.068 & 0.029\\
		0.7 & 0.3 & 2000 & 0.363 & 0.052 & 0.021\\\addlinespace
		1.2 & -0.6 & 100 & 0.007 & 0.038 & 0.057\\
		1.2 & -0.6 & 500 & 0.069 & 0.026 & 0.126\\
		1.2 & -0.6 & 1000 & -0.012 & 0.006 & -0.083\\
		1.2 & -0.6 & 2000 & -0.020 & -0.048 & -0.092\\
		\addlinespace
		1.2 & -0.3 & 100 & 0.380 & 0.263 & 0.092\\
		1.2 & -0.3 & 500 & 0.318 & 0.212 & 0.041\\
		1.2 & -0.3 & 1000 & 0.376 & 0.231 & 0.028\\
		1.2 & -0.3 & 2000 & 0.329 & 0.217 & 0.022\\\addlinespace
		1.2 & 0.0 & 100 & 0.340 & 0.239 & 0.117\\
		1.2 & 0.0 & 500 & 0.315 & 0.226 & 0.043\\
		1.2 & 0.0 & 1000 & 0.306 & 0.228 & 0.031\\
		1.2 & 0.0 & 2000 & 0.336 & 0.216 & 0.027\\\addlinespace
		1.2 & 0.3 & 100 & 0.352 & 0.257 & 0.111\\
		1.2 & 0.3 & 500 & 0.318 & 0.227 & 0.050\\
		1.2 & 0.3 & 1000 & 0.344 & 0.226 & 0.036\\
		1.2 & 0.3 & 2000 & 0.336 & 0.209 & 0.025\\
		\bottomrule
	\end{tabular}
\end{table}

\begin{table}[!h]
	\centering
	\caption{\label{tab:variance_table} Standard deviations of the estimators of $\beta$ by different methods under spatial confounding in $1$-dimensional domain.}
	\centering
	\begin{tabular}[t]{rrrrrr}
		\toprule
		$\nu_X$ & $\nu_W - \nu_X$ & $n$ & OLS$_\blue n(X,Y)$ & OLS$_\blue n^{(1)}(X,Y)$ & OLS$_\blue n^{(2)}(X,Y)$\\
		\midrule
		0.7 & -0.6 & 100 & 0.279 & 0.525 & 0.999\\
		0.7 & -0.6 & 500 & 0.314 & 0.533 & 1.030\\
		0.7 & -0.6 & 1000 & 0.295 & 0.567 & 1.157\\
		0.7 & -0.6 & 2000 & 0.348 & 0.594 & 1.150\\\addlinespace
		0.7 & -0.3 & 100 & 0.396 & 0.233 & 0.367\\
		0.7 & -0.3 & 500 & 0.407 & 0.142 & 0.203\\
		0.7 & -0.3 & 1000 & 0.366 & 0.132 & 0.210\\
		0.7 & -0.3 & 2000 & 0.393 & 0.111 & 0.175\\\addlinespace
		0.7 & 0.0 & 100 & 0.385 & 0.097 & 0.114\\
		0.7 & 0.0 & 500 & 0.416 & 0.048 & 0.050\\
		0.7 & 0.0 & 1000 & 0.422 & 0.035 & 0.037\\
		0.7 & 0.0 & 2000 & 0.389 & 0.026 & 0.026\\\addlinespace
		0.7 & 0.3 & 100 & 0.504 & 0.063 & 0.041\\
		0.7 & 0.3 & 500 & 0.444 & 0.021 & 0.011\\
		0.7 & 0.3 & 1000 & 0.459 & 0.015 & 0.006\\
		0.7 & 0.3 & 2000 & 0.466 & 0.013 & 0.003\\\addlinespace
		1.2 & -0.6 & 100 & 0.682 & 0.373 & 0.694\\
		1.2 & -0.6 & 500 & 0.674 & 0.380 & 0.861\\
		1.2 & -0.6 & 1000 & 0.701 & 0.335 & 0.962\\
		1.2 & -0.6 & 2000 & 0.709 & 0.354 & 0.922\\
		\addlinespace
		1.2 & -0.3 & 100 & 0.698 & 0.272 & 0.291\\
		1.2 & -0.3 & 500 & 0.794 & 0.214 & 0.209\\
		1.2 & -0.3 & 1000 & 0.747 & 0.226 & 0.177\\
		1.2 & -0.3 & 2000 & 0.658 & 0.221 & 0.129\\\addlinespace
		1.2 & 0.0 & 100 & 0.576 & 0.216 & 0.105\\
		1.2 & 0.0 & 500 & 0.659 & 0.173 & 0.044\\
		1.2 & 0.0 & 1000 & 0.659 & 0.171 & 0.032\\
		1.2 & 0.0 & 2000 & 0.644 & 0.173 & 0.023\\\addlinespace
		1.2 & 0.3 & 100 & 0.528 & 0.170 & 0.040\\
		1.2 & 0.3 & 500 & 0.641 & 0.166 & 0.011\\
		1.2 & 0.3 & 1000 & 0.647 & 0.152 & 0.006\\
		1.2 & 0.3 & 2000 & 0.647 & 0.146 & 0.004\\
		\bottomrule
	\end{tabular}
\end{table}

\begin{table}[!t]
	\centering
	\caption{\label{tab:mse_table_2d}Root Mean Squared Error (RMSE) for estimation of $\beta$ by different methods under spatial confounding in $2$-dimensional domain.}
	\centering
	\begin{tabular}[t]{rrrrr}
		\toprule
		$\nu_X$ & $\nu_W - \nu_X$ & $n$ & OLS$_\blue n(X,Y)$ & Lap$_\blue n^{(1)}(X,Y)$\\
		\midrule
		1 & -0.6 & 225 & 0.34 & 0.23\\
		1 & -0.6 & 529 & 0.36 & 0.19\\
		1 & -0.6 & 1024 & 0.42 & 0.14\\
		1 & -0.6 & 2025 & 0.38 & 0.15\\
		1 & -0.6 & 4900 & 0.38 & 0.10\\
		1 & -0.6 & 10000 & 0.38 & 0.10\\\addlinespace
		1 & -0.4 & 225 & 0.44 & 0.18\\
		1 & -0.4 & 529 & 0.45 & 0.14\\
		1 & -0.4 & 1024 & 0.45 & 0.10\\
		1 & -0.4 & 2025 & 0.42 & 0.08\\
		1 & -0.4 & 4900 & 0.40 & 0.06\\
		1 & -0.4 & 10000 & 0.50 & 0.05\\\addlinespace
		1 & -0.2 & 225 & 0.55 & 0.15\\
		1 & -0.2 & 529 & 0.56 & 0.11\\
		1 & -0.2 & 1024 & 0.51 & 0.09\\
		1 & -0.2 & 2025 & 0.53 & 0.06\\
		1 & -0.2 & 4900 & 0.56 & 0.05\\
		1 & -0.2 & 10000 & 0.57 & 0.04\\\addlinespace
		1 & 0.0 & 225 & 0.62 & 0.15\\
		1 & 0.0 & 529 & 0.61 & 0.12\\
		1 & 0.0 & 1024 & 0.65 & 0.08\\
		1 & 0.0 & 2025 & 0.60 & 0.06\\
		1 & 0.0 & 4900 & 0.59 & 0.05\\
		1 & 0.0 & 10000 & 0.62 & 0.03\\\addlinespace
		1 & 0.2 & 225 & 0.66 & 0.36\\
		1 & 0.2 & 529 & 0.67 & 0.33\\
		1 & 0.2 & 1024 & 0.65 & 0.30\\
		1 & 0.2 & 2025 & 0.61 & 0.28\\
		1 & 0.2 & 4900 & 0.63 & 0.26\\
		1 & 0.2 & 10000 & 0.68 & 0.24\\
		\addlinespace
		1 & 0.4 & 225 & 0.68 & 0.25\\
		1 & 0.4 & 529 & 0.71 & 0.21\\
		1 & 0.4 & 1024 & 0.61 & 0.19\\
		1 & 0.4 & 2025 & 0.66 & 0.16\\
		1 & 0.4 & 4900 & 0.69 & 0.14\\
		1 & 0.4 & 10000 & 0.67 & 0.12\\
		\bottomrule
	\end{tabular}
\end{table}

\begin{table}[!h]
	\centering
	\caption{\label{tab:bias_table_2d}Biases of the estimators of $\beta$ by different methods under spatial confounding in $2$-dimensional domain.}
	\centering
	\begin{tabular}[t]{rrrrr}
		\toprule
		$\nu_X$ & $\nu_W - \nu_X$ & $n$ & OLS$_\blue n(X,Y)$ & Lap$_\blue n^{(1)}(X,Y)$\\
		\midrule
		1 & -0.6 & 225 & 0.195 & 0.068\\
		1 & -0.6 & 529 & 0.192 & 0.017\\
		1 & -0.6 & 1024 & 0.259 & -0.004\\
		1 & -0.6 & 2025 & 0.184 & 0.038\\
		1 & -0.6 & 4900 & 0.217 & 0.019\\
		1 & -0.6 & 10000 & 0.149 & 0.005\\\addlinespace
		1 & -0.4 & 225 & 0.294 & 0.099\\
		1 & -0.4 & 529 & 0.290 & 0.074\\
		1 & -0.4 & 1024 & 0.292 & 0.041\\
		1 & -0.4 & 2025 & 0.272 & 0.035\\
		1 & -0.4 & 4900 & 0.260 & 0.023\\
		1 & -0.4 & 10000 & 0.295 & 0.016\\\addlinespace
		1 & -0.2 & 225 & 0.396 & 0.109\\
		1 & -0.2 & 529 & 0.391 & 0.082\\
		1 & -0.2 & 1024 & 0.354 & 0.070\\
		1 & -0.2 & 2025 & 0.377 & 0.050\\
		1 & -0.2 & 4900 & 0.397 & 0.036\\
		1 & -0.2 & 10000 & 0.447 & 0.027\\\addlinespace
		1 & 0.0 & 225 & 0.488 & 0.140\\
		1 & 0.0 & 529 & 0.466 & 0.107\\
		1 & 0.0 & 1024 & 0.539 & 0.070\\
		1 & 0.0 & 2025 & 0.478 & 0.060\\
		1 & 0.0 & 4900 & 0.470 & 0.043\\
		1 & 0.0 & 10000 & 0.492 & 0.031\\
		\addlinespace
		1 & 0.2 & 225 & 0.530 & 0.353\\
		1 & 0.2 & 529 & 0.542 & 0.329\\
		1 & 0.2 & 1024 & 0.493 & 0.304\\
		1 & 0.2 & 2025 & 0.446 & 0.284\\
		1 & 0.2 & 4900 & 0.510 & 0.258\\
		1 & 0.2 & 10000 & 0.519 & 0.241\\
		\addlinespace
		1 & 0.4 & 225 & 0.547 & 0.247\\
		1 & 0.4 & 529 & 0.558 & 0.211\\
		1 & 0.4 & 1024 & 0.488 & 0.186\\
		1 & 0.4 & 2025 & 0.508 & 0.163\\
		1 & 0.4 & 4900 & 0.551 & 0.137\\
		1 & 0.4 & 10000 & 0.489 & 0.118\\
		\bottomrule
	\end{tabular}
\end{table}

\begin{table}[!h]
	\centering
	\caption{\label{tab:variance_table_2d}Standard deviations of the estimators of $\beta$ by different methods under spatial confounding in $2$-dimensional domain.}
	\centering
	\begin{tabular}[t]{rrrrr}
		\toprule
		$\nu_X$ & $\nu_W - \nu_X$ & $n$ & OLS$_\blue n(X,Y)$ & Lap$_\blue n^{(1)}(X,Y)$\\
		\midrule
		1 & -0.6 & 225 & 0.279 & 0.224\\
		1 & -0.6 & 529 & 0.305 & 0.193\\
		1 & -0.6 & 1024 & 0.326 & 0.137\\
		1 & -0.6 & 2025 & 0.331 & 0.148\\
		1 & -0.6 & 4900 & 0.311 & 0.098\\
		1 & -0.6 & 10000 & 0.348 & 0.097\\\addlinespace
		1 & -0.4 & 225 & 0.322 & 0.145\\
		1 & -0.4 & 529 & 0.342 & 0.119\\
		1 & -0.4 & 1024 & 0.338 & 0.089\\
		1 & -0.4 & 2025 & 0.318 & 0.073\\
		1 & -0.4 & 4900 & 0.302 & 0.060\\
		1 & -0.4 & 10000 & 0.409 & 0.044\\\addlinespace
		1 & -0.2 & 225 & 0.379 & 0.099\\
		1 & -0.2 & 529 & 0.400 & 0.074\\
		1 & -0.2 & 1024 & 0.364 & 0.051\\
		1 & -0.2 & 2025 & 0.377 & 0.038\\
		1 & -0.2 & 4900 & 0.395 & 0.028\\
		1 & -0.2 & 10000 & 0.361 & 0.023\\\addlinespace
		1 & 0.0 & 225 & 0.377 & 0.065\\
		1 & 0.0 & 529 & 0.396 & 0.049\\
		1 & 0.0 & 1024 & 0.367 & 0.031\\
		1 & 0.0 & 2025 & 0.360 & 0.020\\
		1 & 0.0 & 4900 & 0.352 & 0.017\\
		1 & 0.0 & 10000 & 0.379 & 0.009\\\addlinespace
		1 & 0.2 & 225 & 0.392 & 0.039\\
		1 & 0.2 & 529 & 0.395 & 0.029\\
		1 & 0.2 & 1024 & 0.425 & 0.018\\
		1 & 0.2 & 2025 & 0.421 & 0.011\\
		1 & 0.2 & 4900 & 0.372 & 0.006\\
		1 & 0.2 & 10000 & 0.445 & 0.004\\
		\addlinespace
		1 & 0.4 & 225 & 0.406 & 0.027\\
		1 & 0.4 & 529 & 0.434 & 0.019\\
		1 & 0.4 & 1024 & 0.371 & 0.012\\
		1 & 0.4 & 2025 & 0.420 & 0.009\\
		1 & 0.4 & 4900 & 0.410 & 0.005\\
		1 & 0.4 & 10000 & 0.464 & 0.003\\
		\bottomrule
	\end{tabular}
\end{table}

\end{document}